 \def\nn{\nonumber}  \def\a{\alpha} 
\def\e{\varepsilon}
\renewcommand{\liminf}{\mathop{{\underline {\hbox{{\rm lim}}}}}}
\renewcommand{\limsup}{\mathop{{\overline {\hbox{{\rm lim}}}}}}
\DeclareMathOperator{\diam}{diam} 
 \def\R{\mathbb{R}} \def\S{\mathbb{S}} \def\N{\mathbb{N}}
\def\O{\EuScript{O}}
\def\fC{\EuScript{C}}
\def\I{{J}^{\kappa}_n(s)}
\def\In{I^{\kappa}(s)}
\def\Fn{F^{\kappa}(s)}
\def\Ihn{\mathcal{I}^{\kappa}(x)}
\def\Fhn{\EuScript{F}^{\kappa}(x)}
\def\hb{\underline{h}}
\def\1{\mathbf 1}
\def\mm{\mu_{\max}}
\def\G{\mathcal{G}}
\def\bequ{\begin{equation}}
\def\nequ{\end{equation}}
\def\bdef{\begin{defn}}
\def\ndef{\end{defn}}
\def\bthm{\begin{thm}}
\def\nthm{\end{thm}}
\def\bprop{\begin{prop}}
\def\nprop{\end{prop}}
\def\brmk{\begin{remarks}}
\def\nrmk{\end{remarks}}
\def\nn{\mathbb{N}}
\def\rr{\mathbb{R}}
\def\bdes{\begin{description}}
\def\ndes{\end{description}}
\newtheorem{theorem}{Theorem}[section]
\newtheorem{lemma}[theorem]{Lemma}
\newtheorem{corollary}[theorem]{Corollary}
\DeclareMathSymbol{\varnothing}{\mathord}{AMSb}{"3F} 
\begin{document}

\renewcommand{\theequation}{\thesection$\cdot$\arabic{equation}}

\title{Dynamical Diophantine approximation}

\author{Ai-Hua Fan} \author{J\"{o}rg Schmeling} \author{Serge Troubetzkoy}

\address{A.\ H.\ Fan: LAMFA, UMR 4160, CNRS, University of Picardie, 33 Rue Saint
Leu, 80039 Amiens, France} \email{ai-hua.fan@u-picardie.fr}
\urladdr{http://www.mathinfo.u-picardie.fr/fan/}

\address{J.\ Schmeling: Mathematics Centre for Mathematical Sciences,
Lund Institute of Technology, Lund University Box 118 SE-221 00 Lund, Sweden}
\email{joerg@maths.lth.se}
\urladdr{http://www.maths.lth.se/matematiklth/personal/joerg/}

\address{S.\ Troubetzkoy: Centre de physique th\'eorique\\
F\'ed\'eration de re\-cher\-ches des unit\'es de math\'ematique de Marseille\\
Institut de math\'ematiques de Luminy and\\
Universit\'e de la M\'editerran\'e\\
Luminy, Case 907, F-13288 Marseille Cedex 9, France}
\email{troubetz@iml.univ-mrs.fr}
\urladdr{http://iml.univ-mrs.fr/{\lower.7ex\hbox{\~{}}}troubetz/}

\date{}
\subjclass{}
\begin{abstract}
Let $\mu$ be a Gibbs measure of the doubling map $T$ of the circle.
For a $\mu$-generic point $x$ and a given sequence $\{r_n\} \subset
\R^+$, consider  the intervals $(T^nx - r_n \pmod 1, T^nx + r_n
\pmod 1)$. In analogy to the classical Dvoretzky covering of the
circle we study the covering properties of this sequence of
intervals. This study is closely related to the local entropy
function of the Gibbs measure and to hitting times for moving
targets. A mass transference principle is obtained for Gibbs
measures which are multifractal. Such a principle was shown by
Beresnevich and Velani \cite{BV} only for monofractal measures. In
the symbolic language we completely describe the combinatorial
structure of a typical relatively short sequence, in particular we
can describe the occurrence of ''atypical'' relatively long words.
Our results have a direct and deep number-theoretical interpretation
via inhomogeneous diadic diophantine approximation by numbers
belonging to a given (diadic) diophantine class.
\end{abstract}
\maketitle

\pagestyle{myheadings}

\markboth{DYNAMICAL DIOPHANTINE APPROXIMATION}{AI-HUA FAN, J\"{O}RG
SCHMELING AND SERGE TROUBETZKOY}

\section{Introduction}\label{sec1}

Let $(X, d)$ be a complete metric space. Given a sequence
$\{x_n\}_{n\ge1}$ of points in $X$ and a sequence $\{r_n\}_{n\ge 1}$
of positive numbers we define
\begin{eqnarray*}
     I(\{x_n\},\{r_n\}) :
     &=& \limsup_{n\to \infty} B(x_n, r_n),\\
     \qquad F(\{x_n\},\{r_n\}):
     &=& X\setminus I(\{x_n\},\{r_n\})
\end{eqnarray*}
  where $B(x_n, r_n)$ denotes the ball of center $x_n$ with radius
  $r_n$. By diophantine approximation we mean the study of the sets
  $I(\{x_n\},\{r_n\})$ and $F(\{x_n\},\{r_n\})$.

Classic diophantine approximation is a special case.
Let
$X=\mathbb{S}^1=\mathbb{R}/\mathbb{Z}$ be the unit circle  equipped with the metric
$$\|x-y\| = \inf_{k\in\mathbb{Z}}|(x-y)-k|.$$ Let
$\{x_n\}=\{n\alpha \pmod 1\}$ be the orbit of the irrational
rotation determined by an irrational number $\alpha$. Then $0\in
I(\{n\alpha\},\{r_n\})$ means $\|\alpha n\|<r_n$ holds for an
infinite number of $n$'s. This is nothing but the homogeneous
diophantine approximation of $\alpha$. More generally $y\in
I(\{n\alpha\},\{r_n\})$ means $\|\alpha n -y\|<r_n$ holds for an
infinite number of $n$'s. This is what is called  inhomogeneous
diophantine approximation.
 In \cite{FS}, based on the results in \cite{ST}, both
$I(\{n \alpha\},\{r_n\})$ and $F(\{n\alpha \},\{ r_n\})$ have been
analyzed for an irrational number $\alpha$ when $r_n=n^{-\kappa}$.
The case for general sequence $\{r_n\}$ has been studied in
\cite{FW2}.

Another special case is the dynamical Borel-Cantelli lemma or
shrinking target problem. Consider a measure preserving map $T$.
A shrinking target is a sequence of balls with decreasing radius and
with centers fixed or moving (more generally, other forms than balls are
also allowed).  The question is to study the set of orbits $T^n x$
(or equivalently of the initial points) which
hit the target or equivalently which are well approximated by the
target, see for example \cite{HV} and the references therein.

There is another well studied case.  Consider  an i.i.d.\ sequence
$\{x_n\} \subset \S^1$ uniformly distributed on the unit circle
$\S^1$ with respect to Lebesgue measure, a decreasing sequence of
positive numbers $\{\ell_n\} \subset \R^+$  and the associated
random intervals $(x_n - \ell_n/2 \pmod 1, x_n + \ell_n/2 \pmod 1)$
(i.e.\ $r_n=\ell_n/2$ in the above terminology). Since $\{x_n\}$ are
independent and uniformly distributed, the Borel--Cant\-elli Lemma
assures that almost surely (a.s. for short) we have $I(\{x_n\},
\{r_n\})= \mathbb{S}^1$ except for a set of null Lebesgue measure,
i.e.\ Lebesgue a.e.\ point in $\S^1$ is covered infinitely often by
the intervals with probability one if and only if $\sum_{n=1}^\infty
\ell_n=\infty$. Moreover $\sum_{n=1}^\infty \ell_n<\infty$ implies
that Lebesgue a.e.\ point in $\S^1$ is covered finitely often with
probability one. In 1956, Dvoretzky observed  the possibility that
{\em all} points in $\S^1$ are covered infinitely often with
probability one for some slowly decreasing sequence $\{\ell_n\}$
\cite{D}. In 1972, Shepp obtained a necessary and sufficient
condition for all points in $\S^1$ to be covered infinitely often
with probability one \cite{S2}:
\[
\sum_{n=1}^\infty\frac{1}{n^2}\exp(\ell_1+\cdots+ \ell_n) = \infty.
\]
This condition is satisfied for example by $\ell_n = \frac{1}{n}$.
Important contributions were made by J.P.\ Kahane, P.\ Billard,
P.\ Erd\'{o}s, S. Orey, B.\ Mandelbrot et al. See Kahane's book
\cite{K} for a full history and a complete reference up to 1985 and
see \cite{BF,F1,F2,FK,FW1,JS} for more recent developments.


In the present work, we consider the dynamics defined by the angle
doubling map on the circle.   We shall consider a generic orbit
$\{x_n\}=\{T^n x\}$ of this map relative to a Gibbs measure. Recall
that  the doubling map $T:\S^1 \to \S^1$ is defined by $$Ts = 2s
\pmod 1.$$
We are interested in the quantity
 $$\|T^nx - y \| = \| 2^nx - y \| < r_n.$$
This is diadic diophantine approximation, homogeneous in the case
$y=0$ and inhomogeneous in the case $y \ne 0$. The sets
$I(\{x_n\},\{r_n\})$ and $F(\{x_n\},\{r_n\})$ are respectively the
sets of $y$ which are well aproximable or badly approximable with
speed $r_n$. In other words $I$ is the set of points obeying a
diophantine equation with speed $r_n$.  Our theorems are similar to
Jarnik type results in number theory. For $\kappa
> 0$ consider the special sequence $r_n = \frac{1}{n^{\kappa}}$.
Write
$$\I= (T^ns - r_n \pmod 1, T^ns + r_n \pmod 1).$$
 For $s \in \S^1$ let
 \begin{eqnarray*}
\In \hspace{-0.2cm} &:= \hspace{-0.2cm} & \bigcap_{N=1}^{\infty}
\bigcup_{n = N}^\infty \I  = \left \{t\in\S^1 \, :\,
\sum_{n=0}^\infty\1_{\I}(t)=\infty \right \},\\
\Fn \hspace{-0.2cm} &:= \hspace{-0.2cm} & \bigcup_{N=1}^{\infty}
\bigcap_{n = N}^\infty \I^c = \left\{t\in\S^1 \, :\,
\sum_{n=0}^\infty\1_{\I}(t)<\infty \right\}.
\end{eqnarray*}
 The following decomposition  is obvious:
$$\S^1 = \Fn \cup \In,\quad\Fn \cap \In =
\emptyset.
$$

It is easy to see by definition that if the orbit of $s$ is dense,
then $\In$ is a residual set, in particular, $\In\not=\emptyset$. It is
the case for a typical point $s$ relative to an ergodic measure with
full support. However, as we will see, it is possible for
$\Fn=\emptyset$ for typical points. Let $\nu_\phi,\nu_\psi$ be two
$T$-invariant probability Gibbs measures on $\S^1$ associated to
normalized H\"{o}lder potentials $\phi$ and $\psi$ (i.e.\ the
pressures of $\phi$ and $\psi$ are equal to zero). The measure
$\nu_\phi$ will be used to describe the randomness and the measure
$\nu_\psi$ to describe sizes of sets.

Let
\begin{eqnarray*}
\kappa_{\phi,\psi,\S^1} : &=& \sup \left\{ \kappa: \nu_{\psi}(\In)  = 1 \hbox{ for } \nu_\phi-a.e.\ s \right\},\\
\kappa^F_{\phi,\S^1} :&=& \sup \left\{ \kappa: \Fn = \emptyset \hbox{ for } \nu_\phi-a.e.\ s \right\}.
\end{eqnarray*}
We are interested in the following questions:  \\
\indent \ ({\bf Q1}) \ {\em How to determine the critical value
$\kappa_{\phi,\psi}$? More precisely when is $I^\kappa(s)$ of full
$\nu_\psi$-measure for $\nu_\phi$-almost every $s$?}\\
\indent \ ({\bf Q2}) \ {\em How to determine the critical value
$\kappa^F_{\phi,\S^1}$? More precisely when is $I^\kappa(s)$ equal to
$\mathbb{S}^1$ for $\nu_\phi$-almost every  $s$ ?}\\
\indent \ ({\bf Q3}) \ {\em What are the Hausdorff dimensions
$\dim_H(\Fn)$, $\dim_H(\In)$ for $\nu_\phi$-almost every $s$ ?}

Our answers to these questions are stated in the following theorems.
Let
\begin{eqnarray*}
   e^- & = &\inf_{\nu: {\rm invariant}}
     \int(-\phi)d\nu,\qquad\\
      e_{\max} &=&
     \int(-\phi)d\mbox{\rm Leb},\qquad\\
       e^+&=&\sup_{\nu: {\rm invariant}}
     \int(-\phi)d\nu
\end{eqnarray*}
where $e_-$ and $e_+$ are respectively the minimal and
maximal local entropy of $\nu_\phi$. Let $E(t)$ be the entropy spectrum of
$\nu_\phi$, which is defined by
$$
     E(t) = \dim_H \left\{y: \lim_{r\to o} \frac{\log \nu_\phi((y-r,y+r))}{\log
     r}=t\right\}.
$$
It is well known that $E(t)$ is continuous on $[e^-,e^+]$, strictly
concave and real analytic in $(e^-,e^+)$ (see \cite{Pesin}).

\begin{theorem}\label{thm1.1} The critical value  $\kappa_{\phi,\psi, \S^1}$ satisfies
$$
      \kappa_{\phi,\psi, \S^1} = \frac{1}{\int (-\phi) d\nu_\psi}.
$$
\end{theorem}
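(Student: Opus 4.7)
The strategy is to pass, via Fubini, from a statement about the $\nu_\psi$-measure of $\In$ to a dynamical Borel--Cantelli statement about the orbit $\{T^n s\}$ hitting the shrinking target $B(y,r_n)$, and then to combine this with the Gibbs/Birkhoff estimate of the local dimension of $\nu_\phi$ at a $\nu_\psi$-generic point.

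First, by Fubini applied to $\nu_\phi \otimes \nu_\psi$ and the measurable set $\{(s,y) : y \in \In\}$, the statement ``$\nu_\psi(\In)=1$ for $\nu_\phi$-a.e.\ $s$'' is equivalent to ``for $\nu_\psi$-a.e.\ $y$, one has $y \in \In$ for $\nu_\phi$-a.e.\ $s$''. Rewriting $y \in \In$ as $T^n s \in B(y, r_n)$ for infinitely many $n$ turns the problem into a dynamical Borel--Cantelli question for the moving, shrinking targets $\{B(y,r_n)\}_{n\ge 1}$. Since $\nu_\phi$ is a Gibbs measure for the uniformly expanding doubling map, it has exponential decay of correlations for H\"older observables, and a standard dynamical Borel--Cantelli lemma (of Philipp or Chernov--Kleinbock--Margulis type) yields that for $\nu_\phi$-a.e.\ $s$,
\[
\#\bigl\{n : T^n s \in B(y, r_n)\bigr\} = \infty
\ \Longleftrightarrow\
\sum_{n \ge 1} \nu_\phi(B(y, r_n)) = \infty.
\]
The convergent direction is the classical Borel--Cantelli lemma; the divergent direction requires a quasi-independence estimate of the form $\nu_\phi(A_n \cap T^{-(m-n)}A_m) \le \nu_\phi(A_n)\nu_\phi(A_m) + \text{error}$, obtained from exponential mixing after approximating balls by unions of at most a bounded number of adjacent dyadic cylinders.

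Second, to determine when this series diverges, I would use the Gibbs property $\nu_\phi([w]) \asymp \exp(S_n \phi(y))$ for each level-$n$ dyadic cylinder $[w]\ni y$ (which has diameter $2^{-n}$), combined with the Birkhoff ergodic theorem $\tfrac{1}{n} S_n \phi(y) \to \int \phi\, d\nu_\psi$ for $\nu_\psi$-a.e.\ $y$ (using ergodicity of $\nu_\psi$). This identifies the local ``entropy'' of $\nu_\phi$ at $\nu_\psi$-a.e.\ $y$ as $d_\psi := \int(-\phi)\, d\nu_\psi$, so $\nu_\phi(B(y, r_n)) \asymp n^{-\kappa d_\psi}$ (up to subexponential corrections). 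The series therefore diverges precisely when $\kappa \le 1/d_\psi$, yielding $\kappa_{\phi,\psi,\S^1} = 1/\int(-\phi)\, d\nu_\psi$.

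The main obstacle I anticipate is the dynamical Borel--Cantelli step for \emph{moving} targets: one needs the quasi-independence bounds to be sufficiently uniform in $y$ even when consecutive targets may overlap, which forces a careful combination of the mixing rate with the geometric decay of $r_n$, and a clean approximation of balls by cylinders through the boundary effects at each scale. A secondary technical point is to upgrade the pointwise Birkhoff statement into one that holds along the precise radii $r_n = n^{-\kappa}$ uniformly on a $\nu_\psi$-full-measure set of $y$; this should follow from standard large-deviation estimates for Birkhoff sums of H\"older potentials under a Gibbs measure, which give exponentially small exceptional sets at every scale and hence summability along the sequence $r_n$.
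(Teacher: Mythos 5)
Your proposal is correct and arrives at the answer along essentially the same lines the paper uses: Fubini to fix a $\nu_\psi$-generic $y$, a Borel--Cantelli argument with quasi-independence supplied by decay of correlations, and the Gibbs property plus Birkhoff (equivalently Shannon--McMillan--Breiman) to identify the local entropy of $\nu_\phi$ at $\nu_\psi$-a.e.\ $y$ as $\int(-\phi)\,d\nu_\psi$. The difference is organizational rather than conceptual. You invoke a black-box dynamical Borel--Cantelli lemma for shrinking balls on the circle; the paper instead transfers everything to $\Sigma_2^+$ (Section~\ref{cove}), reformulates the covering question as a statement about the hitting-time exponent $\alpha(x,y)$ (Lemma~\ref{lemma2}), and proves the needed Borel--Cantelli estimate from scratch as a hitting-time asymptotic: Theorem~\ref{Cha} (following Chazottes) shows that for any deterministic sequence of $n$-cylinders $C_n$, $\liminf_n\frac1n\log\tau(x,C_n)=\liminf_n\frac{-\log\mu_\phi(C_n)}{n}$ for $\mu_\phi$-a.e.\ $x$, with the quasi-independence furnished by the multi-relation property (Theorem~\ref{Thm-MR}), a quasi-Bernoulli estimate from Ruelle's theorem in which a gap $d(n)=\lfloor\omega n\rfloor$ is interposed between the cylinder constraints. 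Theorem~\ref{nu} then combines this with Corollary~\ref{fub} to give the symbolic exponent, and Theorem~\ref{nuc} transfers it to $\S^1$; this route has the advantage that the hitting-time asymptotic is reused for all the dimension results. Two small comments on the obstacles you flag: after Fubini the target $B(y,r_n)$ has a fixed center and only the radius shrinks, so you are in the standard shrinking-target setting rather than the harder moving-target one; and the large-deviation upgrade of Birkhoff is not actually needed, since the pointwise limit already gives $\nu_\phi(B(y,r_n))=n^{-\kappa d_\psi+o(1)}$ and the $o(1)$ does not affect divergence of $\sum_n\nu_\phi(B(y,r_n))$ for $\kappa\neq 1/d_\psi$, which is all that is required to pin down the supremum.
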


Notice that the integral $\int (-\phi) d\nu_\psi$ is nothing but the
conditional entropy of $\nu_\phi$ relative to $\nu_\psi$. The
theorem says that for $\nu_\phi$-a.e $s$ the set $I^\kappa(s)$
supports the Gibbs measure $\nu_\psi$ if $\kappa$ is small enough so
that $\int (-\phi) d\nu_\psi <\frac{1}{\kappa}$. Also notice that
for fixed $s$, the question whether $\nu_{\psi}(I^\kappa(s))=1$ is
the  shrinking target problem or dynamical Borel-Cantelli
lemma (see \cite{HV}).

\begin{theorem}\label{thm1.2}  The critical value  $ \kappa_{\phi,\mathbb{S}^1}^F$ satisfies
$$
      \kappa_{\phi,\mathbb{S}^1}^F = \frac{1}{e_+}.
$$
\end{theorem}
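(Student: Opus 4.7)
The plan is to prove the two inequalities $\kappa^F_{\phi,\S^1} \le 1/e_+$ and $\kappa^F_{\phi,\S^1} \ge 1/e_+$ separately. For the \emph{upper bound}, fix $\kappa > 1/e_+$ and, by the variational description of $e_+$, pick a $T$-invariant probability $\nu^+$ with $\int(-\phi)\,d\nu^+ \ge e_+ - \eta$, where $\eta > 0$ is small enough that $\kappa(e_+-\eta) > 1$. For $\nu^+$-almost every $y_0$, Birkhoff's theorem combined with the Gibbs relation $\nu_\phi([y_0]_k)\asymp \exp(S_k\phi(y_0))$ gives $\nu_\phi(B(y_0,r))\le C\,r^{e_+-\eta}$ for all small $r$. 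By $T$-invariance of $\nu_\phi$,
$$\sum_{n\ge 1}\nu_\phi\{s:T^n s\in B(y_0,n^{-\kappa})\}=\sum_{n\ge 1}\nu_\phi(B(y_0,n^{-\kappa}))\le C\sum_n n^{-\kappa(e_+-\eta)}<\infty,$$
so Borel--Cantelli yields $y_0\in\Fn$ for $\nu_\phi$-a.e.\ $s$, whence $\Fn\neq\emptyset$ almost surely.

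For the \emph{lower bound} $\kappa^F_{\phi,\S^1}\ge 1/e_+$, fix $\kappa<1/e_+$ and $\varepsilon>0$ with $\eta:=1-\kappa(e_++\varepsilon)>0$. The key preliminary is a uniform-in-$y$ lower bound on cylinder masses: there exists $K=K(\varepsilon)$ such that
$$\nu_\phi([y_1\ldots y_k])\ge 2^{-(e_++\varepsilon)k}\quad\text{for all } y\in\S^1,\;k\ge K.$$
If this failed along some sequence $(y_j,k_j)$ with $k_j\to\infty$, the empirical measures $\frac{1}{k_j}\sum_{i<k_j}\delta_{T^i y_j}$ would, by weak-$*$ compactness, subsequentially converge to a $T$-invariant $\mu^*$ with $\int(-\phi)\,d\mu^*\ge e_++\varepsilon>e_+$, contradicting the definition of $e_+$; via the Gibbs relation this upgrades to $\nu_\phi(B(y,r))\ge c\,r^{e_++\varepsilon}$ uniformly in $y$.

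Now along dyadic scales $N=2^j$ set $k_N=\lfloor\kappa\log_2 N\rfloor$, let $\mathcal{D}_N$ be the partition of $\S^1$ into its $2^{k_N}$ dyadic cylinders of length $k_N$ (diameter $\asymp N^{-\kappa}$), and for each cell $D$ introduce the visit count $Z_{D,N}(s)=\#\{n\in[N/2,N]:T^n s\in D\}$. The uniform bound and $T$-invariance give $\ee_{\nu_\phi}Z_{D,N}\gtrsim N\cdot 2^{-(e_++\varepsilon)k_N}\gtrsim N^{\eta}$. Using the exponential decay of correlations for $\nu_\phi$ (cylinder events at times $\ge k_N$ apart are approximately independent), a Bernstein- or large-deviation-type estimate produces a stretched-exponential tail $\nu_\phi\{Z_{D,N}=0\}\le C\exp(-cN^{\eta/2})$, which comfortably absorbs the union bound over the $|\mathcal{D}_N|=N^{\kappa}$ cells; Borel--Cantelli in $j$ then guarantees that for $\nu_\phi$-a.e.\ $s$, every $D\in\mathcal{D}_N$ is visited during $[N/2,N]$ for all sufficiently large $j$. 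Since $n^{-\kappa}\gtrsim N^{-\kappa}\asymp\mathrm{diam}(D)$ on that window, such a visit forces $D\subset \I$, so every $y\in\S^1$ lies in infinitely many $J^\kappa_n(s)$, i.e.\ $\Fn=\emptyset$ a.s. The main obstacle is precisely this concentration step: turning $\ee Z_{D,N}\gtrsim N^\eta$ into a tail strong enough to survive the $N^\kappa$-sized union bound over cells is where the quantitative mixing of the Gibbs measure (spectral gap of the Ruelle transfer operator, or the equivalent Chernoff-type bound) is essential; the upper bound needs only Borel--Cantelli, but without strong mixing the lower bound cannot be closed.
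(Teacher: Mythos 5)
Your proposal is correct in outline and arrives at Theorem~\ref{thm1.2} by a route that is more direct and self-contained than the paper's. The paper deduces the statement as a byproduct of a heavier machinery: it first passes to the shift, proves the hitting-time/local-entropy correspondence (Theorem~\ref{Cha}, Corollary~\ref{fub}), establishes the Big Hitting Probability Lemma~\ref{lemma2'} and the inclusion $\{\alpha(x,\cdot)\ge t\}\subset\{\hb_{\mu_\phi}\ge t\}$ of Theorem~\ref{thm5.2}, and then observes that $e^+$ is the supremum of $\hb_{\mu_\phi}$, so the latter set is empty when $t>e^+$; the non-emptiness of $\Fn$ for $\kappa>1/e_+$ is obtained via Hausdorff-dimension estimates (Theorem~\ref{5.3}). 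You instead work directly on the circle: a Borel--Cantelli argument along a single $\nu^+$-typical target for the non-emptiness, and a uniform-in-$y$ cylinder lower bound plus a visit-count concentration estimate for the emptiness. The engine behind the emptiness direction (decimating in time to extract nearly independent trials and getting a stretched-exponential failure probability) is exactly the computation in Lemma~\ref{lemma2'}, so the two proofs rely on the same spectral-gap input; what you lose relative to the paper is the stronger structural information (the set-inclusion of Theorem~\ref{thm5.2} and the dimension formulas), what you gain is that the argument for this particular theorem is shorter and needs no multifractal formalism.

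Three points you should tighten. (i) With $k_N=\lfloor\kappa\log_2 N\rfloor$ you have $\diam(D)=2^{-k_N}\ge N^{-\kappa}\ge n^{-\kappa}$ for $n$ near $N$, so a visit $T^ns\in D$ does \emph{not} force $D\subset J^\kappa_n(s)$ when $y$ sits at the far end of $D$; take instead $k_N=\lceil\kappa\log_2 N\rceil+1$, so $\diam(D)<n^{-\kappa}$ on all of $[N/2,N]$. This only rescales the number of cells and the expected visit count by bounded factors, so nothing else changes. (ii) In the upper bound you should take $\nu^+$ ergodic (possible since $e_+$ is attained on ergodic measures); without ergodicity Birkhoff's theorem gives a conditional expectation, not the pointwise lower bound on $\liminf\frac1nS_n(-\phi)(y_0)$. (iii) The concentration step is indeed where the quantitative mixing enters, and your sketch is the right idea; the clean way to make it rigorous is precisely the multi-relation property (Theorem~\ref{Thm-MR}): decimate $[N/2,N]$ to spacing $(1+\omega)k_N$, giving $m\asymp N/\log N$ trials of success probability $\ge 2^{-(e_++\e)k_N}$, so
$\nu_\phi\{Z_{D,N}=0\}\le(1+c\beta^{\omega k_N})^m\bigl(1-2^{-(e_++\e)k_N}\bigr)^m\le\exp\bigl(-cN^{\eta}/\log N\bigr)$,
which comfortably beats the union bound over the $\sim N^{\kappa}$ cells.
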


The theorem says that if $\kappa$ is so small that
$e^+<\frac{1}{\kappa}$, then $I^\kappa(s) = \mathbb{S}^1$ or
equivalently $F^\kappa(s)= \emptyset$ for $\nu_\phi$-a.e.\ $s$. This
is the counterpart of the Kahane-Billard-Shepp condition for the
random Dvoretzky covering.

\begin{theorem}\label{thm1.3} For $\nu_\phi$-a.e.\ $s$ we have
$$
      \dim_H F^\kappa(s)  = \begin{cases}
                           1 & \mbox{\rm if} \ \ \frac{1}{\kappa}\le e_{\max}\\
                           E(\frac{1}{\kappa}) & \mbox{\rm if} \ \ \frac{1}{\kappa}> e_{\max}
                          \end{cases}.
$$
\end{theorem}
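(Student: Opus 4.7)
The plan is to prove matching lower and upper bounds using the multifractal structure of $\nu_\phi$. The lower bound works uniformly in both cases via the easy half of Borel--Cantelli against an auxiliary Gibbs measure concentrated on a multifractal level set; the upper bound in the non-trivial Case~2 uses a symbolic covering argument, while Case~1 is trivial since $\dim_H \leq 1$.

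\textbf{Lower bound.} Fix $\alpha > 1/\kappa$ with $\alpha \in (e^-, e^+)$. By the standard multifractal formalism for Gibbs measures of the doubling map, there is an ergodic equilibrium state $\mu_\alpha$ concentrated on the level set $E_\alpha := \{t : \lim_{r\to 0} \log \nu_\phi((t-r,t+r))/\log r = \alpha\}$ with $\dim_H \mu_\alpha = E(\alpha)$. For each $t \in E_\alpha$ one has $\nu_\phi((t-n^{-\kappa}, t+n^{-\kappa})) \asymp n^{-\kappa \alpha}$, hence $\sum_n \nu_\phi((t-n^{-\kappa}, t+n^{-\kappa})) < \infty$ since $\kappa \alpha > 1$. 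Integrating $\sum_n \1_{\I}(t)$ against $\nu_\phi(ds)$ and using $T$-invariance gives finite expectation, so by Markov $t \in \Fn$ for $\nu_\phi$-a.e.\ $s$. A Fubini argument in $(s,t)$ then yields $\mu_\alpha(\Fn) = 1$ for $\nu_\phi$-a.e.\ $s$, whence $\dim_H \Fn \geq E(\alpha)$. Letting $\alpha \downarrow 1/\kappa$ in Case~2 and taking $\alpha = e_{\max}$ in Case~1 (where $E(e_{\max}) = 1$) and invoking continuity of $E$ produces the two lower bounds.

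\textbf{Upper bound.} In Case~2, pass to the binary symbolic coding, where a ball of radius $2^{-\ell}$ is an $\ell$-cylinder. Group the indices $n$ with $\lfloor \kappa \log_2 n \rfloor = \ell$; there are $M_\ell \asymp 2^{\ell/\kappa}$ of them, and for each such $n$ the interval $\I$ coincides up to at most two adjacent cylinders with the $\ell$-cylinder $[s_{n+1}\cdots s_{n+\ell}]$. A large deviation bound for visit counts (coming from exponential decay of correlations of $\nu_\phi$), combined with a Borel--Cantelli pass in $\ell$, shows that for $\nu_\phi$-a.e.\ $s$, eventually \emph{every} $\ell$-cylinder $[w]$ with $\nu_\phi([w]) > 2^{-\ell/\kappa}$ is hit by some $n$ in the block, and is therefore contained in $\bigcup_n \I$. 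Hence $\Fn$ is covered at scale $2^{-\ell}$ by the $\ell$-cylinders with $\nu_\phi([w]) \leq 2^{-\ell/\kappa}$, i.e.\ with local entropy $\geq 1/\kappa$. The multifractal counting estimate $\#\{[w] : |w| = \ell,\ \nu_\phi([w]) \asymp 2^{-\ell\alpha}\} = 2^{\ell E(\alpha) + o(\ell)}$, together with the fact that $E$ is decreasing on $(e_{\max}, e^+)$ and $1/\kappa > e_{\max}$, bounds this count by $2^{\ell E(1/\kappa) + o(\ell)}$, giving $\dim_H \Fn \leq E(1/\kappa)$.

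\textbf{Main obstacle.} The delicate step is upgrading the pointwise ergodic theorem into a \emph{uniform} hitting statement over all $\sim 2^\ell$ cylinders at level $\ell$, since the $\nu_\phi$-null exceptional set for each fixed cylinder depends on the cylinder. The needed concentration comes from exponential decay of correlations for $\nu_\phi$, which produces (doubly) exponentially small miss probabilities for cylinders of mass exceeding $2^{-\ell/\kappa}$ and thus survives the $2^\ell$ union bound, after which a Borel--Cantelli pass over $\ell$ consolidates to a single full $\nu_\phi$-measure set of $s$. Borderline cylinders of mass $\asymp 2^{-\ell/\kappa}$ and the (bounded) discrepancy between the ball $\I$ and an exact cylinder must be absorbed by the $o(\ell)$ slack and the continuity of $E$ near $1/\kappa$.
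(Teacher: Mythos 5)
Your overall strategy tracks the paper's quite closely: a lower bound by planting a multifractal equilibrium state $\mu_\alpha$ inside $F^\kappa(s)$ via Fubini, and an upper bound by a covering argument driven by a uniform (large-deviation) hitting estimate for all cylinders of large $\nu_\phi$-mass. Your lower bound is correct, and it is in fact a welcome simplification: the paper reaches the same conclusion through its Theorem~\ref{Cha} (Chazottes' return-time theorem) and Corollary~\ref{fub}, which requires both the convergent and divergent halves of a Borel--Cantelli argument, whereas you only need the convergent half plus $T$-invariance of $\nu_\phi$. The one thing to spell out is that when $1/\kappa=e_{\max}$ you must take $\alpha$ strictly above $e_{\max}$ (to have $\kappa\alpha>1$) and invoke continuity of $E$ at $e_{\max}$, and that the exceptional set of $s$ must be consolidated over a countable sequence $\alpha_k\downarrow 1/\kappa$.

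The upper bound, however, has a genuine gap in the sentence ``every $\ell$-cylinder $[w]$ with $\nu_\phi([w])>2^{-\ell/\kappa}$ is hit by some $n$ in the block, and is therefore contained in $\bigcup_n \I$.'' Being hit means $T^n s \in [w]$ for some $n$ with $\lfloor\kappa\log n\rfloor=\ell$, but for such $n$ one has $2^{-\ell-1}<r_n\le 2^{-\ell}$ while $[w]$ has length $2^{-\ell}$, so the single ball $\I=B(T^ns,r_n)$ need not contain $[w]$ (e.g.\ $T^ns$ at the left endpoint of $[w]$ and $r_n$ close to $2^{-\ell-1}$ covers only the left half). Hence ``$[w]$ hit'' does not force ``$\Fn\cap[w]=\varnothing$,'' and the claimed cover of $\Fn$ by low-mass $\ell$-cylinders does not follow. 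Your ``main obstacle'' paragraph treats this as a bounded discrepancy absorbed by $o(\ell)$, but that only addresses quantitative slack in the mass threshold, not this qualitative geometric mismatch. The standard fix is to go one level finer: if $T^ns$ falls in the $(\ell+1)$-cylinder $[v]\ni t$ then $|t-T^ns|<2^{-\ell-1}<r_n$ and $t\in\I$; equivalently one argues that the $(\ell+1)$-cylinder containing a point of $\Fn$ is \emph{not} hit in the block, and then counts low-mass $(\ell+1)$-cylinders. This is in effect what the paper does: it works symbolically with Lemma~\ref{lemma2} and Theorem~\ref{thm5.2} (whose proof, via the big hitting probability Lemma~\ref{lemma2'}, builds in exactly the $\varepsilon$-slack you need) and handles the ball-vs-cylinder discrepancy in the transfer theorem of Section~\ref{sec10} through the starred cylinders $C_n^*$ and Lemma~\ref{a+}. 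So: right idea, but as written the upper bound does not close; patch it either by passing to $(\ell+1)$-cylinders (with an explicit $\varepsilon$-slack on the mass threshold so that the time budget $2^{(\ell+1)/\kappa}$ still dwarfs the inverse mass) or by following the paper's symbolic reduction.
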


\begin{theorem}\label{thm1.4} For $\nu_\phi$-a.e.\ $s$ we have
$$
      \dim_H I^\kappa(s)  = \begin{cases}
                           \frac{1}{\kappa} & \mbox{\rm if} \ \ \frac{1}{\kappa}\le h_{\nu_\phi}
                                \\
                           E(\frac{1}{\kappa}) & \mbox{\rm if} \ \
                           h_{\nu_\phi}<\frac{1}{\kappa}<
                           e_{\max}\\
                           1 & \mbox{\rm if} \ \ \frac{1}{\kappa} \ge  e_{\max}\\
                          \end{cases}.
$$
\end{theorem}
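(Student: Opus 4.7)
The proof establishes matching upper and lower bounds on $\dim_H \In$ in each of the three regimes. The three key ingredients are Theorem~\ref{thm1.1} (which promotes control of $\int (-\phi) d\nu_\psi$ to the statement $\nu_\psi(\In)=1$), the multifractal formalism for $\nu_\phi$ (which parameterises the spectrum $E$ by a one-parameter family of equilibrium states $\nu_q$ of $q\phi$), and the mass transference principle for multifractal Gibbs measures announced in the abstract.

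\emph{Upper bound.} In case~1 the Hausdorff-Cantelli lemma, applied to the tautological cover $\In\subset\bigcup_{n\ge N}B(T^n s,n^{-\kappa})$, gives $\dim_H \In\le 1/\kappa$ unconditionally. In case~3, $\dim_H \In\le 1$ is trivial. Case~2 is the delicate one: I would partition $\S^1$ using the multifractal level sets $K_t=\{y:\lim_{r\to 0}\log\nu_\phi(B(y,r))/\log r=t\}$, which satisfy $\dim_H K_t=E(t)$. A dynamical Borel-Cantelli estimate based on the local scaling $\nu_\phi(B(y,n^{-\kappa}))\approx n^{-\kappa t}$ shows that $\sum_n \nu_\phi(B(y,n^{-\kappa}))<\infty$ when $t>1/\kappa$, so for $\nu_\phi$-a.e.\ $s$ such a $y$ is hit only finitely often. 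To upgrade this measure statement to a Hausdorff-dimension bound I would refine the cover at each scale $2^{-K}$ into length-$K$ dyadic cylinders, using a large-deviations estimate for the Birkhoff sums $-S_K\phi$ to bound the number of surviving cylinders (those on which the average of $-\phi$ does not exceed $1/\kappa$) by $\exp(K\log 2\cdot E(1/\kappa)+o(K))$. Since $E$ is monotonically increasing on $[e^-,e_{\max}]$ and $1/\kappa<e_{\max}$ in case~2, this yields $\dim_H \In\le E(1/\kappa)$.

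\emph{Lower bound.} In case~3 I would apply Theorem~\ref{thm1.1} with $\nu_\psi=\mathrm{Leb}$; since $\int (-\phi) d\mathrm{Leb}=e_{\max}\le 1/\kappa$, this gives $\mathrm{Leb}(\In)=1$, so $\dim_H\In=1$. In case~2, for each $t\in(h_{\nu_\phi},1/\kappa)$ I would take the equilibrium state $\nu_{q(t)}$ of $q(t)\phi$ with $q(t)\in(0,1)$ chosen so that $\int (-\phi) d\nu_{q(t)}=t$; such $q(t)$ exists because $q\mapsto\int (-\phi) d\nu_q$ is strictly decreasing and continuous from $e_{\max}$ at $q=0$ to $h_{\nu_\phi}$ at $q=1$. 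Theorem~\ref{thm1.1} then yields $\nu_{q(t)}(\In)=1$, and the multifractal formalism identifies $\dim_H \nu_{q(t)}=E(t)$. Letting $t\uparrow 1/\kappa$ and using continuity of $E$ gives $\dim_H\In\ge E(1/\kappa)$.

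Case~1 is the hardest lower bound, since every Gibbs measure $\nu_\psi$ satisfies $\dim_H\nu_\psi\le\int (-\phi) d\nu_\psi$ by the variational principle, so Theorem~\ref{thm1.1} alone cannot produce dimension $\ge 1/\kappa$ when $1/\kappa<h_{\nu_\phi}$. Instead I would invoke the mass transference principle. For any $\kappa_0<1/h_{\nu_\phi}$ (automatically $\kappa_0<\kappa$ in case~1, since $\kappa\ge 1/h_{\nu_\phi}$), Theorem~\ref{thm1.1} with $\nu_\psi=\nu_\phi$ gives $\nu_\phi(I^{\kappa_0}(s))=1$ for $\nu_\phi$-a.e.\ $s$; the mass transference principle then shrinks the radii from $n^{-\kappa_0}$ to $n^{-\kappa}$ and converts the full $\nu_\phi$-measure of the original limsup into positive Hausdorff $\beta$-measure of the new one, for every $\beta<h_{\nu_\phi}\kappa_0/\kappa$. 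Letting $\kappa_0\uparrow 1/h_{\nu_\phi}$ gives $\dim_H\In\ge 1/\kappa$. The two main obstacles are (i) establishing the multifractal mass transference principle itself, which genuinely generalises Beresnevich-Velani beyond the monofractal (Ahlfors-regular) setting, and (ii) the case~2 upper bound, which requires a multiscale cover adapted simultaneously to the moving targets $\{T^n s\}$ and to the multifractal heterogeneity of $\nu_\phi$.
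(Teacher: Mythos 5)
Your outline reproduces the paper's overall architecture quite faithfully: upper/lower bounds in each regime, with Theorem~\ref{thm1.1} plus the multifractal parametrisation $\nu_{q(t)}$ supplying the lower bounds in cases~2 and~3, a Hausdorff--Cantelli cover for the case~1 upper bound, and a multifractal mass transference principle for the case~1 lower bound. Your case~2 and~3 lower bounds are essentially identical to the paper's (the paper's Corollary~\ref{fub} plus Theorem~\ref{cor1add1} do exactly what you describe), and you are right that the existing Beresnevich--Velani theorem cannot deliver case~1 because $\nu_\phi$ is not Ahlfors-regular.

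There is however a genuine gap in your case~2 upper bound. You propose to bound $\dim_H \In$ by counting, at scale $2^{-K}$, the cylinders on which the Birkhoff average of $-\phi$ does not exceed $1/\kappa$ -- this is a large-deviations estimate that controls the set $\{y:\underline{h}_{\mu_\phi}(y)\le 1/\kappa\}$. But that set is \emph{not} what you need to cover. After the decomposition
$$
\In \subset \bigl(\In\cap\{y:\underline{h}_{\mu_\phi}(y)\le 1/\kappa\}\bigr)\,\cup\,\bigl(\In\cap\{y:\underline{h}_{\mu_\phi}(y)> 1/\kappa\}\bigr),
$$
the first piece is handled by exactly the cylinder count you describe (and by Theorem~\ref{var} its topological entropy is $E(1/\kappa)$ anyway). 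The danger is the second piece: points of \emph{large} local entropy that the particular orbit $\{T^n s\}$ nevertheless visits early. Your dynamical Borel--Cantelli remark only says that, for each fixed such $y$, a.e.\ $s$ avoids hitting it infinitely often; it does not, by itself, bound the dimension of the collection of such $y$ for a.e.\ $s$. The paper's Lemma~\ref{lemma1add} (``small hitting probability'') is precisely the extra ingredient: it controls, with exponentially small probability, the event that \emph{many} low-measure $n$-cylinders are all hit before time $2^{an}$, and this is what makes the covering argument in Theorem~\ref{cor1add} go through. Without some such estimate your large-deviations count does not yield $\dim_H\In\le E(1/\kappa)$.

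Two further remarks. First, your case~1 lower bound is presented as ``prove $\nu_\phi\bigl(I^{\kappa_0}(s)\bigr)=1$ for $\kappa_0<1/h_{\nu_\phi}$, then shrink radii via a mass transference principle''; the paper's Theorem~\ref{revers} (and its Cantor-set construction from the tree of good words in Section~\ref{typ}) is a \emph{direct} dimension lower bound on $\{y:\alpha(x,y)\le c\}$ rather than a transfer from a measure statement -- the BV-style transfer does not survive the passage to a multifractal measure, so a fresh construction is genuinely needed, which you correctly flag. Second, you work directly on $\S^1$ rather than passing through $\Sigma_2^+$; this hides the boundary issues treated in Section~\ref{sec10} (the $C_n^*$ versus $C_n$ distinction and $\alpha^*$ versus $\alpha$), which the paper has to handle carefully when transferring dimensions back to the circle.
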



We will transfer the problem to a similar one in a symbolic
framework.
As we  shall see, our problem is closely related to
hitting times and the later is related to local entropy.


The structure of the article is as follows.  We start in section
\ref{back}
with background on ergodic theory, symbolic dynamics, decay of
correlations, and multi-fractal analysis.  In this section we
prove a ``multi-relation'' and a variational principal which
are essential in the proofs of the main results.
In section \ref{cove} we transfer the covering problem to the
symbolic setting and relate then covering properties to hitting
time asymptotic.  In section \ref{hitt} we prove a first simple
relation between hitting times and local entropy.  This yields
the proof of the Ornstein-Weiss return time theorem in the special
case of Gibbs measures and also allows us the determine the critical
exponent $\kappa_{\phi,\psi}$. For the other exponents more
sophisticated estimates are needed.  Sections \ref{bigh} and
\ref{smallh} contain the core estimates on the probabilities of hitting
time events.  The fundamental tools relating hitting times to the
entropy spectrum are developed.  In section \ref{typ} we study the
structure of a short typical sequence.
In particular
we make a substantial improvement in the mass transference principle
\cite{BV}
to multi-fractal Gibbs states.
Section \ref{sec8} contains the results in the symbolic framework
for the full shift while section \ref{sec9} generalizes these
results to subshifts of finite type.  Finally in section \ref{sec10}
we prove the main theorems by transferring them from the shift
space.

\section{Background}\label{back}

{\it Convention.} All logarithms and exponential functions  in this
article are taken to base 2.  With this convention the notions
of entropy and dimension coincide in our setup.
\bigskip

{\it Ergodic theory.}
We need various standard definitions from
ergodic theory: the metric entropy  of an invariant measure $\nu$
denoted by $h_{\nu}$,  the notion of the Gibbs measure $\mu_{\phi}$
with respect to a potential $\phi$ and the topological entropy for
non compact sets $E$ denoted by $h_{top}(E)$. The definitions of all
these notions can be found in \cite{Pesin}.
\bigskip

{\it Symbolic dynamics.}
 We use various standard notions from
symbolic dynamics. Let $(\Sigma_2^+,\sigma)$ denote the one sided
full shift on two symbols $0,1$. For $y = (y_i)_{i \ge 0} \in
\Sigma_2^+$ we denote a cylinder set by
$$
C_n(y) := [y_0,y_1,\cdots,y_{n-1}].$$
We will denote the length of the cylinder by $|C_n(y)| = n$.
We will denote by $$\pi(y) =
\sum_{i=0}^\infty \frac{y_i}{2^{i+1}}$$
 the natural projection from $\Sigma_2^+$ to
$\S^1$. We consider the $\frac12$-metric on $\Sigma_2^+$, i.e.\ for
$x,y \in \Sigma_2^+$ let $d(x,y) = \frac{1}{2^n}$ where $n$ is the
least integer such that $x_n \ne y_n$.  The pull back of the circle
metric  $\rho(x,y) := \sum_{i = 0}^\infty \frac{|x_i-y_i|}{2^{i+1}}$
is almost equivalent in the sense that for $x \in \Sigma_2^+$ the
ratio $\diam_\rho(C_n(x))/\diam_d(C_n(x))$ is bounded from below and
above uniformly in $n$ and $x$.  Thus Hausdorff dimensions do not
change under the projection, for details see \cite{S1}.  We denote
by $\mu_{\max}$ the measure of maximal entropy for the shift. The
projection of $\mu_{\max}$ is the Lebesgue measure on the circle.

\bigskip

\subsection{ Fast decay of correlation.} \

One of the key tools in our study is fast decay of correlations.  This
is related to Ruelle's theorem on transfer operators. Recall
that for a $\alpha$-H\"{o}lder potential $\phi: \Sigma_2^+ \to
\mathbb{R}$, i.e.
$$[\phi]_\alpha := \sup_{x, y}
|\phi(x)-\phi(y)|/d(x,y)^\alpha<\infty, $$
 the transfer operator
associated to $\phi$ is defined as follows
$$
      L_\phi f(x) = \sum_{\sigma y = x} e^{\phi(y)} f(y).
$$
This operator acts on the space of continuous functions
$C(\Sigma_2^+)$ equipped with the supremum norm $\|f\|_\infty$ and on
the space of $\alpha$-H\"{o}lder continuous functions
$H_\alpha(\Sigma_2)$ equipped with the H\"{o}lder norm
$$\|\!|f\|\!|:= \|f\|_\infty +
[f]_\alpha.
$$
 The well known Ruelle theorem asserts that \cite{Ru} \\
   \indent (i) The spectral
radius $\lambda>0$ of $L_\phi: H_\alpha \to H_\alpha$ is an
eigenvalue with an strictly positive eigenfunction $h$ and there is
a probability eigenmeasure $\nu$ for the adjoint operator
$L_\phi^*$, i.e.\ $L_\phi^* \nu = \lambda \nu$.
   \\
   \indent (ii) Choose $h$ such that $\langle h, \nu \rangle :=\int h d \nu =1$. There exist
   constants $c>0$ and $0<\beta<1$ such that for any $f \in H_\alpha$
   we have
  \begin{equation}\label{ExpDecay}
        \|\lambda^{-n} L_\phi^n f - \langle f, \nu \rangle h\|\le c
        \beta^n \|\!| f\|\!|.
   \end{equation}

Let $P(\phi) = \log \lambda$ and call it the pressure of $\phi$. The
measure $\mu:= h \nu$, denoted by $\mu_\phi$, is the so-called Gibbs
measure associated to $\phi$. Assume that $\phi$ is normalized, that
is to say $\lambda=1$. The Gibbs measure $\mu$ has the Gibbs
property: there exists a constant $\gamma>1$ such that
 \begin{equation}\label{GibbsProperty}
      \frac{1}{\gamma} e^{S_n\phi(x)} \le  \mu(C_n[x]) \le \gamma
      e^{S_n\phi(x)}
 \end{equation}
holds for all $x \in \Sigma_2$ and all $n\ge 1$ where
$$S_n f (y)  := \sum_{j=0}^{n-1} f (\sigma^j y).$$

The Gibbs property (\ref{GibbsProperty}) implies  the following
quasi-Bernoulli property of $\mu_\phi$: for any two cylinders $A$
and $B$ we have
 \begin{equation}\label{QBernoulliProperty}
    \frac{1}{\gamma^3}
     \mu_\phi(A) \mu_\phi(B) \le \mu_\phi(A \cap \sigma^{-|A|} B) \le \gamma^3
     \mu_\phi(A) \mu_\phi(B).
 \end{equation}
For the first inequality take a point $x \in A \cap
\sigma^{-|A|} B$. By using three times the Gibbs property we get
\begin{eqnarray*}
    \mu_\phi(A \cap \sigma^{-|A|} B)
     \ge   \frac{1}{\gamma} 2^{S_{|A|}\phi(x) + S_{|B|}(\sigma^{|A|}
    x)}
     \ge  \frac{1}{\gamma^3}   \mu_\phi(A) \mu_\phi(B).
\end{eqnarray*}

This quasi-Bernoulli property can be generalized in the following
way.

\begin{theorem}[Multi-relation] \label{Thm-MR} Let $\mu=\mu_\phi$ be the Gibbs measure associated
to a H\"{o}lder potential function $\phi$. Let $\omega>1$ be a
sufficiently large number. For any cylinder $D_0$ and any finite
number of cylinders $D_1,\dots,D_k$ of length $n$ we have
 \begin{equation}\label{MR}
\gamma^{-3} \left(1 -  c \beta^{n})\right)^k  \le \frac{\mu
\left(D_0 \cap \bigcap_{j=1}^k  \sigma^{-[n_0+ j(n+d)]}
D_j\right)}{\prod_{j=0}^k \mu(D_j)}
       \le \gamma^3 \left(1 +  c \beta^{n})\right)^k
 \end{equation}
where $n_0 \ge |D_0|$ and $d=d(n):\lfloor \omega n\rfloor$
\mbox{\rm(}$\lfloor a\rfloor$ denoting the integral part of a real
number $a$\mbox{\rm)}.
\end{theorem}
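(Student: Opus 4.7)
The plan is to isolate $D_0$ using the quasi-Bernoulli inequality \eqref{QBernoulliProperty}, and then peel off $D_1,\ldots,D_k$ one at a time using the exponential decay of correlations \eqref{ExpDecay}. The hypothesis $d=\lfloor\omega n\rfloor$ with $\omega$ large will be used precisely to make the single-step decay-of-correlations error smaller than the main term by a factor $\beta^n$, despite the fact that cylinder indicators of length $n$ have H\"older norms blowing up exponentially in $n$.

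\emph{Step 1 (removing $D_0$).} Since $n_0\ge|D_0|$, the set $\bigcap_{j=1}^k\sigma^{-m_j}D_j$ depends only on coordinates of index $\ge|D_0|$, and may therefore be written as a disjoint union of cylinders $[w]$ starting at position $|D_0|$. For each such $w$ the concatenation $D_0w$ is a cylinder, and three applications of the Gibbs property \eqref{GibbsProperty} (exactly as in the derivation of \eqref{QBernoulliProperty}) yield $\mu([D_0w])\le\gamma^3\mu(D_0)\mu([w])$ together with the dual lower bound. Summing over $w$ and then using $\sigma$-invariance to shift indices by $m_1-|D_0|$ gives
$$\gamma^{-3}\mu(D_0)\mu(B)\le \mu\Bigl(D_0\cap\bigcap_{j=1}^k\sigma^{-m_j}D_j\Bigr)\le \gamma^3\mu(D_0)\mu(B),$$
where $B:=\bigcap_{j=1}^k\sigma^{-(j-1)(n+d)}D_j$. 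This accounts for the lone factor $\gamma^{\pm3}$ in \eqref{MR}.

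\emph{Step 2 (iterated decay of correlations).} From $L_\phi^*\nu=\nu$ one has the duality $\int f(g\circ\sigma^m)\,d\mu=\int g\,L_\phi^m(fh)\,d\nu$; combining this with \eqref{ExpDecay} and the boundedness of $h$ and $1/h$ delivers
$$\bigl|\mu(f\cdot g\circ\sigma^m)-\mu(f)\mu(g)\bigr|\le c_1\beta^m\,\|\!|f\|\!|\,\mu(g)$$
for $f\in H_\alpha$ and any $g\ge 0$ bounded measurable. I would apply this with $f=\mathbf{1}_{D_1}$, with $g$ the indicator of the remaining intersection, and $m=n+d$, then iterate on the resulting $\mu(g)$. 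For a cylinder $C$ of length $n$, one has $\|\!|\mathbf{1}_C\|\!|\le 1+2^{\alpha(n-1)}$ while \eqref{GibbsProperty} gives $\mu(C)\ge\gamma^{-1}2^{-\|\phi\|_\infty n}$, so the single-step relative error is bounded by
$$\frac{c_1\beta^{n+d}\,\|\!|\mathbf{1}_{D_j}\|\!|}{\mu(D_j)}\le c_2\,\beta^n\cdot\beta^{\omega n}\,2^{(\alpha+\|\phi\|_\infty)n},$$
which is $\le c\beta^n$ as soon as $\omega>(\alpha+\|\phi\|_\infty)/\log_2(1/\beta)$. The induction on $k$ then telescopes into $\mu(B)=\prod_{j=1}^k\mu(D_j)\,(1+O(\beta^n))^k$, and combining with Step~1 produces \eqref{MR}.

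The main obstacle is precisely this imbalance between the H\"older norm and the Gibbs mass of a cylinder indicator, which dictates the large-gap hypothesis and the required size of $\omega$. A secondary point is that the iteration must yield a \emph{multiplicative} accumulation $(1\pm c\beta^n)^k$ rather than an additive one; this is guaranteed by the decay bound being proportional to the current main term $\mu(f)\mu(g)$ and not merely to $\|\!|f\|\!|$.
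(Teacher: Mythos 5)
Your proof is correct and follows essentially the same route as the paper: isolate $D_0$ by summing the quasi-Bernoulli inequality over the disjoint cylinders making up the tail event, shift by invariance, and then iterate the decay-of-correlations estimate one cylinder at a time, using the gap $d=\lfloor\omega n\rfloor$ with $\omega$ large enough to dominate the exponential growth of $\|\!|\mathbf{1}_{D_j}\|\!|/\mu(D_j)$. The only cosmetic difference is that you write the transfer-operator duality with the H\"older norm landing on the front indicator $f=\mathbf{1}_{D_j}$, while the paper phrases the same one-step estimate via $\mathbb{E}(f\circ\sigma^n\cdot g)=\mathbb{E}(f\cdot L^n g)$ applied to $g-\mathbb{E}g$; both land on the identical bound and the identical constraint on $\omega$.
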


\begin{proof} First remark that
$$
     D_0 \cap \bigcap_{j=1}^k  \sigma^{-[n_0+ j(n+d)]}
D_j= D_0 \cap \sigma^{-|D_0|} \mathcal{B}
$$
where
$$
        \mathcal{B} = \bigcap_{j=1}^k  \sigma^{-[n_0-|D_0|+ j(n+d)]}
        D_j
$$
is a finite union of disjoint cylinders, which we denote by $B_i$'s.
Applying the quasi-Bernoulli property (\ref{QBernoulliProperty}) to
$A=D_0$ and $B=B_i$ we get
\begin{equation*}
   \frac{1}{\gamma^3}
     \mu_\phi(D_0) \mu_\phi(B_i) \le \mu_\phi(D_0 \cap \sigma^{-|D_0|} B_i) \le \gamma^3
     \mu_\phi(D_0) \mu_\phi(B_i).
\end{equation*}
Sum over all $B_i$'s and we get
\begin{equation}\label{MR*}
   \frac{1}{\gamma^3}
     \mu_\phi(D_0) \mu_\phi(\mathcal{B}) \le \mu_\phi(D_0 \cap \sigma^{-|D_0|} \mathcal{B}) \le \gamma^3
     \mu_\phi(D_0) \mu_\phi(\mathcal{B}).
\end{equation}
 Notice that the invariance of $\mu_{\phi}$ implies
 $$
             \mu_\phi(\mathcal{B}) =   \mu_\phi \left( \bigcap_{j=1}^{k}  \sigma^{-[ (j-1)(n+d)]}
D_j\right).
$$
Combining this with the equation (\ref{MR*}), it suffices to prove
\begin{equation}\label{MR**}
\left(1 -  c \beta^{n})\right)^k  \le \frac{\mu \left(
\bigcap_{j=1}^k  \sigma^{-[(j-1)(n+d)]} D_j\right)}{\prod_{j=1}^k
\mu(D_j)}
       \le  \left(1 +  c \beta^{n})\right)^k.
 \end{equation}

Actually we can prove a little more. For simplicity, we will use
$\mathbb{E} f$ to denote the integral $\int f d \mu$ and write
$\|f\|_1 = \|f\|_{L^1(\mu)}$. From the inequality
$$
    \left| \mathbb{E} ( f\circ \sigma^n \cdot g)   \right|
    = \left| \mathbb{E }(f \cdot L^n g)\right|\le  \|L^n g\|_\infty \|f\|_1
$$
(applied to $g-\mathbb{E} g$ and $f$) and Ruelle's theorem, we
deduce that for non-negative H\"{o}lder functions  $g$ and $f$ we
have
$$
   \left( 1 - c \frac{\beta^n  \|\!|g
     -\mathbb{E}g\|\!|}{\mathbb{E}g}
   \right)   \le \frac{\mathbb{E} ( f \circ \sigma^n \cdot g)}{ \mathbb{E} f \mathbb{E }g}
   \le
     \left( 1 + c \frac{\beta^n  \|\!|g
     -\mathbb{E}g\|\!|}{\mathbb{E}g}
   \right).
$$
Inductively, for a finite number of functions $g_1, \cdots, g_k \in
H_\alpha$ and for integers $0=n_1<n_2<\cdots <n_{k}$ we have
 \begin{eqnarray*}
 & &
\prod_{j=1}^{k-1}
      \left( 1  -   c \frac{\beta^{n_{j+1} -n_{j}}
     \|\!|g_{j} - \mathbb{E} g_j\|\!|}{\mathbb{E}g_j}
   \right) \\
   & &  \hspace{2cm}
    \le
   \frac{\mathbb{E} \prod_{j=1}^k g_j\circ\sigma^{n_{j}}}{\prod_{j=1}^k \mathbb{E}
   g_j}
          \le
     \prod_{j=1}^{k-1}
     \left( 1 + c \frac{\beta^{n_{j+1} -n_{j}}
     \|\!|g_{j} - \mathbb{E} g_j\|\!|}{\mathbb{E}g_j}
   \right).
\end{eqnarray*}
To get (\ref{MR**}), we  apply these inequalities to characteristic
functions of cylinders $g_j = 1_{D_j}$. In fact, since all cylinders
$D_j$ have the same length $n$, we have
 $$
 \|\!|g_j\|\!| = 1 + 2^{\alpha n}, \quad
 \frac{1}{\mathbb{E} g_j}= \frac{1}{\mu(D_j)} \le \gamma 2^{n \max_x (-\phi (x))}
 $$
 (the inequality is a consequence of the Gibbs
property).  Take $d: = \lfloor \omega n\rfloor$ with a sufficiently
large $\omega$ so that $ \beta^\omega 2^{\alpha + \max(-\phi)}<1$.
Take $n_j$ such that $n_1=0$ and $n_{j+1} -n_{j} = n+ d$ for $j\ge
2$ and the equation (\ref{MR**}) follows.
\end{proof}

We will refer to this inequality as the {\em multi-relation
property} of the Gibbs measure $\mu_\phi$.

\bigskip

\subsection{Multi-fractal analysis.} \

Furthermore  we will use various notions from multi-fractal analysis
which can also be found in the reference \cite{Pesin}. The notion of
Hausdorff dimension of a set will be denoted by $\dim_H$. For a
point $y \in \Sigma_2^+$ and an invariant measure $\nu$ we denote
the {\em lower local entropy} of $\nu$ at $y$ by
\begin{equation} \label{LowerEntropy}\underline{h}_{\nu}(y)
:= \liminf_{n \to \infty} - \frac{1}{n} \log \nu (C_n(y)).
\end{equation}
 We define the {\em local entropy} $h_{\nu}(y)$ if the limit
exists. For a function $f: \Sigma_2^+ \to \R$ we denote the ergodic
sum by
$$S_m f (y)  := \sum_{j=0}^{m-1} f (\sigma^j y).$$
We denote a Gibbs measure with respect to a H\"{o}lder potential
$\phi$ by $\mu_\phi$. Without loss of generality we may assume that
the potential is normalized so that its pressure $P(\phi) = 0$. Then
\begin{equation}\label{localent}
\underline{h}_{\mu_\phi} (y) = -\liminf_{n \to \infty} \frac1n S_n \phi (y)
\end{equation}
and $h_{\mu_\phi} (y)$ satisfies a  similar relation when the limit
exists. If $\nu$ is an ergodic invariant measure then for $\nu$
a.e.\ $y$
$$h_{\mu_\phi} (y) = - \int_{\Sigma_2^+} \phi \, d\nu.$$
Furthermore if  $\nu$ is  another Gibbs measure $\mu_\psi$ then for
$\mu_\psi$ a.e.\ $y$
\begin{equation}\label{EntropyFormulus}
h_{\mu_\phi} (y) = - P'(\psi + t \phi)|_{t=0}.
\end{equation}

Multi-fractal analysis deals with the study of the entropy
spectrum
\[
E(t) := E_{-\phi}(t) := h_{\rm top}\left\{y\, :\,
h_{\mu_\phi}(y)=t\right\}.\] The following conditional variational
is well known (\cite{BSS,FF,FFW}).

\begin{theorem}[Variational principle I]\label{varI}
Let $\phi$ be a H\"{o}lder function. For any $t\in \mathbb{R}$, we
have
\begin{equation}\label{VariationalPrinciple1}
 E(t) =
\sup_{\nu: \text{ invariant}} \left\{  h(\nu): \int (-\phi) d\nu
= t \right\}.
\end{equation} We also have
\begin{equation}\label{VariationalPrinciple2}
E(t(q))  =  P(q \phi) - q P'(q \phi)=h_{\mu_{-P(q\phi)+q\phi}}
\end{equation}
where $t(q) = - P'(q \phi)$. The range of the function $t(q)$ is an
 interval $[e^-,e^+]$, possibly degenerate to a singleton.
\end{theorem}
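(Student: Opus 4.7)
The plan is to identify an explicit ergodic measure that simultaneously realizes both sides of the theorem: the normalized Gibbs measure $\mu_q := \mu_{q\phi - P(q\phi)}$ for an appropriate $q=q(t)$. Legendre duality between the pressure $q\mapsto P(q\phi)$ and the entropy spectrum $E(\cdot)$ does all the work, so both identities (\ref{VariationalPrinciple1}) and (\ref{VariationalPrinciple2}) will come out of the same construction.

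First, for $t$ in the range $[e^-,e^+]$ of $t(q) = -P'(q\phi)$, pick $q$ with $t(q)=t$. Since the potential $q\phi - P(q\phi)$ is H\"older with pressure zero, $\mu_q$ is ergodic, and Birkhoff's theorem applied to $-\phi$ combined with (\ref{localent}) gives $h_{\mu_\phi}(y) = -\int \phi\, d\mu_q = -P'(q\phi) = t$ for $\mu_q$-a.e.\ $y$. Hence the level set $X_t := \{y : h_{\mu_\phi}(y) = t\}$ has full $\mu_q$-measure, so $h_{\rm top}(X_t) \ge h(\mu_q)$. The equilibrium identity $h(\mu_q) = -\int (q\phi - P(q\phi))\, d\mu_q = P(q\phi) - qP'(q\phi)$ produces the closed-form value in (\ref{VariationalPrinciple2}) and the lower bound $E(t) \ge P(q\phi) - qP'(q\phi)$.

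Next, for the matching upper bound on the supremum in (\ref{VariationalPrinciple1}), apply the usual variational principle for pressure: any invariant $\nu$ satisfies $h(\nu) + q\int\phi\, d\nu \le P(q\phi)$, i.e.\ $h(\nu) \le P(q\phi) + qt$ whenever $\int(-\phi)\, d\nu = t$. With $q$ chosen as above, the right-hand side equals $P(q\phi) - qP'(q\phi) = h(\mu_q)$, so $\mu_q$ attains the supremum and (\ref{VariationalPrinciple2}) is established in the form $E(t(q)) = h(\mu_q)$ once (\ref{VariationalPrinciple1}) is in place.

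The remaining and most delicate direction is $E(t) \le \sup\{h(\nu) : \int(-\phi)\, d\nu = t\}$, which is the main obstacle because $X_t$ is neither compact nor closed. Here I would invoke the variational principle for topological entropy of arbitrary Borel sets (Pesin--Pitskel, as in \cite{Pesin}) to write $h_{\rm top}(X_t) = \sup\{h(\nu) : \nu(X_t)=1\}$, and then verify that any such $\nu$ automatically satisfies the integral constraint: decomposing $\nu$ into ergodic components and applying Birkhoff's theorem to $-\phi$, each component must have $-\tfrac{1}{n}S_n\phi$ converging almost everywhere to both its space average and to $t$ (since it lives in $X_t$), so $\int(-\phi)\, d\nu = t$. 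Chaining these bounds closes the loop and yields both identities. Since this non-compact variational principle in the Gibbs/H\"older setting is precisely what is proved in \cite{BSS,FF,FFW}, I would quote their result rather than reprove it, and combine it with the explicit Gibbs construction above.
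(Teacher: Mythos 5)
The paper does not actually prove Theorem~\ref{varI}: it states the result as well known and points to the references \cite{BSS,FF,FFW}, so there is no in-text proof to compare against. Your proposal supplies the standard Legendre-duality argument, and the overall structure is sound. The one place where you go astray is your appeal to a ``variational principle for topological entropy of arbitrary Borel sets,'' attributed to Pesin--Pitskel. No such general theorem exists, and in fact the identity $h_{\rm top}(E)=\sup\{h(\nu):\nu(E)=1\}$ \emph{fails} for general Borel sets: the set of points for which $-\tfrac1n S_n\phi$ oscillates between two distinct values carries no invariant measure at all, yet can have positive topological entropy in Bowen's sense. What rescues your argument is that $X_t$ is a Birkhoff \emph{level} set, i.e.\ a set where the ergodic averages of $-\phi$ actually converge to a prescribed value $t$. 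For such sets one has two legitimate routes: (i) Bowen's 1973 lemma (\cite{Bowen} is in the paper's bibliography) --- every limit point of the empirical measures along any $y\in X_t$ integrates $-\phi$ to $t$, so all limit measures have entropy at most $\sup\{h(\nu):\int(-\phi)\,d\nu=t\}$, and Bowen's bound gives $h_{\rm top}(X_t)\le\sup\{h(\nu):\int(-\phi)\,d\nu=t\}$; or (ii) the specific conditional variational principle for Birkhoff level sets proved in \cite{BSS,FF,FFW}, which is what the paper itself cites. Your closing sentence indicates you realize you are effectively invoking (ii), so I would read this as a misstatement rather than a logical gap, but in a written proof the phrase ``arbitrary Borel sets'' must be replaced by a correct statement of Bowen's lemma or of the conditional variational principle for level sets.

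Two smaller points. First, the ergodic-decomposition step to force $\int(-\phi)\,d\nu=t$ for any invariant $\nu$ with $\nu(X_t)=1$ is correct (and needed if you take route (ii)), but it relies implicitly on \eqref{localent}, i.e.\ on the normalization $P(\phi)=0$; it is worth saying so. Second, the assertion that the range of $t(q)=-P'(q\phi)$ is $[e^-,e^+]$ deserves a word of caution: for finite $q$ the range is the open interval $(e^-,e^+)$, with the endpoints attained only as $q\to\pm\infty$; this is why the paper treats the boundary values $e^\pm$ separately elsewhere. The rest of your construction --- identifying $\mu_q=\mu_{q\phi-P(q\phi)}$ as the extremal measure, using the Gibbs property and Birkhoff to place $\mu_q$ on the level set, computing $h(\mu_q)=P(q\phi)-qP'(q\phi)$ from $P(q\phi-P(q\phi))=0$, and the pressure variational principle for the upper bound on the supremum --- is exactly the standard chain of reasoning and is carried out correctly.
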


Let us state some more useful facts concerning the variational
principle.
 The function $t(q)$ is
invertible on the interval $[e^-,e^+]$. If $t$ is not in this
interval, then there is no point $y \in \Sigma_2^+$ with local
entropy equal to $t$.  The entropy  $E(t)$ attains its maximum at
the value
$$e_{\max} = t(0) =  \int_{\Sigma_2^+} (-\phi) d\mm.$$
 We have $t(q) \le
e_{\max}$ if and only if $q \ge 0$. Furthermore
$$e^+ =  \max_{\mu:
\text{invariant}} \int (-\phi) \, d\mu, \quad
 e^- = \min_{\mu: \text{invariant}} \int (-\phi) \, d\mu.$$
The entropy spectrum is concave and real analytic in the interval
$(e^-,e^+)$. Its graph lies below the diagonal. Moreover the
interval $[e^-,e^+]$ is degenerate if and only if $\phi$ is
cohomologous to the constant $- h_{\rm top}$, i.e.\ the measure
$\mu_\phi$ is the measure of maximal entropy. In the degenerate case
we have $e^-=e^+ = h_{\rm top}$ and $E(h_{\rm top}) = h_{\rm top}$. For typical
potentials in the sense of Baire, $E(e^-) = E(e^+) = 0$.

We will need the following variational principle.

\begin{theorem}[Variational principle II]\label{var}  Let $\phi$ be a H\"{o}lder
function. For any $t\in \mathbb{R}$, we have
$$h_{\rm top} \left \{ \underline{h}_{\mu_\phi}(y) < t \right \}
= h_{\rm top} \left \{ \overline{h}_{\mu_\phi}(y) < t \right \}= \sup_{s
< t}  E(s),$$
$$h_{\rm top} \left \{ \underline{h}_{\mu_\phi}(y) \ge t \right \}
= h_{\rm top} \left \{ \overline{h}_{\mu_\phi}(y) \ge t \right \} =
\sup_{s \ge t}  E(s).$$
\end{theorem}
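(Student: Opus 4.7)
The plan is to sandwich the topological entropy of each level set between the same multi-fractal quantity, obtaining both identities simultaneously.

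For the lower bounds I would use direct inclusions. For any $s\in[e^-,e^+]$ with $s<t$, by the very definition of the entropy spectrum one has $h_{\rm top}\{y:h_{\mu_\phi}(y)=s\}=E(s)$, and since $\{h_{\mu_\phi}=s\}\subseteq\{\overline h_{\mu_\phi}<t\}\subseteq\{\underline h_{\mu_\phi}<t\}$ this gives $h_{\rm top}\{\underline h_{\mu_\phi}<t\}\ge\sup_{s<t}E(s)$. The symmetric chain $\{h_{\mu_\phi}=s\}\subseteq\{\underline h_{\mu_\phi}\ge t\}\subseteq\{\overline h_{\mu_\phi}\ge t\}$ for $s\ge t$ handles the $\ge t$ side. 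The trivial monotonicities $\{\overline h<t\}\subseteq\{\underline h<t\}$ and $\{\underline h\ge t\}\subseteq\{\overline h\ge t\}$ close half of each chain for free, so it remains to prove the two matching upper bounds $h_{\rm top}\{\underline h_{\mu_\phi}<t\}\le\sup_{s<t}E(s)$ and $h_{\rm top}\{\overline h_{\mu_\phi}\ge t\}\le\sup_{s\ge t}E(s)$.

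For the first upper bound I would decompose $\{\underline h<t\}=\bigcup_{k\ge 1}\{\underline h\le t-1/k\}$, use countable stability of topological entropy, and observe that $\{\underline h_{\mu_\phi}\le s\}\subseteq\limsup_m B_m(s+\varepsilon)$ for every $\varepsilon>0$, where $B_m(\alpha):=\bigcup\{C_m(y):\mu_\phi(C_m(y))\ge 2^{-m\alpha}\}$. A Chebyshev bound, combined with the Gibbs property (\ref{GibbsProperty}) and Ruelle's theorem on the transfer operator $L_{q\phi}$, then gives for every $q\ge 0$
$$\#\{C:C\subseteq B_m(\alpha)\}\le 2^{mq\alpha}\sum_{|C|=m}\mu_\phi(C)^q\le c\,2^{m(q\alpha+P(q\phi))}.$$
Minimising over $q\ge 0$ and applying the Legendre identity $P(q\phi)+qt(q)=E(t(q))$ from Theorem \ref{varI} yields $\inf_{q\ge 0}(q\alpha+P(q\phi))=\sup_{u\le\alpha}E(u)$, the supremum form accommodating the regime $\alpha>e_{\max}$ where the optimum is pinned at $q=0$. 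A standard Hausdorff-pre-measure estimate on the cylinder covers of diameter $2^{-m}$ then produces $h_{\rm top}(\limsup_m B_m(s+\varepsilon))\le\sup_{u\le s+\varepsilon}E(u)$; letting $\varepsilon\downarrow 0$ and using continuity of $E$ on $[e^-,e^+]$ closes the bound. The dual upper bound will follow by the same argument with $\widetilde B_m(\alpha):=\bigcup\{C_m(y):\mu_\phi(C_m(y))\le 2^{-m\alpha}\}$: the Chebyshev inequality now works in the favourable direction for $q\le 0$ (one uses $\mu_\phi(C)^q\ge 2^{-mq\alpha}$), and the Legendre optimisation over $q\le 0$ delivers $\sup_{u\ge\alpha}E(u)$.

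The main obstacle is the coherent bookkeeping of the two sides of the spectrum -- $\alpha<e_{\max}$ versus $\alpha>e_{\max}$, equivalently $q>0$ versus $q<0$ -- so that the one-sided profiles $\alpha\mapsto\sup_{u\le\alpha}E(u)$ and $\alpha\mapsto\sup_{u\ge\alpha}E(u)$ emerge uniformly from the Legendre transform; in particular the boundary regime where the optimum collapses to $q=0$ (producing the trivial bound $h_{\rm top}$) must be correctly absorbed into these suprema, and the degenerate case when $\phi$ is cohomologous to a constant has to be treated separately. A secondary technical point is to justify that a limsup of cylinder covers of diameter $2^{-m}$ with cardinality growing like $2^{m\beta}$ really has topological entropy at most $\beta$; this follows from the convergence of $\sum_m 2^{m(\beta-\alpha)}$ for $\alpha>\beta$ once one invokes the coincidence of topological entropy and Hausdorff dimension on $\Sigma_2^+$ recorded in Section \ref{back}.
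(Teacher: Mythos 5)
Your proof is correct, but it follows a genuinely different route from the paper's. The paper fixes a single auxiliary Gibbs measure $\nu=\mu_{-P(q(t)\phi)+q(t)\phi}$, uses the linear relation $\underline{h}_\nu(y)=P(q\phi)+q\,\underline{h}_{\mu_\phi}(y)$ (for $q>0$, with the analogous reversal for $q<0$) to show that every point of the level set has lower local entropy for $\nu$ at most $E(t)$, and then invokes the mass distribution (Billingsley) lemma from \cite{Pesin} to conclude $h_{\rm top}\le E(t)$ in one stroke. You instead bound the number of length-$m$ cylinders with $\mu_\phi$-measure above (resp.\ below) a threshold by a Chebyshev estimate against the partition function $\sum_{|C|=m}\mu_\phi(C)^q\asymp 2^{mP(q\phi)}$, optimise over $q\ge 0$ (resp.\ $q\le 0$) through the Legendre identity of Theorem~\ref{varI}, and cover the $\limsup$ set directly to get a Hausdorff premeasure bound. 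Both proofs are sound. The paper's argument is shorter and hides the combinatorics inside the auxiliary measure; yours is more elementary and self-contained (no mass distribution principle), makes the $\limsup$ cover explicit, and is in fact closer in spirit to the cylinder-counting machinery the paper develops independently later in Section~\ref{smallh} for Theorem~\ref{cor1add}. Two small remarks: the countable-stability decomposition $\{\underline{h}<t\}=\bigcup_k\{\underline{h}\le t-1/k\}$ is not strictly necessary, since $\sup_{u\le t}E(u)=\sup_{u<t}E(u)$ for $t\le e_{\max}$ by continuity of $E$, and both equal $1$ for $t>e_{\max}$; and the degenerate case you flag (where $\mu_\phi$ is the measure of maximal entropy) actually goes through without a separate argument, as the level sets are either empty or the whole space.
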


\begin{proof}
Let us start with the proof of the  first fact. From the trivial
fact
$$\left \{ \underline{h}_{\mu_\phi}(y) < t \right \}
 \supset \left \{ \overline{h}_{\mu_\phi}(y) < t \right \}\supset  \bigcup_{s < t}
\{h_{\mu_\phi}(y) =s\},$$ we get immediately the following
inequalities
$$h_{\rm top} \left \{ \underline{h}_{\mu_\phi}(y) < t \right \}
\ge  h_{\rm top} \left \{ \overline{h}_{\mu_\phi}(y) < t \right \}\ge
\sup_{s < t}  E(s).$$
  Since $\sup_{t < e_{\max}} E(t) = 1$ the
converse inequalities are trivial in the case $t \ge e_{\max}$. It
remains to consider the case $t < e_{\max}$. Notice that we have $
E(t) = \sup_{s < t}  E(s)$. Also notice that there exists a
positive number $q(t) > 0$ such that
$$\min_{q \ge 0} ( P(q\phi) + qt) = P(q(t)\phi) +
q(t)t =  E(t).$$ Now let $y$ be any point such that
$\underline{h}_{\mu_\phi}(y) < t$. For $q =q(t)>0$ we can apply
Equation \eqref{localent} to yield
\begin{align*}
\underline{h}_{\mu_{- P(q\phi) + q\phi}}(y) &= \liminf_{n \to
\infty} - \frac1n S_n
\big (-P(q\phi) + q\phi \big ) (y)\\
&= P(q\phi) + q \left ( \liminf_{n \to \infty} - \frac1n S_n \phi(y) \right )\\
& \le P(q\phi) + qt =  E(t).
\end{align*}
 Thus applying the mass distribution
principle (see Theorem 7.2 of \cite{Pesin}) yields $h_{\rm top} \left \{
\underline{h}_{\mu_\phi}(y) < t \right \}\le  E(t)$, which completes
the proof of the first line.

The second fact may be similarly proved. We just  point out the
following differences that
$$\left \{ \overline{h}_{\mu_\phi}(y) \ge t \right \}
\supset \left \{ \underline{h}_{\mu_\phi}(y) \ge t \right \} \supset
\bigcup_{s \ge t} \{h_{\mu_\phi}(y) = s\},$$ and that for
$t>e_{\max}$ there exists a negative number $q(t)<0$ such that $
E(t) = P(q(t)\phi) + q(t)t$.
\end{proof}

\begin{figure}[t]
\centerline{\psfig{file=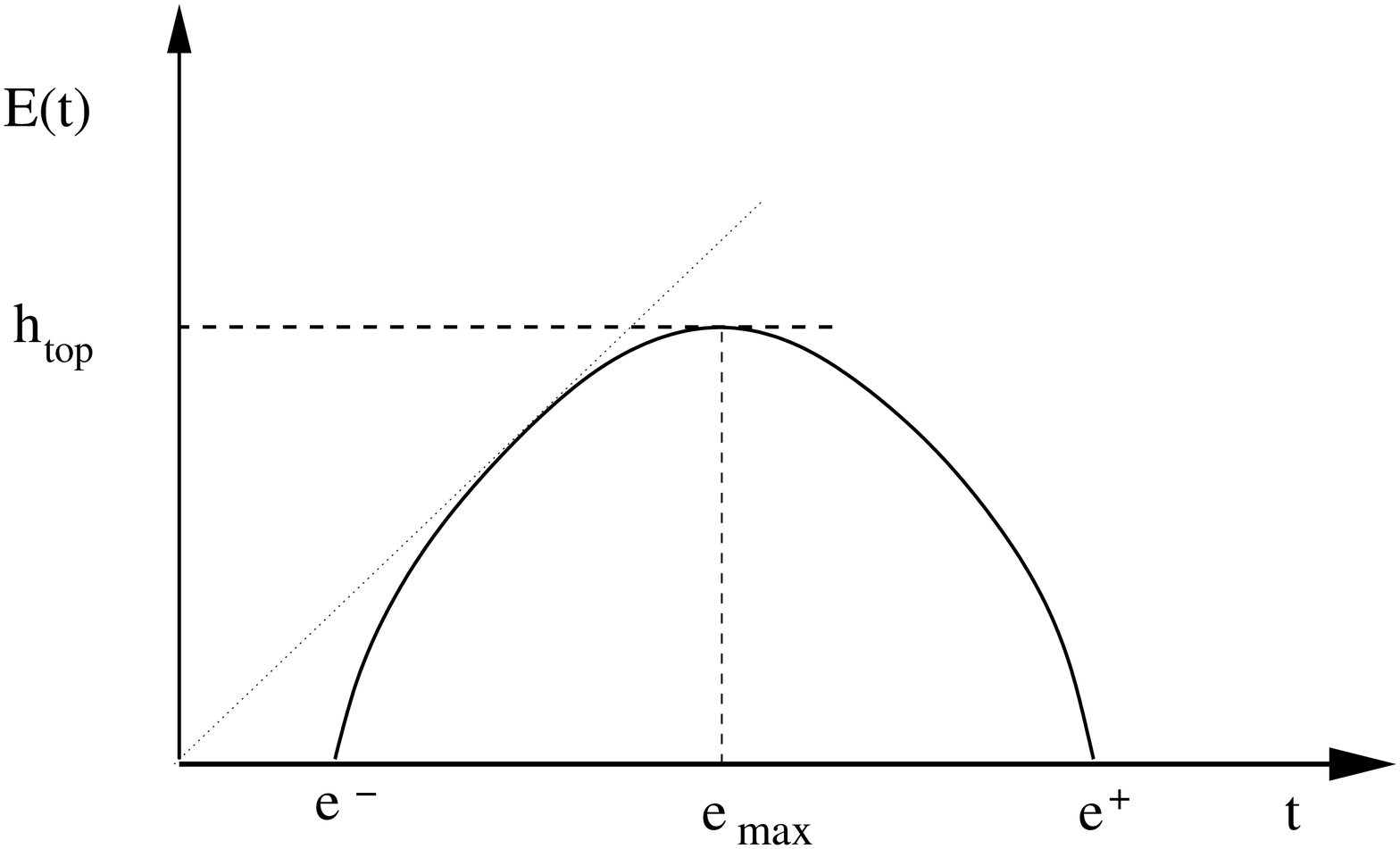,height=45mm}}
\centerline{\psfig{file=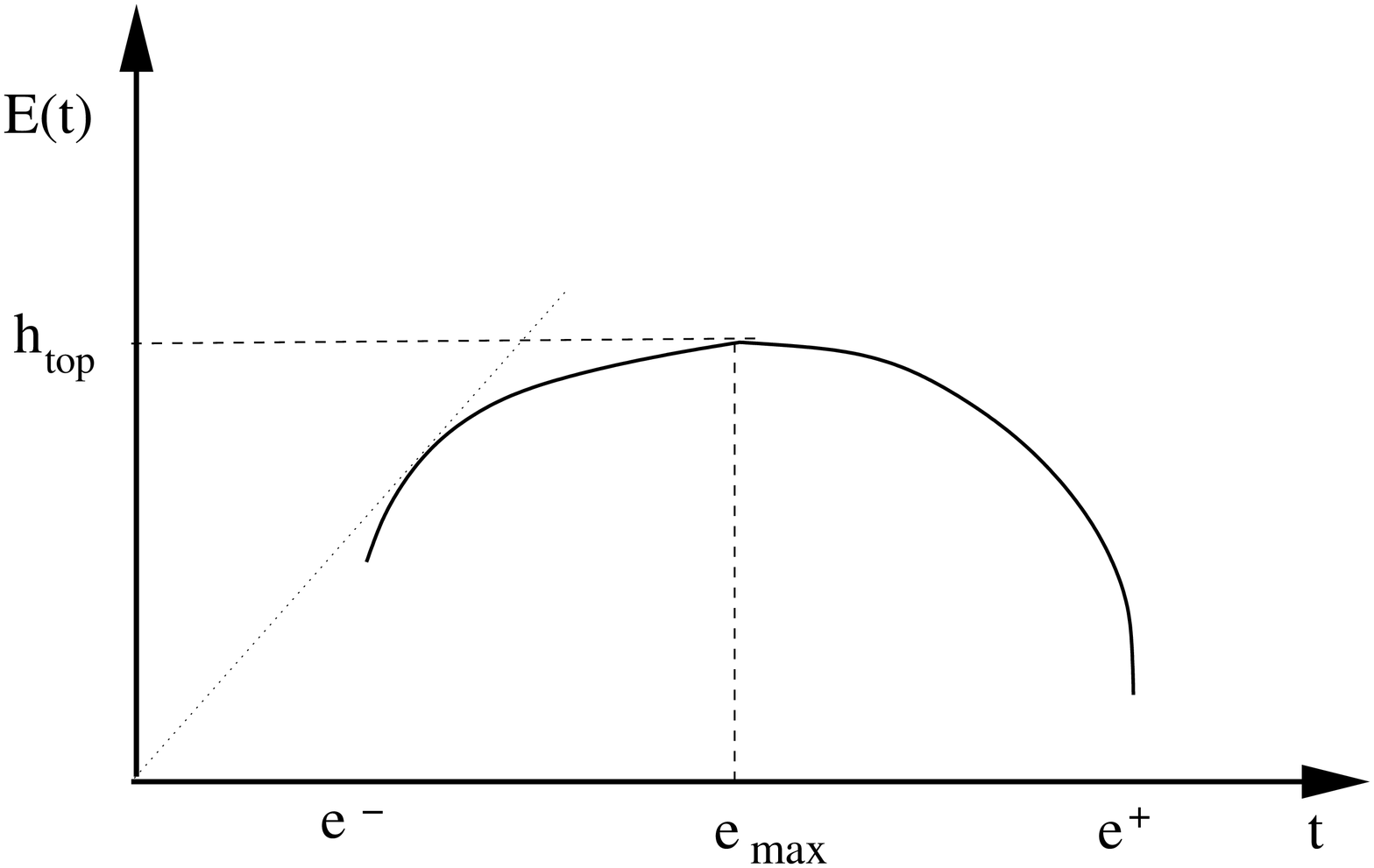,height=45mm} \quad \psfig{file=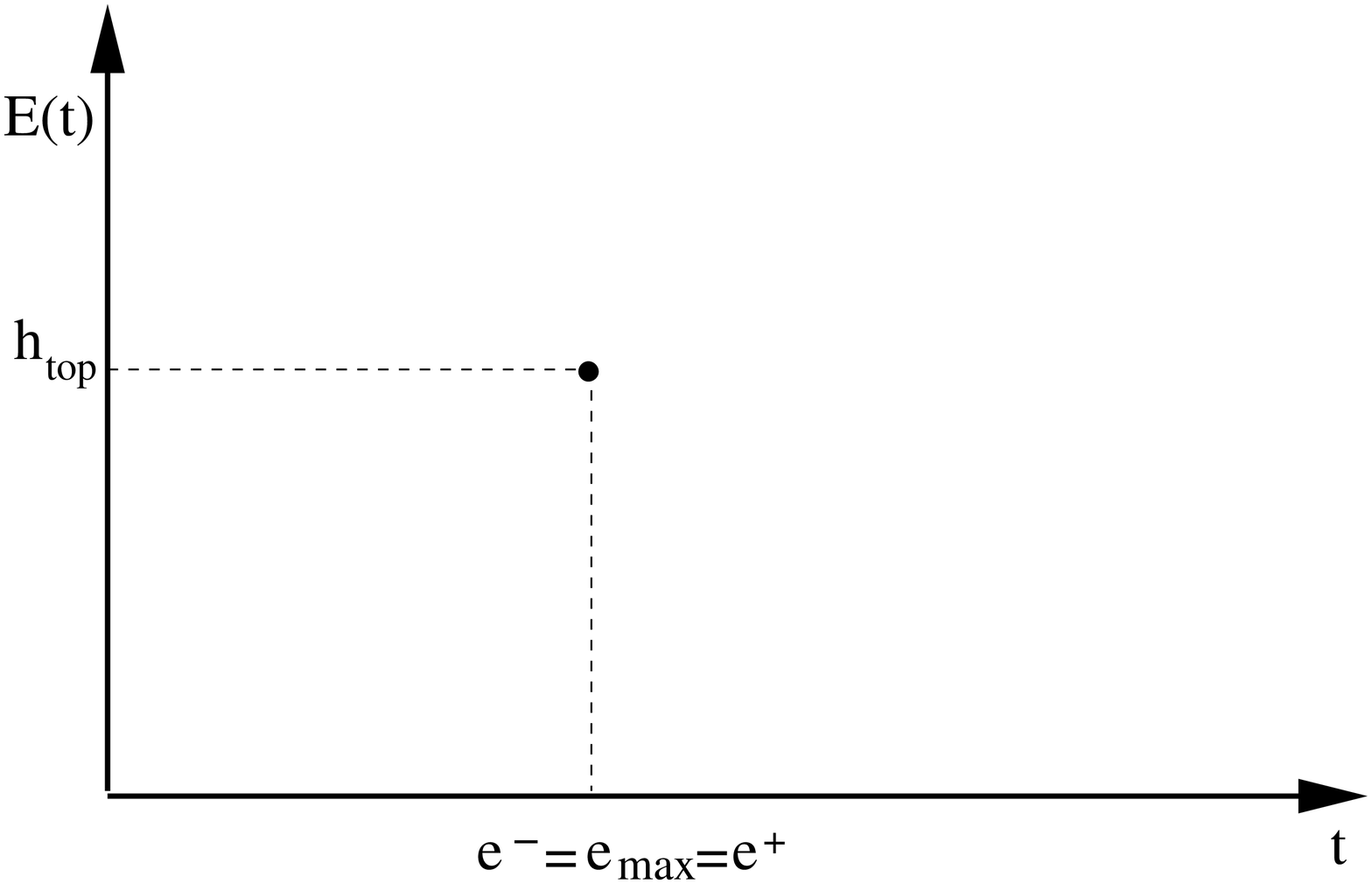,height=45mm}}
\caption{The entropy spectrum for  typical, nontypical and degenerate potentials.}\label{cov1}
\end{figure}

\setcounter{equation}{0}

\section{Covering questions are described by hitting times}\label{cove}
It is well known that the doubling map is semi-conjugate to the
shift map on $\Sigma_2^+$. As we shall see,  the initial covering
questions can be translated into  similar questions concerning the
shift map and these question are described by the hitting time that
we are going to define. We will also see that hitting times are
related to local entropy.

For $x \in \Sigma_2^+$ and $C$ a cylinder let
$$\tau(x,C) := \inf\{l \ge 1: \sigma^lx \in C\}$$
be the {\em first hitting time} of $C$ by $x$. For $x,y \in
\Sigma_2^+$ let
$$\tau_n(x,y)
:= \tau(x,C_n(y))$$
\begin{equation} \label{alpha} \alpha(x,y) := \liminf_{n \to
\infty} \frac1n \log \tau_n(x,y).
\end{equation}

Let
\begin{eqnarray*}
\Fhn &:=& \{y \in \Sigma_2^+: \ y \not \in \cap_{N=1}^{\infty} \cup_{n \ge N}  C_{\lfloor \kappa \log n\rfloor }(\sigma^n x)\},\\
\Ihn &:=& \{y \in \Sigma_2^+: \ y \in \cap_{N=1}^{\infty} \cup_{n
\ge N}  C_{\lfloor \kappa \log n\rfloor }(\sigma^n x)\}.
\end{eqnarray*}
We have the following trivial decomposition
 $$\Sigma_2^+ = \Fhn \cup
\Ihn, \qquad \Fhn \cap \Ihn = \emptyset.
$$

Suppose that $\mu_\phi,\mu_\psi$ are $\sigma$-invariant probability
Gibbs measures on $\Sigma_2^+$. Let
\begin{align*}
\kappa_{\phi,\psi,\Sigma_2^+} &:= \sup \{ \kappa:  \mu_{\psi}(\Ihn)  = 1 \hbox{ for } \mu_{\phi}-a.e.\ x \},\\
\kappa^F_{\phi,\Sigma_2^+} & := \sup \{ \kappa: \Fhn = \emptyset
\hbox{ for } \mu_\phi-a.e.\ x \}.
\end{align*}

One of our goals is to determine the values of both critical
exponents $\kappa_{\phi,\psi,\Sigma_2^+}$ and
$\kappa^F_{\phi,\Sigma_2^+}$ and the other one is to compute the
Hausdorff dimensions of $\Fhn$ and $\Ihn $. Let $$ \mathcal{O}(x)=
\{\sigma^n x: n \ge 0\}, \quad \mathcal{O}^+(x) =\mathcal{O}(x)
\setminus\{x\}.
$$

\begin{lemma}\label{lemma1.5}
There exists an integer $n_0 \ge 1$ such that $y = \sigma^{n_0} x$
(i.e.\ $y\in \mathcal{O}^+(x)$) if and only if the hitting time
sequence $\tau_k(x,y)$ is bounded.
\end{lemma}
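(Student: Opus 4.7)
The proof splits into two directions, both elementary once one unpacks the definition of $\tau_k(x,y)$.

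For the forward direction, I would just observe that if $y=\sigma^{n_0}x$ for some fixed $n_0\ge 1$, then $\sigma^{n_0}x$ and $y$ agree in every coordinate, so in particular $\sigma^{n_0}x\in C_k(y)$ for every $k\ge 1$. By the definition $\tau_k(x,y)=\inf\{l\ge 1:\sigma^l x\in C_k(y)\}$ we obtain $\tau_k(x,y)\le n_0$ for all $k$, hence the sequence is bounded.

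For the reverse direction, suppose $\tau_k(x,y)\le M$ for some constant $M$ and all $k\ge 1$. Then for each $k$ there is an integer $l_k\in\{1,2,\dots,M\}$ with $\sigma^{l_k}x\in C_k(y)$. Since the $l_k$'s take values in a finite set, by the pigeonhole principle some fixed integer $n_0\in\{1,\dots,M\}$ occurs as $l_k$ for infinitely many $k$. But if $\sigma^{n_0}x\in C_k(y)$ for arbitrarily large $k$, then $\sigma^{n_0}x$ agrees with $y$ on every initial segment, i.e.\ $\sigma^{n_0}x=y$, so $y\in\mathcal{O}^+(x)$.

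I don't foresee any real obstacle here; the statement is essentially a bookkeeping lemma stating that boundedness of the hitting times along the refining cylinder filtration forces the orbit of $x$ actually to land on $y$. The only thing to be a little careful about is the ``$n_0\ge 1$'' (as opposed to $n_0\ge 0$), which is automatic because the infimum in the definition of $\tau$ is taken over $l\ge 1$. No additional hypotheses on $\mu_\phi$, $\mu_\psi$ or the potential are needed for this lemma.
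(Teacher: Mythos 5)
Your proof is correct and is essentially the same as the paper's: both directions are elementary, and the reverse direction uses the pigeonhole principle on the bounded hitting times to extract a fixed time $n_0$ (called $t$ in the paper) with $\sigma^{n_0}x\in C_k(y)$ for infinitely many $k$, then passes to the limit. Nothing to add.
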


\begin{proof}
If $y = \sigma^{n_0} x$ then it is obvious that $\tau_k(x,y) \le
n_0$ for all $k$. Conversely, suppose there is a positive constant
such that $\tau_k(x,y) \le K$. Fix an integer $1 \le t \le K$ such
that $\tau_{k_i}(x,y) = t$ holds for an infinite subsequence $k_i$.
Then $\sigma^t x \in C_{k_i}(y)$ for all $i$. Letting $i \to \infty$
we get $\sigma^t x = y$.
\end{proof}

\begin{lemma}\label{lemma2}
\begin{eqnarray*}
 \left \{y \in \Sigma_2^+: \alpha(x,y) > \frac{1}{\kappa} \right \}  \subset
\Fhn   \subset  \left \{y \in \Sigma_2^+: \alpha(x,y) \ge \frac{1}{\kappa} \right \}\cup \O^+(x),\\
\left \{y \in \Sigma_2^+   : \alpha(x,y) < \frac{1}{\kappa} \right
\} \setminus \O^+(x) \subset \Ihn \subset  \left \{y \in \Sigma_2^+:
\alpha(x,y) \le \frac{1}{\kappa} \right \}.
\end{eqnarray*}
\end{lemma}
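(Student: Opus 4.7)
The plan is to prove the two ``inner'' inclusions
\[
(*)\quad \Ihn \subset \{\alpha(x,y) \le 1/\kappa\}, \qquad
(**)\quad \{\alpha(x,y) < 1/\kappa\} \setminus \O^+(x) \subset \Ihn,
\]
and then read off the two ``outer'' inclusions from them by passing to complements (using $\Fhn = \Sigma_2^+ \setminus \Ihn$), since $A \subset B$ is equivalent to $B^c \subset A^c$. The key computational fact I will use repeatedly is the elementary observation
\[
\sigma^n x \in C_k(y) \iff y \in C_k(\sigma^n x) \iff \tau_k(x,y) \le n.
\]

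For $(*)$: suppose $y \in \Ihn$, so there are infinitely many $n$ with $\sigma^n x \in C_{\lfloor \kappa \log n \rfloor}(y)$. Set $k_n := \lfloor \kappa \log n \rfloor$; then $\tau_{k_n}(x,y) \le n < 2^{(k_n+1)/\kappa}$, so $\tfrac{1}{k_n}\log \tau_{k_n}(x,y) \le (k_n+1)/(\kappa k_n)$. The set of admissible $n$ is necessarily unbounded (a finite set cannot be infinite), hence $k_n \to \infty$, and letting $n \to \infty$ along such a sequence gives $\alpha(x,y) \le 1/\kappa$.

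For $(**)$: if $\alpha(x,y) < 1/\kappa$, pick $\delta>0$ and $k_i\to\infty$ with $\tau_{k_i}(x,y) < 2^{k_i(1/\kappa - \delta)}$. Set $n_i := \tau_{k_i}(x,y)$, so $\sigma^{n_i}x \in C_{k_i}(y)$ and
\[
\kappa \log n_i < k_i(1 - \kappa\delta) < k_i,
\]
which yields $\lfloor \kappa \log n_i \rfloor \le k_i$ and therefore $\sigma^{n_i}x \in C_{\lfloor \kappa \log n_i \rfloor}(y)$, i.e.\ $y \in C_{\lfloor \kappa \log n_i \rfloor}(\sigma^{n_i}x)$. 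To conclude $y \in \Ihn$ we need infinitely many distinct $n_i$; this is where Lemma \ref{lemma1.5} enters. Since $y \notin \O^+(x)$, that lemma tells us $\tau_k(x,y) \to \infty$, hence the subsequence $n_i = \tau_{k_i}(x,y) \to \infty$ gives the required infinitely many $n$.

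The proof is essentially bookkeeping between the two scales $n$ and $k = \lfloor \kappa \log n\rfloor$; the only conceptual subtlety—and the one place a pure logical equivalence would fail—is handling the orbit $\O^+(x)$ in $(**)$, because there $\tau_k(x,y)$ is bounded and cannot supply infinitely many distinct $n_i$. Excluding $\O^+(x)$ by hypothesis and invoking Lemma \ref{lemma1.5} circumvents this. The outer inclusions $\{\alpha > 1/\kappa\}\subset \Fhn$ and $\Fhn \subset \{\alpha \ge 1/\kappa\}\cup \O^+(x)$ are then formal contrapositives of $(*)$ and $(**)$, requiring no further argument.
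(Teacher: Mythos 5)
Your proof is correct and follows essentially the same route as the paper: prove the two inner inclusions and get the outer ones by complementation, using the key equivalence $\sigma^n x\in C_k(y)\Leftrightarrow\tau_k(x,y)\le n$ for the first, and invoking Lemma~\ref{lemma1.5} to supply infinitely many distinct hitting times $n_i=\tau_{k_i}(x,y)$ once $y\notin\O^+(x)$ for the second. The only cosmetic difference is the parametrization of the slack ($\delta$ in the exponent versus the paper's $\e$ with $\kappa<1/(\alpha+\e)$), which is immaterial.
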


\begin{proof}
The top left and bottom right inclusions imply one another. Let us
prove the bottom right inclusion.  Suppose $y \in \Ihn$. Then $y \in
C_{\lfloor \kappa \log n\rfloor }(\sigma^n x)$ or equivalently
$\sigma^n x \in C_{\lfloor \kappa \log n\rfloor }(y)$ for infinitely
 many $n$.
 Thus $\tau_{\lfloor \kappa
\log n\rfloor}(x,y)\le n$ for infinitely many $n$, which implies
$\alpha(x,y) \le \kappa^{-1}$.


The top right and bottom left inclusions imply one another. So, it
remains to prove the bottom left inclusion. Suppose
$\alpha:=\alpha(x,y) < \kappa^{-1}$ and $y \not \in \O^+(x)$. Take
$\e>0$ such that $\kappa<\frac{1}{\alpha+\e}. $ By the definition of
$\alpha := \alpha(x,y)$,  there is a subsequence $k_i$ such that
$\log \tau_{k_i}(x,y) \le (\alpha +\e) k_i$, i.e.\ $k_i \ge
\frac{\log \tau_{k_i}(x,y)}{\alpha + \e}$. The definition of
$\tau_{k_i}(x,y)$ implies that
$$
\sigma^{\tau_{k_i}}x \in C_{k_i}(y)\subset C_{\left \lfloor
\frac{\log \tau_{k_i}}{\alpha + \e} \right \rfloor}(y) \subset
C_{\lfloor \kappa \log \tau_{k_i}\rfloor}(y).
$$ Since $y \not \in \O^+(x)$ the previous lemma yields that $\tau_{k_i}$
is not bounded.  Thus
 $\sigma^nx \in C_{\lfloor \kappa
\log n \rfloor}(y)$ or equivalently $y \in C_{\lfloor \kappa \log n
\rfloor}(\sigma^n x)$ for   infinitely many $n = \tau_{k_i}$.

\end{proof}

We should point out that points $y$ on the orbit $\O^+(x)$
have the property that $\alpha(x, y)=0<1/\kappa$, but they are not
necessarily contained in $\Ihn$. For example, if $x$ is an
eventually periodic point but not periodic and if $y$ is on the
orbit $\O^+(x)$ but not in the cycle of $x$, then $y\not\in \Ihn$.
However, for $\mu_\phi$-almost all $x$, we have the following
situation.

\begin{lemma}\label{OW}
For $\mu_{\phi}$ a.e.\ $x$, we have    $\O(x) \subset \Ihn$ if
$\frac{1}{\kappa}  > h_{\mu_{\phi}}$ and  $\O(x) \subset \Fhn$ if
$\frac{1}{\kappa}  < h_{\mu_{\phi}}$.
\end{lemma}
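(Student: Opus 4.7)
The plan is to reduce each orbit point to its own return-time asymptotic and then invoke the Ornstein--Weiss return time theorem. Fix $n_0\ge 0$ and set $y:=\sigma^{n_0}x$. For $n\ge n_0$, writing $m=n-n_0$, the identity $\sigma^n x=\sigma^m y$ shows that $y\in C_{\lfloor\kappa\log n\rfloor}(\sigma^n x)$ holds iff $\sigma^m y\in C_{\lfloor\kappa\log(m+n_0)\rfloor}(y)$. Hence membership of $y$ in $\Ihn$ or $\Fhn$ is controlled entirely by the first return times $\tau_k(y,y)$.

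By the Ornstein--Weiss theorem, for $\mu_\phi$-a.e.\ $y$ one has $\lim_{k\to\infty}\frac{1}{k}\log\tau_k(y,y)=h_{\mu_\phi}$. Since $\mu_\phi$ is $\sigma$-invariant, the set of $x$ where this fails at $\sigma^{n_0}x$ is $\mu_\phi$-null for each fixed $n_0$, and a countable union over $n_0\ge 0$ remains null. Outside this null set the return-time asymptotic therefore holds simultaneously at every $y\in\O(x)$.

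Assume first $1/\kappa>h_{\mu_\phi}$ and set $m_k:=\tau_k(y,y)$. Then $\sigma^{m_k}y\in C_k(y)$, and for $k$ large Ornstein--Weiss forces $\kappa\log(m_k+n_0)<k$, so $\sigma^{m_k}y\in C_{\lfloor\kappa\log(m_k+n_0)\rfloor}(y)$. Because $y$ is aperiodic for $\mu_\phi$-a.e.\ $x$, $m_k\to\infty$, which proves $y\in\Ihn$. Conversely, if $1/\kappa<h_{\mu_\phi}$, choose $\e>0$ with $\kappa(h_{\mu_\phi}-\e)>1$; for $m$ large and $k_m:=\lfloor\kappa\log(m+n_0)\rfloor$ the Ornstein--Weiss lower bound yields $\tau_{k_m}(y,y)\ge 2^{k_m(h_{\mu_\phi}-\e)}\gg m$, so $\sigma^m y\notin C_{k_m}(y)$. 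Consequently the event happens only finitely often and $y\in\Fhn$.

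The main obstacle is chronological: the Ornstein--Weiss return time theorem for Gibbs states is only re-derived in section \ref{hitt}. Either one cites the classical theorem as a black box at this point, or one postpones the proof until after section \ref{hitt}; no circularity arises, since the tools used to establish Ornstein--Weiss for Gibbs measures (the Gibbs property and the multi-relation) do not rely on this lemma.
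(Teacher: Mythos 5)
Your proof is correct and follows essentially the same route as the paper: both reduce the covering condition at $y=\sigma^{n_0}x$ to the return‑time asymptotic $\alpha(y,y)$, invoke the Ornstein--Weiss theorem, and upgrade it to all $n_0$ simultaneously by shift‑invariance and a countable union. The paper defines $\alpha^{(n_0)}(x,y)=\alpha(y,y)$ and asserts the implication $\alpha^{(n_0)}\lessgtr 1/\kappa \Rightarrow y\in\Ihn$ resp.\ $\Fhn$ rather than spelling out the elementary verification as you do, and it simply cites \cite{OW} at this point, so the chronological concern you raise does not arise.
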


\begin{proof}
Let $y \in \O(x)$ where $x$ is not eventually periodic.   Then there
exists a unique integer $n_0\ge 0$ such that $y = \sigma^{n_0}x$.
Define the hitting time after $n_0$
by
$$ \tau^{(n_0)}_n (x, y) := \inf \{k > n_0: \sigma^kx \in C_n(y) \} = \tau_n(\sigma^{n_0} x, y)+n_0.$$
Since $y \not\in \mathcal{O}^+(\sigma^{n_0}x)$) Lemma \ref{lemma1.5}
implies that $\tau^{(n_0)}_n (x,y) \to \infty$ as $n \to \infty$.
Let
\begin{equation}\label{AlphaN0} \alpha^{(n_0)}(x,y) = \liminf_{n \to
\infty} \frac1n \log \tau_n^{(n_0)}(x,y).
\end{equation}
Hence $$y \in \Ihn\ \  \mbox{\rm if} \ \  \alpha^{(n_0)}(x,y) <
\frac{1}{\kappa},\quad \mbox{\rm and} \quad y \in \Fhn  \ \
\mbox{\rm if}\  \ \alpha^{(n_0)}(x,y) > \frac{1}{\kappa}.$$ Now
$$\alpha^{(n_0)}(x,y) = \alpha(y,y) =
\alpha(\sigma^{n_0}x,\sigma^{n_0}x).$$ Thus applying the
Ornstein-Weiss return time theorem \cite{OW} yields that
$\alpha(x,x) = h_{\mu_{\phi}}$ for $\mu_{\phi}$-a.e.\ $x$. Finally
the invariance of $\mu$ implies that
$\alpha(\sigma^{n}x,\sigma^{n}x) = h_{\mu_{\phi}}$ for $\mu_{\phi}$
a.e.\ $x$ and for  all $n$.
\end{proof}

\setcounter{equation}{0}
\section{Hitting time and local entropy: basic relation}\label{hitt}

As Lemmas~\ref{lemma2} and~\ref{OW}  show, we have to study the
hitting time $\alpha(x,y)$ of the Gibbs measure $\mu_\phi$.  We will
show   that the hitting time is related to the local entropy. Local entropy
have been well studied in the literature.

In this section, we start with a basic relation between hitting
times and local entropy. This allows us to
compute the critical value $\kappa_{\phi,\psi,\Sigma_2^+}$.

Let us first introduce a generalized notion of local entropy.
Let $(C_{n})$ be a sequence of (arbitrary) cylinders with length
$|C_n| = n$. We
define the {\em lower local entropy of the sequence $(C_{n})$} by
\begin{equation}\label{LocalEntropySequence}
\underline{h}_{\mu_\phi}({\{C_{n}\}}) :=
\liminf_{n\to\infty}-\frac{\log \mu_\phi(C_{n})}{n}. \end{equation}

\subsection{Basic relation}\
We have the following basic relation between local entropy and
the hitting times.

\begin{theorem}\label{Cha} Suppose that $\mu_{\phi}$
is a Gibbs measure associated to a H\"{o}lder potential $\phi$ and
that $(C_{n})$ is a sequence of (arbitrary) cylinders of length
$n$. Then for $\mu_\phi$ a.e.\ $x$ we have
\begin{equation}\label{cha2}
\liminf_{n\to\infty}\frac{\log\tau(x,
C_n)}{n}=\underline{h}_{\mu_\phi}({\{C_{n}\}})
\end{equation}
\end{theorem}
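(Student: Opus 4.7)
The approach is to prove $L(x)\ge\underline{h}$ and $L(x)\le\underline{h}$ separately, where $L(x):=\liminf_n\frac{\log\tau(x,C_n)}{n}$ and $\underline{h}:=\underline{h}_{\mu_\phi}(\{C_n\})$.

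For the lower bound $L\ge\underline{h}$ $\mu_\phi$-a.s., the $\sigma$-invariance of $\mu_\phi$ and a union bound give, for any $\varepsilon>0$,
\[
\mu_\phi\!\left(\tau(x,C_n)\le 2^{n(\underline{h}-\varepsilon)}\right)\le 2^{n(\underline{h}-\varepsilon)}\mu_\phi(C_n)\le 2^{-n\varepsilon/2}
\]
for all large $n$ (the last estimate using $\mu_\phi(C_n)\le 2^{-n(\underline{h}-\varepsilon/2)}$ eventually, by the very definition of $\underline{h}$). Since the bound is summable, Borel--Cantelli yields $\tau(x,C_n)>2^{n(\underline{h}-\varepsilon)}$ eventually $\mu_\phi$-a.s.; letting $\varepsilon\downarrow 0$ through a countable sequence gives this direction.

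The inequality $L\le\underline{h}$ is the substantive direction and proceeds in three stages. First, I extract a subsequence $n_k\to\infty$ with $\mu_\phi(C_{n_k})\ge 2^{-n_k(\underline{h}+\varepsilon/2)}$, and set $d_k:=\lfloor\omega n_k\rfloor$, $N_k:=\lfloor 2^{n_k(\underline{h}+\varepsilon)}\rfloor$, $M_k:=\lfloor N_k/(n_k+d_k)\rfloor$. I introduce the well-separated events $A_j^{(k)}:=\sigma^{-j(n_k+d_k)}C_{n_k}$ for $j=0,\dots,M_k-1$, whose union is contained in $\{\tau(x,C_{n_k})\le N_k\}$. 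Letting $Z_k:=\sum_{j=0}^{M_k-1}\mathbf{1}_{A_j^{(k)}}$, one has $\mathbb{E} Z_k=M_k\mu_\phi(C_{n_k})\gtrsim 2^{n_k\varepsilon/2}/n_k\to\infty$, while the quasi-Bernoulli estimate \eqref{QBernoulliProperty} (or the finer multi-relation of Theorem~\ref{Thm-MR}) bounds the pair correlations by $\mu_\phi(A_i^{(k)}\cap A_j^{(k)})\le\gamma^3\mu_\phi(C_{n_k})^2$, hence $\mathbb{E} Z_k^2\le\mathbb{E} Z_k+\gamma^3(\mathbb{E} Z_k)^2$. Paley--Zygmund then gives
\[
\mu_\phi(\tau(x,C_{n_k})\le N_k)\ge\mu_\phi(Z_k\ge 1)\ge\frac{(\mathbb{E} Z_k)^2}{\mathbb{E} Z_k^2}\longrightarrow\frac{1}{\gamma^3}.
\]
Second, the reverse Fatou lemma upgrades this to $\mu_\phi\bigl(\{\tau(x,C_{n_k})\le N_k\}\text{ i.o.}\bigr)\ge\frac{1}{2\gamma^3}>0$, and on this event $L(x)\le\underline{h}+\varepsilon$. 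Third, ergodicity converts the positive-probability estimate into full measure: once $\underline{h}>0$ (the case $\underline{h}=0$ being vacuous since $L\ge 0$ always), the lower bound proved above forces $\tau(x,C_n)\to\infty$ $\mu_\phi$-a.s., so $\tau(\sigma x,C_n)=\tau(x,C_n)-1$ for all large $n$, whence $L(\sigma x)=L(x)$ $\mu_\phi$-a.s. Since Gibbs measures of H\"older potentials are ergodic, $L$ is $\mu_\phi$-a.s.\ constant, so the positive-measure event above in fact has full measure. Taking $\varepsilon\downarrow 0$ along a countable sequence concludes.

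The main obstacle is that the multi-relation only delivers \emph{approximate} independence, so the Paley--Zygmund estimate on $Z_k$ caps out at a positive constant ($\sim\gamma^{-3}$) rather than at a probability tending to $1$. The entire argument therefore rests on using ergodicity of $\mu_\phi$ to bootstrap this constant-probability statement into an almost-sure one; a purely moment-theoretic route would require much sharper (Poisson-type) approximation of the hitting-time distribution, substantially more delicate to obtain.
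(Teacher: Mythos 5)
Your lower bound $L\ge\underline{h}$ is essentially the paper's (a union bound plus first Borel--Cantelli). The upper bound is where you genuinely diverge from the Chazottes-style argument that the paper follows. There, using the multi-relation (Theorem~\ref{Thm-MR}), one shows that the ``late hit'' event $B_n:=\{\tau_n\mu_\phi(C_n)>2^{\varepsilon n}\}$ has $\mu_\phi(B_n)\le e^{-2^{\varepsilon n-1}/(n+d)}$, a \emph{super-exponential} tail bound obtained by forcing failure at $\sim 2^{\varepsilon n}/(\mu_\phi(C_n)(n+d))$ nearly-independent spaced times; this series converges, and first Borel--Cantelli yields the a.s.\ upper bound outright. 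You instead settle for a constant-order lower bound on the hitting probability via a second-moment/Paley--Zygmund computation (which only needs pair correlations, so quasi-Bernoulli suffices), and then promote positive to full measure by showing $L$ is a.s.\ $\sigma$-invariant and invoking ergodicity. Both routes are legitimate; the paper's delivers the a.s.\ statement in one shot at the price of a sharper decay estimate, while yours requires less from the decay of correlations but pays with the ergodicity bootstrap.

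However, your parenthetical that the case $\underline{h}=0$ is ``vacuous since $L\ge 0$ always'' is not correct: when $\underline{h}=0$ the theorem asserts $L=0$ a.s., and since $L\ge 0$ is trivial the content is precisely the upper bound $L\le 0$, which is not automatic. Your invariance argument relies on $\tau(x,C_n)\to\infty$ a.s., which you justify only via the strict inequality $L\ge\underline{h}>0$; that reasoning is unavailable when $\underline{h}=0$. The repair is straightforward: observe that $\{x:\tau(x,C_n)\to\infty\}$ is forward-invariant (since $\tau(\sigma x,C_n)=\tau(x,C_n)-1$ whenever $\tau(x,C_n)\ge 2$), hence a.e.\ invariant, hence of measure $0$ or $1$ by ergodicity. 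If the measure is $1$, your argument proceeds unchanged; if $0$, then $L=0$ a.e., and the lower bound forces $\underline{h}=0$, so the conclusion holds. Also a minor slip: the index $j$ in $A_j^{(k)}=\sigma^{-j(n_k+d_k)}C_{n_k}$ should start at $1$, since $j=0$ (i.e.\ $x\in C_{n_k}$) gives no information on $\tau(x,C_{n_k})$.
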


\begin{proof}
A special case of this theorem was proven by Chazottes \cite{Ch}.
The proof follows the idea of Chazottes closely. We include it for
completeness.

Let $\tau_n(x) :=  \tau(x,C_{n})$.
Note that the Gibbs property implies $\mu_\phi(C_{n}) \to 0$.  Fix
$\e
> 0$ and let
\begin{align*}
A_n & := \big \{x \in \Sigma_2^+: \ \tau_n(x) \mu_\phi(C_{n})< 2^{-\e n} \big  \},\\
B_n & := \big \{x \in \Sigma_2^+: \ \tau_n(x) \mu_\phi(C_{n})> 2^{\e
n} \big \}.
\end{align*}
We will prove that  
$$ \sum \mu_\phi(A_n \cup B_n) \le
\sum \mu_\phi(A_n) + \sum \mu_\phi(B_n)< \infty.$$
  Once we have shown this we apply the first part
of the Borel-Cantelli lemma to conclude the proof.

First consider the series $\sum \mu_\phi(A_n)$, which is simpler to
handle. We have
  $$A_n
\subset A_n^0 \cup \cdots \cup A_n^m$$ where $$A_n^i := \{ x \in
\Sigma_2^+: \ \sigma^ix \in C_{n}\}, \quad m = \lfloor 2^{-\e
n}/\mu_\phi(C_{n}) \rfloor.$$
 Since $\mu_\phi(A_n^i) = \mu_\phi(A_n^j)
= \mu_\phi(C_{n})$, this yields $$\mu(A_n) \le \left( \frac{2^{-\e
n}}{\mu_\phi(C_{n})} + 2 \right) \mu_\phi(C_{n}) \le 2^{-\epsilon n}
+ 2 \mu_\phi(C_{n}).$$ Now we distinguish two cases:
$\underline{h}_{\mu_\phi}({\{C_{n}\}})>0$ and
$\underline{h}_{\mu_\phi}({\{C_{n}\}})=0$. In the first case,
$\mu_\phi(C_{n})$ decays exponentially fast, so that $\sum
\mu_\phi(C_{n})<\infty$, then $\sum \mu_\phi(A_n)<\infty$. In the
second case, since $\mu_\phi(C_{n}) \to 0$, we can find some
subsequence $n_k$ such that $\sum_k \mu_\phi(C_{n_k})<\infty$ so
that $\sum_k \mu_\phi(A_{n_k})<\infty$. So
$$
   \liminf_{n\to\infty}\frac{\log\tau(x,
C_n)}{n}\le \liminf_{k\to\infty}\frac{\log\tau(x, C_{n_k})}{n_k}=0.
$$

Now we turn to the analysis of the series $\sum \mu_\phi(B_n)$.
Choose a big $\omega > 0$
and $d := d(n):= \lfloor \omega n \rfloor$. Let
 $$B_n^i := \{x:  \sigma^{i(n+d)}x
\not\in C_n \}, \quad m := \lfloor 2^{\e n}/\mu_\phi(C_n)(n + d)
\rfloor - 1.$$ Thus
$$B_n \subset B_n^0 \cap \cdots \cap B_n^m = \bigcup_{D_0,\dots,D_m} D_0 \cap \sigma^{-(n+d)}D_1 \cap
\cdots \cap \sigma^{-m(n+d)}D_m$$ where the $D_i$ are cylinders (not
necessarily distinct) of length $n$ disjoint from $C_n$. Thus, by
the multi-relation property, we get
\begin{align*}
\mu_\phi(B_n) & \le \sum_{D_0,\dots,D_m} \mu_\phi( D_0 \cap
\sigma^{-(n+d)}D_1 \cap
\cdots \cap \sigma^{-m(n+d)}D_m)\\
& \le ( 1 + c\beta^d)^m \sum_{D_0,\dots,D_m}   \prod_{i=0}^m \mu_\phi(\sigma^{-i(n+d)}D_i)\\
&\le [(1 + c\beta^d)(1 - \mu_\phi(C_n))]^{m+1}
\\
& \le \left ( 1 - \frac{\mu_\phi(C_n)}2  \right )^{m+1}  \\
& \le e^{-(m+1) \mu_\phi(C_n)/2}\\
& \le e^{-2^{\epsilon n -1}/(n+d)}.
\end{align*}
\end{proof}

\begin{corollary}
For any $y \in \Sigma_2^+$ and  for $\mu_\phi$ a.e.\ $x$

$$\alpha(x,y) = \hb_{\mu_\phi}(y).$$
\end{corollary}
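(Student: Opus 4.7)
The plan is to recognize this corollary as the immediate specialization of Theorem \ref{Cha} to the cylinder sequence generated by the given point $y$. No further probabilistic argument is needed; one only has to unpack definitions and check that the hypotheses of the theorem are satisfied.

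Concretely, for the fixed $y \in \Sigma_2^+$ I set $C_n := C_n(y)$. This is a legitimate choice for Theorem \ref{Cha}, since the theorem allows $(C_n)$ to be an \emph{arbitrary} sequence of cylinders of length $n$, and $|C_n(y)| = n$ by construction. With this choice, $\tau(x, C_n) = \tau(x, C_n(y)) = \tau_n(x,y)$ by the very definition of $\tau_n$, so the left-hand side of \eqref{cha2} becomes
$$\liminf_{n\to\infty}\frac{\log \tau_n(x,y)}{n} = \alpha(x,y),$$
recalling \eqref{alpha}. Simultaneously, comparing \eqref{LocalEntropySequence} with \eqref{LowerEntropy}, we have
$$\underline{h}_{\mu_\phi}(\{C_n(y)\}) = \liminf_{n\to\infty} -\frac{\log \mu_\phi(C_n(y))}{n} = \underline{h}_{\mu_\phi}(y) = \hb_{\mu_\phi}(y).$$
Plugging these two identifications into \eqref{cha2} delivers $\alpha(x,y) = \hb_{\mu_\phi}(y)$ on a set of full $\mu_\phi$-measure.

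The only subtlety worth flagging is the order of quantifiers. Theorem \ref{Cha} is stated for one prescribed cylinder sequence, and the exceptional null set of $x$ depends on that sequence; here the sequence $(C_n(y))_n$ is uniquely determined by $y$, so the null set depends on $y$. This matches the statement of the corollary exactly (``for any $y$\dots for $\mu_\phi$-a.e.\ $x$''), and no uniform-in-$y$ assertion is being claimed, so there is nothing further to prove. Since the argument is purely a substitution there is no real obstacle — the work has all been absorbed into Theorem \ref{Cha}.
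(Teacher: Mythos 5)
Your proof is correct and matches the paper's intended argument exactly: the corollary is stated without proof because it is precisely Theorem~\ref{Cha} applied to the constant choice $C_n = C_n(y)$, so that $\tau(x,C_n)=\tau_n(x,y)$ and $\underline{h}_{\mu_\phi}(\{C_n(y)\})=\hb_{\mu_\phi}(y)$. Your remark about the quantifier order (the null set of $x$ depending on $y$) is the right point to flag, and it is indeed all that needs checking.
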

An application of Fubini's Theorem yields
\begin{corollary}\label{fub}
Let $\nu$ be a probability measure on $\Sigma_2^+$. Then for
$\mu_\phi\times\nu$ a.e.\ $(x,y)$ we have
\[
\alpha(x,y)=\hb_{\mu_\phi}(y).
\]
\end{corollary}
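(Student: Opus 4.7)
The plan is to observe that this is a direct application of Fubini's theorem to the previous corollary, once joint measurability is verified. Let $A := \{(x,y) \in \Sigma_2^+ \times \Sigma_2^+ : \alpha(x,y) = \hb_{\mu_\phi}(y)\}$. Our goal is to show $(\mu_\phi \times \nu)(A) = 1$.

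First I would check joint Borel measurability. The hitting time $\tau_n(x,y) = \inf\{l \ge 1 : \sigma^l x \in C_n(y)\}$ is a Borel function of $(x,y)$, since for each fixed $l$ and each cylinder $C$, the set $\{(x,y) : \sigma^l x \in C_n(y)\}$ can be written as a finite union of products of cylinders (the condition depends only on finitely many coordinates of $x$ and of $y$). Hence $\alpha(x,y)$, being a liminf of $\frac{1}{n}\log \tau_n(x,y)$, is Borel measurable in $(x,y)$. On the other hand, $\hb_{\mu_\phi}(y)$ is Borel measurable in $y$ by its definition \eqref{LowerEntropy}. Therefore $A$ is a Borel subset of $\Sigma_2^+ \times \Sigma_2^+$.

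Next I would slice. For each fixed $y \in \Sigma_2^+$, the previous corollary gives $\mu_\phi(\{x : \alpha(x,y) = \hb_{\mu_\phi}(y)\}) = 1$, i.e.\ $\mu_\phi(A_y) = 1$ where $A_y$ denotes the $y$-slice. By Fubini's theorem applied to the indicator function $\mathbf{1}_A$,
\[
(\mu_\phi \times \nu)(A) = \int_{\Sigma_2^+} \mu_\phi(A_y)\, d\nu(y) = \int_{\Sigma_2^+} 1\, d\nu(y) = 1,
\]
which is exactly the assertion of the corollary.

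There is no real obstacle here; the only thing worth being careful about is the measurability step, since Fubini requires a jointly measurable integrand. Once that is in place, the argument is entirely formal, and the statement follows by swapping the order of integration compared to the previous corollary.
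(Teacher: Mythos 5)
Your argument is exactly the paper's: the corollary is introduced with the phrase ``An application of Fubini's Theorem yields,'' and the intended proof is precisely to integrate the conclusion of the preceding corollary (full $\mu_\phi$-measure of the slice $A_y$ for every $y$) against $\nu$. Your added verification of joint Borel measurability of $\alpha(x,y)$ is a correct and sensible supplement, but the route is the same.
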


The hitting time $\alpha(x,x)$ is what we called the return time.
The following result due to Ornstein and Weiss \cite{OW} concerning the return
time is well known and holds for all ergodic measures. For Gibbs
measures, it can be similarly proved as the above theorem.

\begin{corollary}\label{return}
For $\mu_\phi$ a.e.\ $x$ we have
\[
\alpha(x,x)=\alpha(\sigma^k x,\sigma^k
x)=\hb_{\mu_\phi}(x)=h_{\mu_\phi}\qquad (\forall k\ge 1).
\]
\end{corollary}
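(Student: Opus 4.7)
My plan is to deduce the three asserted equalities from two ingredients. The first, $\underline{h}_{\mu_\phi}(x) = h_{\mu_\phi}$ for $\mu_\phi$-a.e.\ $x$, is immediate from the Gibbs property $\mu_\phi(C_n(x)) \asymp 2^{S_n\phi(x)}$ combined with Birkhoff's theorem applied to the H\"{o}lder function $\phi$ relative to the ergodic measure $\mu_\phi$: the convergence $-\frac{1}{n}S_n\phi(x) \to -\int \phi\, d\mu_\phi = h_{\mu_\phi}$ turns the liminf in \eqref{localent} into a genuine limit. Granting also the core step $\alpha(x,x) = \underline{h}_{\mu_\phi}(x)$ $\mu_\phi$-a.e., the invariance of $\mu_\phi$ under $\sigma$ then forces $\alpha(\sigma^k x, \sigma^k x) = h_{\mu_\phi}$ for every $k \ge 1$ on a full-measure set: if the identity $\alpha(y,y) = h_{\mu_\phi}$ holds on a set $\Omega_0$ of full measure, each preimage $\sigma^{-k}\Omega_0$ is also full, and $\bigcap_{k \ge 0}\sigma^{-k}\Omega_0$ is the desired set.

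The substantive step is $\alpha(x,x) = \underline{h}_{\mu_\phi}(x)$ for $\mu_\phi$-a.e.\ $x$, which I would prove by adapting Theorem~\ref{Cha} to the $x$-dependent cylinders $C_n(x)$. Fix $\epsilon > 0$ and set
\begin{align*}
A_n &:= \{x : \tau(x, C_n(x))\, \mu_\phi(C_n(x)) < 2^{-\epsilon n}\}, \\
B_n &:= \{x : \tau(x, C_n(x))\, \mu_\phi(C_n(x)) > 2^{\epsilon n}\}.
\end{align*}
As in Theorem~\ref{Cha}, it suffices to show that $\sum_n \mu_\phi(A_n) < \infty$ and $\sum_n \mu_\phi(B_n) < \infty$: the first Borel--Cantelli lemma then gives, for $\mu_\phi$-a.e.\ $x$ and all sufficiently large $n$, the estimate $|\log\tau(x,C_n(x))/n + \log\mu_\phi(C_n(x))/n| \le \epsilon$; taking liminf and letting $\epsilon \to 0$ along a sequence yields $\alpha(x,x) = \underline{h}_{\mu_\phi}(x)$.

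To bound $\mu_\phi(A_n)$, partition over length-$n$ cylinders $C$ and hitting indices $1 \le i \le m_C := \lfloor 2^{-\epsilon n}/\mu_\phi(C)\rfloor$, noting that $A_n \cap C \subset \bigcup_{i=1}^{m_C}(C \cap \sigma^{-i}C)$. For $i \ge n$ the quasi-Bernoulli inequality \eqref{QBernoulliProperty} gives $\mu_\phi(C \cap \sigma^{-i}C) \le \gamma^3 \mu_\phi(C)^2$, so this part of the sum contributes at most $\gamma^3 \sum_C m_C \mu_\phi(C)^2 \le \gamma^3 \cdot 2^{-\epsilon n}$, which is summable in $n$. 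To bound $\mu_\phi(B_n)$, pick the multi-relation gap parameter $\varpi$ large enough, set $d = \lfloor \varpi n\rfloor$ and $m = \lfloor 2^{\epsilon n}/((n+d)\mu_\phi(C))\rfloor - 1$; since $\tau(x,C_n(x)) > 2^{\epsilon n}/\mu_\phi(C_n(x))$ forces $\sigma^{j(n+d)}x \notin C$ for $1 \le j \le m$, the multi-relation \eqref{MR} yields
\[
\mu_\phi(B_n \cap C) \le \gamma^3 (1 + c\beta^n)^m \mu_\phi(C)(1 - \mu_\phi(C))^m,
\]
with $(1 + c\beta^n)^m$ kept uniformly bounded by the choice of $\varpi$ and $(1 - \mu_\phi(C))^m \le \exp(-2^{\epsilon n}/(n+d))$ super-exponentially small. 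Summing over $C$ gives a summable bound on $\mu_\phi(B_n)$.

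The main obstacle I anticipate is the residual contribution to $\mu_\phi(A_n)$ from small indices $i < n$, where the quasi-Bernoulli inequality fails because $C$ and $\sigma^{-i}C$ overlap. Such terms are nonzero only when the length-$n$ word defining $C$ is $i$-periodic, and there are at most $2^i$ such words for each $i$. Evaluating the Gibbs bound $\mu_\phi(C) \le \gamma \cdot 2^{S_{n+i}\phi(z)}$ at the periodic point $z \in C$ and combining this with the exponential large deviations for $-\frac{1}{n}S_n\phi$ around $h_{\mu_\phi}$ enjoyed by Gibbs measures, one bounds $\sum_{i<n}\mu_\phi\{x : \sigma^i x \in C_n(x)\}$ by $2^{-\delta n}$ for some $\delta > 0$. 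Executing this periodic-cylinder estimate is the technical heart of the argument; it is standard for Gibbs measures on the full shift, and I expect no further obstruction beyond book-keeping.
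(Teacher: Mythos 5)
Your proof follows exactly the route the paper itself suggests: the paper's ``proof'' of this corollary is essentially a citation of Ornstein--Weiss together with the remark that, for Gibbs measures, it ``can be similarly proved as the above theorem,'' i.e.\ by adapting the Chazottes argument of Theorem~\ref{Cha} to the $x$-dependent cylinders $C_n(x)$. You carry this out and, importantly, correctly identify the one place where the fixed-cylinder argument does not carry over verbatim, namely the contribution to $A_n$ from return indices $i<n$, where $C\cap\sigma^{-i}C$ can be much larger than $\gamma^3\mu_\phi(C)^2$ because the two copies of $C$ overlap. That is indeed the genuinely new ingredient here, and the paper's ``similarly proved'' quietly skips it.

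One caution about the small-$i$ step: the way you phrase the periodic-cylinder bound (``at most $2^i$ such words,'' each controlled by a pointwise Gibbs estimate $2^{S_{n+i}\phi(z)}$) would not close by itself. Bounding each periodic $(n+i)$-cylinder by the worst case $\gamma\,2^{-(n+i)e^-}$ and multiplying by $2^i$ gives $\sum_{i<n}2^{i}\,2^{-(n+i)e^-}\sim 2^{n(1-2e^-)}$, which is \emph{not} summable when $e^-<1/2$ (already for a Bernoulli($p$) measure with $p>1/\sqrt2$ this fails, even though the true quantity is exponentially small). What saves the argument is precisely the refinement you gesture at with ``large deviations'': one must not use the extremal value $e^-$ for every $u\in\{0,1\}^i$, but rather exploit that the partial sums $S_i\phi(u^\infty)$ are spread out. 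A clean way to run it is to note that, for an $i$-periodic word $u$ and $k'=\lfloor(n+i)/i\rfloor$, the periodic point $u^\infty$ lies in the $(n+i)$-cylinder, so $\mu_\phi(C\cap\sigma^{-i}C)\le\gamma\,2^{k'S_i\phi(u^\infty)}$; writing $a_u:=2^{S_i\phi(u^\infty)}$ one has $\max_u a_u\le 2^{i\max\phi}=:\theta^i<1$ (normalization forces $\max\phi<0$) and $\sum_u a_u\asymp 1$ (Gibbs), whence $\sum_u a_u^{k'}\le(\max_u a_u)^{k'-1}\sum_u a_u\lesssim\theta^{i\lfloor n/i\rfloor}\le\theta^{n/2}$, uniformly in $i<n$. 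This makes $\sum_n\sum_{i<n}\mu_\phi\{x:\sigma^ix\in C_n(x)\}$ converge and rescues Borel--Cantelli. So the outline is right and the ingredients you list are the correct ones, but calling this step ``book-keeping'' undersells it; the naive $2^i$ count against a worst-case Gibbs bound is a real trap, and a sharper summation (multifractal counting, or the H\"older-type estimate above) is genuinely required.
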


\subsection{Determination of $\kappa_{\phi,\psi,\Sigma_2^+}$}\

Recall that $-\int
\phi d\mu_{\psi}$ is nothing but the conditional entropy of
$\mu_{\phi}$ relative to $\mu_{\psi}$.
As a direct consequence of Lemma \ref{lemma2} and Chazottes' theorem, we
get immediately the following critical value.

\begin{theorem}\label{nu} Let $\phi$ and $\psi$ be H\"{o}lder
functions on $\Sigma^+_2$. We have
 $$\kappa_{\phi,\psi} = \frac{1}{-
\int_{\Sigma_2} \phi \, d\mu_{\psi}} = - \frac{1}{\frac{d}{dt}P(\psi
+ t \phi)|_{t=0}}.$$
\end{theorem}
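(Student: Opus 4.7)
The plan is to reduce the question to the already-established identification $\alpha(x,y)=\hb_{\mu_\phi}(y)$ from Corollary \ref{fub}, and then to recognize the right-hand side as a Birkhoff average under $\mu_\psi$. The target constant is $h_\ast:=-\int\phi\,d\mu_\psi$, so the strategy is to show that, for $\mu_\phi$-a.e.\ $x$, the quantity $\alpha(x,y)$ equals $h_\ast$ for $\mu_\psi$-a.e.\ $y$, after which Lemma \ref{lemma2} reads off the critical exponent.

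First I would apply Corollary \ref{fub} with $\nu=\mu_\psi$: it asserts that $\alpha(x,y)=\hb_{\mu_\phi}(y)$ on a set of full $\mu_\phi\otimes\mu_\psi$-measure. By Fubini, for $\mu_\phi$-a.e.\ $x$ this identity holds for $\mu_\psi$-a.e.\ $y$. Since $\phi$ is normalized so that $P(\phi)=0$, formula \eqref{localent} gives $\hb_{\mu_\phi}(y)=-\liminf_{n\to\infty}\tfrac{1}{n}S_n\phi(y)$, and Birkhoff's ergodic theorem applied to the ergodic measure $\mu_\psi$ upgrades the $\liminf$ to an honest limit equal to $-\int\phi\,d\mu_\psi=h_\ast$ for $\mu_\psi$-a.e.\ $y$. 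Combining the two, for $\mu_\phi$-a.e.\ $x$ one has $\alpha(x,y)=h_\ast$ for $\mu_\psi$-a.e.\ $y$.

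Now I would feed this into Lemma \ref{lemma2}. If $\kappa<1/h_\ast$, then $\alpha(x,y)=h_\ast<1/\kappa$ for $\mu_\psi$-a.e.\ $y$; the inclusion $\{y:\alpha(x,y)<1/\kappa\}\setminus\mathcal{O}^+(x)\subset\Ihn$ then yields $\mu_\psi(\Ihn)=1$, after discarding the countable orbit $\mathcal{O}^+(x)$, which is $\mu_\psi$-null because the Gibbs property forces $\sup_y\mu_\psi(C_n(y))\to 0$ and hence $\mu_\psi$ to be non-atomic. Conversely, if $\kappa>1/h_\ast$, then $\alpha(x,y)>1/\kappa$ for $\mu_\psi$-a.e.\ $y$, and the inclusion $\{y:\alpha(x,y)>1/\kappa\}\subset\Fhn$ forces $\mu_\psi(\Fhn)=1$, so $\mu_\psi(\Ihn)=0<1$. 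These two directions pin down $\kappa_{\phi,\psi}=1/h_\ast$, which is the first equality. The second equality is standard pressure calculus: $t\mapsto P(\psi+t\phi)$ is real analytic, and its derivative at $t=0$ equals $\int\phi\,d\mu$ against the equilibrium state $\mu$ of $\psi$; since $\psi$ is normalized, this equilibrium state is $\mu_\psi$, giving $P'(\psi+t\phi)|_{t=0}=\int\phi\,d\mu_\psi=-h_\ast$.

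The main obstacle is minor and concerns the exceptional orbit set $\mathcal{O}^+(x)$ appearing in Lemma \ref{lemma2}: one must justify that it is negligible for $\mu_\psi$, which is exactly where non-atomicity of Gibbs measures on $\Sigma_2^+$ is used. Otherwise, everything is a clean composition of Corollary \ref{fub} with the ergodic theorem and Lemma \ref{lemma2}; no delicate hitting-time estimates beyond those of Theorem \ref{Cha} are required for this particular critical value, which is why the present section suffices to treat $\kappa_{\phi,\psi}$, while the remaining critical exponents demand the finer probability bounds developed in later sections.
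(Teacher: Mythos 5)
Your proof is correct and follows essentially the same route as the paper: Corollary~\ref{fub} identifies $\alpha(x,y)$ with $\hb_{\mu_\phi}(y)$, ergodicity of $\mu_\psi$ (Birkhoff applied to $S_n\phi$) identifies the latter with $-\int\phi\,d\mu_\psi$ for $\mu_\psi$-a.e.\ $y$, and Lemma~\ref{lemma2} then pins down the critical exponent. You have merely unpacked the terse final step of the paper's argument, in particular justifying explicitly that the countable orbit $\mathcal{O}^+(x)$ is $\mu_\psi$-negligible by non-atomicity of Gibbs measures, which the paper leaves implicit.
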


\begin{proof}
Suppose that $\mu_\phi$ and $\mu_\psi$ are ergodic Gibbs measures
with $P(\phi) = P(\psi) = 0$. Corollary \ref{fub} implies that
for $\mu_{\phi} \times \mu_{\psi}$ a.e.\ $(x,y)$
$$\alpha(x,y) = h_{\mu_{\phi}}(y)  = - \int_{\Sigma_2} \phi \, d\mu_\psi
= - \frac{d}{dt}P(\psi + t \phi)|_{t=0}.$$
Thus applying Lemma \ref{lemma2} yields the assertion of
the theorem.
\end{proof}

\setcounter{equation}{0}
\section{ Big hitting probability and Study of $\Fhn$}\label{bigh}

We will give answers to question ({\bf Q2}) and to the part of question
({\bf Q3}) concerning $\Fhn$.

\subsection{Big hitting probability}\

Heuristically points of small local entropy (i.e.\ large ``local
measure'') are hit with big probability.
More precisely we have

\begin{lemma}[Big hitting probability]\label{lemma2'}
Let $K := 2^{hn}$. Fix $L$ cylinders  $C_1, \cdots C_L$ of length
$n$ satisfying $\mu_\phi(C_i) \ge 2^{-(h - \gamma)n}$. Then
\begin{align*}
\mu_{\phi}\{x: \, \exists C \in \{C_i\} \text{ such that }
\tau_n(x,C) > K\} \le  2^{-\lambda n}
\end{align*}
for any positive $\lambda$ for sufficiently large $n$.
\end{lemma}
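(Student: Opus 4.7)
The strategy is to reduce, by a union bound over the $L$ cylinders, to a single-cylinder estimate $\mu_\phi\{x : \tau_n(x, C_i) > K\} \le 2^{-\Lambda n}$ with an arbitrarily large $\Lambda$. Because there are at most $2^n$ cylinders of length $n$, necessarily $L \le 2^n$, and any super-exponential decay per cylinder will absorb this $2^n$ factor while still leaving the prescribed rate $2^{-\lambda n}$ for any fixed $\lambda > 0$.

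For a fixed cylinder $C \in \{C_1, \ldots, C_L\}$ I will reuse the analysis of the set $B_n$ from the proof of Theorem~\ref{Cha}. Choose $\omega > 0$ so large that $c \beta^{\omega} \le 2^{-(h-\gamma)-1}$, set $d := \lfloor \omega n \rfloor$, and let $m := \lfloor K/(n+d) \rfloor \ge 2^{hn}/(n+d) - 1$. With this choice $c \beta^d \le \tfrac{1}{2}\mu_\phi(C)$ uniformly in the $C_i$'s, because the hypothesis $\mu_\phi(C_i) \ge 2^{-(h-\gamma)n}$ is uniform in $i$. If $\tau_n(x, C) > K$ then $\sigma^{j(n+d)} x \notin C$ for every $j = 1, \ldots, m$, and each event $\{\sigma^{j(n+d)} x \notin C\}$ decomposes into a disjoint union of cylinders of length $n$ other than $C$.

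Applying the multi-relation property (Theorem~\ref{Thm-MR}) to the resulting cylinder intersections and summing over the choices of cylinders yields, exactly as in the proof of Theorem~\ref{Cha},
$$
\mu_\phi\{x : \tau_n(x, C) > K\} \le \gamma^3 \bigl[(1 + c \beta^d)(1 - \mu_\phi(C))\bigr]^m \le \gamma^3 \bigl(1 - \tfrac{1}{2}\mu_\phi(C)\bigr)^m \le \gamma^3 \, e^{-m \mu_\phi(C)/2}.
$$
Substituting the lower bounds gives $m \mu_\phi(C)/2 \ge 2^{\gamma n}/(2(n+d))$, which is super-exponentially large in $n$, so the single-cylinder probability is super-exponentially small. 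A union bound over the $L \le 2^n$ cylinders then gives
$$
\mu_\phi\{x : \exists\, C \in \{C_i\},\ \tau_n(x, C) > K\} \le 2^n \gamma^3 \exp\!\bigl(-2^{\gamma n}/(2(n+d))\bigr),
$$
which is bounded above by $2^{-\lambda n}$ once $n$ is large, for any positive $\lambda$.

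The only mildly delicate point is the uniform choice of $\omega$: one needs a single $d = \lfloor \omega n \rfloor$ that works for every $C_i$ simultaneously so that the multi-relation error factor $(1 + c\beta^d)$ is absorbed into $(1 - \mu_\phi(C_i)/2)$; this is automatic because the hypothesis controls $\mu_\phi(C_i)$ uniformly from below. Beyond that, the argument is a direct transcription of the decorrelation estimate used to bound $\mu_\phi(B_n)$ in the proof of Theorem~\ref{Cha}.
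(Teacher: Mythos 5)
Your proof is correct and follows essentially the same route as the paper: decorrelate by subsampling the orbit at multiples of $n+d$, apply the multi-relation property to turn the ``$C$ never appears'' event into a product of $(1-\mu_\phi(C))$-type factors, observe that $m\,\mu_\phi(C) \gtrsim 2^{\gamma n}/(n+d)$ is super-exponential, and union-bound over the at most $2^n$ cylinders. The only cosmetic differences are the harmless extra $\gamma^3$ (the paper applies the intermediate inequality \eqref{MR**} without the boundary factor) and your stated condition on $\omega$, which should really be $\beta^\omega < 2^{-(h-\gamma)}$ (not $c\beta^\omega \le 2^{-(h-\gamma)-1}$) so that $c\beta^d$ is eventually dominated by $\tfrac12\mu_\phi(C_i)$ regardless of the size of the Ruelle constant $c$; this is a one-line fix and the lemma is anyway stated only for $n$ large.
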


\begin{proof}
We have $L$ possibilities for the cylinder $C$. Let $m := \lfloor
K/(1+ \omega)n \rfloor - 1$. Fix a choice $C$ from these $L$
cylinders and let $D_0,\dots,D_m$ denote any cylinders of length $n$
(possibly with repetition), which are disjoint from $C$. Choose $\omega
> 0$ so that $\beta^\omega < 2^{e^+}$. Let $d := d(n):= \lfloor
\omega n \rfloor$.

For a fixed $C$, let $G_C$ be the set of points in $\Sigma_2^+$ in
which the chosen cylinder $C$, considered as a word, does not appear
up to time $K$. In particular, it does not appear at times $n+d,
\cdots, m(n+d)$. Thus
\begin{align*}
\mu_\phi(G_C) & \le \sum_{D_0,\dots,D_m} \mu_\phi( D_0 \cap
\sigma^{-n+d}D_1 \cap \cdots \cap \sigma^{-m(n+d)}D_m).
\end{align*}
By the multi-relation property, we get
\begin{align*}
\mu_\phi(G_C)
& \le ( 1 + c\beta^d)^{m+1} \sum_{D_0,\dots,D_m}   \prod_{i=0}^m \mu_\phi(\sigma^{-i(n+d)}D_i)\\
&= \left[(1 + c\beta^d)(1 - \min_{C_i} \mu_\phi(C_i))\right]^m\\
& \le \left ( 1 - \frac{1}2 {\min_{C_i} \mu_\phi(C_i)}  \right )^m .
\end{align*}
Summing over all the $L (\le 2^n)$ possible cylinders $C$ yields
\begin{align*}
\mu_{\phi}\{x: \, & \exists C \in \{C_i\} \text{ such that } \tau_n(x,C) > K\}\\
& \le \sum_{C} \mu_\phi(G_C)\\
& \le L \left( 1 - \frac{1}{2} \min_{C_i} \mu_\phi(C_i))\right)^m\\
& \le L  \left( 1 - \frac{1}2 \min_{C_i} \mu_\phi(C_i))\right)^{2^{\gamma n}/(\min_{C_i}\mu_\phi(C_i)(1 + \omega)n)}\\
& \le const \cdot 2^{n} \cdot  \big( e^{-1/2}\big )^{2^{\gamma n}/(1+\omega)n}\\
& \le 2^{- \lambda n}
\end{align*}
for any positive $\lambda$ and sufficiently large $n$.
\end{proof}

\subsection{The set of late  hits. }\

Let us
recall that $\{y \in \Sigma_2^+: \, \alpha(x,y) \ge t \}$ is random
but $\{ y \in \Sigma_2^+: \, \hb_{\mu_\phi}(y)\ge t\}$ is
deterministic (i.e.\ independent of $x$). The following theorem is
deduced from Lemma~\ref{lemma2'} (big hitting probability) and
Corollary~\ref{fub} (Ornstein-Weiss type theorem on return times).

\begin{theorem}\label{thm5.2}
For any $t\ge 0$ and for $\mu_\phi$ a.e.\ $x$ we have
\begin{equation}\label{EarlyHitSet1}
\{y \in \Sigma_2^+: \, \alpha(x,y) \ge t \} \subset \{ y \in
\Sigma_2^+: \, \hb_{\mu_\phi}(y)\ge t\}. \end{equation}
 Moreover if
$\nu$ is any probability measure on $\Sigma_2$, then for $\mu_\phi$
a.e.\ $x$ we have
$$\{y \in \Sigma_2^+: \, \alpha(x,y) \ge t \} \stackrel{\nu}{ =} \{ y \in \Sigma_2^+: \,
\hb_{\mu_\phi}(y)\ge t\}.$$
\end{theorem}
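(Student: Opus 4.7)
The substantive part is the inclusion \eqref{EarlyHitSet1}; the ``moreover'' statement follows immediately from Corollary \ref{fub}. I would prove \eqref{EarlyHitSet1} by contrapositive: for $\mu_\phi$-a.e.\ $x$, every $y$ with $\hb_{\mu_\phi}(y)<t$ must satisfy $\alpha(x,y)<t$. The key idea is to apply the big hitting probability (Lemma \ref{lemma2'}) \emph{uniformly} to all cylinders of sufficient $\mu_\phi$-mass and then invoke the easy half of Borel--Cantelli.

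Fix rationals $s<s'<t$ and set $\gamma:=s'-s>0$. Let $\mathcal{F}_n(s)$ be the collection of cylinders $C$ of length $n$ with $\mu_\phi(C)\ge 2^{-sn}=2^{-(s'-\gamma)n}$. Applying Lemma \ref{lemma2'} with $h:=s'$ (so $K=2^{s'n}$) to the family $\mathcal{F}_n(s)$ yields, for $n$ large,
$$\mu_\phi\Bigl\{x:\ \exists\,C\in\mathcal{F}_n(s) \text{ with } \tau(x,C)>2^{s'n}\Bigr\}\le 2^{-\lambda n}.$$
Since the bound is summable in $n$, Borel--Cantelli produces a set of full $\mu_\phi$-measure and an integer $N=N(x)$ such that $\tau(x,C)\le 2^{s'n}$ for every $n\ge N$ and every $C\in\mathcal{F}_n(s)$. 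If $\hb_{\mu_\phi}(y)<s$, a subsequence $n_k\to\infty$ gives $\mu_\phi(C_{n_k}(y))\ge 2^{-sn_k}$, hence $\tau_{n_k}(x,y)\le 2^{s'n_k}$, and therefore $\alpha(x,y)\le s'<t$.

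Taking a countable intersection of these full-measure sets over pairs $(s_j,s_j')$ of rationals with $s_j\uparrow t$ and $s_j<s_j'<t$ gives a single $\mu_\phi$-full-measure set of $x$ on which $\hb_{\mu_\phi}(y)<t$ implies $\hb_{\mu_\phi}(y)<s_j$ for some $j$, and hence $\alpha(x,y)\le s_j'<t$. This is the contrapositive of \eqref{EarlyHitSet1}.

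For the last assertion, Corollary \ref{fub} gives $\alpha(x,y)=\hb_{\mu_\phi}(y)$ on a $\mu_\phi\times\nu$-conull set; by Fubini, for $\mu_\phi$-a.e.\ $x$ this equality holds for $\nu$-a.e.\ $y$, so the two level sets $\{\alpha(x,\cdot)\ge t\}$ and $\{\hb_{\mu_\phi}(\cdot)\ge t\}$ coincide modulo a $\nu$-null set. The main obstacle is the uniformity step: one needs the hitting-time bound to hold simultaneously for \emph{all} cylinders in $\mathcal{F}_n(s)$, which is precisely the content of Lemma \ref{lemma2'} and is what drives the whole argument.
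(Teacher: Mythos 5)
Your proposal is correct and follows essentially the same route as the paper: both reduce the statement to a uniform hitting-time bound over all cylinders of large measure, both invoke Lemma \ref{lemma2'} (big hitting probability) with the two exponents playing the roles of your $s<s'$ (the paper's $t-2\e$ and $t-\e$), both finish with Borel--Cantelli and a countable intersection over parameters, and both deduce the ``moreover'' part from Corollary \ref{fub} together with Fubini's theorem. The only cosmetic difference is that you phrase the inclusion contrapositively and take explicit rational sequences $s_j\uparrow t$, whereas the paper writes the level sets as $\bigcap_{\e>0}\liminf_n$ and shows containment of the finite-$n$ approximations for each fixed $\e$.
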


\begin{proof} The case $t=0$ is trivial. Assume $t>0$.  Let
$$
     H_{\ge t}(x) = \{y \in \Sigma_2^+: \, \alpha(x,y) \ge t
     \},\quad
     E_{\ge t} = \{ y \in \Sigma_2^+: \,
\hb_{\mu_\phi}(y)\ge t\}.
$$
By definition, we have $$
  H_{\ge t}(x) = \bigcap_{\e>0} \liminf_{n \to \infty} H_{n, \e}(x)
$$
with $H_{n, \e}(x)= \{y: \tau_n(x, y) \ge 2^{(t-\e)n}\}$, and
$$
  E_{\ge t} = \bigcap_{\e>0} \liminf_{n \to \infty} E_{n, \e}
$$
with $E_{n, \e}(x)= \{y: \mu_\phi(C_n(y)) \le 2^{-(t-2\e)n}\}$. Thus
it remains to prove that for $\mu_\phi$-a.e.\ $x$ there exists $n(x) >
0$ such that
$$
        H_{n,\e}(x) \subset E_{n, \e}  \quad \forall n \ge n(x).
$$
Equivalently
$$
        E_{n,\e}^c \subset H_{n, \e}^c(x)  \quad \forall n \ge
        n(x).
$$
Notice that $E_{n,\e}^c$ is the union of all $n$-cylinders $C$ such
that $\mu_\phi(C) >2^{-(t-2\e)n}$. Let $\mathcal{C}_{n, \e}$ be the
set of all these cylinders. Applying Lemma \ref{lemma2'} to $\{C_1,
\cdots, C_L\}:=\mathcal{C}_{n, \e}$ leads to
$$
   \sum_n \mu_\phi\{ x\in \Sigma_2: \exists C \in \mathcal{C}_{n, \e} \ \mbox{\rm s.t.}\
     \tau_n(x, C) \ge 2^{(t-\e)n}\}<\infty.
$$
So, by the Borel-Cantelli lemma, for $\mu_\phi$-a.e.\ $x$, for large
$n$ and for all $C \in \mathcal{C}_{n, \e}$ we have $\tau_n(x, C)<
2^{(t-\e)n}$, i.e.\ $C \subset H_{n,\e}^c(x)$. This proves the first
assertion.

To prove the second assertion, it suffices to show that for
$\mu_\phi$-a.e.\ $x$ we have
$$
    \nu\{y\in \Sigma_2:    \underline{h}_{\mu_\phi}(y)\ge t, \alpha(x, y)<t\}=0.
$$
Let
$$
    E=\{(x,y): \alpha(x, y) = \underline{h}_{\mu_\phi}(y)\},
    \quad E_x=\{y: \alpha(x, y) = \underline{h}_{\mu_\phi}(y)\}.
$$
By Corollary~\ref{fub}, we have $\mu_\phi \times \nu(E)=1$. Then
Fubini's theorem asserts that for $\mu_\phi$-a.e.\ $x$ we have
$\nu(E_x)=1$, i.e.
$$
     \nu(E_x^c) = \nu\{y: \alpha(x, y) \not= \underline{h}_{\mu_\phi}(y) \}=0.
$$
We conclude by noticing
$$
    \{y: \underline{h}_{\mu_\phi}(y) \ge t,  \alpha(x, y)<t\}
    \subset E_x^c.
$$
\end{proof}

We should point out that (\ref{EarlyHitSet1}) is equivalent to
\begin{equation}\label{EarlyHitSet2}
\{ y \in \Sigma_2^+: \, \hb_{\mu_\phi}(y)< t\} \subset \{y \in
\Sigma_2^+: \, \alpha(x,y) < t \}.
\end{equation}
This justifies our heuristics that points of small local entropy
are hit early.
We point out that the inverse inclusion of
(\ref{EarlyHitSet2}) does not hold.
 Actually for $t<e^-$,
the deterministic set $\{ y \in \Sigma_2^+: \, \hb_{\mu_\phi}(y)<
t\}$ is empty, but if $1/\kappa< t$, the random set $\{y \in
\Sigma_2^+: \, \alpha(x,y) < t \}$ contains $I^\kappa(x)$ which is a
residual set.

\subsection{\bf Computation of $\dim_H \{y: \alpha(x, y)\ge t\}$ and $\dim_H
\Fhn$}\

\begin{theorem}\label{5.3} For $\mu_\phi$-a.e.\ $x$, we have
\begin{align*}
\dim_H \big \{ y: \ \a(x,y) \ge t \big \} &= \dim_H \big \{ y: \
\hb_{\mu_\phi} \ge t \big \}. \ \text{}
\end{align*}

\end{theorem}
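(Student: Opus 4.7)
The upper bound $\dim_H\{y:\alpha(x,y)\ge t\}\le \dim_H\{y:\underline h_{\mu_\phi}(y)\ge t\}$ is immediate from Theorem~\ref{thm5.2}, which gives, for $\mu_\phi$-a.e.\ $x$, the set inclusion
$\{y:\alpha(x,y)\ge t\}\subset\{y:\underline h_{\mu_\phi}(y)\ge t\}$. So the whole content of the theorem is the reverse inequality, and by the second part of Variational Principle~II, the target quantity equals $\sup_{s\ge t}E(s)$.

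My plan for the lower bound is to realize each value $E(s)$ with $s\ge t$ via a Gibbs measure supported on the random set $\{\alpha(x,\cdot)\ge t\}$. Concretely, fix $s\in[t,e^+)\cap(e^-,e^+)$ and form the Gibbs measure $\nu_s:=\mu_{-P(q(s)\phi)+q(s)\phi}$ coming from Variational Principle~I. By construction $\nu_s$ is an ergodic Gibbs measure with $h_{\nu_s}=E(s)$ and with $\underline h_{\mu_\phi}(y)=s$ for $\nu_s$-a.e.\ $y$. Since $\nu_s$ is Gibbs for a H\"older potential, its pointwise dimension is a.s.\ constant and equal to $h_{\nu_s}=E(s)$ (recall the base-$2$ convention that identifies entropy and dimension). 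Applying Corollary~\ref{fub} with $\nu=\nu_s$, for $\mu_\phi$-a.e.\ $x$ we have $\alpha(x,y)=\underline h_{\mu_\phi}(y)=s\ge t$ for $\nu_s$-a.e.\ $y$. Thus the set $\{y:\alpha(x,y)\ge t\}$ has full $\nu_s$-measure, so its Hausdorff dimension is at least $\dim_H\nu_s=E(s)$.

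The technical point is that the $\mu_\phi$-null exceptional set produced by Corollary~\ref{fub} depends on $s$, so one cannot naively take a supremum over the uncountable family $s\in[t,e^+]$. I would handle this by fixing a countable dense subset $\{s_k\}_{k\ge 1}\subset[t,e^+)\cap(e^-,e^+)$, discarding the countable union of the corresponding $\mu_\phi$-null sets, and concluding that for $\mu_\phi$-a.e.\ $x$,
\begin{equation*}
\dim_H\{y:\alpha(x,y)\ge t\}\ \ge\ \sup_k E(s_k)\ =\ \sup_{s\ge t}E(s),
\end{equation*}
where the last equality uses the real-analyticity and (in particular) continuity of $E$ on $(e^-,e^+)$ together with the fact that the values at the endpoints contribute no more than the interior supremum. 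Boundary cases (e.g.\ $t\le e^-$ or a degenerate entropy interval) are handled separately using that $E$ attains the full dimension $1$ at $e_{\max}$ and that the statement is trivial when $t\le e_{\max}$.

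The main obstacle is the joint-in-$s$ control just described: one cannot simultaneously avoid exceptional sets for all $s\ge t$ without a countability argument, and the conclusion relies on the continuity of $E$ to pass from the countable dense family to the honest supremum. The other subtlety is the appeal to $\dim_H\nu_s=E(s)$, which depends on the exact-dimensionality of Gibbs measures of H\"older potentials; this is standard but should be cited explicitly from \cite{Pesin}.
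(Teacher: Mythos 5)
Your proof is correct and follows essentially the same strategy as the paper: the upper bound via the a.e.\ inclusion from Theorem~\ref{thm5.2}, and the lower bound by realizing $E(s)$ as the dimension of the Gibbs measure $\nu_s=\mu_{-P(q(s)\phi)+q(s)\phi}$ and applying Corollary~\ref{fub} to put $\nu_s$ onto the random set $\{y:\alpha(x,y)\ge t\}$. The one simplification you missed is that, since $E$ is continuous on the compact interval $[e^-,e^+]$, the supremum $\sup_{s\ge t}E(s)$ is \emph{attained} at a single $s^*\in[e^-,e^+]$ (namely $s^*=\max(t,e_{\max})$ when nonempty), so the paper applies Corollary~\ref{fub} just once with $\nu_{s^*}$ and never needs the countable-dense-family argument you identify as the main obstacle.
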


\begin{proof}
By the second variational principle  (Theorem~\ref{var}), there exists
an $s \ge t$ such that
\begin{equation}\label{DimA-F1}
\dim_H \{ y: \ \hb_{\mu_\phi} \ge t \big \} = \dim_H
\mu_{-P(q(s)\phi) + q(s) \phi}.
\end{equation}
Applying Corollary~\ref{fub} (with $\nu = \mu_{-P(q(s)\phi) + q(s)
\phi}$) implies that
$$
\mu_{-P(q(s)\phi) + q(s) \phi}(\{y: \ \hb_{\mu_\phi}(y) = \a(x,y) =
s\}) = 1 \ \ \mbox{\rm for}\ \mu_\phi-\mbox{\rm  a.e.}\ x.
$$
It follows that for $\mu_\phi$-a.e.\ $x$ we have
\begin{align*}
\dim_H \{ y: \ \a(x,y) \ge t \} & \ge \dim_H \{ y: \
\hb_{\mu_\phi}(y) = \a(x,y) = s \}   \\
 & \ge  \dim \mu_{-P(q(s)\phi)
+ q(s) \phi}.
\end{align*}
This, together with (\ref{DimA-F1}), implies
$$
   \dim_H \big \{ y: \ \a(x,y) \ge t \big \} \ge  \dim_H \big \{ y: \
\hb_{\mu_\phi} \ge t \big \} \quad \text{}  \mu_\phi\mbox{\rm -a.e.}
$$

  Now we turn to the
reverse inequality. Observe the following decomposition
$$\{y: \a(x,y) \ge t \} = \{\a(x,y) \ge t,  \hb_{\mu_\phi}(y) < t
\} \cup \{  \a(x,y) \ge t, \hb_{\mu_\phi}(y) \ge t \}.$$ Since
$$\dim_H \{ \hb_{\mu_\phi}(y) \ge t, \a(x,y) \ge t \} \le  \dim_H \{ \hb_{\mu_\phi}(y) \ge t \},$$
it suffices to remark that $\{y: \ \hb_{\mu_\phi}(y) < t, \a(x,y)
\ge t \} = \emptyset$  for $\mu_\phi$ a.e.\ $x$.
\end{proof}

By this theorem, Lemmas~\ref{lemma2} and \ref{OW}, and the
second variational principle (Theorem~\ref{var}) we get
\begin{theorem} For $\mu_\phi$-a.e.\ $x$ we have
\begin{eqnarray*}
h_{\rm top}(\Fhn) & = &1                     \qquad \quad  \ \mbox{\rm for} \ \frac{1}{\kappa} \le e_{\max},\\
h_{\rm top}(\Fhn) & = &h_{\mu_{q(\kappa)\phi}} \ \ \ \mbox{\rm for}\ \ \
\ e_{\max}
                                                    \le \frac{1}{\kappa} < \ e_+
\end{eqnarray*}
where $q(\kappa)$ is chosen
such that $h_{\mu_\phi}(y) = \frac1{\kappa}$ for
$\mu_{q(\kappa)\phi}$ a.e.\ $y$. We also have
\begin{eqnarray*} \Fhn         & = &
\emptyset \ (\mbox{\rm or equivalently}  \  \Ihn=\S^1 )  \mbox{ \rm
if} \ \  \frac{1}{\kappa} >
                                                 e_+,\\
\Fhn         & \not= & \emptyset \ (\mbox{\rm or equivalently}  \
\Ihn\not=\S^1 )  \mbox{ \rm if} \ \  \frac{1}{\kappa} <
                                                 e_+.
\end{eqnarray*}
\end{theorem}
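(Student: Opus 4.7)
The plan is to bracket $\Fhn$ using Lemma~\ref{lemma2} and then compute the entropy of the bracketing sets via the dimension theorem for hitting-time level sets (Theorem~\ref{5.3}) combined with the second variational principle (Theorem~\ref{var}). Explicitly, for $\mu_\phi$-a.e.\ $x$,
$$\{y:\alpha(x,y)>1/\kappa\}\ \subset\ \Fhn\ \subset\ \{y:\alpha(x,y)\ge 1/\kappa\}\cup\O^+(x).$$
The orbit $\O^+(x)$ is countable, so it has zero topological entropy and contributes nothing to $h_{\rm top}(\Fhn)$. Using the convention, stated at the start of Section~\ref{back}, that topological entropy coincides with Hausdorff dimension in the symbolic setting, Theorem~\ref{5.3} combined with Theorem~\ref{var} yields
$$h_{\rm top}\{y:\alpha(x,y)\ge t\}=h_{\rm top}\{y:\hb_{\mu_\phi}(y)\ge t\}=\sup_{s\ge t}E(s).$$
Applying this with $t=1/\kappa$ for the upper bound, and letting $t\downarrow 1/\kappa$ with $t>1/\kappa$ for the lower bound (invoking continuity of $E$ on $[e^-,e^+]$), I obtain $h_{\rm top}(\Fhn)=\sup_{s\ge 1/\kappa}E(s)$ for $\mu_\phi$-a.e.\ $x$.

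The two cases of the theorem now follow from the shape of the entropy spectrum. Since $E$ is concave, attains its maximum value $1$ at $s=e_{\max}$, and is strictly decreasing on $[e_{\max},e^+]$, I have $\sup_{s\ge 1/\kappa}E(s)=1$ when $1/\kappa\le e_{\max}$, and $\sup_{s\ge 1/\kappa}E(s)=E(1/\kappa)$ when $e_{\max}\le 1/\kappa<e^+$. The first variational principle (Theorem~\ref{varI}) then identifies $E(1/\kappa)=h_{\mu_{q(\kappa)\phi}}$ for $q(\kappa)$ determined by $-P'(q(\kappa)\phi)=1/\kappa$, which matches the description in the statement.

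For the emptiness/nonemptiness dichotomy, first suppose $1/\kappa>e^+$. Any $y$ with $\hb_{\mu_\phi}(y)\ge 1/\kappa$ would, via any weak-$*$ limit of its empirical measures along a Birkhoff-convergent subsequence, produce a shift-invariant $\nu$ with $\int(-\phi)\,d\nu\ge 1/\kappa>e^+$, contradicting the definition of $e^+$. Hence $\{y:\hb_{\mu_\phi}(y)\ge 1/\kappa\}=\varnothing$, and the inclusion of Theorem~\ref{thm5.2} yields $\{y:\alpha(x,y)\ge 1/\kappa\}=\varnothing$ $\mu_\phi$-a.s.; since additionally $1/\kappa>e^+\ge h_{\mu_\phi}$, Lemma~\ref{OW} gives $\O(x)\subset\Ihn$, and the upper bracket forces $\Fhn=\varnothing$. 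Conversely, if $1/\kappa<e^+$, the definition of $e^+$ produces an ergodic invariant $\nu$ with $\int(-\phi)\,d\nu>1/\kappa$; by Corollary~\ref{fub} and the Birkhoff theorem, $\alpha(x,y)=\hb_{\mu_\phi}(y)=\int(-\phi)\,d\nu>1/\kappa$ for $\mu_\phi\times\nu$-a.e.\ $(x,y)$, so $\nu(\Fhn)=1$ $\mu_\phi$-a.s.\ and in particular $\Fhn\ne\varnothing$. The main technical subtlety I anticipate is reconciling the strict versus non-strict inequalities in the Lemma~\ref{lemma2} sandwich, which is precisely what the continuity of $E$ on $[e^-,e^+]$ takes care of above.
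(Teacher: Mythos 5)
Your proof is correct and follows essentially the same route the paper takes: the sandwich from Lemma~\ref{lemma2}, the identification $h_{\rm top}\{y:\alpha(x,y)\ge t\}=\sup_{s\ge t}E(s)$ via Theorem~\ref{5.3} and the second variational principle, and Lemma~\ref{OW} together with Theorem~\ref{thm5.2} to dispose of $\O^+(x)$ and establish emptiness when $1/\kappa>e^+$; the paper states exactly these references as its one-line proof sketch, and you correctly fill in the details (including the countable-$t$ intersection implicit in letting $t\downarrow 1/\kappa$). The only point where you deviate slightly is the non-emptiness when $1/\kappa<e^+$: the paper lets it follow implicitly from $E(1/\kappa)>0$, whereas you give a clean direct argument by selecting an ergodic $\nu$ with $\int(-\phi)\,d\nu>1/\kappa$ and applying Corollary~\ref{fub}; your version is slightly more robust since it does not need to argue that the dimension is strictly positive.
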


Remark that the case $\frac{1}{\kappa}=e^+$ is not covered by the
theorem because $E(t)$ is not continuous at $t=e^+$. We have the
upper bound $\dim_H \mathcal{F}^{1/e_+} \le E(e^+)$. A result due
to Kahane for the random covering shows that a strict inequality may occur (\cite{K}, p.160).


\setcounter{equation}{0}

\section{Small hitting probability and upper bound of $\dim_H \{y: \alpha(x, y)\le s\}$}\label{smallh}
\subsection{Small hitting probability}\

\begin{lemma}[Small hitting probability]\label{lemma1add}
Let $K := 2^{an}, L := 2^{bn}, N := 2^{cn}$ with $a>0, b>0, c>0$.
Fix $L$ different cylinders  $C_1, \cdots C_L$ of length $n$
satisfying
$$ \mu_\phi(C_i) \le 2^{-(a + \gamma)n}.$$
 Then if
$\gamma
> \max (b-c, 0)$, for any positive $\lambda$ and sufficiently large $n$ we have
\begin{align*}
\mu_{\phi}\{x: \, \tau_n(x,C_i) \le  K \text{ for } N &\text{
different cylinders among the}\ C_i\} \le 2^{-\lambda n}.
\end{align*}

\end{lemma}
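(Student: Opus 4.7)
\medskip

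\noindent\textbf{Proof plan.} I would mirror the structure of the Big Hitting Probability Lemma, using the multi-relation property \eqref{MR} to obtain approximate independence of hitting events and then combining this with a subset union bound. The event in question is contained in the union, over $N$-element subsets $S \subset \{1, \ldots, L\}$, of the event that every $C_i$ with $i \in S$ is hit by time $K$, so a trivial union bound gives
\begin{equation*}
\mu_\phi\bigl\{x : \tau_n(x, C_i) \le K \text{ for } N \text{ different } C_i\bigr\} \le \binom{L}{N} \max_{|S| = N} \mu_\phi\bigl\{x : \tau_n(x, C) \le K \ \forall\, C \in S\bigr\}.
\end{equation*}

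For a fixed $S = \{D_1, \ldots, D_N\}$, the probability that every $D_l$ is hit can be enumerated by the tuple of hit positions; since distinct length-$n$ cylinders cannot share a starting position, the hit times are automatically distinct, and
\begin{equation*}
\mu_\phi\bigl\{\text{every } D_l \text{ is hit by time } K\bigr\} \le \sum_{\substack{(t_1, \ldots, t_N) \in [0, K]^N \\ \text{pairwise distinct}}} \mu_\phi\bigl\{\sigma^{t_l} x \in D_l \ \forall\, l\bigr\}.
\end{equation*}
Setting $d := \lfloor \omega n\rfloor$ with $\omega$ large enough for Theorem~\ref{Thm-MR} to apply, the Multi-relation Theorem applied to an ordered tuple with consecutive gaps $\ge n + d$ gives the bound $\gamma^3 (1 + c\beta^d)^N \prod_l \mu_\phi(D_l) \le 2 \gamma^3 \cdot 2^{-(a+\gamma) n N}$, where I choose $\omega$ sufficiently large that the factor $(1 + c \beta^{\omega n})^N$ remains bounded even as $N = 2^{cn}$. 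Summing over the $\le K^N = 2^{a n N}$ well-separated tuples yields a contribution of order $2^{-\gamma n N}$. Tuples containing close positions (gap $< n+d$) are organized into clusters, each replaced by a single compound cylinder whose probability is controlled via the Gibbs property \eqref{GibbsProperty}; the drastically reduced number of such tuples (close positions are confined to small windows) more than compensates for the weaker multi-relation estimate, so these contributions do not dominate.

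Combining with $\binom{L}{N} \le (eL/N)^N = (e \cdot 2^{(b-c) n})^N$ gives a bound of the form
\begin{equation*}
\mu_\phi\bigl\{x : \tau_n(x, C_i) \le K \text{ for } N \text{ different } C_i\bigr\} \le K_0^N \cdot 2^{(b - c - \gamma) n N}
\end{equation*}
for some absolute constant $K_0$. Under the hypothesis $\gamma > b - c$, the exponent $(b - c - \gamma) n$ is strictly negative for large $n$, so the bound becomes $\le 2^{-\varepsilon n N}$ for some $\varepsilon > 0$. Since $N = 2^{cn}$, this is in fact double exponentially small in $n$ and therefore dominates $2^{-\lambda n}$ for any prescribed $\lambda > 0$ once $n$ is large enough. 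The main technical hurdle is the per-subset estimate: unlike the Big Hitting Probability lemma, where the multi-relation applies cleanly to disjoint miss events sampled at equally spaced times, here one must handle arbitrary tuples of distinct hit positions, and the residual contribution from clustered positions requires a careful bookkeeping via compound cylinders and the Gibbs property of $\mu_\phi$.
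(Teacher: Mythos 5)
Your high-level plan matches the paper's in its ingredients---union bound over subsets, the multi-relation property (Theorem~\ref{Thm-MR}), Stirling-type counting---and your final arithmetic is right: $\binom{L}{N}\cdot K^{N}\cdot\max_i\mu_\phi(C_i)^{N}\sim 2^{(b-c-\gamma)nN}$ decays doubly exponentially in $n$ when $\gamma>b-c$, which crushes $2^{-\lambda n}$. But the step you yourself flag as ``the main technical hurdle'' is a genuine gap, not a detail. After fixing an $N$-subset, you enumerate all tuples of distinct hit positions in $[0,K]$ and apply the multi-relation, which is legitimate only when consecutive gaps are $\ge n+d$; for the remaining (clustered) tuples you propose merging close positions into ``compound cylinders controlled by the Gibbs property'' and assert that ``the drastically reduced number of such tuples more than compensates.'' This is not established. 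For a cluster of $k$ close positions the compound cylinder is a single long cylinder, and the only bound available from the hypothesis is $\mu_\phi(\text{compound})\le 2^{-(a+\gamma)n}$ (containment in one $C_i$), not the product bound $2^{-(a+\gamma)nk}$; you thus lose a factor $2^{(a+\gamma)n(k-1)}$ per cluster. To recover it you would need to exploit the Gibbs property on the full length of the compound word \emph{and} carry out a careful count of cluster configurations (sizes, anchor positions, and the constraints linking consecutive sub-cylinders inside one cluster). None of this is done, and it is not obvious that it closes without extra hypotheses.

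The paper sidesteps the clustering problem entirely with a subsampling trick, and this is the idea your proof is missing. If $x$ lies in the event, there are distinct times $\ell_1<\cdots<\ell_N<K$ with distinct associated cylinders $C_{i_1},\ldots,C_{i_N}$. Since the $\ell_j$ are distinct integers, the subsequence $\tau_j:=\ell_{(3n+d)j}$, $j=1,\ldots,N':=\lfloor N/(3n+d)\rfloor$, automatically satisfies $\tau_{j}-\tau_{j-1}\ge 3n+d$, and the corresponding cylinders $C_{j_1},\ldots,C_{j_{N'}}$ are still distinct (a subset of distinct objects). So the event is contained in a union of cylinder intersections at \emph{always} well-separated times, the multi-relation applies with no exceptional case, and the count becomes $\binom{L}{N'}\binom{K}{N'}N'!$. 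The loss of the polynomial factor $3n+d$ in passing from $N$ to $N'$ is harmless because the final bound is still of the form $\bigl(\mathrm{const}\cdot 2^{(b-c-\gamma)n}\bigr)^{N'}$ with $N'\sim 2^{cn}/n$, still doubly exponentially small. Adopting this subsampling step removes the only genuinely delicate part of your argument; without it, the proposal is incomplete.
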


\begin{proof}  Let $S$ be  the set in question.
That $x\in S$ means there exist times $\ell_1<\ell_2<\cdots
<\ell_N<K$ and different cylinders $C_{i_1}, C_{i_2}, \cdots,
C_{i_N}$ such that
$$
    \sigma^{\ell_1}x \in C_{i_1}, \ \
    \sigma^{\ell_2}x \in C_{i_2},\ \ \cdots, \ \  \sigma^{\ell_N}x \in
    C_{i_N}.
$$
In this sequence $(\ell_k)$ of length $N$ there is a subsequence of
$N/(3n+d)$ terms, denoted $(\tau_j)$ such that $\tau_j
-\tau_{j-1}\ge 3n +d$. For example, we may take $\tau_j = \ell_{(3n
+ d)j}$. Thus $x\in S$ implies
$$
    \sigma^{\tau_1}x \in C_{j_1}, \ \
    \sigma^{\tau_2}x \in C_{j_2},\ \ \cdots, \ \  \sigma^{\tau_{N'}}x \in
    C_{j_{N'}}
$$
for $N':=N/(3n +d)$ different cylinders taken from the list $C_1,
C_2, \cdots, C_L$.
Thus to each $x \in S$ we can associate the sequences $(\tau_j)$ and $(C_{j_k})$.
Thus
$$
x \in C(x) := \bigcap\sigma^{-\tau_i}(C_{j_i})
$$
 and $S$ is covered
by the union of $C(x)$.
The multi-relation property implies that
the measure of $C(x)$ is bounded by $$ \max_{1\le
i\le L} \mu_\phi(C_i)^{N'} (1+c\beta^d)^{N'}.$$ Now, we have to
estimate the number of different (disjoint) sets $C(x)$.
First we have ${L \choose
N'}$
choices for the $N'$ different
cylinders from the list of $L$ words. Then we can choose ${K \choose
N'}$ places (i.e.\ we fix the sequence $\tau_j$) to put the chosen
words in order to
determine $C(x)$ . Finally we have $N'!$  ways to
arrange words into these $N'$ (now fixed) places.

Thus the measure of the
set in question can be majorized by
$$
{L\choose N'} {K \choose N'} \cdot N'!
\cdot\max_{C_i}\mu_\phi(C_i)^{N'}\cdot (1+c\beta^d)^{N'}. $$
This is equal to
$$ \frac{L!}{(L-N')!} \cdot\frac{K!}{(K-N')!N'!}\cdot
(\max_{C_i} \mu_\phi(C_i))^{N'}\cdot (1+c\beta^d)^{N'}.
$$
Next using the estimates
$$\frac{L!}{(L-N')!} \le L^{N'},
\quad \frac{K!}{(K-N')!N'!} \le const\cdot K^{N'}\cdot
\frac{e^{N'}}{{N'}^{N'}}$$ (the second one is implied by  Stirling's
formula), we  conclude that the measure is majorized by
\begin{align*}
& const\cdot L^{N'}\cdot K^{N'}\cdot e^{N'}\cdot N'^{-N'}\cdot
\left(2^{-(a+\gamma)n}\right)^{N'}\cdot (1+c\beta)^{N'}\\& =
const\cdot\left(2^{bn}\cdot 2^{an}\cdot e\cdot 2^{-cn}\cdot
  2^{-(a+\gamma)n}\cdot (1+c\beta^d)\right)^{N'}\\&\le
const \left ( {e\cdot (1+c\beta^d) \cdot 2^{(b-c - \gamma) n}} \right
)^{N'}.
\end{align*}
Provided $\gamma>b-c$, this is less that $2^{\lambda n}$
for any positive  $\lambda$ and sufficiently large $n$.
\end{proof}

\subsection{Upper bound of $\dim_H \{y: \alpha(x, y)\le s\}$}\

\begin{theorem}\label{cor1add}
If $h_{\mu_\phi}<s<e_{\max}$ then for $\mu_\phi$-a.e.\ $x$ we have
\begin{equation}
\label{UpperBoundIhn1}
h_{\rm top} \left\{ y\,:\,  \a(x,y)
  \le  s\right\} \le E(s).
  \end{equation}
If $0<s\le h_{\mu_\phi}$ then for {\rm all} $x$ we have
\begin{equation}
\label{UpperBoundIhn2} h_{\rm top}\left\{y\,:\, \a(x,y)
  \le s\right\}\le s.
  \end{equation}
\end{theorem}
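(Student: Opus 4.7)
The elementary inequality is (\ref{UpperBoundIhn2}). For every $x$ and every $K\ge 1$ we have the explicit description
\[
\{y \in \Sigma_2^+ : \tau_n(x,y) \le K\} = \bigcup_{\ell=1}^{K} C_n(\sigma^\ell x),
\]
a union of at most $K$ cylinders of length $n$. For any $\epsilon>0$, $\{y:\alpha(x,y)\le s\}$ is therefore contained in $\bigcap_N\bigcup_{n\ge N}\bigcup_{\ell=1}^{\lfloor 2^{(s+\epsilon)n}\rfloor} C_n(\sigma^\ell x)$, a set admitting at arbitrarily fine scales a cover by $2^{(s+\epsilon)n}$ cylinders of length $n$. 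The definition of topological entropy for non-compact sets then yields $h_{\rm top}\le s+\epsilon$, and $\epsilon\to 0$ gives (\ref{UpperBoundIhn2}); the argument is deterministic and valid for every $x$.

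For (\ref{UpperBoundIhn1}) the idea is to refine this count by sorting the hit cylinders according to their $\mu_\phi$-measure and applying Lemma~\ref{lemma1add} (small hitting probability) to those of small measure. Fix small $\epsilon,\delta>0$, put $K_n:=2^{(s+\epsilon)n}$, and for $u$ running over a $\delta$-grid covering $[e^-, e^+]$ set
\[
\mathcal{C}_n(u) := \bigl\{\, C \text{ of length } n : 2^{-(u+\delta)n} < \mu_\phi(C) \le 2^{-un} \,\bigr\}.
\]
A standard multifractal computation (the partition-function bound $\sum_C \mu_\phi(C)^q \le \mathrm{const}\cdot 2^{nP(q\phi)}$ combined with the Legendre identity $E(u)=\inf_q(P(q\phi)+qu)$) yields the sharp count $|\mathcal{C}_n(u)|\le 2^{(E(u)+\delta)n}$ for all large $n$. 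Let $\mathcal{H}_n(u) := \{C\in\mathcal{C}_n(u): \tau_n(x,C)\le K_n\}$. For $u \le s+\epsilon$ use the trivial bound $|\mathcal{H}_n(u)|\le 2^{(E(u)+\delta)n}$. For $u>s+\epsilon$ apply Lemma~\ref{lemma1add} with parameters $a=s+\epsilon$, $b=E(u)+\delta$, $\gamma=u-(s+\epsilon)>0$ and any $c>\max(0,E(u)-u+s+\epsilon+\delta)$, which gives a failure probability $\le 2^{-\lambda n}$. Borel--Cantelli summed over $n$ and the $O(1/\delta)$ grid values of $u$ gives that for $\mu_\phi$-a.e.\ $x$ and all large $n$,
\[
|\mathcal{H}_n(u)| \le 2^{(\max(0,\,E(u)-u+s+\epsilon)+\delta)n}.
\]

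It remains to check that the combined cylinder count covering $J_n := \{y:\tau_n(x,y)\le K_n\}$ has exponential rate bounded by $E(s+\epsilon)+O(\delta)$. The two regimes contribute $\max_{u\le s+\epsilon} E(u)$ and $\max_{u>s+\epsilon}\max(0,E(u)-u+s+\epsilon)$ respectively. The first equals $E(s+\epsilon)$ because $E$ is increasing on $[e^-, e_{\max}]$ and $s+\epsilon<e_{\max}$. For the second, write $f(u):=E(u)-u+s+\epsilon$: concavity of $E$ and the tangency $E'(h_{\mu_\phi})=1$ give $f'(u)=E'(u)-1<0$ for $u>h_{\mu_\phi}$, and the hypothesis $s+\epsilon>h_{\mu_\phi}$ then forces $f$ to be decreasing on $[s+\epsilon,\infty)$, so $\sup f = f(s+\epsilon) = E(s+\epsilon)$. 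Consequently $J_n$ is covered by $2^{(E(s+\epsilon)+O(\delta))n}$ cylinders of length $n$; since $\{y:\alpha(x,y)\le s\}\subset\limsup_n J_n$, the Pesin formula for topological entropy gives $h_{\rm top}\{y:\alpha(x,y)\le s\}\le E(s+\epsilon)+O(\delta)$, and $\delta\to 0$ followed by $\epsilon\to 0$ using continuity of $E$ at $s$ completes the proof. The chief obstacle is the sharp counting estimate $|\mathcal{C}_n(u)|\le 2^{(E(u)+\delta)n}$: the naive total-mass bound $|\mathcal{C}_n(u)|\le 2^{un}$ would replace $E(u)-u+s$ by $s$ in the second regime and would collapse the argument; the full multifractal Legendre duality with $q\mapsto P(q\phi)$ is indispensable. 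A secondary bookkeeping point is that the Borel--Cantelli sums remain finite after the $O(1/\delta)$ inflation from the $u$-discretization, which is absorbed by the free choice of $\lambda$ in Lemma~\ref{lemma1add}.
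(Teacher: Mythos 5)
Your proof is correct, and it uses the same essential ingredients as the paper's: the partition-function (Legendre duality) bound on the number of cylinders of a given measure scale, the Small Hitting Probability Lemma~\ref{lemma1add} combined with Borel--Cantelli, and the concavity/tangency fact $E'(h_{\mu_\phi})=1$ so that $E(u)-u$ decreases for $u>h_{\mu_\phi}$. The one organizational difference: the paper first splits $\mathcal{A}_x(s)$ as $\bigl(\mathcal{A}_x(s)\cap\{\underline h_{\mu_\phi}\le s\}\bigr)\cup\bigl(\mathcal{A}_x(s)\cap\{\underline h_{\mu_\phi}> s\}\bigr)$, dispatches the first piece via the deterministic variational principle $h_{\rm top}\{\underline h_{\mu_\phi}\le s\}\le E(s)$, and only then slices the second piece by entropy level; you instead cover the whole set at once by the sets $J_n$ of cylinders hit before time $K_n$ and sort those cylinders directly by $\mu_\phi$-measure. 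This avoids the explicit appeal to Theorem~\ref{var} and treats the two regimes $u\le s+\epsilon$ and $u>s+\epsilon$ symmetrically, at the cost of redoing in the first regime the count that the variational principle packages for you. For the elementary estimate~(\ref{UpperBoundIhn2}) your multi-scale cover $\bigcup_{n\ge N}\bigcup_{\ell\le K_n}C_n(\sigma^\ell x)$ and the ensuing Hausdorff-sum computation is the careful version; the paper's phrasing that $\{C_n(\sigma^k x)\}_{k\le 2^{an}}$ covers $\mathcal A_x(s)$ for \emph{every} fixed $n$ would need $\tau_n(x,y)\le 2^{an}$ for all large $n$, which only $\limsup$-type smallness of the hitting exponent guarantees, not the $\liminf$ appearing in $\alpha(x,y)$; your cover at all scales $n\ge N$ is what the definition of $h_{\rm top}$ for non-compact sets actually requires and is immune to this issue.
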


\begin{proof} Let
$$
    \mathcal{A}_x(s) = \left\{y\,:\, \a(x,y)  \le s \right\}.
$$
The case $s\le h_{\mu_\phi}$ is simple. In fact, if $a>s$, we have
$$
   \mathcal{A}_x(s) \subset \limsup_{n\to \infty}
     \left\{y\,:\, \tau_n(x,y) \le 2^{an}\right\}
     = \limsup_{n\to \infty} \bigcup_{k=1}^{2^{an}} C_n(\sigma^k x).
$$
Since $C_m(\sigma^k x)\subset C_n(\sigma^k x)$ for $m >n$, we have
$$
     \mathcal{A}_x(s) \subset \bigcap_{n=1}^\infty \bigcup_{k=1}^{2^{an}} C_n(\sigma^k
  x).
$$
We have $h_{\rm top} \mathcal{A}_x(s) \le a$ since
$\{C_n(\sigma^k)\}_{1\le k\le 2^{an}}$ is a cover of  for
$\mathcal{A}_x(s)$ by $2^{an}$
cylinders of length $n$. We conclude by letting $a \downarrow s$.
Remark that $h_{\rm top} \mathcal{A}_x(s) \le s$ holds for any non
negative $s$.

We turn to the case $h_{\mu_\phi}< s \le e_{\max}$. We start with a
remark. For $\delta > 0$ and $n \ge 1$ and $0<h_1<h_2$, let
$\mathfrak{L}_n (h_1, h_2) := \mathfrak{L}_n(h_1,h_2,\delta)$ be the
set of cylinders $C$ of length $n$ such that $2^{-(h_2 - \delta)n}
\le \mu_\phi(C)\le 2^{-(h_1 + \delta)n}$. Then for $n$ sufficiently
large (depending on $h_1,h_2$ and $\delta$) we have
$$
   {\rm Card} \, \mathfrak{L}_n
(h_1, h_2) \le 2^{n  E(h_2) }
   \quad {\rm if } \ \ \  h_2< e_{\max}
$$
$$
    {\rm Card} \, \mathfrak{L}_n
(h_1, h_2) \le 2^{n  E(h_1)}
   \quad {\rm if } \ \ \  h_1 > e_{\max}.
$$
In fact, assume $h_2< e_{\max}$ (the other case may be similarly
proved). There exists a positive number $q$ such that $ E(h_2) =
P(q)+ h_2 q$. Then
$$
  2^{-q(h_2-\delta)n} {\rm Card} \, \mathfrak{L}_n
(h_1, h_2)
  \le \sum_{C \in \mathfrak{L}_n
(h_1, h_2) } \mu_\phi(C)^q
     \le 2^{n (P(q)+q\delta)}.
$$

Write
$$
     \mathcal{A}_x(s)  =  \left( \mathcal{A}_x(s) \cap \left\{ y\,:\,  \underline{h}_{\mu_\phi}(y) \le s \right\}
     \right)
      \bigcup \left( \mathcal{A}_x(s) \cap  \left\{y\,:\,  \underline{h}_{\mu_\phi}(y) > s \right\}\right).
$$
Since $h_{\rm top} \{y: \underline{h}_{\mu_\phi}(y) \le s\} \le
E(s)$, it suffices to show
\begin{equation}\label{show}
h_{\rm top} \left( \mathcal{A}_x(s) \cap \left\{y\,:\,
\underline{h}_{\mu_\phi}(y) > s \right\}\right) \le  E(s).
\end{equation}

Let
$$ \mathcal{H}(h', h'') =
 \left\{y\,:\,
 h' \le \underline{h}_{\mu_\phi}(y) \le h''
  \right\}.
$$
If all choices $s<h'<h''$  such that $h''< e_{\max}$ or $h' >
e_{\max}$ the formula
$$
h_{\rm top}  \left( \mathcal{A}_x(s) \cap \mathcal{H}(h', h''
)\right) \le  E(s)
$$
holds, then the equation (\ref{show}) also holds.

Let $s<h_1+ \delta <h'<h''<h_2 - \delta$ with $h_1$ close to $h'$
and $h_2$ close to $h''$.  Remark that $y \in \mathcal{H}(h', h'')$
implies that $C_n(y)) \in \mathcal{H}(h_1,h_2)$ for infinitely many
$n$'s. In other words
$$
    \mathcal{H}(h', h'') \subset \bigcap_{m=1}^\infty
    \bigcup_{n=m}^\infty \ \ \bigcup_{C \in \mathcal{L}_n(h_1, h_2)} C.
$$
That is to say, for any fixed $m$, $\bigcup_{n\ge m}
\mathcal{L}_n(h_1, h_2)$ is a cover of $\mathcal{H}(h', h'')$.

 Now
we construct a cover of $A_x(s) \cap \mathcal{H}(h', h'')$. For any
$s<a<h_1$, let
\begin{eqnarray*}
      \mathcal{L}_n(x; a, h_1,h_2)
      &= &\{C\in \mathfrak{L}_n
(h_1, h_2): \tau(x, C)\le 2^{an}\},\\
 N_n(x; a, h_1,h_2) & =& \mbox
{Card} \, \mathcal{L}_n(x; a, h_1,h_2).
\end{eqnarray*}
Clearly $\bigcup_{n\ge m} \mathcal{L}_n(x;a, h_1, h_2)$ is a cover
of $A_x(s) \cap \mathcal{H}(h', h'')$, because $$
  \ \  \  \   \mathcal{A}_x(s)
     \subset \bigcap_{m=1}^\infty
    \bigcup_{n=m}^\infty \ \ \bigcup_{C: \tau(x, C) \le 2^{a n}} C.
$$

Let $\gamma = h_2-a$ if $h_2\le e_{\max}$, or  $\gamma = h_1-a$ if $h_1
> e_{\max}$. Since $E'(t)<1$ when $t>h_{\mu_\phi}$, we have
   $$
       E(a+\gamma)- E(a)< \gamma, \quad \mbox{\rm i.e.}\quad
          E(a+\gamma)-\gamma <E(a).
   $$
We apply the Small Hitting Probability Lemma to $b=E(a+\gamma)$ and
$c=E(a)$ to get
$$
   \sum_n \mu_\phi\{x:  N_n(x; a, h_1,h_2,) > 2^{n E(a)}\}<\infty.
$$
By the Borel-Cantelli Lemma, for $\mu_\phi$-a.e.\ $x$, we have
$N_n(x; a, h_1,h_2) \le 2^{n E(a)}$ for $n\ge n(x)$. So,  if $m\ge
n(x)$, for any $\epsilon>0$ we have
\begin{eqnarray*}
    & & \sum_{n\ge m} \,\,\, \sum_{C\in \mathfrak{L}_n(x; a, h_1,h_2)} (\mbox{\rm diam}\, C
    )^{E(a)+\epsilon}\\
    & \le & \sum_{n\ge m} 2^{-n (E(a)+\epsilon)} \cdot 2^{n E(a)}
     \le \sum_{n\ge m} 2^{-n \epsilon} <\infty.
\end{eqnarray*}
Since $\bigcup_{n\ge m} \mathcal{L}_n(x; a, h_1,h_2)$ is a cover of
$\mathcal{A}_x(s) \cap \mathcal{H}(h', h'' )$, we have proved
$$
\dim \mathcal{A}_x(s) \cap \mathcal{H}(h', h'') \le E(a) +\epsilon.$$ We
finish the proof by letting first $\e \downarrow 0$ and
then  $a \downarrow s$.
\end{proof}

\begin{theorem}\label{cor1add1}
If $h_{\mu_\phi}<s<e_{\max}$ then for $\mu_\phi$-a.e.\ $x$ we have
$$h_{\rm top} \left\{ y\,:\,  \a(x,y)
  \le  s\right\} = E(s).$$
\end{theorem}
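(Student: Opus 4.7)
The upper bound is already established by Theorem \ref{cor1add}, so everything reduces to producing the matching lower bound $h_{\rm top}\{y:\alpha(x,y)\le s\}\ge E(s)$ for a.e.\ $x$. The natural strategy is to exhibit a measure of dimension $E(s)$ that is concentrated on the set in question. The variational principle (Theorem \ref{varI}) hands us the right candidate: pick $q=q(s)$ with $t(q)=s$, which is possible because $s\in(h_{\mu_\phi},e_{\max})\subset(e^-,e^+)$, and let $\psi:=-P(q\phi)+q\phi$. Then $\mu_\psi$ is a Gibbs measure, $P(\psi)=0$, $h_{\mu_\psi}=E(s)$ and, by \eqref{EntropyFormulus} applied with this $\mu_\psi$, one has $h_{\mu_\phi}(y)=s$ for $\mu_\psi$-a.e.\ $y$; in particular the local entropy exists and equals $s$ on a set of full $\mu_\psi$-measure.

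Now I would invoke Corollary \ref{fub} with $\nu=\mu_\psi$: for $(\mu_\phi\times\mu_\psi)$-a.e.\ $(x,y)$ one has $\alpha(x,y)=\underline{h}_{\mu_\phi}(y)$. Combining this with the previous paragraph, for $(\mu_\phi\times\mu_\psi)$-a.e.\ $(x,y)$ we get $\alpha(x,y)=s$. Fubini then yields a $\mu_\phi$-full set of $x$ for which
\[
\mu_\psi\bigl(\{y:\alpha(x,y)=s\}\bigr)=1,
\]
and hence $\mu_\psi\bigl(\{y:\alpha(x,y)\le s\}\bigr)=1$.

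The last step is to convert ``carries a probability measure of entropy $E(s)$'' into the lower bound on topological entropy. Under our base-$2$ logarithm convention topological entropy coincides with Hausdorff dimension on $\Sigma_2^+$; the Gibbs property \eqref{GibbsProperty} together with the Shannon--McMillan--Breiman theorem gives $\dim_H\mu_\psi=h_{\mu_\psi}=E(s)$, and any Borel set of full $\mu_\psi$-measure has at least this dimension by the standard mass distribution argument. Therefore
\[
h_{\rm top}\{y:\alpha(x,y)\le s\}\ge\dim_H\mu_\psi=E(s)
\]
for $\mu_\phi$-a.e.\ $x$, matching the upper bound from Theorem \ref{cor1add}.

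The only delicate point is the selection of $q(s)$: one must know that for $s\in(h_{\mu_\phi},e_{\max})$ the equation $t(q)=s$ has a solution with $q\in(0,\infty)$, and that the corresponding Gibbs measure $\mu_\psi$ actually sees the level set $\{h_{\mu_\phi}(y)=s\}$ with full measure. Both facts are contained in Theorem \ref{varI} and the accompanying analyticity and strict concavity of $E$ on $(e^-,e^+)$; past that, the argument is a direct Fubini/variational‑principle dovetail, and no new probabilistic estimate beyond Corollary \ref{fub} is needed.
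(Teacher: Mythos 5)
Your proof is correct and follows essentially the same route as the paper: the paper also establishes the lower bound by invoking multi-fractal analysis to produce a Gibbs measure of entropy $E(s)$ supported on $\{y:h_{\mu_\phi}(y)=s\}$ (your $\mu_\psi=\mu_{-P(q(s)\phi)+q(s)\phi}$) and then applying Corollary \ref{fub}. You have simply written out the details of the variational-principle selection, the Fubini step, and the dimension-of-a-Gibbs-measure argument that the paper leaves implicit.
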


\begin{proof}
We simply need to prove the reverse inequality of
(\ref{UpperBoundIhn1}) in Theorem \ref{cor1add}. By multi-fractal
analysis there is a Gibbs measure with entropy $E(s)$ supported on
$\{y: h_{\mu_{\phi}}(y) = s \}$. Then Corollary \ref{fub} implies
the result.
\end{proof}

For $0<s<h_{\mu_\phi}$, the opposite inequality of
(\ref{UpperBoundIhn2}):
$$h_{\rm top}\left\{y\,:\, \a(x,y)
  \le s\right\}\ge s
  $$
also holds. But its proof is much more involved. It can not be
deduced from the mass transference principle as stated in \cite{BV}
since $\mu_\phi$ has nontrivial entropy spectrum.  In the next
section we make a substantial improvement in the mass transference
principle to multi-fractal Gibbs states.  In order to prove it, we
need to undertake a full investigation of the structure of typical
sequences.

 \setcounter{equation}{0}
\section{Typical sequences and Lower bound of $\dim_H \{y: \alpha(x, y)\le c\}$}\label{typ}

Recall that $\mu_\phi$ is a Gibbs measure associated to a normalized
H\"{o}lder potential $\phi$. A cylinder $C$ of length $n$ is said to be a
$(n,\e)$-cylinder if
$$
         2^{-(h+\e) n} \le \mu_\phi(C) \le 2^{-(h-\e) n}
$$
where  $h=h_\phi$ denotes the entropy of $\mu_\phi$. We denote by
$\mathcal{C}_{n, \e}$ the set of all $(n,\e)$-cylinders. Sometimes
we will say that a $(n,\e)$-cylinder is a good cylinder or the word
determining a $(n,\e)$-cylinder is a good word. As we shall prove,
a relatively short typical word contains
plenty of good subwords of a
fixed length and they are even different.

The following notations will be used. If $C$ and $D$ are cylinders,
we denote  by $C\star D$ the cylinder $C \cap \sigma^{-|C|} D$. If
we read $C$ and $D$ as words, $C \star D$ is nothing but the
concatenation of the words $C$ and $D$. Let $d \ge 1$ be an integer,
by $C\star_d D$ we mean $C \cap \sigma^{-(|C|+d)} D$, i.e.
$$
      C\star_d D =\bigcup_{G: |G|=d} C\star G\star D.
$$
For a set $S$, $\sharp S$ will denote the cardinality of $S$.

\subsection{Frequency of good words in a typical orbit}\
\begin{lemma} \label{GoodG}
Let $\mu_\phi$ be a Gibbs measure with  entropy
$h:=h_{\mu_\phi}>0$. For any $\e>0$, there exist an integer
$n(\e)\ge 1$ and a Borel set $\mathcal{G}_\e$ with
$\mu_\phi(\mathcal{G}_\e)>1-\e$ such that for any $x\in
\mathcal{G}_\e$ and any $n\ge n(\e)$, the cylinder $C= C_n(x)$ is a
$(n, \e)$-cylinder.
Consequently, if $n\ge n(\e)$, we have
$$
     (1-\e) 2^{(h-\e)n} \le \sharp \mathcal{C}_{n, \e} \le
     2^{(h+\e)n}.
$$
\end{lemma}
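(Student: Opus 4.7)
The plan is to combine the Gibbs property~\eqref{GibbsProperty} with Birkhoff's ergodic theorem (equivalently the Shannon--McMillan--Breiman theorem) to establish the pointwise limit $-\tfrac{1}{n}\log\mu_\phi(C_n(x))\to h$ for $\mu_\phi$-a.e.\ $x$, and then to upgrade this to uniform convergence on a set of large measure via Egorov's theorem. Exponentiating the uniform estimate yields the $(n,\e)$-cylinder property, and the cardinality bounds follow by elementary counting.

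First, I would use~\eqref{GibbsProperty} to write
$$\left|-\tfrac{1}{n}\log\mu_\phi(C_n(x))+\tfrac{1}{n}S_n\phi(x)\right|=O\!\left(\tfrac{1}{n}\right).$$
Because $\phi$ is a normalised H\"older potential (so $P(\phi)=0$) the Gibbs measure $\mu_\phi$ is ergodic and $h_{\mu_\phi}=-\int\phi\,d\mu_\phi=h$. Birkhoff's ergodic theorem applied to $-\phi$ therefore gives $-\tfrac{1}{n}S_n\phi(x)\to h$ $\mu_\phi$-a.e., and hence $-\tfrac{1}{n}\log\mu_\phi(C_n(x))\to h$ $\mu_\phi$-a.e. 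Applying Egorov's theorem on the probability space $(\Sigma_2^+,\mu_\phi)$ to the sequence $x\mapsto -\tfrac{1}{n}\log\mu_\phi(C_n(x))$, for any $\e>0$ I obtain an integer $n(\e)$ and a Borel set $\mathcal{G}_\e$ with $\mu_\phi(\mathcal{G}_\e)>1-\e$ such that
$$\left|-\tfrac{1}{n}\log\mu_\phi(C_n(x))-h\right|<\e\qquad\forall\,x\in\mathcal{G}_\e,\ \forall\,n\ge n(\e).$$
Exponentiating gives precisely $2^{-(h+\e)n}\le\mu_\phi(C_n(x))\le 2^{-(h-\e)n}$, i.e.\ $C_n(x)\in\mathcal{C}_{n,\e}$.

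The cardinality bounds then follow by counting. Since $n$-cylinders are pairwise disjoint and $\mu_\phi$ is a probability measure,
$$1\ \ge\ \sum_{C\in\mathcal{C}_{n,\e}}\mu_\phi(C)\ \ge\ \sharp\mathcal{C}_{n,\e}\cdot 2^{-(h+\e)n},$$
which gives the upper bound. For the lower bound I would observe that for $n\ge n(\e)$ every point of $\mathcal{G}_\e$ lies in an $(n,\e)$-cylinder, so $\mathcal{G}_\e\subset\bigcup_{C\in\mathcal{C}_{n,\e}}C$ and therefore
$$1-\e\ <\ \mu_\phi(\mathcal{G}_\e)\ \le\ \sharp\mathcal{C}_{n,\e}\cdot 2^{-(h-\e)n}.$$

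The argument is essentially standard; the only substantive step is upgrading the $\mu_\phi$-a.e.\ convergence of $-\tfrac{1}{n}\log\mu_\phi(C_n(\cdot))$ to uniform convergence on a set of measure greater than $1-\e$, which is precisely what Egorov's theorem guarantees on a probability space. Everything else is purely combinatorial and uses nothing more than the disjointness of cylinders and the upper/lower bounds on their $\mu_\phi$-mass just established.
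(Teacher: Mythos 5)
Your proof is correct and follows essentially the same route as the paper's: Shannon--McMillan--Breiman (which you derive via the Gibbs property and Birkhoff, a standard equivalent) followed by Egorov's theorem to get uniform convergence on a set of measure $>1-\e$, then the identical counting argument for the cardinality bounds.
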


\begin{proof}
 By the Shannon McMillan Breiman theorem, for $\mu_\phi$-a.e.\ $x$ we have
 $$
     \lim_{n\to \infty } - \frac{\log \mu_{\phi}(C_n(x))}{n} = h.
 $$
Then by Egorov's theorem,
 there is a number $n(\e)\ge 1$ such that the set
\[
\G_\e:=\left\{y\in\Sigma_2\, :\, -\frac1n\log\mu_\phi(C_n(y))\in
  [h-\e,h+\e],\quad\forall n>n(\e)\right\}
\]
has measure $\mu_\phi(\G_\e)>1-\e$.

The upper estimate $\sharp \mathcal{C}_{n, \e} \le
     2^{(h+\e)n}$ follows from
$$
2^{-(h_\mu +\e)n } \sharp \mathcal{C}_{n, \e} \le \sum_{C \in
\mathcal{C}_{n, \e}} \mu_\phi(C)\le 1.
$$
The lower estimate $ (1-\e) 2^{(h-\e)n} \le \sharp \mathcal{C}_{n,
\e}$ follows from $\mathcal{G}_\e \subset \bigcup_{C\in
\mathcal{C}_{n,\e}} C$ and
$$
1-\e \le \mu_\phi(\mathcal{G_\e}) \le  \sum_{C \in \mathcal{C}_{n,
\e}} \mu_\phi(C) \le 2^{-(h_\mu -\e)n } \sharp \mathcal{C}_{n, \e}.
$$
\end{proof}

We call the set  $\mathcal{G}_\e$ the set of $\e$-good points.  By the definition of
$\mathcal{G}_\e$, we have
      $$
            \mathcal{G}_\e = \bigcap_{n=n(\e)}^\infty \bigcup_{C \in \mathcal{C}_{n,
            \e}} C.
      $$
Hence it is a $G_\delta$ set. We will write it as a decreasing limit of
open sets in the following manner
$$
    \mathcal{G}_\e = \bigcap_{N=n(\e)}^\infty \bigcap_{n=n(\e)}^N \bigcup_ {C \in \mathcal{C}_{n,
            \e}} C.
$$
This representation of $\mathcal{G}_\e$ is useful in the proof of
the following lemma.

\begin{lemma} \label{MassOfD}
Let $0<\e<1/2$ and let $L'\ge 1$ be an arbitrary
integer. For any cylinder $D$ of length $L'$, we have
$$
    \mu_{\phi}(D \cap \sigma^{-|D|} \mathcal{G}_\e) \ge
    \frac{1}{2\gamma^4} 2^{-L'\|\phi\|_\infty}
$$
where $\gamma > 1$ is the constant involved in the Gibbs property of
$\mu_\phi$ (\ref{GibbsProperty}).
\end{lemma}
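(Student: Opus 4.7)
The plan is to reduce the estimate to the quasi-Bernoulli property (\ref{QBernoulliProperty}), which is already stated for pairs of cylinders, and to extend it to the $G_\delta$ set $\mathcal{G}_\e$ by the monotone approximation indicated just before the lemma.

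First I would write $\mathcal{G}_\e$ as a decreasing limit of finite unions of cylinders. Set
$$U_N := \bigcap_{n=n(\e)}^N \bigcup_{C \in \mathcal{C}_{n,\e}} C,$$
so that $\{U_N\}_{N\ge n(\e)}$ is a decreasing sequence of clopen sets with $\mathcal{G}_\e = \bigcap_N U_N$, each $U_N$ being a finite disjoint union of cylinders of length $N$. Then $\sigma^{-|D|} U_N$ is also a finite disjoint union of cylinders, say $\sigma^{-|D|} U_N = \bigsqcup_i B_i$.

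Applying (\ref{QBernoulliProperty}) to $D$ and each $B_i$ and summing over $i$ yields
$$\mu_\phi\bigl(D \cap \sigma^{-|D|} U_N\bigr) \ge \frac{1}{\gamma^3}\, \mu_\phi(D)\, \mu_\phi(U_N).$$
Letting $N \to \infty$ and using continuity of the measure along the decreasing sequence $D \cap \sigma^{-|D|} U_N \downarrow D \cap \sigma^{-|D|} \mathcal{G}_\e$, I obtain
$$\mu_\phi\bigl(D \cap \sigma^{-|D|} \mathcal{G}_\e\bigr) \ge \frac{1}{\gamma^3}\, \mu_\phi(D)\, \mu_\phi(\mathcal{G}_\e).$$

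To finish, I would lower bound each factor separately. Since $0 < \e < 1/2$, Lemma~\ref{GoodG} gives $\mu_\phi(\mathcal{G}_\e) > 1 - \e > 1/2$. For $\mu_\phi(D)$, pick any $x \in D$ and apply the Gibbs property (\ref{GibbsProperty}):
$$\mu_\phi(D) \ge \frac{1}{\gamma}\, 2^{S_{L'}\phi(x)} \ge \frac{1}{\gamma}\, 2^{-L'\|\phi\|_\infty},$$
where the last inequality uses $|\phi| \le \|\phi\|_\infty$ (recall the base-$2$ convention of the paper). Combining the three bounds gives exactly
$$\mu_\phi\bigl(D \cap \sigma^{-|D|} \mathcal{G}_\e\bigr) \ge \frac{1}{2\gamma^4}\, 2^{-L'\|\phi\|_\infty}.$$

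The only mildly subtle point is the passage from the cylinder version of the quasi-Bernoulli inequality to the $G_\delta$ set $\mathcal{G}_\e$; this is handled cleanly by the monotone approximation by the $U_N$, which is why the paper took care to rewrite $\mathcal{G}_\e$ as a decreasing intersection of finite unions of cylinders just before stating the lemma. Everything else is a direct invocation of the Gibbs bound on $\mu_\phi(D)$ and of Lemma~\ref{GoodG} for $\mu_\phi(\mathcal{G}_\e)$.
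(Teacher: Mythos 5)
Your argument is exactly the paper's: approximate $\mathcal{G}_\e$ from above by the clopen sets $U_N=\mathcal{G}_{N,\e}$, apply the quasi-Bernoulli property cylinder by cylinder, pass to the limit, and finish with the Gibbs lower bound on $\mu_\phi(D)$ together with $\mu_\phi(\mathcal{G}_\e)>1/2$. One small correction in the bookkeeping: you should decompose $U_N$ itself into its constituent $N$-cylinders $C_j$ and apply (\ref{QBernoulliProperty}) to the pairs $(D,C_j)$, writing $D\cap\sigma^{-|D|}U_N=\bigsqcup_j\bigl(D\cap\sigma^{-|D|}C_j\bigr)$; decomposing $\sigma^{-|D|}U_N$ into cylinders $B_i$ as you wrote does not directly feed into the quasi-Bernoulli inequality, since that would estimate $\mu_\phi(D\cap\sigma^{-|D|}B_i)$ rather than $\mu_\phi(D\cap B_i)$.
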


\begin{proof} We first recall the following quasi-Bernoulli
property of $\mu_\phi$ (\ref{QBernoulliProperty}): for any two cylinders $A$ and $B$ we have
$$
     \mu_\phi(A \cap \sigma^{-|A|} B) \ge \frac{1}{\gamma^3}
     \mu_\phi(A) \mu_\phi(B).
$$
Let us prove the lemma. The set
$\mathcal{G}_\e$ is the decreasing limit of
 the open sets
$$
    \mathcal{G}_{N,\e} = \bigcap_{n=n(\e)}^N \bigcup_ {C \in \mathcal{C}_{n,
            \e}} C.
$$
Observe that $ \mathcal{G}_{N,\e}$ is a union of cylinders of length $N$.
Thus we have
$$
    \mu_\phi(D \cap \sigma^{-|D|} \mathcal{G}_\e) = \lim_{N\to \infty}
    \mu_\phi(D \cap \sigma^{-|D|}  \mathcal{G}_{N,\e}) =
    \lim_{N\to \infty} \sum_C \mu_\phi(D \cap \sigma^{-|D|}  C)
$$
where $C$ varies over all $N$-cylinders contained in
$\mathcal{G}_{N,\e}$.  First applying the quasi-Bernoulli
property and then using the fact that
$
     \mu_\phi(\mathcal{G}_{N,\e}) \ge 1-\e >1/2,
$ yields
$$
    \sum_C \mu_\phi(D \cap \sigma^{-|D|} C)
     \ge \frac{\mu_{\phi}(D)}{\gamma^3} \sum_C \mu_\phi(C)
     =  \frac{\mu_{\phi}(D)}{\gamma^3} \mu_\phi(\mathcal{G}_{N,\e})
     \ge \frac{\mu_{\phi}(D)}{2 \gamma^3}.
$$
To conclude, it suffices to remark that
$$\mu_\phi(D) \ge \frac{1}{\gamma} 2^{-|D| \ \|\phi\|_\infty}$$
which is assured  by the Gibbs property of $\mu_\phi$.
\end{proof}

The next theorem essentially says that a typical word of length
$2^{cL''}$ contains many good subwords of length $n$ with an
arbitrary but fixed prefix $D$ of length $L'$. We keep the notations
$n(\epsilon)$ and $\mathcal{G}_\epsilon$ appearing in
Lemma~\ref{GoodG}.

\begin{theorem}\label{TypicalSequence}
 Let $c>0$ be fixed. Let $0<\e <\min(\frac{1}{2},c)$,  $0<\eta<\frac{1}{2}$
and  $L'\ge 1$. There exist an integer $n(\e, \eta, L')\ge L' +
n(\e)$ and a Borel set $\mathcal{E}(\e, \eta, L')$ with $ \mu_\phi
(\mathcal{E}(\e,\eta, L'))>1-\eta$
 such that if  $x \in
\mathcal{E}(\e,\eta, L')$ and $L''
> n(\e, \eta, L')$,
for each $L'$-cylinder $D$ there are at least $2^{(c-\e)L''}$ points
of the finite orbit $\sigma^{j}x$  ($2^{L'}+1\le j \le 2^{cL''}$),
which fall into $D\cap \sigma^{-L'} \mathcal{G}_\e$.
\end{theorem}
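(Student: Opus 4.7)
The plan is to view the statement as a uniform quantitative Birkhoff theorem for the finite family of sets $A_D := D \cap \sigma^{-L'}\mathcal{G}_\e$ indexed by the $L'$-cylinders $D$. By Lemma~\ref{MassOfD},
\[
\mu_\phi(A_D) \;\ge\; p_0 \;:=\; \frac{1}{2\gamma^4}\,2^{-L'\|\phi\|_\infty} \;>\; 0,
\]
where $p_0$ depends only on $L'$ and $\phi$. Writing $N_{D,L''}(x)$ for the number of $j \in [2^{L'}+1,\,2^{cL''}]$ with $\sigma^j x \in A_D$, the goal is $N_{D,L''}(x) \ge 2^{(c-\e)L''}$ uniformly in $D$ for $x$ in a set of $\mu_\phi$-measure $>1-\eta$ and $L''$ exceeding an explicit threshold. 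Since the Gibbs measure $\mu_\phi$ is ergodic, Birkhoff's theorem gives, for each $D$, that $M^{-1}\sum_{j=1}^M \1_{A_D}(\sigma^j x) \to \mu_\phi(A_D) \ge p_0$ for $\mu_\phi$-a.e.\ $x$.

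There are at most $2^{L'}$ cylinders $D$, so I would apply Egorov's theorem to each of these finitely many convergences with tolerance $p_0/2$, producing sets $\mathcal{E}_D$ with $\mu_\phi(\mathcal{E}_D) > 1 - \eta/2^{L'}$ and thresholds $M_D^\ast$ on which the convergence is uniform beyond $M_D^\ast$. Setting $\mathcal{E}(\e,\eta,L'):=\bigcap_D \mathcal{E}_D$ and $M^\ast := \max_D M_D^\ast$ yields $\mu_\phi(\mathcal{E}(\e,\eta,L')) > 1-\eta$, and for $x \in \mathcal{E}(\e,\eta,L')$ and any $L''$ with $2^{cL''}\ge M^\ast$, discarding the trivial contribution from $j \le 2^{L'}$ gives $N_{D,L''}(x) \ge (p_0/2)\,2^{cL''} - 2^{L'}$. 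Choosing $n(\e,\eta,L')$ large enough that $2^{cn(\e,\eta,L')} \ge 2M^\ast$, $2^{L'+2} \le p_0\,2^{cn(\e,\eta,L')}$, and $p_0/4 \ge 2^{-\e\,n(\e,\eta,L')}$ hold simultaneously (all three are arrangeable since $L'$, $p_0$, $M^\ast$ are fixed once $\e,\eta,L'$ are) then forces $N_{D,L''}(x) \ge (p_0/4)\,2^{cL''} \ge 2^{(c-\e)L''}$ for every $L''> n(\e,\eta,L')$ and every $L'$-cylinder $D$, which is the claim.

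The only genuine difficulty is that $A_D$ is not a cylinder: the set $\mathcal{G}_\e$ is only a $G_\delta$, so Theorem~\ref{Thm-MR} does not apply to $A_D$ verbatim. This is what forces the appeal to Birkhoff plus Egorov rather than the Chebyshev/multi-relation scheme used in Lemmas~\ref{lemma2'} and~\ref{lemma1add}. A quantitatively sharper version of the bound---with doubly exponentially summable probability of failure in $L''$---can be obtained by approximating $\mathcal{G}_\e$ from above by the open truncations $\mathcal{G}_{N,\e}$, each of which is a finite union of length-$N$ cylinders, estimating the visits to $A_{D,N}:=D\cap\sigma^{-L'}\mathcal{G}_{N,\e}$ and to $A_{D,N}\setminus A_D$ separately via the second-moment computation in the proof of the Small Hitting Probability lemma, and then letting $N\to\infty$; this refinement is not needed for the statement as posed but would be available if later sections require a quantitative tail estimate.
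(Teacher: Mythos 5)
Your proof is correct and follows the same route as the paper: both reduce the claim to Birkhoff's ergodic theorem plus Egorov applied to the finitely many sets $D\cap\sigma^{-L'}\mathcal{G}_\e$, with Lemma~\ref{MassOfD} supplying the uniform lower bound $m(L')$ on their measure. The only cosmetic difference is that the paper applies Egorov once to the function $n_{L',\e}(x)=\max_D n_{D,L',\e}(x)$ whereas you apply it per cylinder $D$ and intersect, which amounts to the same thing since there are only $2^{L'}$ cylinders.
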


\begin{proof} Let
$$
      m(L') := \frac{1}{2\gamma^4}
2^{-L'\|\phi|_\infty}
$$
be the lower bound which appeared in the last lemma. For $x\in
\Sigma_2$, define
\[
n_{D, L',\e}(x):=\inf\left\{n\in\N\, :\,
  \frac1N\sum_{j=2^{L'}+1}^{2^{L'}+N}\1_{D \cap \sigma^{-L'} \mathcal{G}_\e}(\sigma^j x)> \frac{1}{2}
  m(L'), \forall N\ge n \right\}
  \]
and
$$
   n_{L', \e}(x) = \max_D n_{D, L',\e}(x).
$$
 By Lemma~\ref{MassOfD} and  Birkhoff's
ergodic theorem we have
$$
\mu_\phi (x\in\Sigma_2\, :\, n_{L', \e}(x)<\infty) =1.$$ So, for any
$\eta>0$, there exists an integer $\widehat{n}(L', \e, \eta)$ such
that the Borel set
$$
     \mathcal{E}(L', \e,\eta):= \left\{x\in \Sigma_2: \  n_{L', \e}(x) \le \widehat{n}(L', \e,
     \eta)\right\}
$$
satisfies
$$
   \mu_{\phi}(\mathcal{E}(L', \e,\eta)) >1-\eta.
$$
Fix $n(L',\e, \eta)\ge 1$ sufficiently large so that
$$
     \frac{1}{2} m(L') [2^{\e n(L', \e, \eta)}-2^{L'}] \ge 1,
     $$
     $$
      n(L', \e,\eta) -L' \ge n(\e), \\
      $$
$$ 2^{c
     n(L',\e,\eta)} -2^{L'} \ge \widehat{n}(L', \e, \eta).
$$
Assume $x\in \mathcal{E}(L', \e,\eta)$ and $L'' \ge
n(L',\e, \eta)$. Since $N: = 2^{cL''} -2^{L'}\ge \widehat{n}(L', \e,
\eta)$,  we have
\begin{eqnarray*}
\sum_{j=2^{L'}+1}^{2^{cL''}}\1_{D \cap \mathcal{G}_\e}(\sigma^j x)
  &\ge& \frac{1}{2}m(L') [2^{cL''} -2^{L'}]\\
  &\ge& \frac{1}{2}
  m(L') [2^{\e n(L', \e,\eta)} -2^{L'}]\cdot 2^{(c-\e)L''}\\
  &\ge & 2^{(c-\e)L''}.
\end{eqnarray*}

\end{proof}

Let $C$ be a cylinder of length $n$. If $C_n(\sigma^j x)=C$, we say that the
cylinder $C$ is seen in $x$ at time $j$. Let $\e>0$, $L'<L''$ and
let $D$ be a cylinder  of length $L'$. For any $x\in \Sigma_2$, we
define a finite tree, denoted $\mathcal{T}(x,D, L',L'',\e)$,  as
follows:
\begin{itemize} \item  the nodes of
$\mathcal{T}(x,D, L',L'',\e)$ are all those cylinders $D\star G'$,
where $G'$ is a $(\ell-L',\e)$-cylinder  with $L'+n(\e)\le \ell \le
L''$, each of which  contains at least one $(L'', 2\e)$-cylinder
seen in $x$ at a moment between the
time $2^{L'}+1$ and the time $2^{cL''}$;
\item a $\ell$-cylinder $D\star G'\in\mathcal{T}(x,D, L',L'',\e)$ is the
parent of a $(\ell+1)$-cylinder $D\star G''\in\mathcal{T}(x,D,
L',L'',\e)$ if and only if  $G'' \subset G'$.
\end{itemize}
Fix $L'<L''$. For $L'+n(\e)\le \ell\le L''$, denote
$$
     T(x,D,\ell, \e) := \sharp\{D\star G' \in \mathcal{T}(x,D, L',L'',\e):
     |D\star G'|=\ell\}.
$$

Theorem~\ref{TypicalSequence} implies that if $L''$ satisfies the
condition of Theorem~\ref{TypicalSequence} and if $x \in
\mathcal{E}(L', \e,\eta)$, then in between the times $2^{L'}+1$ and
$2^{cL''}$, for each $L'$-cylinder $D$ we can see at least
$2^{(c-\e)L''}$ cylinders of length $L''$ in $x$ of the form
\begin{equation}\label{L''-good}
       D\star G'      \qquad (G'\in \mathcal{C}_{L''-L',\e}).
\end{equation}
By the quasi-Bernoulli property (\ref{QBernoulliProperty}), it is
easy to see that if $L''$ is sufficiently larger than $L'$ then the
cylinders $D\star G'$ are good in the sense
\begin{equation}\label{2eGood}
G:= D\star G' \in \mathcal{C}_{L'', 2\e}.
\end{equation}
Thus we have
$$
T(x, D, L'',\e) \ge 2^{(c- \e)L''}.
$$
Next we will prove that with big probability, for all $L'+n(\e)\le \ell\le
L''$
$$
      T(x, D, \ell,\e) \ge 2^{(c- 2\e)\ell}.
$$

\subsection{Trees associated to a typical orbit} \

Assume that $L'' \ge n(L', \e,\eta)$.  Let $L'+n(\e)\le \ell\le
L''$, $x\in \mathcal{E}(L', \e,\eta)$, and $D$ be a $L'$-cylinder.
By definition $T(x, D, \ell,\e)$ is the number of {\it different
cylinders} of the form
$$
         D \star G' \quad \mbox{\rm with}\  G' \in \mathcal{C}_{\ell-L', \e}
$$
each of which contains at least one $(L'',2\e)$-cylinder belonging
to the list $C_{L''}(\sigma^{ j} x)$, $2^{L'} +1\le j \le 2^{cL''}$.

\begin{theorem}\label{ThmTree} There exists $n_0(\e)$ such that for
sufficiently large $L''$
and for  $L'+n_0(\e) \le \ell \le L''$ we have
    $$
    \mu_\phi\left\{x \in \mathcal{E}(L',\e,\eta): T(x, D,\ell,\e) \le 2^{(c- 2\e )(\ell-L')}
    \right\}
     \le 2^{-2^{(c-2\e)L''}}.
     $$

\end{theorem}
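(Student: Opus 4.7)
The plan is to bound the bad event $\{T(x,D,\ell,\e)\le N\}$ with $N:=2^{(c-2\e)(\ell-L')}$ by combining a combinatorial reduction to an avoidance event with a Dvoretzky-type estimate coming from the multi-relation property. On $\mathcal{E}(L',\e,\eta)$, Theorem \ref{TypicalSequence} produces at least $M:=2^{(c-\e)L''}$ visits $\sigma^j x\in D\cap\sigma^{-L'}\mathcal{G}_\e$ during the window $[2^{L'}+1,2^{cL''}]$; each such visit designates the good cylinder $G':=C_{\ell-L'}(\sigma^{j+L'}x)\in\mathcal{C}_{\ell-L',\e}$ and thereby the node $D\star G'$ at level $\ell$ of $\mathcal{T}(x,D,L',L'',\e)$. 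If $T(x,D,\ell,\e)\le N$, these $M$ visits occupy at most $N$ distinct $G'$'s, so there is an unused family $\mathcal{U}\subset\mathcal{C}_{\ell-L',\e}$ with $|\mathcal{U}|\ge L-N$, where $L:=|\mathcal{C}_{\ell-L',\e}|\ge(1-\e)2^{(h-\e)(\ell-L')}$ by Lemma \ref{GoodG}. Setting $M_\mathcal{U}:=\bigsqcup_{G'\in\mathcal{U}}G'$ and $A_\mathcal{U}:=D\cap\sigma^{-L'}(M_\mathcal{U}\cap\mathcal{G}_\e)$, the bad event forces $\sigma^j x\notin A_\mathcal{U}$ for every $j$ in the window.

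I next secure a mass bound on $A_\mathcal{U}$ that is uniform in $\ell$ and $\mathcal{U}$. The cylinders outside $\mathcal{U}$ contribute total mass at most $N\cdot 2^{-(h-\e)(\ell-L')}=2^{(c-h-\e)(\ell-L')}\to 0$ (we work in the relevant range $c\le h_{\mu_\phi}$), so $\mu_\phi(M_\mathcal{U})\ge 1-2\e$ for $\ell-L'$ large, whence $\mu_\phi(M_\mathcal{U}\cap\mathcal{G}_\e)\ge 1-3\e$ and by quasi-Bernoulli (\ref{QBernoulliProperty})
\[
\mu_\phi(A_\mathcal{U})\ge c_0:=\gamma^{-3}\mu_\phi(D)(1-3\e).
\]
Since $\mathcal{G}_\e$ is a $G_\delta$, I approximate it from inside by a clopen set $F$ (a finite union of cylinders of lengths bounded by some $N^\ast=N^\ast(\e)$) with $\mu_\phi(\mathcal{G}_\e\setminus F)<\e$; regularity of $\mu_\phi$ permits this. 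The corresponding set $\tilde A_\mathcal{U}:=D\cap\sigma^{-L'}(M_\mathcal{U}\cap F)\subset A_\mathcal{U}$ is a finite union of cylinders of common length $n\le L'+\max(\ell-L',N^\ast)\le 2L''$ with $\mu_\phi(\tilde A_\mathcal{U})\ge c_0/2$.

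Subsampling times $j_k=2^{L'}+1+k(n+d)$ with $d=\lfloor\omega n\rfloor$ and $\omega$ large enough that $c\beta^d\le c_0/8$, and expanding $\tilde A_\mathcal{U}^c$ into its constituent cylinders, Theorem \ref{Thm-MR} applied exactly as in the proof of Lemma \ref{lemma2'} yields
\[
\mu_\phi\bigl(\sigma^{j_k}x\notin\tilde A_\mathcal{U}\ \forall\,0\le k\le K\bigr)\le\gamma^3(1-c_0/2)^{K+1}(1+c\beta^d)^{K+1}\le\exp(-c_0 K/4),
\]
with $K\ge C\cdot 2^{cL''}/L''$ for an absolute constant $C>0$. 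Taking a union bound over the $\binom{L}{N}\le L^N\le 2^{(h+\e)L''\cdot 2^{(c-2\e)L''}}$ choices of $\mathcal{U}$,
\[
\mu_\phi(\text{bad event})\le 2^{(h+\e)L''\cdot 2^{(c-2\e)L''}}\cdot\exp\bigl(-c_0 C\cdot 2^{cL''}/(4L'')\bigr),
\]
and since the exponent of the second factor dwarfs the logarithm of the first by a factor $2^{2\e L''}/L''^2\to\infty$, the product is at most $2^{-2^{(c-2\e)L''}}$ for $L''$ sufficiently large.

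The principal obstacle is the $G_\delta$ nature of $\mathcal{G}_\e$: Theorem \ref{Thm-MR} is stated for cylinders and cannot be applied to $A_\mathcal{U}$ directly. The inner clopen approximation $\tilde A_\mathcal{U}$ sidesteps this difficulty, and since $N^\ast$ depends only on $\e$, the effective cylinder length stays $O(L'')$, allowing a single choice of correlation gap $d$ to work uniformly across the range $L'+n_0(\e)\le\ell\le L''$.
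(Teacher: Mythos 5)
The combinatorial reduction is sound and your route is genuinely different from the paper's: you pass to the complementary (large) ``unused'' family $\mathcal{U}$ and bound the probability that the orbit avoids $A_{\mathcal{U}}$ over the window, whereas the paper fixes the $K$ cylinders that get hit, passes to stopping times along an arithmetic progression, and uses the generalized quasi-Bernoulli property (Lemma~\ref{GQB}) to bound the probability that all the good hits land in the small set $D\star\widetilde{C}$ (Lemmas~\ref{lemmaI}--\ref{lemmaII}). Your avoidance estimate only needs the raw multi-relation (as in Lemma~\ref{lemma2'}), so, if it worked, it would be somewhat more elementary than the paper's proof.

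However, the clopen approximation step is a genuine gap, and it is fatal as written. You need $\tilde A_{\mathcal{U}}\subset A_{\mathcal{U}}$, hence a \emph{non-empty} clopen $F\subset\mathcal{G}_\e$ with $\mu_\phi(\mathcal{G}_\e\setminus F)<\e$. But $\mathcal{G}_\e=\bigcap_{n\ge n(\e)}\bigcup_{C\in\mathcal{C}_{n,\e}}C$ has empty interior whenever $\phi$ is not cohomologous to a constant: any cylinder $C$ contains points $y$ (e.g.\ $y=C\star 1^\infty$ when $-\phi(1^\infty)>h+\e$) for which $-\frac1n\log\mu_\phi(C_n(y))\notin[h-\e,h+\e]$ for large $n$, so no cylinder lies inside $\mathcal{G}_\e$. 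Consequently the only clopen subset of $\mathcal{G}_\e$ is $\varnothing$, and inner regularity only produces a compact (not clopen) approximant, which cannot be fed into Theorem~\ref{Thm-MR}. Your sentence ``regularity of $\mu_\phi$ permits this'' is exactly where the argument breaks.

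The fix that keeps your strategy intact is to dispense with $\mathcal{G}_\e$ in the definition of the target set and instead use the cylinder-level condition directly: put
$\tilde A_{\mathcal{U}}:=\bigl\{z: C_{L'}(z)=D,\ C_{\ell-L'}(\sigma^{L'}z)\in\mathcal{U},\ C_{L''}(z)\in\mathcal{C}_{L'',2\e}\bigr\}$,
a finite union of $L''$-cylinders. Then (i) $\sigma^j x\in\tilde A_{\mathcal{U}}$ with $j$ in the window produces a node $D\star G'$ with $G'\in\mathcal{U}$, so the bad event forces the orbit to avoid $\tilde A_{\mathcal{U}}$; and (ii) $D\cap\sigma^{-L'}(M_{\mathcal{U}}\cap\mathcal{G}_\e)\subset\tilde A_{\mathcal{U}}$ once $L''$ is large relative to $L'$ (because $C_{L''}(z)=D\star C_{L''-L'}(\sigma^{L'}z)$ is $(L'',2\e)$-good by quasi-Bernoulli as in (\ref{2eGood})), whence $\mu_\phi(\tilde A_{\mathcal{U}})\ge\gamma^{-3}\mu_\phi(D)(1-3\e)$ by the argument of Lemma~\ref{MassOfD}, uniformly in $\ell$ and $\mathcal{U}$. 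With this replacement the multi-relation applies to cylinders of length $L''$ and the rest of your estimate goes through (the dependence of $c_0$ on $\mu_\phi(D)$, i.e.\ on $L'$, is harmless since $L''$ is allowed to be large depending on $L'$).
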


In the rest of this subsection and the next two subsections we
prepare for the proof of this theorem, which will be presented  in
the subsection \ref{75}. We need to estimate the measures
$$
    \mu_\phi\left\{x \in \mathcal{E}(L',\e,\eta): T(x, D, \ell, \e)
    =K\right\}
$$
for  $K \le 2^{(c- 2\e )(\ell-L')}$. We will do that in the
following.

For $1\le t \le L''+ d$ (where $d:=[\omega L'']$), let
$$
   \Lambda_t =\left\{ 2^{L'} + k (L''+d) + t: 0\le k \le \frac{2^{cL''}-2^{L'}}{L''+d}
   \right\}.
$$
Fix $K$ cylinders $C_1, \cdots, C_K \in \mathcal{C}_{\ell-L',\e}$.
Let
\begin{eqnarray*}
     &&\Upsilon_t(x;C_1, C_2, \cdots, C_K) = \\
      &&\quad \quad \sharp\left\{ j \in \Lambda_t:
          C_{L''}(\sigma^j x)
         \in \mathcal{C}_{L'', 2 \e} \ \mbox{\rm implies} \
            C_{L''}(\sigma^j x) \subset D \star \widetilde{C}
          \right\}
\end{eqnarray*}
where $$ D\star \widetilde{C}: =\bigcup_{i=1}^K D\star C_i.
$$

$T(x, D, \ell, \e)= K$ means there exist $K$ different
$(\ell-L', \e)$-cylinders, say $C_1, C_2, \cdots, C_K$ such that
all
 $(L'',2\e)$-cylinders seen in $x$ in between the times $2^{L'}+1$ and $2^{cL''}$
are
contained in some of the $D\star C_i$'s, i.e.\ contained in $D\star
\widetilde{C}$. On the other hand, by Theorem~\ref{TypicalSequence},
there are at least $2^{(c-\e)L''}$ of the $(L'',2\e)$-cylinders
seen in $x$ in between the times $2^{L'}+1$ and $2^{cL''}$.
So, for at least one $t$ the number of the $(L'',2\e)$-cylinders seen at
moments belonging to $\Lambda_t$ and contained in
$D\star \widetilde{C}$ is
at least
$\displaystyle \frac{2^{(c-\e)L''}}{L''+d}$.  Thus we get
\begin{eqnarray*}
 \{x \in \mathcal{E}(L', \e,\eta): T(x, D, \ell, \e)= K\}
   \subset
      \bigcup_{t=1}^{L''+ d} \bigcup_{C_1, \cdots,C_K}
          E_t(C_1, \cdots, C_K)
\end{eqnarray*}
where the second union is taken over all possible collections $C_1,
\cdots, C_K$ of  $(\ell-L', \e)$-cylinders, and where
$$
         E_t(C_1, \cdots, C_K)=  \left\{x \in \mathcal{E}(L', \e,\eta): \Upsilon_t(x; C_1, C_2, \cdots,
         C_K) \ge \frac{2^{(c-\e)L''}}{L''+d}
           \right\}.
$$
Therefore, using the fact that the number of
$(\ell-L',\e)$-cylinders is at most $2^{(h+\e)(\ell-L')}$, we have
proved
\begin{lemma}\label{lemmaI}
\begin{eqnarray*}
   & &  \mu_\phi (x \in \mathcal{E}(L', \e,\eta): T(x, D, \ell, \e)=
    K) \nonumber \\
     & \le & (L''+d) \left(
                    \begin{array}{c}
                      2^{(h+\e)(\ell-L')} \\
                      K \\
                    \end{array}
                  \right)
                  \sup_{t; C_1,\cdots,C_K} \mu_\phi(E_t(C_1, \cdots,
                  C_K)).
\end{eqnarray*}
\end{lemma}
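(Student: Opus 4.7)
The plan is a two-step union bound starting from the inclusion
\[
\{x \in \mathcal{E}(L', \e,\eta): T(x, D, \ell, \e)= K\} \subset \bigcup_{t=1}^{L''+ d} \bigcup_{C_1, \cdots,C_K} E_t(C_1, \cdots, C_K),
\]
which was established immediately before the statement by the pigeonhole argument on the moments $j$ modulo $L''+d$ combined with Theorem \ref{TypicalSequence}.

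First I would apply the subadditivity of $\mu_\phi$ to the outer union over $t\in\{1,\dots,L''+d\}$. Since this is a union of exactly $L''+d$ sets, the measure is at most $(L''+d)$ times the maximum over $t$, which produces the leading factor $L''+d$ and replaces the outer union by a supremum over $t$.

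Next, for each fixed $t$, I would apply subadditivity again over the inner union, indexed by unordered collections $\{C_1,\dots,C_K\}$ of distinct $(\ell-L',\e)$-cylinders (distinctness is built into the definition of $T(x,D,\ell,\e)=K$). By Lemma \ref{GoodG} the total number of $(\ell-L',\e)$-cylinders is at most $2^{(h+\e)(\ell-L')}$, so the number of such $K$-element subsets is at most $\binom{2^{(h+\e)(\ell-L')}}{K}$. Bounding each $\mu_\phi(E_t(C_1,\dots,C_K))$ by the supremum over $t$ and over the choice of collection then yields the stated inequality.

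There is essentially no obstacle: the lemma is a bookkeeping step whose only role is to package the event $\{T(x,D,\ell,\e)=K\}$ into a form to which the multi-relation property \eqref{MR} and the small-hitting-probability estimate (Lemma \ref{lemma1add}) can be brought to bear in order to control $\mu_\phi(E_t(C_1,\dots,C_K))$. The one point to get right is that the cylinders $C_1,\dots,C_K$ may be taken unordered, so that the enumeration cost is the binomial coefficient rather than a falling factorial, matching the bound in the displayed inequality.
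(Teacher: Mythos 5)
Your proposal is correct and follows exactly the paper's argument: the inclusion into $\bigcup_t\bigcup_{C_1,\dots,C_K}E_t(C_1,\dots,C_K)$ established just before the lemma, a union bound giving the factor $L''+d$, and a count of $K$-element subsets of the at most $2^{(h+\e)(\ell-L')}$ cylinders in $\mathcal{C}_{\ell-L',\e}$ (Lemma \ref{GoodG}) giving the binomial coefficient. Nothing is missing.
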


\subsection{Generalized quasi Bernoulli property} \

In order to estimate the measure $\mu_\phi(E_t(C_1, \cdots,
                  C_K))$, we need the following  generalized quasi Bernoulli property.



Let $A$ be any cylinder and $L\ge 1$ be any integer. For $x\in A$,
we define
    $$
       \iota_A(x) =\inf \{|A|+ k(L + d(L))  \ge 0: C_{L}(\sigma^{|A|+k(L+d(L))}x) \in \mathcal{C}_{L,
       \e}\}
    $$
where $d(L)=\lfloor \omega L\rfloor$ for some big $\omega >1$ (see
Theorem~\ref{Thm-MR}).

\begin{lemma} [Generalized quasi Bernoulli property]
\label{GQB} Let $A$ be any cylinder, $G \in \mathcal{C}_{L, \e}$ and
$\iota_A$ be defined as above. Then
   $$
    \mu_\phi(x\in A: C_L(\sigma^{\iota_A(x)}x) = G)
    \le \frac{\gamma^3}{1-2\e} \mu_\phi(A) \mu_\phi(G).
    $$
\end{lemma}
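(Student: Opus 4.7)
The plan is to decompose the event according to the value of $\iota_A(x)$ and to apply the quasi-Bernoulli property (\ref{QBernoulliProperty}) together with the multi-relation inequality of Theorem~\ref{Thm-MR}. Set $U_L := \bigcup_{C \in \mathcal{C}_{L,\e}} C$; for $L \ge n(\e)$ one has $\mathcal{G}_\e \subset U_L$, so by Lemma~\ref{GoodG}, $p_L := \mu_\phi(U_L^c) \le \e$. By the definition of $\iota_A$, the event $\{x \in A : C_L(\sigma^{\iota_A(x)}x) = G\}$ is the disjoint union over $k \ge 0$ of the events $F_k$ consisting of those $x \in A$ for which $\sigma^{|A|+j(L+d)}x \in U_L^c$ for $j = 0, \dots, k-1$ and $\sigma^{|A|+k(L+d)}x \in G$.

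For each fixed $k$, I would first apply (\ref{QBernoulliProperty}) to separate $A$ from the shifted event, obtaining $\mu_\phi(F_k) \le \gamma^3\,\mu_\phi(A)\,\mu_\phi(E_k)$ with
\[
E_k \;=\; \bigcap_{j=0}^{k-1}\sigma^{-j(L+d)}U_L^c \;\cap\; \sigma^{-k(L+d)}G.
\]
Writing $U_L^c$ as a disjoint union of bad $L$-cylinders $B_\alpha$ and applying the multi-relation inequality (\ref{MR**}) to the $k+1$ cylinders $B_{\alpha_0},\dots,B_{\alpha_{k-1}},G$ of length $L$ separated by gaps of size $d=\lfloor \omega L\rfloor$, then summing over the choices $\vec\alpha$, one obtains
\[
\mu_\phi(E_k) \;\le\; (1+c\beta^L)^{k+1}\,p_L^{\,k}\,\mu_\phi(G).
\]

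Finally, I would sum over $k \ge 0$. The resulting geometric series has ratio $(1+c\beta^L)p_L$, which is less than $2\e$ once $L$ is large enough (because $p_L \le \e$ and $c\beta^L \to 0$), so the series sums to at most $(1+c\beta^L)/(1-2\e)$. Reinserting the $\gamma^3\,\mu_\phi(A)$ factor from the quasi-Bernoulli step yields $\mu_\phi(A\,\cap\,\{C_L(\sigma^{\iota_A(x)}x)=G\}) \le \gamma^3\,(1+c\beta^L)\,\mu_\phi(A)\,\mu_\phi(G)/(1-2\e)$, and the residual factor $(1+c\beta^L)\to 1$ is absorbed by taking $L$ sufficiently large relative to $\e$, giving the announced bound. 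The main bookkeeping obstacle is precisely arranging the multi-relation error $(1+c\beta^L)$ at each level and the bad-measure bound $p_L \le \e$ to combine into the denominator $1-2\e$ rather than something worse; this is what forces the implicit assumption $L \ge L_0(\e)$.
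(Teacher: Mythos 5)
Your argument is essentially the paper's: decompose the event into the pieces $F_k$ (the paper calls them $A_i$) according to $\iota_A(x)=|A|+k(L+d)$, peel off $A$ by quasi-Bernoulli, control each remaining product of $L$-cylinders by the multi-relation estimate, bound the bad-cylinder contribution by $\mu_\phi(\mathcal{G}_\e^c)\le\e$, and sum the geometric series. The only divergence is bookkeeping: you assign each $E_k$ an error $(1+c\beta^L)^{k+1}$, whereas the correct count is $(1+c\beta^L)^{k}$ (one factor per \emph{gap}, and $E_0=G$ has no gaps and needs no multi-relation at all, only the Gibbs/quasi-Bernoulli step). With the exponent $k$, the geometric sum is $\sum_{k\ge 0}\bigl((1+c\beta^L)p_L\bigr)^k\le 1/(1-2\e)$ directly, and you get the stated constant $\gamma^3/(1-2\e)$ with no leftover factor and no need to take $L$ large to "absorb" anything. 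The paper does exactly this, so your stray $(1+c\beta^L)$ is a slip rather than a genuinely different route.
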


\begin{proof} Notice that
  $$\{x\in A: C_L(\sigma^{\iota_A(x)}x) = G\}
  = \bigcup_{i=0}^\infty A_i
  $$
  where
  $$ A_i= \{x\in A: C_L(\sigma^{\iota_A(x)}x) = G, \iota_A(x)
  =|A|+i (L + d)\}.$$
For $i=0$, we have
      $$
          A_0 = A \star G.
      $$
So, by the Gibbs property (\ref{GibbsProperty}) we get
$$
    \mu_\phi(A_0) \le \gamma^3 \mu_\phi(A) \mu_\phi(G).
$$
For $i\ge 1$, we have
$$
     A_i \subset
     \bigcup_{B_1, \cdots, B_i \not\in \mathcal{C}_{L,\e} } A \star B_1 \star_d \cdots \star_d B_i
     \star_d G.
$$
So, by the multi-relation (\ref{MR}) we get
$$
   \mu_{\phi}(A_i) \le \gamma^3 (1+\beta^d)^i \mu_\phi(A) \mu_\phi(G)
   \left(\sum_{B \not\in \mathcal{C}_{L,\e}} \mu_{\phi}(B)\right)^{\hspace{-0.1cm}i}.
$$
Since $\sum_{B \not\in \mathcal{C}_{L,\e}} \mu_{\phi}(B) \le
\mu_{\phi}(\mathcal{G}_{L, \e}^c)\le \e$, we get
$$
   \mu_{\phi}(A_i) \le \gamma^3(\e (1+\beta^d))^i \mu_\phi(A) \mu_\phi(G).
$$
Thus
\begin{eqnarray*}
   \mu_\phi(x\in A: C_L(\sigma^{\iota_A(x)}x) = G)
   & \le & \gamma^3 \mu_\phi(A) \mu_\phi(G) \sum_{i=0}^\infty (\e
   (1+\beta^d))^i\\
   &=& \frac{\gamma^3}{1-\e(1+\beta^d)} \mu_\phi(A) \mu_\phi(G).
\end{eqnarray*}
We finish the proof by observing that $\beta<1.$
\end{proof}

\subsection{Estimation of $\mu_\phi(E_t(C_1, \cdots,
                  C_K) )$}\

Let $t$  be fixed.
We define inductively
\begin{eqnarray*}
  \iota_1(x) & = &  \inf\{ j \in \Lambda_t: C_{L''}(\sigma^j x)\in \mathcal{C}_{L'', 2\e} \};\\
  \iota_{k+1}(x) & = &  \inf\{ j \in \Lambda_t: j> \iota_k(x); C_{L''}(\sigma^j x) \in \mathcal{C}_{L'', 2\e}
   \}.
\end{eqnarray*}
Let
\begin{equation}\label{tildeN}
     \widetilde{n}:= \frac{2^{(c-\e)L''}}{L''+d}.
\end{equation}
We have
$$
      \iota_i(x) <\infty \quad \mbox{\rm if} \ x \in E_t(C_1,\cdots,
      C_K), \mbox{and \ if} \  i \le
      \widetilde{n}.
$$
Then
\begin{equation}
\mu_\phi(E_t(C_1, \cdots,C_K)) \le \sum \mu_\phi\left(x:
\sigma^{\iota_i(x)} x \in F_i,   1\le \forall i\le \widetilde{n}
\right)
\end{equation}
where the sum is taken over all $F_i$'s with the property
$$
       F_i \in \mathcal{C}_{L'', 2 \e}, \quad F_i \subset
       D \star \widetilde{C}\quad  (1\le \forall i \le \widetilde{n}).
$$

\begin{lemma} Let $n\ge 1$ and let $F_i \in \mathcal{C}_{L'', 2\e}$ with $1\le i\le
n$. We have
\begin{eqnarray*}
     \mu_\phi\left(x: C_{L''}(\sigma^{\iota_i(x)} x) = F_i; i=1,2, \cdots, n\right)
\le  \left(\frac{\gamma^3}{1-4\e}\right)^{\vspace{-0.3cm}n}
        \prod_{i=1}^n
        \mu_\phi (F_i).
\end{eqnarray*}
\end{lemma}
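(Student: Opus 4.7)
The plan is to induct on $n$, viewing the statement as an iterated version of the Generalized quasi-Bernoulli property of Lemma~\ref{GQB}.

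For the base case $n=1$ I would essentially mimic the proof of Lemma~\ref{GQB} with the ``good'' family $\mathcal{C}_{L,\epsilon}$ replaced throughout by $\mathcal{C}_{L'',2\epsilon}$. Namely, I decompose $\{x : C_{L''}(\sigma^{\iota_1(x)}x) = F_1\}$ into the disjoint subsets $A_i$ on which $\iota_1(x) = 2^{L'} + i(L'' + d) + t$. On $A_i$ the point $x$ must see a cylinder outside $\mathcal{C}_{L'', 2\epsilon}$ at each of the $i$ earlier windows of $\Lambda_t$, and land in $F_1$ at the $i$-th window. Theorem~\ref{Thm-MR} then bounds $\mu_\phi(A_i)$ by $\gamma^3 (1+c\beta^d)^i \mu_\phi(F_1)\left(\sum_{B \notin \mathcal{C}_{L'',2\epsilon}} \mu_\phi(B)\right)^i$. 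The inner sum is at most $2\epsilon$ (since $\mu_\phi(\mathcal{G}_\epsilon)>1-\epsilon$ and $\mathcal{G}_\epsilon \subset \bigcup_{C \in \mathcal{C}_{L'',2\epsilon}} C$ after enlarging $n(\epsilon)$ if necessary), and summing the geometric series in $i$, with $\omega$ chosen so that $2\epsilon(1+c\beta^d) \le 4\epsilon$, yields the required $\frac{\gamma^3}{1-4\epsilon}\mu_\phi(F_1)$.

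For the inductive step $n-1 \Rightarrow n$, I would partition the event $\{x : C_{L''}(\sigma^{\iota_k(x)}x)=F_k,\ k=1,\dots,n\}$ according to the value of $\iota_1(x) = 2^{L'}+i(L''+d)+t$. For each fixed $i$ the orbit decomposes into three blocks of windows along $\Lambda_t$, each pair separated by a gap of length $\ge d$: (a) $i$ initial ``bad'' windows, each lying in some $L''$-cylinder outside $\mathcal{C}_{L'', 2\epsilon}$; (b) a single ``hit'' window in which $\sigma^{\iota_1(x)}x$ lies in $F_1$; and (c) the analogous multi-hit event for $\iota_2,\dots,\iota_n$ with targets $F_2,\dots,F_n$, taking place on the orbit of $\sigma^{\iota_1(x)+L''}x$ and parametrised by the obvious shifted copy of $\Lambda_t$. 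A further application of Theorem~\ref{Thm-MR} factorises the joint measure as the product of the three contributions, up to a factor $\gamma^3(1+c\beta^d)^i$. Block (a) contributes at most $(2\epsilon)^i$, block (b) contributes $\mu_\phi(F_1)$, and the induction hypothesis bounds block (c) by $\left(\frac{\gamma^3}{1-4\epsilon}\right)^{n-1}\prod_{k=2}^n \mu_\phi(F_k)$. Summing over $i\ge 0$ extracts exactly one further factor $\frac{\gamma^3}{1-4\epsilon}\mu_\phi(F_1)$, closing the induction.

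The main obstacle I anticipate is the bookkeeping around the induction step: one must carefully verify that the tail subproblem for $\iota_2,\dots,\iota_n$ really is an instance of the same statement at the shifted starting point, that the remaining portion of $\Lambda_t$ retains its arithmetic-progression structure with step $L''+d$ after the shift by $\iota_1(x)+L''$, and that the multi-relation genuinely decouples the three blocks (a), (b), (c) uniformly. The parameter $\omega$ must be fixed once and for all, large enough that $1+c\beta^d$ is close to $1$ so that $2\epsilon(1+c\beta^d)\le 4\epsilon$ holds independently of the index $i$; this guarantees that the constants in the geometric-series estimates remain uniform in $n$ as the induction proceeds.
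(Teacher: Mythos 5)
Your proposal is correct in spirit but takes a genuinely different inductive decomposition from the paper, and the paper's route is materially cleaner. You peel off the \emph{first} hit $\iota_1$, partition according to its value $2^{L'}+i(L''+d)+t$, and factor the event into three blocks (a)(b)(c) before applying the induction hypothesis to the shifted tail. The paper instead peels off the \emph{last} hit $\iota_{n+1}$, and its key observation --- which you do not use --- is that the conditioning event $\mathcal{Q}_n=\{C_{L''}(\sigma^{\iota_i(x)}x)=F_i,\ i\le n\}$ is automatically a \emph{disjoint union of cylinders} $A_j$ (since each $\iota_i(x)$ is determined by a cylinder containing $x$). On each $A_j$ the event $\{C_{L''}(\sigma^{\iota_{n+1}(x)}x)=F_{n+1}\}$ coincides with $\{C_{L''}(\sigma^{\iota_{A_j}(x)}x)=F_{n+1}\}$, so Lemma~\ref{GQB} applies verbatim and conditionally, cylinder by cylinder, and summing over $j$ immediately gives $\mu_\phi(\mathcal{Q}_{n+1})\le \frac{\gamma^3}{1-4\e}\mu_\phi(\mathcal{Q}_n)\mu_\phi(F_{n+1})$. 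This collapses the whole bookkeeping you anticipate into one line.

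Two places in your version would need more care than you allot them. First, Theorem~\ref{Thm-MR} as stated factorises intersections of \emph{cylinders}, not of arbitrary events; your block (c) is an infinite union of cylinders (indexed by the positions $i_2<\dots<i_n$ and the bad cylinders in the gaps), and coordinating this expansion with the expansion of block (a) before applying the multi-relation and then re-summing is precisely the work that Lemma~\ref{GQB} packages once and for all --- your sketch treats this step as a black box when it isn't one for you. Second, the induction hypothesis must be verified for a \emph{shifted} instance of the lemma (the progression $\Lambda_t$ restarting at $\iota_1(x)+L''+d$ rather than at $2^{L'}+t$); this is fine by shift-invariance of $\mu_\phi$ but must be stated explicitly, since the lemma as written pins down $\Lambda_t$. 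Neither issue is fatal, but together they explain why the paper's ``condition on $\mathcal{Q}_n$ being a union of cylinders and peel off the back'' strategy is the shorter path.
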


\begin{proof} We prove it by induction on $n$. Let
$$
       \mathcal{Q}_n = \{x: C_{L''}(\sigma^{\iota_i(x)} x) = F_i; i=1,2, \cdots,
       n\}.
$$
Write
$$
   \mathcal{Q}_{n+1}
      = \mathcal{Q}_n \cap \{x: C_{L''}(\sigma^{\iota_{n+1}(x)} x) =
      F_{n+1}\}.
$$
Notice that $\mathcal{Q}_n$ is a disjoint union of cylinders, say
$$
       \mathcal{Q}_n = \bigcup A_j.
$$
Furthermore if $x \in A_j$ we have
   $$
        C_{L''}(\sigma^{\iota_{n+1}(x)}x)= F_{n+1}
        \Longleftrightarrow C_{L''} (\sigma^{\iota_{A_j}(x)} x)=F_{n+1}.
   $$
Thus, using the generalized Bernoulli property (Lemma~\ref{GQB}), we
have
\begin{eqnarray*}
   \mu_\phi(\mathcal{Q}_{n+1})
   & = &  \sum_j  \mu_\phi (x\in A_j,
      C_{L''}(\sigma^{\iota_{A_j}(x)} x)=
   F_{n+1})\\
    & \le  &  \frac{\gamma^3}{1-4\e} \sum_j  \mu_\phi (A_j)
      \mu_\phi(
   F_{n+1})\\
    & = & \frac{\gamma^3}{1-4\e}  \mu_\phi (\mathcal{Q}_n)
      \mu_\phi(
   F_{n+1}).
\end{eqnarray*}
\end{proof}

\begin{lemma}\label{lemmaII}
   $$
   \mu_\phi(E_t(C_1, \cdots, C_K))
   \le \left(2 \gamma^6
         K
         2^{(-h+\e)(\ell-L')}\right)^{\frac{2^{(c-\e)L''}}{L''+d}}.
   $$
\end{lemma}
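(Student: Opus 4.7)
My plan is to start from the pre-displayed bound
$$
\mu_\phi(E_t(C_1,\ldots,C_K))\le \sum_{F_1,\ldots,F_{\widetilde n}}\mu_\phi\!\left(x:\sigma^{\iota_i(x)}x\in F_i,\ 1\le i\le\widetilde n\right),
$$
where the sum ranges over all tuples $(F_1,\ldots,F_{\widetilde n})$ with $F_i\in\mathcal{C}_{L'',2\e}$ and $F_i\subset D\star\widetilde C$, and where $\widetilde n=2^{(c-\e)L''}/(L''+d)$. Applying the preceding lemma to each tuple separately turns the summand into $\big(\gamma^3/(1-4\e)\big)^{\widetilde n}\prod_{i=1}^{\widetilde n}\mu_\phi(F_i)$, so the multiple sum factorises into $\widetilde n$ identical single sums of the form $\sum_{F\in\mathcal{C}_{L'',2\e},\,F\subset D\star\widetilde C}\mu_\phi(F)$.

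The key step is then to bound this single sum. Since distinct cylinders of the same length are disjoint, the single sum is at most $\mu_\phi(D\star\widetilde C)\le\sum_{i=1}^K\mu_\phi(D\star C_i)$. The quasi-Bernoulli inequality \eqref{QBernoulliProperty} gives $\mu_\phi(D\star C_i)\le\gamma^3\mu_\phi(D)\mu_\phi(C_i)\le\gamma^3\mu_\phi(C_i)$, and each $C_i\in\mathcal{C}_{\ell-L',\e}$ satisfies $\mu_\phi(C_i)\le 2^{-(h-\e)(\ell-L')}$ by the very definition of an $(\ell-L',\e)$-cylinder. Hence
$$
\sum_{F\in\mathcal{C}_{L'',2\e},\,F\subset D\star\widetilde C}\mu_\phi(F)\le K\gamma^3\,2^{(-h+\e)(\ell-L')}.
$$

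Combining the two estimates yields
$$
\mu_\phi(E_t(C_1,\ldots,C_K))\le\left(\frac{\gamma^3}{1-4\e}\right)^{\widetilde n}\!\left(K\gamma^3\,2^{(-h+\e)(\ell-L')}\right)^{\widetilde n}.
$$
Since we have already restricted to $\e<1/2$ (hence in particular may assume $\e$ small enough that $1/(1-4\e)\le 2$), the factor $\gamma^3/(1-4\e)$ is bounded by $2\gamma^3$, and the exponents combine to give $\big(2\gamma^6 K\,2^{(-h+\e)(\ell-L')}\big)^{\widetilde n}$, which is exactly the claimed bound. There is no real obstacle here beyond bookkeeping: the previous lemma has already absorbed all the hard work on the correlation decay, so the remaining task is merely to reduce the multi-cylinder sum to the single-cylinder sum by factorisation and to apply quasi-Bernoulli once.
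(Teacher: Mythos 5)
Your proof is correct and takes essentially the same route as the paper: apply the preceding lemma to factor out $\bigl(\gamma^3/(1-4\e)\bigr)^{\widetilde n}$, then bound the single-cylinder sum by $K\gamma^3 2^{(-h+\e)(\ell-L')}$ via the quasi-Bernoulli property and the defining estimate for $(\ell-L',\e)$-cylinders, and combine. The only nit is that $1/(1-4\e)\le 2$ actually requires $\e\le 1/8$ rather than $\e<1/2$, but the paper's own proof (which ends with $\gamma^6/(1-4\e)$ while the lemma statement reads $2\gamma^6$) makes the same silent replacement, so this is a shared harmless imprecision rather than a gap in your argument.
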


\begin{proof} By the last lemma, we have
$$
\mu_\phi(E_t(C_1, \cdots, C_K)) \le \left( \frac{\gamma^3}{1-4\e}
\right)^{\vspace{-0.3cm}\widetilde{n}}
        \sum_{F_1,\cdots, F_{\widetilde{n}}} \prod_{i=1}^{\widetilde{n}}
        \mu_\phi (F_i)
   $$
where the sum is taken over all collections $F_1, \dots, F_n$'
consisting of different $(L'', 2\e)$-cylinder contained in $D
\star\widetilde{C}$. Recall that $\tilde{n}$ is defined in
(\ref{tildeN}).

Since $\mu_\phi (D\star C_i)\le \gamma^3 \mu_\phi(D) \mu_\phi(C_i)$
and  $\mu_\phi(C_i) \le 2^{(-h+\e)(\ell-L')}$, we have
$$
    \sum_{F \in \mathcal{C}_{L'', 2\e}, F\subset D \star
    \widetilde{C}}\mu_\phi(F)
    \le \mu_\phi(D\star \widetilde{C}) \le K \gamma^3 2^{(-h+\e)(\ell-L')}.
$$
So,
$$
\mu_\phi(E_t(C_1, \cdots, C_K)) \le \left(\frac{\gamma^6}{1-4\e}
         K 2^{(-h+\e)(\ell-L')}\right)^{\vspace{-0.3cm}\widetilde{n}}.
   $$
\end{proof}

\subsection{Number of branches of a tree: Proof of Theorem~\ref{ThmTree}}
\label{75}\

By Lemmas~\ref{lemmaI} and \ref{lemmaII}, we have
\begin{eqnarray}\label{Thm7-4-1}
& & \hspace{-4em} \mu_\phi \Big (x\in \mathcal{E}(L',\e,\eta ):
     T(x, D, \ell, \e)=K \Big )     \nonumber \\
&\le &
 (L''+d) \left(
                    \begin{array}{c}
                      2^{(h+\e)(\ell-L')} \\
                      K \\
                    \end{array}
                  \right)
                  \left(2\gamma^6
         K
         2^{-(h-\e)(\ell-L')}\right)^{\frac{2^{(c-\e)L''}}{L''+d}}.
\end{eqnarray}
For $K\le 2^{(c-2\e )(\ell-L')}$ and  for  $\ell \le L''$, we have
on one hand
\begin{equation}\label{Thm7-4-2}
   \left(
                    \begin{array}{c}
                      2^{(h+\e)(\ell-L')} \\
                      K \\
                    \end{array}
                  \right)
                  \le 2^{(h+\e)(\ell-L') K}
                  \le 2^{(h+\e)L'' 2^{(c-2\e)L''}};
\end{equation}
and on the other hand
$$
     K 2^{-(h-\e)(\ell-L')} \le 2^{(c-h-\e )(\ell-L')},
$$
which implies that there exists an integer $n_0(\e)$ such that if
$\ell-L'\ge n_0(\e)$ we have
\begin{equation}\label{Thm7-4-3}
       2\gamma^6 K 2^{-(h-\e)(\ell-L')} \le \frac{1}{2},
       \quad \text{i.e.}\ \
         2\gamma^6 2^{-(h-c-\e)(\ell-L')} \le \frac{1}{2}.
\end{equation}
So, from (\ref{Thm7-4-1}), (\ref{Thm7-4-2}) and (\ref{Thm7-4-3}) we
get
\begin{eqnarray}\label{Thm7-4-4}
 & & \hspace{-4em} \mu_\phi \Big ( x\in
\mathcal{E}(L',\e,\eta):
      T(x, D, \ell, \e)=K \Big )    \nonumber  \\
     & \le&  (L''+d)\cdot 2^{(h+\e)L'' 2^{(c-2\e)L''} -
     \frac{2^{(c-\e)L''}}{L''+d}}.
\end{eqnarray}
Choose $L''$ sufficiently large so that
\begin{equation}\label{Thm7-4-5}
      (h+\e)L'' 2^{(c-2\e)L''} \le \frac{1}{2}\cdot
      \frac{2^{(c-\e)L''}}{L''+d}.
\end{equation}
>From (\ref{Thm7-4-4}) and (\ref{Thm7-4-5}), we get
$$
  \mu_\phi(x\in
\mathcal{E}(L',\e,\eta):
     T(x, D, \ell, \e)=K)
     \le (L''+d) \cdot 2^{-
     \frac{2^{(c-\e)L''}}{2(L''+d)}}.
$$
Summing  over all $K\le 2^{(c-2\e)(\ell-L')}$, we obtain
\begin{eqnarray*}
  & & \hspace{-4em} \mu_\phi\left\{ x\in
\mathcal{E}(L',\e,\eta):
     T(x, D, \ell, \e)\le 2^{(c-2\e)(\ell-L')} \right\}\\
    & \le &  (L''+d) \cdot 2^{(c-2\e)(\ell-L')} \cdot 2^{-
     \frac{2^{(c-\e)L''}}{2(L''+d)}}
     \le 2^{-2^{(c-2\e)L''}}
\end{eqnarray*}
for large $L''$, because $2^{-\frac{2^{(c-\e)L''}}{2(L''+d)}}$ tends
to zero superexponentially fast.

\subsection{The Cantor set and lower bound of $\dim_H \{y: \alpha(x, y)\le c\}$}\

The next theorem is an improvement of the mass transference principle
\cite{BV} to the multi-fractal measure $\mu_\phi$.

\begin{theorem}\label{revers} {\rm (Multi-fractal mass transference
    principle)} For $0<c<~h_{\mu_\phi}$, and for
  $\mu_{\phi}$-a.e.\ $x$ we have
 $$h_{\rm top}\left\{y\,:\, \a(x,y)
  \le c \right\}\ge c.$$
\end{theorem}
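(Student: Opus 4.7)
\medskip
\noindent\textbf{Proof plan.}
The strategy is to build, for $\mu_\phi$-a.e.\ $x$, a random Cantor subset $\mathcal{K}(x) \subset \{y:\alpha(x,y)\le c\}$ whose Hausdorff dimension is at least $c-O(\varepsilon)$, and then let $\varepsilon\to 0$. The construction is driven by Theorem~\ref{ThmTree}, which supplies large random trees of visited cylinders, and the lower bound on $\dim_H$ comes from a standard mass distribution argument, with the usual subtlety that one has to interpolate the mass estimate across the scales between consecutive levels of the tree.

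\medskip
\noindent\textbf{Selecting the scales.}
Fix $0<\varepsilon<\min(c,h_{\mu_\phi}-c)/10$. Choose an arithmetic sequence of lengths $L_k=L_0+k\Delta$, where $\Delta=\Delta(\varepsilon)$ is taken large enough so that Theorem~\ref{ThmTree} is applicable with any $L'=L_k$ and $L''=L_{k+1}$ (this requires $\Delta\ge n_0(\varepsilon)$ and the inequality~(7.5) to hold for $L''=L_{k+1}$; since the latter is a condition on $L''$ alone, growing $L_0$ secures it). Let $\eta_k=2^{-k}$. By Theorem~\ref{ThmTree} applied to each of the at most $2^{L_k}$ cylinders $D$ of length $L_k$, the event
$$B_k(x)=\bigl\{\exists\, L_k\text{-cylinder }D:\ T(x,D,L_{k+1},\varepsilon)< 2^{(c-2\varepsilon)(L_{k+1}-L_k)}\bigr\}\cup \mathcal{E}(L_k,\varepsilon,\eta_k)^c$$
has $\mu_\phi$-measure at most $2^{L_k}\cdot 2^{-2^{(c-2\varepsilon)L_{k+1}}}+\eta_k$, which is summable in $k$. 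By Borel--Cantelli, there exists $k_0=k_0(x)$ such that for $\mu_\phi$-a.e.\ $x$ and every $k\ge k_0$, $x\notin B_k(x)$.

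\medskip
\noindent\textbf{The Cantor construction.}
For such an $x$, pick any $L_{k_0}$-cylinder in the tree $\mathcal{T}(x,\ast,L_{k_0-1},L_{k_0},\varepsilon)$ as the root and let $\mathcal{F}_{k_0}=\{C\}$. Inductively, for each $C\in\mathcal{F}_k$, use Theorem~\ref{ThmTree} to select a subset $\mathcal{F}_{k+1}^{C}$ of exactly $b:=\lfloor 2^{(c-2\varepsilon)\Delta}\rfloor$ children of $C$, i.e.\ $L_{k+1}$-cylinders of the form $C\star G'$ with $G'\in\mathcal{C}_{\Delta,\varepsilon}$, each of which is a $(L_{k+1},2\varepsilon)$-cylinder visited by the orbit of $x$ at some time between $2^{L_k}+1$ and $2^{cL_{k+1}}$; set $\mathcal{F}_{k+1}=\bigcup_{C\in\mathcal{F}_k}\mathcal{F}_{k+1}^C$. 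Define
$$\mathcal{K}(x):=\bigcap_{k\ge k_0}\bigcup_{C\in\mathcal{F}_k}C.$$
For any $y\in\mathcal{K}(x)$ and every $k\ge k_0$, the cylinder $C_{L_k}(y)\in\mathcal{F}_k$ is visited by time $2^{cL_k}$, so $\tau_{L_k}(x,y)\le 2^{cL_k}$ and hence $\alpha(x,y)\le c$. Thus $\mathcal{K}(x)\subset\{y:\alpha(x,y)\le c\}$.

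\medskip
\noindent\textbf{Dimension lower bound and main obstacle.}
Place on $\mathcal{K}(x)$ the equidistributed Bernoulli-type measure $\nu$ giving total mass $b^{-(k-k_0)}$ to every $C\in\mathcal{F}_k$. For a ball $B(y,r)$ with $r=2^{-m}$ and $L_k\le m<L_{k+1}$, the mass is bounded by
$$\nu(B(y,r))\le \min\!\bigl(b,\,2^{L_{k+1}-m}\bigr)\cdot b^{-(k+1-k_0)}.$$
In the range $m\le L_{k+1}-(c-2\varepsilon)\Delta$ this gives $\nu(B(y,r))\le b^{-(k-k_0)}\le 2^{-(c-2\varepsilon)(L_k-L_{k_0})}$, while in the range $m>L_{k+1}-(c-2\varepsilon)\Delta$ it gives $\nu(B(y,r))\le 2^{L_{k+1}-m}\cdot 2^{-(c-2\varepsilon)\Delta(k+1-k_0)}$. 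A direct check shows that for any $s<c-2\varepsilon$, both bounds are dominated by $r^s$ once $k$ is large enough, the slack tending to zero at rate $L_{k_0}/L_k\to 0$. Since $L_k/L_{k+1}=(L_0+k\Delta)/(L_0+(k+1)\Delta)\to 1$, the exponent $s$ can be taken arbitrarily close to $c-2\varepsilon$. The mass distribution principle then gives $\dim_H\mathcal{K}(x)\ge c-2\varepsilon$, and since $h_{\mathrm{top}}\ge\dim_H$ in this symbolic setting and $\varepsilon>0$ is arbitrary, we obtain $h_{\mathrm{top}}\{y:\alpha(x,y)\le c\}\ge c$. The main obstacle is precisely this interpolation across intermediate scales: a too-rapid growth of $L_k$ would force a loss proportional to $L_k/L_{k+1}$ in the attainable exponent, which is why the arithmetic progression (or any sequence with $L_k/L_{k+1}\to 1$) is essential.
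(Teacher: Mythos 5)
You have spotted the real structure of the argument (tree of visited cylinders from Theorem~\ref{ThmTree}, Cantor subset, mass distribution principle), but the crucial scale selection is incompatible with the hypotheses of the tree theorem, and this cannot be repaired without adding the intermediate branching that the paper uses.

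Concretely, Theorem~\ref{ThmTree} (through Theorem~\ref{TypicalSequence}, whose set $\mathcal{E}(L',\e,\eta)$ appears in the event) requires $L''\ge n(\e,\eta,L')$. The number $n(\e,\eta,L')$ is forced by the three conditions in the proof of Theorem~\ref{TypicalSequence}, the first of which is $\tfrac12 m(L')[2^{\e n}-2^{L'}]\ge 1$ with $m(L')=\tfrac{1}{2\gamma^4}2^{-L'\|\phi\|_\infty}$. This already gives $n(\e,\eta,L')\gtrsim L'\max(\|\phi\|_\infty,1)/\e$, so the constraint on $L''$ is \emph{multiplicative} in $L'$ with a factor at least $1/\e$. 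An arithmetic progression $L_k=L_0+k\Delta$ therefore fails to satisfy $L_{k+1}\ge n(\e,\eta_k,L_k)$ for large $k$, no matter how large $L_0$ and $\Delta$ are chosen; only the condition~(7.5) depends on $L''$ alone, the other one does not, and you have overlooked it.

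This gap is not cosmetic. If you enlarge the scales so that $L_{k+1}\ge n(\e,\eta_k,L_k)$, the ratio $R=L_{k+1}/L_k$ stays bounded away from $1$ (roughly $R\gtrsim 1/\e$), and then your own mass estimate forces the attainable H\"older exponent down to $s\le \dfrac{c-2\e}{R(1-(c-2\e))+(c-2\e)}$, which degrades to something of order $\e$ rather than converging to $c$. Your closing remark flags exactly this obstruction ("too-rapid growth of $L_k$ would force a loss proportional to $L_k/L_{k+1}$") but mistakenly concludes that an arithmetic progression avoids it. The way out is not to control $L_k/L_{k+1}$ but to let the Cantor tree branch at every intermediate level $\ell$ with $L_{k-1}+n_0(\e)\le\ell\le L_k$, which is exactly what Theorem~\ref{ThmTree} delivers: it yields $T(x,D,\ell,\e)\ge 2^{(c-2\e)(\ell-L_{k-1})}$ at \emph{all} intermediate $\ell$, so the cumulative branching at level $\ell$ is $\ge 2^{(c-2\e)\ell}$, and the canonical measure on the tree satisfies $\nu(C)\le 2^{-(c-2\e)\ell}$ at every scale (up to the bounded $n_0(\e)$ gaps at the junctions). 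With intermediate branching the sparse, rapidly-growing sequence $L_k$ required by the tree theorem poses no problem. Without it, your construction cannot reach dimension $c-O(\e)$.
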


\begin{proof}
Let $\e>0$  be an arbitrary  small number. We can find an increasing
sequence of integers $(L_k)_{k\ge 0}$ such that
\begin{equation}\label{ChoiceOfLk}
    L_0=0 , \quad  2^{-2^{(c-2\e)L_{k}}} \le \frac{\e}{2^{k+2}}.
\end{equation}
and
that for each $k\ge 1$, the
couple $(L',L'')=(L_{k-1}, L_{k})$ satisfies the condition of
Theorem~\ref{ThmTree}. Apply Theorem~\ref{ThmTree} to
$L'=L_{k-1}, L''=L_{k}$ and $\eta= \frac{\e}{2^{k+1}}$ to get
$\mathcal{E}_k(\e) := \mathcal{E}(L',\e,\eta)$. It has
the properties that
 \begin{equation}\label{MeasureOfEk}
    \mu_\phi(\mathcal{E}_k(\e)) > 1- \frac{\e}{2^{k+1}};
\end{equation}
and that there is a subset $\mathcal{E}^*_k(\e)$ of
$\mathcal{E}_k(\e)$ with
\begin{equation}\label{MeasureOfE*k}
\mu_\phi(\mathcal{E}_k(\e) \setminus \mathcal{E}^*_k(\e)) <
\frac{\e}{2^{k+1}}
\end{equation} such that for any $x\in \mathcal{E}^*_k(\e)$,
any $L_{k-1}$-cylinder $D$
 and any $L_{k-1}+n_0(\e) \le \ell \le  L_{k}$ we
have
$$
    T(x, D, \ell, \e) \ge 2^{(c-2\e)(\ell-L_{k-1})}.
$$
Define
$$
        \mathcal{E}^*(\e) = \bigcap_{k=1}^\infty \mathcal{E}^*(L_k,
        \e).
$$
Equations (\ref{MeasureOfEk}) and (\ref{MeasureOfE*k}) imply that
$\mu_\phi (\mathcal{E}^*_k(\e)))\ge 1- \frac{\e}{2^k}$ and
\begin{equation}\label{MeasureOfE}
     \mu_\phi(\mathcal{E}^*(\e)) \ge 1 - \sum_{k=1}^\infty
     \frac{\e}{2^k} = 1- \e.
\end{equation}
For $x\in \mathcal{E}^*(\e)$, we have
\begin{equation}\label{T-ell}
    T(x, D, \ell, \e) \ge 2^{(c-2\e)(\ell-L_{k-1})}
\end{equation}
for all $L_{k-1}$-cylinders $D$  and all $L_{k-1}+n_0(\e) \le \ell
\le L_{k}$.

Now, for each $x\in \mathcal{E}^*(\e)$, we construct a Cantor
set as follows.

{\it First step}:  for $n_0(\e) \le \ell\le L_1$,  consider the
family $\mathfrak{C}_\ell(x)$ of
$(\ell,\e)$-cylinders which contain at least one $(L_1,
2\e)$-cylinder seen in $x$ between the times $1$ and $2^{c
L_1}$. This yields a tree $\mathfrak{T}_{L_1}(x)$
of height $L_1$. The nodes of the tree $\mathfrak{T}_{L_1}(x)$ are
the $(\ell,\e)$-cylinders, with $n_0(\e)\le \ell \le L_1$,
belonging to $\mathfrak{C}_\ell(x)$.
The edges are defined by the containment relation.
We will extend this tree inductively.

{\it Second step}: Let $k\ge 2$. Suppose that we have constructed a
tree $\mathfrak{T}_{L_{k-1}}(x)$ of height  $L_{k-1}$. We will
construct a tree of height $L_{k}$.  Let
$$
L' =L_{k-1}, \quad L'' =L_{k}.
$$
Fix a $L'$-cylinder $D$ seen in $x$ before time $2^{c L'}$, which is
the label of a node of the tree $\mathfrak{T}_{L_{k-1}}(x)$ at level
$L_{k-1}$. For $L'+ n_0(\e) \le \ell \le L''$, take all
$(\ell,\e)$-cylinders that contain at least one  $(L'',
2\e)$-cylinder of the form $D\star G$ seen in $x$ between the times
$2^{L'}+1$ and $2^{c L''}$. As before we denote this family by
$\mathfrak{C}_\ell(x)$ (both $D$ and $G$ varying). The tree
$\mathfrak{T}_{L_{k}}(x)$ is obtained from
$\mathfrak{T}_{L_{k-1}}(x)$ by adding branches to each $D$. That is
to say, by splitting $D$ into $(\ell,\e)$-cylinders belonging to
$\mathfrak{C}_\ell(x)$.

We define
$$
    C_\infty(x) = \bigcap_{k=1}^\infty\
    \bigcap_{\ell=L_{k-1}+n_0(\e)}^{L_k} \ \bigcup_{C \in
    \mathfrak{C}_\ell(x)} C.
$$
We have $C_\infty(x) \subset \{y: \alpha(x, y)\le c\}$, since for
any $y \in C_\infty(x)$ and for all $k\ge 1$
$$
    y \in \bigcup_{C \in \mathfrak{C}_{L_k}(x)} C,
$$
i.e.\ $y \in C_{L_k}(\sigma^j x)$ for some $$
2^{L_{k-1}}+1\le j\le 2^{cL_k}.$$

We claim that $\dim_H C_\infty(x) \ge c- 2\e$. In fact, for
$L_{k-1}+ n_0(\e) \le \ell\le L_k$, we have
\begin{eqnarray*}
    \log_ 2 \sharp \mathfrak{C}_\ell(x)
     & \ge &  (c-2\e)(\ell- L_{k-1}) +
    \sum_{j=1}^{k-1}(c-2\e)(L_j-L_{j-1})\\
    & \ge & (c-2\e)\ell
\end{eqnarray*}

Define a probability measure $\nu$ on $C_\infty(x)$ by
$$
      \nu(C) = \frac{1}{\sharp \mathfrak{C}_\ell(x)} \qquad
          (\forall C \in \mathfrak{C}_\ell(x) \  \mbox{\rm and} \ l \in \N).
$$
It is clear that (note $n(\e)$ does not depend on $L_k$)
$$
     \nu(C) \le  2^{-(c-2\e)\ell}.
$$

Thus we have proved that with probability bigger than $1-\e$ we have
$$
        \dim_H \{y: \alpha(x, y) \le c\} \ge c-2\e.
$$
\end{proof}

Remark:  The proofs in this section can be used to obtain
a more precise estimate on the growth rate of the tree,
however this estimate is not necessary for our purpose.
Namely one can show that $L_{k-1} \ll l \le \frac{c}{h}L_k$
then
$$
T(x,D,l,\e) \ge 2^{(h-3\e)l}.
$$
This implies that the upper box counting dimension of the
corresponding Cantor set is $h-3\e$ while the lower box
dimension equals the Hausdorff dimension equals $c-2\e$.

\setcounter{equation}{0}
\section{Results for the full shift}\label{sec8}

Our strategy is to prove all the theorems in the symbolic framework
and then transfer them to the circle. Let us get together the
already obtained results in the symbolic framework.

\begin{lemma}\label{lemma3}

For  $0 < \kappa < \infty$ we have $\mu_\phi$-a.e.
$$\sup \{E(t): \frac1t \le \kappa\} \ge \dim_H \Fhn \ge  \sup \{E(t): \frac1t < \kappa\}.$$
For  $\kappa \le 1/h_{\mu_\phi}$ (i.e. $1/\kappa \ge h_{\mu_\phi}$)
we have $\mu_\phi$-a.e.
$$\sup \{ E(t): \frac1t \ge \kappa\} \ge \dim_H \Ihn \ge  \sup \{
E(t): \frac1t > \kappa\},$$\\
and for $\kappa > 1/h_{\mu_\phi}$ (i.e. $1/\kappa < h_{\mu_\phi}$)
we have $\mu_\phi$-a.e.
$$\dim_H \Ihn = 1/\kappa.$$
\end{lemma}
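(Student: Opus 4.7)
The plan is to package the statement by combining the inclusions of Lemma~\ref{lemma2} (which sandwich $\Fhn$ and $\Ihn$ between level sets of the lower hitting exponent $\alpha(x,\cdot)$, up to the countable orbit $\O^+(x)$) with the dimensional analysis of those level sets carried out in Sections~\ref{bigh} and~\ref{smallh}. Throughout I will use the fact that $\O^+(x)$ is countable and hence has Hausdorff dimension $0$, and that in this symbolic setting $h_{\rm top}$ agrees with $\dim_H$ by the convention of Section~\ref{back}.

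\textbf{The $\Fhn$ case.} From Lemma~\ref{lemma2} I get
$\dim_H\{y:\alpha(x,y)>1/\kappa\}\le\dim_H\Fhn\le\dim_H\{y:\alpha(x,y)\ge 1/\kappa\}$.
For the upper bound I feed Theorem~\ref{5.3} (applied with $t=1/\kappa$) into Variational Principle~II (Theorem~\ref{var}) to get
$\dim_H\{y:\alpha(x,y)\ge 1/\kappa\}=\dim_H\{y:\hb_{\mu_\phi}(y)\ge 1/\kappa\}=\sup_{t\ge 1/\kappa}E(t)$.
For the lower bound I write $\{\alpha(x,y)>1/\kappa\}=\bigcup_{n\ge 1}\{\alpha(x,y)\ge 1/\kappa+1/n\}$ and apply Theorem~\ref{5.3} simultaneously at the countably many rational thresholds $1/\kappa+1/n$ (a single $\mu_\phi$-null set is discarded), giving $\dim_H\{\alpha>1/\kappa\}=\sup_n\sup_{t\ge 1/\kappa+1/n}E(t)=\sup_{t>1/\kappa}E(t)$.

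\textbf{The $\Ihn$ case when $\kappa>1/h_{\mu_\phi}$.} Here $1/\kappa<h_{\mu_\phi}$. The upper bound $\dim_H\Ihn\le 1/\kappa$ follows from Lemma~\ref{lemma2} and the second clause \eqref{UpperBoundIhn2} of Theorem~\ref{cor1add} applied at $s=1/\kappa$. For the matching lower bound I apply the multifractal mass transference principle (Theorem~\ref{revers}) with any $c<1/\kappa<h_{\mu_\phi}$: it yields $\dim_H\{\alpha(x,y)\le c\}\ge c$, and since $\{\alpha\le c\}\subset\{\alpha<1/\kappa\}\subset\Ihn\cup\O^+(x)$ by Lemma~\ref{lemma2}, I conclude $\dim_H\Ihn\ge c$; letting $c\uparrow 1/\kappa$ closes the gap.

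\textbf{The $\Ihn$ case when $\kappa\le 1/h_{\mu_\phi}$.} The upper bound $\dim_H\Ihn\le\sup_{t\le 1/\kappa}E(t)$ follows from Lemma~\ref{lemma2} combined with Theorem~\ref{cor1add}: when $h_{\mu_\phi}<1/\kappa<e_{\max}$ the clause \eqref{UpperBoundIhn1} gives the bound $E(1/\kappa)$, which equals $\sup_{t\le 1/\kappa}E(t)$ by monotonicity of $E$ on $[e^-,e_{\max}]$; the boundary case $1/\kappa=h_{\mu_\phi}$ is handled by \eqref{UpperBoundIhn2} together with $E(h_{\mu_\phi})=h_{\mu_\phi}$; and when $1/\kappa\ge e_{\max}$ the right-hand side is simply $1$, which bounds $\dim_H\Ihn$ trivially. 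For the lower bound I fix any $t<1/\kappa$ with $t\in[e^-,e^+]$ and split into three subcases: if $t<h_{\mu_\phi}$ use Theorem~\ref{revers} together with the fact that the graph of $E$ lies below the diagonal to get $\dim_H\{\alpha\le t\}\ge t\ge E(t)$; if $h_{\mu_\phi}<t<e_{\max}$ apply the equality from Theorem~\ref{cor1add1}; the remaining endpoint values $t=h_{\mu_\phi}$, $t=e_{\max}$ and $t>e_{\max}$ are obtained by approximating from below (using continuity of $E$ on $[e^-,e^+]$ and the fact that $\sup_{t\ge e_{\max}}E(t)=E(e_{\max})=1$). In each subcase $\{\alpha\le t\}\subset\{\alpha<1/\kappa\}\subset\Ihn\cup\O^+(x)$ by Lemma~\ref{lemma2}, so $\dim_H\Ihn\ge E(t)$, and taking the supremum over $t<1/\kappa$ gives the conclusion.

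\textbf{Main obstacle.} The substantive inputs (Theorem~\ref{5.3}, Theorem~\ref{cor1add1}, Theorem~\ref{revers}) have all been established in Sections~\ref{bigh}--\ref{typ}, so the main care needed here is bookkeeping: the ``$\mu_\phi$-a.e.''~clauses in those results depend on the threshold, and to obtain statements for the strict-inequality sets $\{\alpha>1/\kappa\}$ and $\{\alpha<1/\kappa\}$ I must invoke them simultaneously along a countable dense set of thresholds, then handle the various endpoint values of $1/\kappa$ (namely $h_{\mu_\phi}$, $e_{\max}$, $e^-$, $e^+$) by continuity and concavity of $E$ on $[e^-,e^+]$. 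This is routine but requires dividing into enough cases to fit both the $\kappa\le 1/h_{\mu_\phi}$ and $\kappa>1/h_{\mu_\phi}$ regimes into one coherent statement.
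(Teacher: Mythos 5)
Your proposal is correct and follows essentially the same route as the paper, which simply cites Lemma~\ref{lemma2} together with Theorems~\ref{5.3}, \ref{var}, \ref{cor1add} and~\ref{revers} without writing out the bookkeeping; you have filled in exactly the details that the paper leaves implicit, in particular the passage from strict to non-strict thresholds via a countable exhaustion and the use of Corollary~\ref{fub}/Theorem~\ref{cor1add1} and Theorem~\ref{revers} to supply measures of the right dimension on the level sets $\{\alpha(x,\cdot)\le t\}$. One small point worth tightening: in the case $\kappa\le 1/h_{\mu_\phi}$, your handling of the endpoints $t=h_{\mu_\phi}$ and $t=e_{\max}$ should not lean on ``continuity of $E$'' abstractly (which is trivial or vacuous in the degenerate case) but rather on the explicit lower bounds you already have from the two open subcases — namely $\dim_H\Ihn\ge t_n$ for $t_n\nearrow h_{\mu_\phi}$ via Theorem~\ref{revers}, and $\dim_H\Ihn\ge E(t_n)$ for $t_n\nearrow e_{\max}$ with $t_n\in(h_{\mu_\phi},e_{\max})$ via Theorem~\ref{cor1add1} — which then pass to the limit directly; this way the argument survives the degenerate case $e^-=e_{\max}=e^+$, where the spectrum collapses to a point.
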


\begin{proof}

The first line is a consequence of Lemma~\ref{lemma2},
Theorem~\ref{5.3} and Theorem~\ref{var}.

The second line is a consequence of Lemma~\ref{lemma2},
Theorem~\ref{cor1add} 
and Theorem~\ref{var}.

The third line is a direct consequence of Lemma~\ref{lemma2},
Theorems \ref{cor1add} and \ref{revers}.
\end{proof}

\begin{corollary}\label{var2}
Let $1/\kappa \in (e^-,e^+)$. Then for $\mu_\phi$ a.e.\  $x$
\[
\dim_H\Fhn =\max_{\nu - ergodic}\{h_\nu\, : \a (x,y)\le\frac1{\kappa}\,\,\nu -a.e. y\}.
\]
For $1/\kappa \in (h_{\mu_\phi},e^+)$ and $\mu_\phi$ a.e.\  $x$
\[
\dim_H\Ihn =\max_{\nu - ergodic}\{h_\nu\, : \a (x,y)\ge\frac1{\kappa}\,\,\nu -a.e. y\}.
\]
\end{corollary}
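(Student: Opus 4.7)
The plan is to express both sides using the first variational principle (Theorem~\ref{varI}) and then translate between the constraint $-\int\phi\,d\nu$ and the constraint on $\alpha(x,\cdot)$. Under the hypothesis $1/\kappa\in(e^-,e^+)$, $E$ is continuous at $1/\kappa$, so Lemma~\ref{lemma3} collapses its sandwich bounds and yields, for $\mu_\phi$-a.e.\ $x$,
\[
\dim_H \Fhn = \sup\{E(t):t\ge 1/\kappa\},
\]
and the analogous identity $\dim_H \Ihn=\sup\{E(t):t\le 1/\kappa\}$ when $1/\kappa\in(h_{\mu_\phi},e^+)$. It is then natural to match each ergodic $\nu$ with the parameter $t=-\int\phi\,d\nu\in[e^-,e^+]$.

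For the direction $\dim_H\Fhn\le\max_\nu h_\nu$, I would exhibit a single ergodic Gibbs measure that simultaneously realizes the supremum and satisfies the side condition. By concavity of $E$ on $(e^-,e^+)$, the supremum is attained at $t^*:=\max(1/\kappa,e_{\max})$; the ergodic Gibbs measure
\[
\nu^* := \mu_{-P(q(t^*)\phi)+q(t^*)\phi}
\]
(which degenerates to $\mu_{\max}$ when $t^*=e_{\max}$) has entropy $E(t^*)$ and satisfies $-\int\phi\,d\nu^*=t^*\ge 1/\kappa$. Applying Corollary~\ref{fub} to this specific $\nu^*$ produces a $\mu_\phi$-full set of $x$ on which $\alpha(x,y)=\underline{h}_{\mu_\phi}(y)=t^*\ge 1/\kappa$ for $\nu^*$-a.e.\ $y$, so $\nu^*$ is admissible in the maximum and realizes the value $E(t^*)=\dim_H\Fhn$. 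The $\Ihn$ direction is symmetric, taking $t^*:=\min(1/\kappa,e_{\max})$ and observing that here $-\int\phi\,d\nu^*=t^*\le 1/\kappa$.

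For the reverse inequality $\max_\nu h_\nu\le\dim_H\Fhn$, fix any ergodic $\nu$ with $\alpha(x,\cdot)\ge 1/\kappa$ holding $\nu$-a.e. I would apply Theorem~\ref{thm5.2} along a countable dense sequence of thresholds $t_j\uparrow 1/\kappa$ to obtain a single $\mu_\phi$-full set of $x$ on which
\[
\{y:\alpha(x,y)\ge t_j\}\subset\{y:\underline{h}_{\mu_\phi}(y)\ge t_j\}\qquad(\forall j),
\]
and intersecting over $j$ gives $\{\alpha(x,\cdot)\ge 1/\kappa\}\subset\{\underline{h}_{\mu_\phi}\ge 1/\kappa\}$. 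Consequently $\underline{h}_{\mu_\phi}\ge 1/\kappa$ holds $\nu$-a.e., and since $\nu$ is ergodic, Birkhoff's theorem applied to $-\phi$ gives $-\int\phi\,d\nu\ge 1/\kappa$. The first variational principle then bounds $h_\nu\le E(-\int\phi\,d\nu)\le\sup\{E(t):t\ge 1/\kappa\}=\dim_H\Fhn$. The $\Ihn$ case is dual, using the analogous upper bound for $\{y:\alpha(x,y)\le 1/\kappa\}$ via Theorem~\ref{cor1add} (itself based on the small-hitting-probability Lemma~\ref{lemma1add}).

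The main obstacle is precisely this quantifier exchange in the last step: a naive use of Corollary~\ref{fub} alone gives an exceptional $\mu_\phi$-null set depending on the test measure $\nu$, whereas the statement requires ranging over \emph{all} ergodic $\nu$ simultaneously on a single full-measure set of $x$. Theorem~\ref{thm5.2} is exactly what resolves this, by providing inclusions uniform over the countable family of rational thresholds; no analogous uniform exchange is available for the opposite inclusion, which is why the argument must route the bound through $\underline{h}_{\mu_\phi}$ and Variational Principle~I rather than reasoning directly with $\alpha(x,\cdot)$.
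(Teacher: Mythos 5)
Your proof is correct, and since the paper states Corollary~\ref{var2} immediately after Lemma~\ref{lemma3} with no accompanying argument, what you have written is essentially the intended justification, made precise. A few remarks.

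First, the inequalities printed in the corollary appear to be transposed: as written the $\Fhn$ identity carries the constraint $\a(x,y)\le\frac1\kappa$, but by Lemma~\ref{lemma2}, $\Fhn$ lies (modulo the countable orbit $\O^+(x)$) between $\{\a>\frac1\kappa\}$ and $\{\a\ge\frac1\kappa\}$, so the admissibility condition for an ergodic $\nu$ must be $\a(x,\cdot)\ge\frac1\kappa$ $\nu$-a.e.\ (and dually $\le$ for $\Ihn$). Otherwise both right-hand sides would coincide, forcing $\dim_H\Fhn=\dim_H\Ihn$, which is false in general. You have silently corrected this, which is the right reading.

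Second, you have put your finger on the genuine content of the statement, namely the quantifier exchange: Corollary~\ref{fub} produces an exceptional set in $x$ that depends on the test measure $\nu$, which cannot be summed over the (uncountable) family of ergodic measures. Your resolution is correct on both sides. For $\Fhn$ the one-sided inclusion of Theorem~\ref{thm5.2} gives a full-measure set of $x$ depending only on the single threshold, so every candidate $\nu$ with $\a(x,\cdot)\ge\frac1\kappa$ $\nu$-a.e.\ must satisfy $\underline h_{\mu_\phi}\ge\frac1\kappa$ $\nu$-a.e., hence by ergodicity $-\int\phi\,d\nu\ge\frac1\kappa$, and Theorem~\ref{varI} finishes. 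For $\Ihn$ the reverse set inclusion is unavailable (the remark after Theorem~\ref{thm5.2} explains why), but the dimension bound of Theorem~\ref{cor1add} again holds on a full-measure set depending only on $\kappa$, and $h_\nu=\dim_H\nu\le\dim_H\{\a(x,\cdot)\le\frac1\kappa\}\le E(\frac1\kappa)$ closes the argument. One small simplification: in the $\Fhn$ hard direction the limiting scheme over a dense sequence $t_j\uparrow\frac1\kappa$ is unnecessary, since Theorem~\ref{thm5.2} already applies directly at $t=\frac1\kappa$ with an exceptional set that is independent of $\nu$.
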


The properties of the entropy spectrum which were stated in the background
section immediately imply the following corollary.
\begin{corollary}\label{theoremMFA}
For $1/\kappa \in (e^-,e^+)$ and $\mu_\phi$ a.e.\  $x$  we have

$$\sup_{-P'(q) \ge \frac{1}{\kappa}}\left[ P(q\phi) - P'(q\phi)q\right ]  \ge
dim_H \Fhn \ge \sup_{-P'(q) > \frac{1}{\kappa}}\left[ P(q\phi) -
P'(q\phi)q\right ].$$ For $1/\kappa \in (h_{\mu_\phi},e^+)$ and
$\mu_\phi$ a.e.\  $x$ we have
$$\sup_{-P'(q) \le \frac{1}{\kappa}}\left[ P(q\phi) - P'(q\phi)q\right ]  \ge
dim_H \Ihn \ge  \sup_{-P'(q) < \frac{1}{\kappa}}\left[ P(q\phi) - P'(q\phi)q\right ].$$
\end{corollary}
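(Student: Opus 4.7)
The plan is to deduce Corollary~\ref{theoremMFA} directly from Lemma~\ref{lemma3} by rewriting each supremum of the entropy spectrum $E(t)$ as a supremum of a pressure expression in the thermodynamic parameter $q$, using the Legendre-type identity recorded in Theorem~\ref{varI}.

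First I would rephrase the sandwich inequalities of Lemma~\ref{lemma3} in the $t$-variable. For $t>0$ the condition $1/t\le\kappa$ is equivalent to $t\ge 1/\kappa$, and similarly for the other three inequalities. Thus for $\mu_\phi$-a.e.\ $x$ Lemma~\ref{lemma3} yields
$$
\sup\{E(t):\, t\ge 1/\kappa\} \;\ge\; \dim_H\Fhn \;\ge\; \sup\{E(t):\, t>1/\kappa\},
$$
and, when $1/\kappa\in(h_{\mu_\phi},e^+)$,
$$
\sup\{E(t):\, t\le 1/\kappa\} \;\ge\; \dim_H\Ihn \;\ge\; \sup\{E(t):\, t<1/\kappa\}.
$$

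Next I would apply the change of variables $t=t(q):=-P'(q\phi)$ from Theorem~\ref{varI}, which gives $E(t(q))=P(q\phi)-qP'(q\phi)$. The background properties recall that the map $q\mapsto t(q)$ is invertible with range $[e^-,e^+]$ and that $E$ is strictly concave and real analytic on $(e^-,e^+)$. Since by hypothesis $1/\kappa$ lies in the open interval $(e^-,e^+)$, respectively $(h_{\mu_\phi},e^+)$, this is precisely the regime where the correspondence $q\leftrightarrow t$ is a real analytic bijection in a neighborhood of $1/\kappa$. Under this identification the constraint $t\ge 1/\kappa$ becomes $-P'(q\phi)\ge 1/\kappa$, the value $E(t)$ becomes $P(q\phi)-qP'(q\phi)$, and analogously for the three remaining (reversed or strict) inequalities.

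Substituting this translation into the two sandwich inequalities of the previous paragraph yields exactly the two double inequalities in the statement of the corollary. I do not foresee any substantive obstacle: the argument is essentially a bijective change of variable in the supremum. The only point deserving a brief line of justification is that $\{-P'(q\phi):q\in\mathbb{R}\}$ covers $(e^-,e^+)$, which follows from continuity of $q\mapsto -P'(q\phi)$ together with the fact that its range is the interval $[e^-,e^+]$, so that by the intermediate value theorem every $t\in(e^-,e^+)$ is of the form $-P'(q\phi)$ for some $q$.
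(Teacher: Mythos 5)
Your proposal is correct and follows exactly the route the paper intends: the paper derives Corollary~\ref{theoremMFA} from Lemma~\ref{lemma3} by invoking the Legendre-type relations of Theorem~\ref{varI} (the change of variable $t=-P'(q\phi)$ with $E(t(q))=P(q\phi)-qP'(q\phi)$), declaring this to be immediate from the background facts about the entropy spectrum. Your explicit translation of the constraints ($1/t\le\kappa\Leftrightarrow t\ge 1/\kappa$, etc.) and your remark that $q\mapsto -P'(q\phi)$ ranges over $(e^-,e^+)$ supply precisely the brief justification the paper leaves implicit.
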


If we consider a typical potential, then the function $E(t)$ is
continuous on the nontrivial interval $(e^-,e^+)$, equals 0 on the
endpoints (see \cite{S'}). Hence the right hand side and left hand
side inequalities in Lemma~\ref{lemma3} and
Corollary~\ref{theoremMFA} are equal.  Since the maximum value of
$E(t)$ is attained at the value $t = - \int_{\Sigma_2^+} \phi \,
d\mm$ and equals $h_{\rm top}(\Sigma_2^+) = 1$ we have the following
corollary.

\begin{corollary} For a typical potential and $\mu_\phi$ a.e.\ $x$ we have
\begin{eqnarray*}
&\dim_H \Fhn = h_{\rm top}(\Sigma_2^+) = 1  &\hbox{for } \kappa \ge \frac{1}{- \int \phi \, d\mm.},\\
&\dim_H \Ihn =  h_{\rm top}(\Sigma_2^+) = 1  & \hbox{for } \kappa \le \frac{1}{- \int \phi \, d\mm.}.
\end{eqnarray*}
Let $q_{\kappa}$ be the number such that $P'(q_{\kappa} \phi) = - \frac1{\kappa}.$ Then
\begin{eqnarray*}
&\dim_H \Fhn = E\left (\frac{1}{\kappa} \right )
= P\left (q_{\kappa} \phi \right ) + \frac{1}{\kappa} q_{\kappa}
\quad &\hbox{for } \kappa < \frac{1}{- \int \phi \, d\mm.},\\
&\dim_H \Ihn = E\left (\frac{1}{\kappa} \right ) = P\left (q_{\kappa} \phi \right )
+ \frac{1}{\kappa}t q_{\kappa} \quad &\hbox{for }
\frac{1}{h_{\mu_\phi}} \ge  \kappa > \frac{1}{- \int \phi \, d\mm.}.
\end{eqnarray*}
\end{corollary}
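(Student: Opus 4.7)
The plan is to assemble Lemma~\ref{lemma3} with the structural properties of $E$ that are special to typical potentials: $E$ is continuous on $[e^-,e^+]$ with $E(e^-)=E(e^+)=0$ (so $E$ vanishes outside $[e^-,e^+]$), strictly increasing on $[e^-,e_{\max}]$, strictly decreasing on $[e_{\max},e^+]$, and $E(e_{\max}) = h_{\rm top}(\Sigma_2^+) = 1$ with $e_{\max} = -\int\phi\,d\mm$ (realized by $\mm$). Continuity immediately collapses the two sides of the sandwich in Lemma~\ref{lemma3}: for any threshold $s$,
\[
\sup\{E(t): t > s\} = \sup\{E(t): t \ge s\},\qquad \sup\{E(t): t < s\} = \sup\{E(t): t \le s\}.
\]
Hence $\dim_H \Fhn = \sup\{E(t): t \ge 1/\kappa\}$ unconditionally, and $\dim_H \Ihn = \sup\{E(t): t \le 1/\kappa\}$ in the regime $\kappa \le 1/h_{\mu_\phi}$ covered by the second line of Lemma~\ref{lemma3}.

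Next I would dispatch the four cases by locating $e_{\max}$ relative to the sup range. For $\Fhn$: if $\kappa \ge 1/e_{\max}$ then $1/\kappa \le e_{\max}$ lies in $\{t \ge 1/\kappa\}$, so the sup equals $E(e_{\max})=1$; if $\kappa < 1/e_{\max}$ then $1/\kappa > e_{\max}$, and strict monotonicity on $[e_{\max},e^+]$ (combined with $E \equiv 0$ beyond $e^+$) forces the sup to be attained at the endpoint $t=1/\kappa$, giving $E(1/\kappa)$. The argument for $\Ihn$ is symmetric: the variational inequality $e_{\max} \ge h_{\rm top}=1 \ge h_{\mu_\phi}$ ensures that both corollary ranges fall within $\kappa \le 1/h_{\mu_\phi}$, so Lemma~\ref{lemma3} applies; if $\kappa \le 1/e_{\max}$ the sup range $\{t \le 1/\kappa\}$ contains $e_{\max}$ and the sup is $1$; if $1/e_{\max} < \kappa \le 1/h_{\mu_\phi}$, strict monotonicity of $E$ on $[e^-,e_{\max}]$ yields sup equal to $E(1/\kappa)$.

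Finally, to convert $E(1/\kappa)$ into the closed form $P(q_\kappa\phi) + q_\kappa/\kappa$ I would invoke the variational formula (\ref{VariationalPrinciple2}) of Theorem~\ref{varI}, namely $E(t(q)) = P(q\phi) - qP'(q\phi)$ with $t(q) = -P'(q\phi)$. The defining relation $-P'(q_\kappa\phi) = 1/\kappa$ singles out exactly the parameter $q_\kappa$ with $t(q_\kappa) = 1/\kappa$, and substitution produces the stated formula. The entire argument is pure bookkeeping on top of Lemma~\ref{lemma3} and Theorem~\ref{varI}; the only minor point of care is the boundary value $\kappa = 1/e_{\max}$, where continuity of $E$ makes both case-formulas return the same answer $E(e_{\max})=1$, so the endpoint may be assigned to either case without conflict.
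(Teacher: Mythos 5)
Your argument is correct and is essentially the same as the paper's: the paper justifies the corollary with one sentence observing that for typical potentials $E$ is continuous on $[e^-,e^+]$ with $E(e^\pm)=0$, so the two sides of the sandwich in Lemma~\ref{lemma3} coincide, and then notes that $E$ peaks at $e_{\max}=-\int\phi\,d\mm$ with value $1$. You have simply made the bookkeeping explicit — in particular the monotonicity of $E$ on either side of $e_{\max}$ to locate the suprema, the chain $e_{\max}\ge 1\ge h_{\mu_\phi}$ needed to ensure the $\Ihn$ ranges fall inside the validity regime of Lemma~\ref{lemma3}, and the appeal to \eqref{VariationalPrinciple2} to produce the closed form $P(q_\kappa\phi)+q_\kappa/\kappa$ — all of which is implicit in the paper's presentation.
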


Finally we come to the answer of the symbolic version of question
({\bf Q2}).
\begin{lemma}
For $\mu_\phi$ a.e.\ $x$ we have
$$
\Fhn = \emptyset  \hbox{ for }  \kappa < \frac1{e^+} = \frac{1}{
\max_{\mu \ \rm{ ergodic}} \int (-\phi) \, d\mu}=
\kappa^F_{\phi,\Sigma_2^+}.
$$
\end{lemma}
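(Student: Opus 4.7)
The plan is to assemble the empty set statement from three ingredients already established in the paper: the sandwich description of $\Fhn$ via hitting times (Lemma~\ref{lemma2}), the deterministic upper bound $\alpha(x,y)\le \underline{h}_{\mu_\phi}(y)$ valid almost surely (Theorem~\ref{thm5.2}), and the Ornstein--Weiss type statement about orbit points (Lemma~\ref{OW}). Fix $\kappa<1/e^+$, so that $1/\kappa>e^+$.

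First I would apply Lemma~\ref{lemma2} to obtain the inclusion
\[
\Fhn\subset \bigl\{y\in\Sigma_2^+:\alpha(x,y)\ge 1/\kappa\bigr\}\cup \O^+(x).
\]
Next, by Theorem~\ref{thm5.2}, for $\mu_\phi$ a.e.\ $x$,
\[
\bigl\{y:\alpha(x,y)\ge 1/\kappa\bigr\}\subset\bigl\{y:\underline{h}_{\mu_\phi}(y)\ge 1/\kappa\bigr\}.
\]
The crux is then to observe that this latter set is already empty: since $1/\kappa>e^+$, Variational Principle II (Theorem~\ref{var}) gives
\[
h_{\rm top}\bigl\{y:\underline{h}_{\mu_\phi}(y)\ge 1/\kappa\bigr\}=\sup_{s\ge 1/\kappa}E(s),
\]
and this supremum is taken over an empty subset of $[e^-,e^+]$, forcing the set itself to be empty. (Equivalently, by the Gibbs property $\underline{h}_{\mu_\phi}(y)=-\limsup \tfrac{1}{n}S_n\phi(y)$, and any subsequential limit of the empirical measures $\tfrac{1}{n}\sum_{k<n}\delta_{\sigma^k y}$ is an invariant measure, so $-\limsup \tfrac{1}{n}S_n\phi(y)\le e^+$ for every $y$.) Consequently, for $\mu_\phi$ a.e.\ $x$,
\[
\Fhn\subset \O^+(x).
\]

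Finally, to remove the orbit, I would invoke Lemma~\ref{OW}: since $h_{\mu_\phi}=\int(-\phi)d\mu_\phi\le e^+<1/\kappa$, the condition $1/\kappa>h_{\mu_\phi}$ is satisfied, hence $\O(x)\subset \Ihn$ for $\mu_\phi$ a.e.\ $x$. In particular $\O^+(x)\cap\Fhn=\varnothing$, and combining this with the previous inclusion yields $\Fhn=\varnothing$. Taking the intersection of the two full-measure sets on which Theorem~\ref{thm5.2} and Lemma~\ref{OW} hold gives the desired statement almost surely.

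The only delicate point is really the identification of the deterministic obstruction $\underline{h}_{\mu_\phi}\le e^+$ as the right upper bound that kills the hitting-time set; this is precisely what Theorem~\ref{thm5.2} was designed for, so there is no remaining obstacle once one has that theorem and Lemma~\ref{OW} in hand. Everything else is bookkeeping among countably many $\mu_\phi$-null exceptional sets.
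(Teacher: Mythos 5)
Your proposal is correct and follows essentially the same route as the paper's own proof: the paper also combines Lemma~\ref{lemma2}, Theorem~\ref{thm5.2}, and Lemma~\ref{OW} with the fact that $\underline{h}_{\mu_\phi}(y)\le e^+$ for \emph{every} $y$, packaged there as the one-line observation $\Fhn\subset\{y:\alpha(x,y)\ge 1/\kappa\ \text{and}\ \underline{h}_{\mu_\phi}(y)\le e^+<1/\kappa\}=\emptyset$. One small caveat: your first justification of the emptiness of $\{y:\underline{h}_{\mu_\phi}(y)\ge 1/\kappa\}$ via Variational Principle~II is not quite logically sufficient (zero or undefined topological entropy does not by itself force a set to be empty), but the parenthetical argument you give afterward — that every subsequential weak-$*$ limit of the empirical measures of $y$ is $\sigma$-invariant, whence $\underline{h}_{\mu_\phi}(y)\le e^+$ pointwise — is exactly the right, and essentially the paper's, reasoning.
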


\begin{proof}
>From multi-fractal analysis, it is well known that
$$e^+ = \max_{\nu} \int (-\phi) \, d\nu = \max_{ y \in \Sigma_2^+} h_{\mu_\phi}(y).$$
Therefore $$ \Fhn \subset \{y: \a(x,y) \ge 1/\kappa \text{ and }
\hb_{\mu_\phi}(y) \le e^+ < 1/\kappa\}= \emptyset$$
by Lemma~\ref{lemma2}, Lemma~\ref{OW} and Theorem~\ref{thm5.2}.
\end{proof}

Using the techniques developed in the previous sections we can
conclude a strong theorem on the structure of typical sequences.
The subword structure of a typical sequence up to time $L$ is completely
determined by the entropy spectrum of the measure.

\begin{corollary}\label{structure}
Consider $n \ll L$ sufficiently large, a typical point $x$ and
the set of cylinders $C_n$ of length $n$  satisfying $\mu(C_n) \sim
2^{-\beta n}$ which are subwords of the cylinder $C_L(x)$, i.e.\
the orbit of $x$ hits the cylinder $C_n$ before time $L$.
Then
$$\sharp \big( C_n \big )  \sim  \max(0, 2^{\min (E(\beta),E(\beta) - \beta + (\log
  L)/n)n}).$$
\end{corollary}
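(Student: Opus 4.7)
The plan is to estimate
\[
N(x):=\sharp\bigl\{C\in\mathcal{C}_{n,\beta}\,:\,C=C_n(\sigma^j x)\text{ for some }0\le j\le L-n\bigr\},
\]
where $\mathcal{C}_{n,\beta}$ denotes the family of $n$-cylinders of $\mu_\phi$-measure $\sim 2^{-\beta n}$. Two inputs from the earlier sections are needed at the outset: by the multifractal formalism (apply Lemma \ref{GoodG} to the Gibbs state that realises local entropy $\beta$) one has $\sharp\mathcal{C}_{n,\beta}\sim 2^{E(\beta)n}$, and $\mu_\phi(\bigcup\mathcal{C}_{n,\beta})\sim 2^{(E(\beta)-\beta)n}$.

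The upper bound is essentially automatic. Let $V(x)$ count those $j\in\{0,\ldots,L-n\}$ for which $\sigma^j x$ starts a cylinder of $\mathcal{C}_{n,\beta}$. Birkhoff's theorem applied to $\mathbf{1}_{\bigcup\mathcal{C}_{n,\beta}}$ gives $V(x)\sim L\cdot 2^{(E(\beta)-\beta)n}$ for $\mu_\phi$-typical $x$ and $L$ sufficiently large, so the trivial bound $N(x)\le\min(V(x),\sharp\mathcal{C}_{n,\beta})$ yields $N(x)\lesssim 2^{\min(E(\beta),\,E(\beta)-\beta+(\log L)/n)\,n}$. This also automatically covers the degenerate regime $L\lesssim 2^{(\beta-E(\beta))n}$, in which the bound is $O(1)$ and the $\max(0,\cdot)$ of the statement takes over.

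For the lower bound I would split into two regimes. In the dense regime $L\ge 2^{\beta n}$, I would invoke Lemma \ref{lemma2'} (Big Hitting Probability) with the collection $\mathcal{C}_{n,\beta}$ and hitting window $L$: the $\mu_\phi$-measure of the set on which some member of $\mathcal{C}_{n,\beta}$ is missed is exponentially small, whence $N(x)\sim 2^{E(\beta)n}$ for typical $x$. In the sparse regime $2^{(\beta-E(\beta))n}\le L\le 2^{\beta n}$, $V(x)$ already matches the required order and it remains to show that repetitions are negligible. For that I would bound the number of collision pairs $(i,j)$ with $i<j\le L-n$ and $C_n(\sigma^i x)=C_n(\sigma^j x)\in\mathcal{C}_{n,\beta}$: the multi-relation (Theorem \ref{Thm-MR}) decouples pairs with $j-i$ exceeding a multiple of $n$ and yields an expected collision count $\lesssim L^2\sum_{C\in\mathcal{C}_{n,\beta}}\mu_\phi(C)^2\sim L^2\cdot 2^{(E(\beta)-2\beta)n}$, which is $\ll V(x)$ exactly when $L\ll 2^{\beta n}$; the short-range pairs contribute only $O(nV(x))$, absorbed by the $\sim$. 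Since $N(x)\ge V(x)-2\cdot\#\{\text{collisions}\}$, the sparse-regime lower bound follows.

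The hard part will be the sparse-regime lower bound, since one is working strictly below the Birkhoff equilibration threshold $2^{\beta n}$ of any single cylinder and only decay-of-correlations (via Theorem \ref{Thm-MR}) is available. A more uniform alternative, which also subsumes the upper bound, is to apply Lemma \ref{lemma1add} (Small Hitting Probability) directly to $\mathcal{C}_{n,\beta}$ with parameters $a=(\log L)/n$, $b=E(\beta)$ and $\gamma=\beta-(\log L)/n$: the hypothesis $\gamma>b-c$ reads $c>E(\beta)-\beta+(\log L)/n$, so for any such $c$ the number $N(x)$ exceeds $2^{cn}$ only with exponentially small probability, producing the matching exponential tail bound without a separate collision count. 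Combining with $V(x)\sim L\cdot 2^{(E(\beta)-\beta)n}$ from Birkhoff closes the estimate.
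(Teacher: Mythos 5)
The paper states this corollary without proof, asserting only that it follows from ``the techniques developed in the previous sections,'' so your proposal is being compared against a tacit argument rather than a written one. With that caveat, your plan correctly identifies the three regimes and the three tools that cover them: Lemma~\ref{lemma2'} (Big Hitting Probability) for $L\gtrsim 2^{\beta n}$, Lemma~\ref{lemma1add} (Small Hitting Probability) for the upper bound when $L\lesssim 2^{\beta n}$, and a counting/equidistribution argument for the sparse lower bound. Your second-moment collision count is a genuinely different and rather more elementary route to the sparse-regime lower bound than the paper's tree-counting machinery (Theorems~\ref{TypicalSequence} and~\ref{ThmTree}, Lemmas~\ref{lemmaI}--\ref{lemmaII}); the paper's apparatus is built to give precise branch counts for a Cantor-set construction in the mass-transference argument, which is overkill here, so your approach buys economy at the cost of some looseness.

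Two points in your write-up need repair. First, the inequality $N(x)\ge V(x)-2\cdot\#\{\text{collisions}\}$ is correct but useless in combination with your crude short-range bound of $O(nV(x))$: it yields a negative right-hand side. You should either use the Cauchy--Schwarz form $N(x)\ge V(x)^2/\sum_C m_C^2\ge V(x)/O(n)$, which absorbs the polynomial factor into the $\sim$, or replace the crude short-range count by the observation that a collision at lag $k<n$ forces $C_n(\sigma^i x)$ to be $k$-periodic, whose $\mu_\phi$-measure decays exponentially in $n-k$, so the short-range contribution is in fact $o(V(x))$. Second, the invocation of Birkhoff's theorem to get $V(x)\sim L\cdot 2^{(E(\beta)-\beta)n}$ is too casual: the observable $\1_{\bigcup\mathcal{C}_{n,\beta}}$ changes with $n$, and $n$ grows with $L$, so the plain ergodic theorem does not control the error. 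You need a quantified/uniform version such as the paper's Theorem~\ref{TypicalSequence}, or an exponential-tail estimate from the multi-relation together with Borel--Cantelli over $n$, to obtain the a.e.\ statement; the same remark applies to turning your expected-collision bound into an almost-sure one (Markov plus Borel--Cantelli does work here because the ratio $L\cdot 2^{-\beta n}$ is exponentially small strictly inside the sparse regime, but this should be said). With these repairs your proof is sound and is a reasonable, self-contained reconstruction of the argument the paper leaves implicit.
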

Here $a_n \sim b_n$ means that the ratio $a/b$ is subexponential
in $n$.


\setcounter{equation}{0}
\section{Extensions to subshifts of finite type}\label{sec9}

The previous results can be extended in a canonical way to subshifts
of finite type: $\Sigma_2^+$ is replaced by a subshift space
$\Sigma_A$ and $\mu_{\phi}$ and $\mu_\psi$ by two Gibbs measures of
the subsystem $\sigma: \Sigma_A \to \Sigma_A$. Extensions to
symbolic spaces of several symbols are also obvious.

Here we consider another kind of extension.  Given a compact subset
$K$ in $\Sigma_2^+$. What can we say about $K\cap \Ihn$ and $K\cap \Fhn$ ? We
assume that the reference measures $\mu_\phi$ and $\mu_\psi$ are
Gibbs measure of the {\em full shift} $\sigma: \Sigma_2^+\to
\Sigma_2^+$. We can answer this question when $K=\Sigma_A$ is a
subshift of finite type.
  The
proofs are still slight modifications of those for the full shift,
thus we only sketch them briefly here.  We will emphasize the differences.

Let $\Sigma_A\subset \{0,1\}^{\nn}$ be a subshift of finite type. We
are interested in the following two sets:
\def\FA{\mathcal{F}^{\kappa}_A(x)}
\def\IA{\mathcal{I}^{\kappa}_A(x)}
$$\FA := \Fhn \cap \Sigma_A \quad \text{and} \quad \IA := \Ihn \cap \Sigma_A.$$
Recall that $\mu_\phi(\Sigma_A)=0$ if $\Sigma_A\neq\{0,1\}^\nn$
because $\Sigma_A$ is a closed invariant set ($\sigma \Sigma_A \subset \Sigma_A$)
and $\mu_{\phi}$ is of full support and ergodic.

The analysis of these sets is related to the determination of the
following restricted entropy spectrum: Recall that $-\int \phi
d\mu_{\psi}$ is nothing but the conditional entropy of $\mu_{\phi}$
relative to $\mu_{\psi}$. Let
$$
E_A(\alpha):=\dim_H\{y\in \Sigma_A:\ h_{\mu_\phi}(y)=\alpha\}.
$$
We list some facts concerning $E_A(\alpha)$ which are needed to
modify the proofs.
\begin{enumerate}
\item{} Clearly the restriction $\phi|_{\Sigma_A}$ is a H\"{o}lder
  function.
\item Let ${P_A}(\psi)$
be the pressure of a potential $\psi:\ \Sigma_A\to \rr$ related to
the subsystem $\sigma: \Sigma_A\to \Sigma_A $. Then
$${P_A}(\phi|_{\Sigma_A}) \le 0.$$
This a consequence of the variational  principle:
\begin{eqnarray*}
{P_A}(\phi|_{\Sigma_A}) & = &\max_{\mu \ \text{inv on} \
\Sigma_A} (h_\mu + \int_{\Sigma_A} \phi \, d\mu)  \\
      & \le & \max_{\mu \
\text{inv on} \ \Sigma} (h_\mu + \int_{\Sigma} \phi \, d\mu) 
 = P(\phi)=0.
 \end{eqnarray*}
\item{} $\phi_A(x):=\phi|_{\Sigma_A}-{P_A}(\phi|_{\Sigma_A})$
is normalized in the sense that ${P_A}(\phi_A)=0$.
\item{} Let $\mu_{\phi_A}$ be the Gibbs measure on $\Sigma_A$
associated to $\phi_A$. It is related to the original Gibbs measure
$\mu_\phi$ by
$$
\mu_{\phi_A}(C_n(x))\approx
e^{S_n\phi_A(x)}=e^{S_n\phi(x)-n{P_A}(\phi|_{\Sigma_A})}\approx
e^{-n{P_A(\phi|_{\Sigma_A}}(\phi)}\mu_\phi(C_n(x))
$$
for $x \in \Sigma_A$.  Here $\approx$ means that the ratio is bounded
between two constants independent of $n$.
\item{} Consequently, if one of the local entropies
$h_{\mu_{\phi_A}}(x)$ or
$h_{\mu_\phi}(x)$ is well defined then both are well defined and we have
$$
h_{\mu_{\phi_A}}(x)=h_{\mu_\phi}(x)+{P_A}(\phi|_{\Sigma_A}), \ x
\in\Sigma_A.
$$
\item{} The following spectrum is well known from multi-fractal analysis
$$ \aligned
\widetilde{E}_A(\beta):=\dim_H\{y\in \Sigma_A: \
h_{\mu_{\phi_A}}(y)=\beta\}.
\endaligned
$$
The condition $h_{\mu_{\phi_A}}(y)=\beta$ is equivalent to
 $\lim_{n\to \infty } n^{-1}(S_n(-\phi_A)(y))=\beta$.
\end{enumerate}

Now, by (5) and (6), we get that the spectrum $E_A(\cdot)$ is
expressed in term of the known spectrum $\widetilde{E}_A(\cdot)$:
$$
\aligned
E_A(\alpha)&=\dim_H\{y\in \Sigma_A:\ h_{\mu_\phi}(y)=\alpha\}\\
&=\dim_H\{y\in \Sigma_A:\ h_{\mu_{\phi_A}}(y)=\alpha+{P_A}(\phi)\}\\
&=\widetilde{E}_A(\alpha+{P_A}(\phi)).
\endaligned
 $$
Furthermore, the set $\{y\in \Sigma_A:\ h_{\mu_\phi}(y)=\alpha \}$
is empty, so $E_A(\alpha)=0$ unless
 \bequ\label{star}
\tilde{e}_A^-\le\alpha+{P_A}(\phi)\le \tilde{e}_A^+ \nequ where
$\tilde{e}_A^+,\tilde{e}_A^-$ are respectively the maximal and
minimal entropy of $h_{\mu_{\phi_A}}$.  That is
$$
\aligned \tilde{e}_A^+&=\sup_{  {\rm supp} \mu\subset
\Sigma_A}\int(-\phi_A)d\mu=\sup_{  {\rm supp} \mu\subset
\Sigma_A}\int(-\phi)d\mu+{P_A}(\phi|_{\sigma_A})\\
\tilde{e}_A^-&=\inf_{  {\rm supp}
\mu\subset\Sigma_A}\int(-\phi_A)d\mu=\inf_{  {\rm supp} \mu\subset
\Sigma_A}\int(-\phi)d\mu+{P_A}(\phi|_{\Sigma_A}).
\endaligned
$$
Define
$$
   e_A^{-}:=\inf_{  {\rm supp} \mu\subset
\Sigma_A}\int(-\phi)d\mu,\qquad e_A^{+} :=\sup_{  {\rm supp}
\mu\subset \Sigma_A}\int(-\phi)d\mu.
$$
 So, (\ref{star}) is equivalent to
$$
e_A^{-}\le\alpha\le e_A^{+}.
$$
Thus $E_A(\alpha)\le E(\alpha)$ because
$$
\aligned \widetilde{E}_A(\alpha+{P_A}(\phi))&=\sup_{\substack{
  {\rm supp}\mu\subset\Sigma_A\\\int(-\phi_A)d\mu=\alpha+{P_A}(\phi)}}h_\mu
=\sup_{\substack{  {\rm
supp}\mu\subset\Sigma_A\\\int(-\phi)d\mu=\alpha}}h_\mu \\ & \le
\sup_{\int(-\phi)d\nu=\alpha}h_\nu=E(\alpha).
\endaligned
$$

Let $e_A^{\max}$ be the unique value for which $E_A(\alpha)$ attains
its maximum (supported by the Parry measure). In particular
$E_A(e_A^{\max}) = \dim_H(\Sigma_A)$. Then we can conclude

\begin{theorem}

$$
      \dim_H \FA   =       \begin{cases}
                           \dim_H(\Sigma_A) & \mbox{\rm if} \ \ \frac{1}{\kappa}\le e_A^{\max}\\
                           E_A(\frac{1}{\kappa}) & \mbox{\rm if} \ \ \frac{1}{\kappa}> e_A^{\max}
                          \end{cases}
$$
$$
\text{and} \quad \FA = \emptyset \quad \text{if} \quad  \frac{1}{\kappa}> e_A^+.$$
\end{theorem}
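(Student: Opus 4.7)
The plan is to mirror the full-shift arguments of Sections~\ref{bigh} and~\ref{typ} inside $\Sigma_A$, using the local-entropy dictionary $h_{\mu_\phi}(y) = h_{\mu_{\phi_A}}(y) - P_A(\phi|_{\Sigma_A})$ from fact~(5) above to pass between the ambient multi-fractal data for $\mu_\phi$ and the intrinsic multi-fractal data for the subsystem Gibbs measure $\mu_{\phi_A}$ on $\Sigma_A$.

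The upper bound is a direct transcription. By Lemma~\ref{lemma2} and Theorem~\ref{thm5.2}, for $\mu_\phi$-a.e.\ $x$ one has
$$\FA \subset \{y \in \Sigma_A : \underline{h}_{\mu_\phi}(y) \ge 1/\kappa\} \cup \O^+(x),$$
with the orbit piece countable. Applying Variational Principle~II (Theorem~\ref{var}) to $\mu_{\phi_A}$ on the subsystem $(\Sigma_A,\sigma)$ and translating via fact~(5) identifies the Hausdorff dimension of the first set as $\sup_{t \ge 1/\kappa} E_A(t)$, which by concavity of $E_A$ equals $\dim_H \Sigma_A$ when $1/\kappa \le e_A^{\max}$ and $E_A(1/\kappa)$ when $e_A^{\max} < 1/\kappa \le e_A^+$. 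For the lower bound I would apply the Fubini-type Corollary~\ref{fub} with a Gibbs measure $\nu$ on $\Sigma_A$ concentrated on $\{y : h_{\mu_\phi}(y) = s\}$ for some $s$ slightly larger than $1/\kappa$: namely $\nu = \mu_{\phi_A}$ (with $s = e_A^{\max}$ and $\dim \nu = \dim_H \Sigma_A$) when $1/\kappa < e_A^{\max}$, and $\nu = \mu_{q\phi_A - P_A(q\phi_A)}$ (with $q$ chosen by a Legendre transform so that $\dim \nu = E_A(s)$) when $e_A^{\max} < 1/\kappa < e_A^+$. Corollary~\ref{fub} then gives $\alpha(x,y) = s > 1/\kappa$ for $\nu$-a.e.\ $y$ and $\mu_\phi$-a.e.\ $x$, placing $\nu$-a.e.\ $y$ inside $\FA$ by Lemma~\ref{lemma2}; sliding $s \downarrow 1/\kappa$ and invoking continuity of $E_A$ on $(e_A^-, e_A^+)$ matches the upper bound.

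For emptiness when $1/\kappa > e_A^+$, pick $h$ with $e_A^+ < h < 1/\kappa$. Every cylinder $C$ of length $n$ meeting $\Sigma_A$ satisfies $\mu_\phi(C) \ge 2^{-(e_A^+ + o(1))n} \ge 2^{-(h-\gamma)n}$ for large $n$ by the Gibbs property, so the Big Hitting Probability Lemma~\ref{lemma2'} (whose proof only used $L \le 2^n$) together with Borel--Cantelli give, for $\mu_\phi$-a.e.\ $x$ and $n$ large, $\tau_n(x, C) \le 2^{hn}$ simultaneously for all such $C$. Hence $\alpha(x,y) \le h < 1/\kappa$ for every $y \in \Sigma_A$, and Lemma~\ref{lemma2} reduces $\FA$ to $\O^+(x) \cap \Sigma_A$; this latter intersection is empty for $\mu_\phi$-typical $x$, because $\sigma^{n_0} x \in \Sigma_A$ combined with $\sigma \Sigma_A \subset \Sigma_A$ would trap the forward orbit of $x$ in $\Sigma_A$, forcing the Birkhoff average of $\1_{\Sigma_A}$ to equal $1$ rather than $\mu_\phi(\Sigma_A) = 0$. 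The most delicate step in this program will be justifying the subsystem analogue of Theorem~\ref{var} and producing the approximating Gibbs states $\mu_{q\phi_A - P_A(q\phi_A)}$ with the correct dimensional data, particularly near the endpoint $e_A^+$ where $E_A$ may fail to be continuous; this boundary issue is precisely the reason the theorem states emptiness only for $1/\kappa$ strictly larger than $e_A^+$ rather than at the endpoint itself.
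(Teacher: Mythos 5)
Your proposal follows essentially the route the paper intends (the paper only sketches this theorem, pointing to the full-shift machinery plus the dictionary $h_{\mu_\phi}(y)=h_{\mu_{\phi_A}}(y)-P_A(\phi|_{\Sigma_A})$), and the overall architecture — upper bound from Lemma~\ref{lemma2} plus Theorem~\ref{thm5.2} plus the subsystem Variational Principle, lower bound from Corollary~\ref{fub} with a well-chosen Gibbs state on $\Sigma_A$, emptiness from the uniform upper bound on $\overline{h}_{\mu_\phi}$ over $\Sigma_A$ — is correct. Your emptiness argument re-runs Lemma~\ref{lemma2'} (big hitting probability) directly rather than citing the already-packaged Theorem~\ref{thm5.2}; this is a valid and slightly more self-contained route, and your observation that $\O^+(x)\cap\Sigma_A=\emptyset$ for $\mu_\phi$-typical $x$ (via Birkhoff and $\mu_\phi(\Sigma_A)=0$) is a necessary point that the paper leaves implicit.

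There is, however, a concrete error in the lower bound for the regime $1/\kappa\le e_A^{\max}$. You take $\nu=\mu_{\phi_A}$ and assert that $\nu$-a.e.\ $y$ satisfies $h_{\mu_\phi}(y)=e_A^{\max}$ and that $\dim\nu=\dim_H\Sigma_A$. Neither is true in general: for $\mu_{\phi_A}$-a.e.\ $y$, $h_{\mu_\phi}(y)=-\int\phi\,d\mu_{\phi_A}=h_{\mu_{\phi_A}}-P_A(\phi|_{\Sigma_A})$, which is a priori different from $e_A^{\max}$, and $\dim\mu_{\phi_A}=h_{\mu_{\phi_A}}\le h_{\rm top}(\Sigma_A)=\dim_H\Sigma_A$ with equality only in the degenerate case where $\phi|_{\Sigma_A}$ is cohomologous to a constant. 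The correct witness measure is the Parry measure of $\Sigma_A$, which in your own notation is $\mu_{q\phi_A-P_A(q\phi_A)}$ with $q=0$, not $q=1$: it has full entropy $h_{\rm top}(\Sigma_A)=\dim_H\Sigma_A$ and carries the level set $\{y\in\Sigma_A : h_{\mu_\phi}(y)=e_A^{\max}\}$. With this substitution your argument closes; as written it only yields $\dim_H\FA\ge h_{\mu_{\phi_A}}$, which is strictly weaker than the stated $\dim_H\Sigma_A$ whenever $\mu_{\phi_A}$ is not the measure of maximal entropy on $\Sigma_A$.
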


\begin{theorem}
$$
      \dim_H \IA  =      \begin{cases}
                           \frac{1}{\kappa} + P_A(\phi|_{\Sigma_A})
                           & \mbox{\rm if} \ \
                           -P_A(\phi|_{\Sigma_A}) \le
                           \frac{1}{\kappa}\le
                           h_{\mu_{\phi_A}}- P_A(\phi|_{\Sigma_A})\\
                           E_A(\frac{1}{\kappa}) & \mbox{\rm if} \ \
                           h_{\mu_{\phi_A}}- P_A(\phi|_{\Sigma_A}) \le
                           \frac{1}{\kappa} \le
                           e_A^{\max}\\
                           \dim_H(\Sigma_A) & \mbox{\rm if} \ \
                           \frac{1}{\kappa}
                           \ge  e_A^{\max}\\
                          \end{cases}
$$
$$ \text{and} \quad \IA = \emptyset \quad \text{if} \quad  \frac{1}{\kappa}
                           <  -P_A(\phi|_{\Sigma_A}).$$
\end{theorem}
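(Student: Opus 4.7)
The plan is to mirror the full-shift proofs of Theorems~\ref{thm1.3} and~\ref{thm1.4} on $\Sigma_A$, keeping $\mu_\phi$ as the measure driving the orbit of $x$ but swapping the ``ground'' measure for the normalized Gibbs measure $\mu_{\phi_A}$ of $\phi_A = \phi|_{\Sigma_A} - P_A(\phi|_{\Sigma_A})$. The single computational fact that converts every estimate from the full-shift setting is the proportionality $\mu_\phi(C) \asymp 2^{P_A(\phi)|C|}\mu_{\phi_A}(C)$ on cylinders $C$ admissible in $\Sigma_A$, which follows from the Gibbs property applied to both potentials. The four regimes of the theorem correspond to translating, via this proportionality, the four regimes of Theorem~\ref{thm1.4}.

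For the emptiness claim when $1/\kappa < -P_A(\phi|_{\Sigma_A})$, let $S_L$ denote the union of all cylinders of length $L$ whose underlying word is admissible in $\Sigma_A$. Summing the proportionality over admissible $L$-words and using $\mu_{\phi_A}(\Sigma_A)=1$ gives $\mu_\phi(S_L) \asymp 2^{P_A(\phi)L}$. If $y \in \IA$ then $\sigma^n x \in S_{L(n)}$ with $L(n)=\lfloor\kappa\log n\rfloor$ infinitely often; since $\sum_n \mu_\phi(\sigma^{-n}S_{L(n)}) \asymp \sum_n n^{\kappa P_A(\phi)}$ converges precisely when $1/\kappa<-P_A(\phi|_{\Sigma_A})$, Borel--Cantelli forces $\IA=\emptyset$ for $\mu_\phi$-a.e.\ $x$.

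For the upper bounds, the proof of Theorem~\ref{cor1add} extends to $\Sigma_A$ with minor changes. In the range $h_{\mu_{\phi_A}}-P_A(\phi)<1/\kappa\le e_A^{\max}$ I would cover $\{y\in\Sigma_A : \alpha(x,y)\le 1/\kappa\}$ by the level sets $\mathcal{H}(h',h'')\cap\Sigma_A$, using Lemma~\ref{lemma1add} applied to $\mu_{\phi_A}$-good cylinders inside $\Sigma_A$ to yield the cover bound $E_A(1/\kappa)$. In the range $-P_A(\phi)\le 1/\kappa\le h_{\mu_{\phi_A}}-P_A(\phi)$, cover instead by the admissible cylinders $C_n(\sigma^k x)$ with $\sigma^k x\in S_n$ and $k\le 2^{n/\kappa}$; the expected count of such cylinders is $\asymp 2^{n/\kappa}\mu_\phi(S_n)\asymp 2^{(1/\kappa+P_A(\phi))n}$, which via Borel--Cantelli on these expected counts yields $\dim\le 1/\kappa+P_A(\phi)$. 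The case $1/\kappa\ge e_A^{\max}$ is trivial. For the lower bounds in Cases~2 and~3, apply Corollary~\ref{fub} with $\nu$ a Gibbs measure on $\Sigma_A$ whose typical point satisfies $h_{\mu_\phi}(y)=\min(1/\kappa,e_A^{\max})$; such a measure is furnished by the multifractal formalism on $\Sigma_A$, has dimension $E_A(1/\kappa)$ in Case~2 and $\dim_H\Sigma_A$ in Case~3, and Fubini gives $\nu(\IA)=1$, completing these regimes.

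The main obstacle is the lower bound in Case~1, which requires extending the multifractal mass transference principle (Theorem~\ref{revers}) to $\Sigma_A$. I would rerun the Section~\ref{typ} machinery with $\mu_{\phi_A}$ in place of $\mu_\phi$ throughout: Lemmas~\ref{GoodG}--\ref{GQB} go through verbatim for $\mu_{\phi_A}$-good cylinders inside $\Sigma_A$. The key counting change is that the $\mu_\phi$-probability that $\sigma^j x$ starts with a fixed admissible $L$-word $C$ equals $\asymp 2^{P_A(\phi)L}\mu_{\phi_A}(C)$, so within a window $[2^{L_{k-1}}+1,2^{L_k/\kappa}]$ the expected number of admissible length-$L_k$ subwords visited in the orbit of $x$ is $\asymp 2^{(1/\kappa+P_A(\phi))L_k}$. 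The tree counting of Theorem~\ref{ThmTree} then delivers at least $2^{(1/\kappa+P_A(\phi)-2\epsilon)(\ell-L_{k-1})}$ good nodes at height $\ell$, and the Cantor set construction of Theorem~\ref{revers} produces a subset of $\Sigma_A\cap\{y:\alpha(x,y)\le 1/\kappa\}$ of Hausdorff dimension at least $1/\kappa+P_A(\phi)-2\epsilon$; sending $\epsilon\downarrow 0$ closes the bound. The delicate step is verifying that the pressure correction $P_A(\phi)$ propagates cleanly through each generalized quasi-Bernoulli and multi-relation estimate while the error factors $(1+c\beta^d)^{\tilde n}$ remain subexponential --- this is where the two measures $\mu_\phi$ (governing the orbit of $x$) and $\mu_{\phi_A}$ (governing the geometric structure of $\Sigma_A$) must be carefully disentangled.
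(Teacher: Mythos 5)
Your overall strategy is sound and your proof of the emptiness claim takes a genuinely different --- and cleaner --- route than the paper's. The paper decomposes $\IA$ into level sets of the local entropy $h_{\mu_\phi}$ and applies the Small Hitting Probability Lemma with $N=1$ to each level set, then sums over countably many applications of Borel--Cantelli; you instead apply Borel--Cantelli directly to the events $\{\sigma^n x\in S_{L(n)}\}$, using only the proportionality $\mu_\phi(C)\asymp 2^{P_A(\phi|_{\Sigma_A})\,|C|}\mu_{\phi_A}(C)$ to compute $\mu_\phi(S_L)\asymp 2^{P_A(\phi|_{\Sigma_A})\,L}$. Your version is more elementary, avoids invoking the small-hitting machinery entirely, and gives the critical exponent $-P_A(\phi|_{\Sigma_A})$ in one line; the paper's version is heavier but sits naturally alongside the arguments already developed in Section~\ref{smallh}.

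For the remaining three regimes, the paper's proof consists of the single sentence that they are ``slight modifications'' of the full-shift proofs, so there is no detailed argument there to compare against. Your proposal fills this in, correctly identifying the pressure shift $P_A(\phi|_{\Sigma_A})$ as the single quantity that converts each full-shift estimate, and the outlines of your upper-bound covers (the direct count $\asymp 2^{(1/\kappa+P_A)n}$ in Case~1, the level-set cover with $E_A$ in Case~2) and of the Fubini lower bounds (Cases~2 and~3) are correct. One imprecision worth flagging: you write that Lemmas~\ref{GoodG}--\ref{GQB} ``go through verbatim'' for $\mu_{\phi_A}$-good cylinders, but Lemma~\ref{MassOfD} in particular does \emph{not}: its conclusion is a lower bound of the form $\mathrm{const}\cdot 2^{-L'\|\phi\|_\infty}$ which is \emph{independent} of the length of the thickening of the good set, precisely because $\mathcal G_\e$ has full $\mu_\phi$-measure. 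The natural replacement on $\Sigma_A$ --- the union of admissible, $\mu_{\phi_A}$-good $L''$-cylinders --- has $\mu_\phi$-measure $\asymp 2^{P_A(\phi|_{\Sigma_A})L''}$, so the lower bound in the modified Lemma~\ref{MassOfD} decays in $L''$. That decay propagates through Theorem~\ref{TypicalSequence}, where the frequency $m(L')$ must be replaced by the $L''$-dependent quantity $\asymp 2^{P_A L''}$ before multiplying by the time horizon $2^{cL''}$, yielding $2^{(c+P_A)L''}$. You do write this correct counting a sentence later, so you evidently understand the point; but the word ``verbatim'' overstates it and should be dropped. The two measures $\mu_\phi$ and $\mu_{\phi_A}$ really must be carried side by side through Theorem~\ref{ThmTree}: the probability bounds are with respect to $\mu_\phi$, while the good/bad classification of cylinders is with respect to $\mu_{\phi_A}$; this is exactly the ``disentangling'' you flag, and it is the only genuinely nontrivial adaptation.
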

\noindent Remark:  Unlike the full shift case $\IA$ is empty for
large $\kappa$.
\begin{proof}
The only statement in the two theorems which differs from the full
shift is that $\IA$ may be empty.   Fix $\e > 0$.  Let
$\frac1{\kappa} < -P_A(\phi|_{\Sigma_A}) - \e.$ Then by (\ref{star})
we have
$$h_{\mu_{\phi}}(y)
  \ge \tilde{e}^-_A -P_A(\phi|_{\Sigma_A})$$
for all $y \in \Sigma_A$.
Then, by Lemma~{\ref{lemma2}}
\begin{eqnarray*}
\IA  & \subset & \left\{y \in \Sigma_A: \  \alpha(x,y) < \frac1{\kappa} +
  \epsilon\right\}\\
& = &
 \{y \in \Sigma_A: \ \alpha(x,y) <
\frac{1}{\kappa} + \epsilon, \
h_{\mu_{\phi}}(y) \ge \tilde{e}^-_A -  P_A(\phi|_{\Sigma_A}) \}
\\
& \subset & \{y \in \Sigma_A: \ \alpha(x,y) <
-P_A(\phi|_{\Sigma_A}), \
h_{\mu_{\phi}}(y) \ge \tilde{e}^-_A -  P_A(\phi|_{\Sigma_A}) \}
\\
& \subset & \bigcup_{j=0}^{\infty} \{ y \in \Sigma_A: \
h_{\mu_{\phi}}(y) \in \big [j \e,(j+1)\e \big ) + \tilde{e}^-_A - P_A(\phi|_{\Sigma_A}) \}.
\end{eqnarray*}
Thus,  Lemma \ref{lemma1add} with $K = 2^{\frac1{\kappa}n}, L = \max
(2^{E_A(\tilde{e}^-_A + j\e )},2^{(\tilde{e}^-_A + (j+1)\e})$ and
$N=1$ implies that each of the (countably many) sets on the right
hand side is empty for $\mu_{\phi}$-a.e.\ $x$.
\end{proof}

\setcounter{equation}{0}
\section{Transferring to the circle}\label{sec10}
In this section we show that  the results of the section \ref{sec8}
hold for the doubling map of the circle, i.e.\ replacing $\Fhn,\Ihn$
by $\Fn,\In$. Recall that the projection $\pi: \Sigma \to \S$ was
defined in the section \ref{back}. For $y\in\Sigma_2$; $y\ne
1^\infty ,0^\infty$ let
\[
C_n^*(y):=C_n^-(y)\cup C_n(y)\cup C_n^+(y)
\]
where $C_n^-(y)$ denotes the cylinder of length $n$ preceding $C_n(y)$ in the
lexicographical order and $C_n^+(y)$ denotes the immediate successor.

\begin{theorem} For $\mu_\phi$ a.e.\ $x$ we have
$$\dim_H (F^\kappa \pi(x)) = \dim_H \pi (\Fhn)$$
$$\dim_H (I^\kappa \pi(x)) = \dim_H \pi (\Ihn).$$
\end{theorem}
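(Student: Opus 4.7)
The doubling map $T$ on $\S^1$ is semi-conjugate to $\sigma$ via $\pi$, and $\pi$ preserves Hausdorff dimension (Section~\ref{back}); in particular $\dim_H\pi(\Ihn)=\dim_H\Ihn$ and similarly for $F$. The plan is to sandwich each circle ball $J^\kappa_n(\pi(x))$ between projections of cylinders of comparable symbolic length, translate that into sandwich inclusions for $I^\kappa(\pi(x))$ and $F^\kappa(\pi(x))$, and conclude from continuity of the dimension formulas in Sections~\ref{sec8}--\ref{sec9}. The geometric input is that a length-$m$ cylinder projects to a dyadic interval of length $2^{-m}$, so with $m_n:=\lfloor\kappa\log n\rfloor$ one has $n^{-\kappa}\le 2^{-m_n}\le 2n^{-\kappa}$. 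Consequently, for any $\kappa'>\kappa$ and all $n$ sufficiently large,
\[
\pi\bigl(C_{\lfloor\kappa'\log n\rfloor}(\sigma^n x)\bigr)\ \subset\ J^\kappa_n(\pi(x))\ \subset\ \pi\bigl(C^*_{m_n}(\sigma^n x)\bigr),
\]
since the left set is a dyadic interval of length $\le 2n^{-\kappa'}<n^{-\kappa}$ containing $T^n\pi(x)$, while the right set is a run of three adjacent dyadic intervals of length $2^{-m_n}$ with $T^n\pi(x)$ in the middle, hence contains the ball of radius $2^{-m_n}\ge n^{-\kappa}$ around $T^n\pi(x)$. Passing to limsups and complements, and discarding the countable set of dyadic rationals (invisible to $\dim_H$) yields, with $\mathcal{I}^{*\kappa}(x):=\{y:y\in C^*_{m_n}(\sigma^n x)\text{ i.o.}\}$ and $\mathcal{F}^{*\kappa}(x):=\Sigma_2^+\setminus\mathcal{I}^{*\kappa}(x)$,
\[
\pi\bigl(\mathcal{I}^{\kappa'}(x)\bigr)\subset I^\kappa(\pi(x))\subset\pi\bigl(\mathcal{I}^{*\kappa}(x)\bigr),\quad \pi\bigl(\mathcal{F}^{*\kappa}(x)\bigr)\subset F^\kappa(\pi(x))\subset\pi\bigl(\mathcal{F}^{\kappa'}(x)\bigr),
\]
for every $\kappa'>\kappa$.

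The crux is to show that for $\mu_\phi$-a.e.\ $x$ the $*$-sets have the same a.e.\ dimension as the standard ones: $\dim_H\mathcal{I}^{*\kappa}(x)=\dim_H\Ihn$ and $\dim_H\mathcal{F}^{*\kappa}(x)=\dim_H\Fhn$. The Gibbs property yields $\mu_\phi(C^*_n(y))\asymp\mu_\phi(C_n(y))$ with uniform constants, so the $*$-lower local entropy coincides with $\underline{h}_{\mu_\phi}(y)$, and the basic hitting-time identity (Theorem~\ref{Cha}) holds verbatim for $\tau^*(x,C^*_n)$: $\alpha^*(x,y)=\underline{h}_{\mu_\phi}(y)$ for $\mu_\phi$-a.e.\ $x$. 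Consequently every probabilistic estimate of Sections~\ref{bigh}--\ref{typ} (big/small hitting probability, multi-relation, the Cantor-set tree for the mass transference principle) carries over with $C$ replaced by $C^*$, since each estimate uses only the Gibbs measure of cylinders and the local entropy spectrum, both unchanged up to uniform constants. Equivalently, writing $\mathcal{I}^{*\kappa}(x)=\Ihn\cup\mathcal{I}^{\kappa,-}(x)\cup\mathcal{I}^{\kappa,+}(x)$ and $\mathcal{F}^{*\kappa}(x)=\Fhn\cap\mathcal{F}^{\kappa,-}(x)\cap\mathcal{F}^{\kappa,+}(x)$, the shifted versions use $C^{\pm}_{m_n}(\sigma^n x)$ in place of $C_{m_n}(\sigma^n x)$ and the entire analysis of Sections~\ref{bigh}--\ref{typ} applies unchanged to them.

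Finally, by Lemma~\ref{lemma3} the a.e.\ dimensions $\phi_I(\kappa):=\dim_H\Ihn$ and $\phi_F(\kappa):=\dim_H\Fhn$ are continuous functions of $\kappa$, being pieced together from $1/\kappa$, $E(1/\kappa)$ and $1$, with $E$ continuous on $[e^-,e^+]$. Combining the two previous displays gives, for every $\kappa'>\kappa$,
\[
\phi_I(\kappa')\le\dim_H I^\kappa(\pi(x))\le\phi_I(\kappa),\qquad \phi_F(\kappa)\le\dim_H F^\kappa(\pi(x))\le\phi_F(\kappa').
\]
Letting $\kappa'\downarrow\kappa$ and using continuity forces equality throughout, which with $\dim_H\pi(\Ihn)=\phi_I(\kappa)$ and $\dim_H\pi(\Fhn)=\phi_F(\kappa)$ proves the theorem. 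The main obstacle is the robustness claim in the second paragraph: while the underlying heuristic is transparent (the $*$-enlargement only costs uniform multiplicative constants from the Gibbs property), the verification requires revisiting each Borel--Cantelli, multi-relation, and Cantor-set/tree estimate of Sections~\ref{bigh}--\ref{typ} to confirm that these accumulated constants do not inflate exponentially through the combinatorial arguments.
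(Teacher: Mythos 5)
Your plan — sandwich the circle interval between a slightly shorter cylinder and a three-cylinder block $C^*$, prove the $*$-decorated quantities have the same a.e.\ dimensions as the plain ones, and close via continuity of the dimension profile in $\kappa$ — is the right shape and shares the geometric skeleton (the inclusion \eqref{inclusion}) with the paper. But the proof has a genuine gap at the step you correctly identify as the crux. You assert that \emph{the Gibbs property yields} $\mu_\phi(C^*_n(y))\asymp\mu_\phi(C_n(y))$ \emph{with uniform constants}. This is false for a general Gibbs measure on $\Sigma_2^+$. Take the Bernoulli measure with $p(0)=0.1$, $p(1)=0.9$ (a Gibbs measure for a locally constant $\phi$); for $y=01^\infty$ one has $C_n(y)=[01^{n-1}]$ with measure $0.1\cdot0.9^{n-1}$ while $C^+_n(y)=[10^{n-1}]$ has measure $0.9\cdot0.1^{n-1}$, so the ratio decays exponentially. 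The comparability you need is true, but only because the relevant potential is a pullback $\phi=\hat\phi\circ\pi$ of a H\"older function on the circle; that structure forces $S_n\phi(w01^\infty)=S_n\phi(w10^\infty)$, and \emph{that} observation combined with the Gibbs property gives uniform comparability. You never invoke this, and without it the entire $*$-carryover collapses.

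The second issue is that even with the pullback hypothesis in hand, the wholesale claim that ``every probabilistic estimate of Sections~\ref{bigh}--\ref{typ} carries over with $C$ replaced by $C^*$'' is left unverified, and you acknowledge this. The paper avoids the problem entirely by not re-running the machinery on unions of three cylinders. For $\Fhn$ it proves a separate deterministic fact (Lemma~\ref{a+}): for any $x$ and any ergodic $\nu\neq\delta_{0^\infty},\delta_{1^\infty}$ one has $\alpha^*(x,y)=\alpha(x,y)$ for $\nu$-a.e.\ $y$, using the $\nu$-generic bound on the length of runs of $1$'s (or $0$'s) in $y$ — no measure comparison at all. That, combined with Corollary~\ref{fub} applied to the multifractal measure $\mu_{t(\kappa-\e)\phi}$, gives the lower bound for $\dim_H F^\kappa(\pi(x))$. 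For the upper bound on $\dim_H I^\kappa(\pi(x))$ the paper estimates only the \emph{discrepancy} set $I^\kappa(\pi(x))\setminus\pi(\Ihn)$, places it inside $\{y:\alpha^*(x,y)<1/\kappa,\ h_{\mu_\phi}(y)\ge1/\kappa\}$ via Theorem~\ref{thm5.2}, and covers it by Lemma~\ref{lemma1add}; the pullback identity is invoked precisely here to get $h^*_{\mu_\phi}=h_{\mu_\phi}$, and only the already-proved lemmas are used. Your continuity-in-$\kappa$ closure is a nice touch and would work once $\dim_H\mathcal{I}^{*\kappa}(x)=\dim_H\Ihn$ and $\dim_H\mathcal{F}^{*\kappa}(x)=\dim_H\Fhn$ are established, but to get those you should either add the pullback observation and then explicitly redo the two nontrivial bounds (the upper bound for $\mathcal{I}^*$ via Theorem~\ref{cor1add}-type covering, the lower bound for $\mathcal{F}^*$ via Theorem~\ref{5.3}-type arguments; the opposite inclusions $\Ihn\subset\mathcal{I}^{*\kappa}(x)$ and $\mathcal{F}^{*\kappa}(x)\subset\Fhn$ are free), or use the paper's targeted Lemma~\ref{a+}/Lemma~\ref{lemma1add} route, which is structurally cleaner.
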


\begin{proof}
For $x\in\Sigma_2$ with $x\not=1^\infty, 0^\infty$, the projection
of each of the cylinders $C_n^-(x)$, $C_n(x)$,$C_n^+(x)$ to $\S^1$
is an interval around $\pi(x)$. Moreover we have
\begin{equation}\label{inclusion}
\pi (C_{\lfloor\kappa\log n\rfloor + 1}(x))\subset
\left(\pi(x)-\frac1{n^\kappa},\pi(x)+\frac1{n^\kappa}\right)\subset
\pi(C^*_{\lfloor\kappa\log n\rfloor}(x)).\end{equation} Applying the
left inclusion, it follows that
$$
 F^{\kappa}(\pi(x)) \subset \pi(\Fhn).
$$
Hence
$$\dim_H (F^\kappa \pi(x)) \le \dim_H \pi (\Fhn),$$
and similarly
$$\dim_H (I^\kappa \pi(x)) \ge \dim_H \pi (\Ihn).$$

We turn to the reverse inequalities.
For this we define
 $$\tau^*_n(x,y):=\inf\{l\ge 1\, :\, \sigma^lx\in
C^*_n(y)\},$$ $$\tau^-_n(x,y):=\inf\{l\ge 1\, :\, \sigma^lx\in
C^-_n(y)\}$$ and $$\tau^+_n(x,y):=\inf\{l\ge 1\, :\, \sigma^lx\in C^+_n(y)\}$$ then
\[
\tau^*_n(x,y)=\min\{\tau_n^-(x,y),\tau_n(x,y),\tau^+_n(x,y)\}
\]
and
\[
\alpha^*(x,y)=\min\{\alpha^-(x,y),\alpha(x,y),\alpha^+(x,y)\}
\]
where $\alpha^*,\alpha^-,\alpha^+$ are defined in the corresponding way. Therefore
in analogy to Lemma~\ref{lemma2}
\begin{equation}\label{contained}
\left\{\pi(y)\, :\, \alpha^*(x,y)>\frac1{\kappa}\right\}\subset
F^\kappa(\pi(x))
\end{equation}
and
\begin{equation}\label{contained'}
I^\kappa(\pi(x))\subset\left\{\pi(y)\, :\, \alpha^*(x,y)\le\frac1{\kappa}\right\}.
\end{equation}

Next we need the following lemma to prove the reverse inequalities.
\begin{lemma}\label{a+}
For any $x \in \Sigma_2^+$ and  $\nu$
an ergodic Borel probability measure
different from $\delta_{0^\infty}$ and $\delta_{1^\infty}$ we have
\[
\alpha^*(x,y)=\alpha(x,y)\qquad\nu-a.e.
\]
\end{lemma}
\begin{proof}
We will prove that $\alpha^+(x,y)\ge\alpha(x,y)$ almost everywhere.
The proof for $\alpha^-(x,y) \ge \alpha(x,y)$ a.e.\ is similar. Since
$$\alpha^*(x,y)=\min\{\alpha^-(x,y),\alpha(x,y),\alpha^+(x,y)\},$$ this
will imply the lemma.

Fix
$\epsilon>0$. Let $\1_n$ be the characteristic function of the
cylinder set consisting of $n$ 1's. Since $\nu$ is not concentrated on $1^\infty$
we can find an $n_\epsilon$ sufficiently large that
\[
\int\1_n(x)\, d\nu(x)<\epsilon \qquad (\forall n>n_\epsilon).
\]
Now let $y$ be a generic point for $\nu$.
 Then there
is an $n_0=n_0(y) > n_\epsilon$
such that
\[
\frac1m S_m\1_n(y)<\epsilon \qquad (\forall m>n_0).
\]
Let us consider the structure of $C^+_m(y)$.
\begin{eqnarray*}
    C^+_m(y) =& [y_1\cdots y_{m-1}1]      \ \ \ \ \ \        &\ \ \text{if} \ \  y=y_1\cdots
    y_{m-1}0\cdots\\
    C^+_m(y)=&[ y_1\cdots y_{k-1} 1 0 0 \cdots 0] &\ \ \text{if} \ \
         y=y_1\cdots y_{k-1}011\cdots 1 y_{m+1} \cdots.
\end{eqnarray*}
It follows that $$
        C^+_m(y) \subset C_{k-1}(y)
$$
where $k=k(y, m)$ is characterized by $y_k=0$ and $y_j=1$ ($\forall
k<j\le m\}$). Thus
\begin{equation} \label{tau+tau}
     \tau_m^+(x, y) \ge \tau_{k-1}(x, y).
\end{equation}
For a given $x$,  the more 1's at the end of $C_m(y)$ is the only way
to enlarge the difference of $x$'s
hitting times of $C^+_m(y)$ and $C_m(y)$.
Let $n>n_0$, $m > n - l - 1$ and assume that
we have a block of $n+l$ ones at the end ($l>n$). Then
(\ref{tau+tau}) becomes
\[
\tau^+_m(x,y) \ge \tau_{m-n-l-1}(x,y).
\]
The worst situation is when this block occurs very early. We are
going to estimate this first occurrence.
First we observe that
\[
\epsilon > \frac{1}{m} S_m \1_n(y) \ge \frac{l}{m} .
\]
This implies that the first occurrence of the block in question is not earlier
than
\[
m-n-l - 1 \ge  m-2l >m(1-2\epsilon).
\]
Therefore
\[
\alpha^+(x,y)=\liminf_{m\to\infty}\frac{\log\tau^+_m(x,y)}{m}\ge\liminf_{m\to\infty}\frac{\log\tau_{m-n-l-1}(x,y)}{m}\ge (1-2\epsilon)\alpha(x,y).
\]
Letting $\epsilon\to 0$ we obtain the result.
\end{proof}

We continue with the proof of the theorem. For any Borel set $A$ we have
$$\dim_H \pi A = h_{\rm top}(A)$$
since $\diam \pi(C) = 2^{-|C|}$ for any cylinder set $C$.  Thus
applying Theorem \ref{5.3} yields
$$\dim_H \pi(\Fhn) = h_{\rm top}(\Fhn) = h_{\mu_q(\kappa)\phi}.$$

Let  $t(\kappa) = q(\kappa)$ if $\frac{1}{\kappa} \ge e_{\max}$ and
$t(\kappa) = 0$ otherwise. Suppose $\e > 0$.  By continuity of
the multi-fractal spectrum we have
\[
\lim_{\e \to 0} h_{t(\kappa - \e)\phi} = h_{t(\kappa)\phi}
\]
and $$ h_{\mu_{t(\kappa-\e)\phi}}(y)  = \frac{1}{\kappa - \e} >
\frac{1}{\kappa} \quad \mu_{t(\kappa - \e)\phi}\!-\!\mbox{a.e.}\
y.$$ By Corollary \ref{fub} for $\mu_{\phi} \times
\mu_{q(\kappa -
  \e) \phi}$ for a.e.\ $(x,y)$ we have
$$\alpha^*(x,y) = \alpha(x,y) = h_{\mu_\phi}(y) > \frac{1}{\kappa}.
$$
Thus $\pi(y) \in F^{\kappa}(\pi(x))$ for $\mu_{qt\kappa - \e)\phi}$
a.e.\ $y$ and $\dim_H F_{\kappa} \ge h_ {\mu_{t(\kappa -
\e)\phi}}$. Taking the limit $\e \to 0$ shows
$$\dim_H F^{\kappa}(\pi(x)) \ge  h_ {\mu_{t(\kappa)\phi}} = \dim_H
\pi(\Fhn ).$$
This completes the proof for the set $F^{\kappa}$.

It remains to show that $\dim_H I^\kappa (\pi(x)) \le \dim_H \pi
(\Ihn).$ If $\frac{1}{\kappa} \ge e_{\max}$ then this is trivial
since $\dim_H \pi(\Ihn) = 1$. Observe that for any $\kappa$ we have
$\dim_H I^{\kappa} \pi(x) \le \frac1{\kappa}$.  To see this consider
the natural covering $(T^n \pi(x) - \frac1{n^{\kappa}},T^n \pi(x)  +
\frac{1}{n^{\kappa}})$ of $I^{\kappa}(\pi(x)).$ The $s$-covering sum
is $\sum \frac{1}{n^{\kappa s}} < \infty$ if $s > \frac{1}{\kappa}.$
Therefore, if $0 < \frac{1}{\kappa} \le h_{\mu_\phi}$, we have $\dim
I^{\kappa}(\pi(x)) \le \frac{1}{\kappa} =\dim_H(\Ihn).$ Finally   if
$ h_{\mu_{\phi}}  \le  \frac{1}{\kappa} <  e_{\max}$ the for any
H\"{o}lder function $\hat{\phi} \in H^{\alpha}(\S^1)$  let $\phi =
\hat{\phi} \circ \pi$.  We have $\phi \in H^{\alpha}(\Sigma_2)$ and
$\phi(x_1, \dots,x_n 0 1^{\infty}) = \phi(x_1,\dots,x_n,1
0^{\infty})$ thus by the Gibbs property we have
$$\lim_{n \to \infty} \frac{\log \mu_{\phi}(C^{\pm}_n(x))}{\log
  \mu_{\phi} (C_n(x))} = 1.$$
Hence  $h^*_{\mu_{\phi}}(y) = h_{\mu_{\phi}}(y)$ for all $y \in \Sigma_2$.

Consider the set
\begin{eqnarray*}
I^{\kappa}(\pi(x)) \backslash \pi(\Ihn) & = & \{y: \pi(y) \in
I^{\kappa}(\pi(x)), y \in \Fhn  \}\\
& \subset & \{y: \alpha^*(x,y) < \frac{1}{\kappa}, \alpha(x,y) \ge \frac{1}{\kappa}  \}.
\end{eqnarray*}
By Theorem \ref{thm5.2} for $\mu_{\phi}$-a.e.\ $x$ we have that the
last set is contained in
$$\{y: \alpha^*(x,y) < \frac{1}{\kappa}, h_{\mu_{\phi}}(y)
\ge \frac{1}{\kappa}\}.$$
Thus Lemma \ref{lemma1add}  implies that for any $\e > 0$ there are at most $C(\e)
\cdot 2^{}E(\frac{1}{\kappa})n$
cylinders of length $n$ needed to cover $\{y: \alpha^*(x,y) < \frac{1}{\kappa}, h_{\mu_{\phi}}(y)
\ge \frac{1}{\kappa} + \e\}$ .  Hence
$$\dim_H(I^{\kappa}(\pi(x)) \backslash \pi(\Ihn) \le
E(\frac{1}{\kappa}) = \dim_H \pi(\Fhn).$$
\end{proof}

\begin{corollary} For $\mu_{\phi}$ a.e.\ $x$ we have
$F^{\kappa}(\pi(x)) = \emptyset$ if $\frac{1}{\kappa} > e_+.$
\end{corollary}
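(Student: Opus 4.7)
The plan is to reduce to the symbolic statement (the last lemma in Section~\ref{sec8}) using the inclusion \eqref{inclusion} from the proof of the preceding theorem. The only care needed is a small loss in the exponent $\kappa$, which is absorbed because the hypothesis $1/\kappa > e^+$ is an open condition.

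First, since $1/\kappa > e^+$, I can choose $\kappa'' > \kappa$ with $1/\kappa'' > e^+$ still. The symbolic result asserts that for $\mu_\phi$-a.e.\ $x$ we have $\mathcal{F}^{\kappa''}(x) = \emptyset$; equivalently, for every $y \in \Sigma_2^+$,
$$y \in C_{\lfloor \kappa'' \log n \rfloor}(\sigma^n x) \text{ for infinitely many } n.$$
Fix such an $x$ and let $y \in \Sigma_2^+$ be arbitrary. Since $\kappa'' > \kappa$, for all $n$ sufficiently large we have $\lfloor \kappa'' \log n \rfloor \ge \lfloor \kappa \log n \rfloor + 1$, hence
$$y \in C_{\lfloor \kappa \log n \rfloor + 1}(\sigma^n x) \text{ for infinitely many } n.$$

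Next I apply the left inclusion of \eqref{inclusion} with $\sigma^n x$ in place of $x$: the interval $\pi(C_{\lfloor \kappa \log n \rfloor + 1}(\sigma^n x))$ is contained in $(\pi(\sigma^n x) - 1/n^\kappa,\, \pi(\sigma^n x) + 1/n^\kappa) = J^\kappa_n(\pi(x))$, using $\pi(\sigma^n x) = T^n \pi(x)$. Therefore $\pi(y) \in J^\kappa_n(\pi(x))$ for infinitely many $n$, i.e.\ $\pi(y) \in I^\kappa(\pi(x))$.

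Since $y \in \Sigma_2^+$ was arbitrary and $\pi$ is onto, this gives $I^\kappa(\pi(x)) = \S^1$, hence $F^\kappa(\pi(x)) = \emptyset$. The only subtlety is the passage from $\kappa''$ back to $\kappa$, which costs the extra ``$+1$'' in the cylinder length needed by \eqref{inclusion}; this is the only place where the strict inequality $1/\kappa > e^+$ (rather than equality) is used. No multifractal machinery is needed beyond what already produced the symbolic statement.
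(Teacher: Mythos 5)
Your proof is correct and follows the same route the paper intends: transfer the symbolic emptiness result for $\Fhn$ via the left inclusion of \eqref{inclusion}. The one place you do more work than the paper is in handling the cylinder-length shift. The paper asserts $F^\kappa(\pi(x)) \subset \pi(\Fhn)$ directly from the left inclusion, but that inclusion only yields $y \notin C_{\lfloor\kappa\log n\rfloor+1}(\sigma^n x)$, which is weaker than the condition $y \notin C_{\lfloor\kappa\log n\rfloor}(\sigma^n x)$ defining $\Fhn$; the mismatch is harmless for the dimension equalities but not, on its face, for set-level inclusions. Your device of passing to $\kappa''>\kappa$ with $1/\kappa''>e^+$ absorbs the ``$+1$'' cleanly and closes that gap, so your argument is in fact slightly more careful than the implicit one in the text while relying on exactly the same ingredients.
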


In Theorem~\ref{var} and Corollary~\ref{fub} we can ignore the delta
measure on fixed points since they have zero entropy and therefore
do not give any contribution. This transfer procedure allows us to
conclude the following Theorems and Corollaries from the analogous
results of the section~\ref{sec8}. These results contain more
information than those stated in the introduction, thus we
reformulate them. We set $\nu_\phi = \mu_\phi \circ \pi^{-1}$.

\begin{theorem} {\rm (Theorem \ref{thm1.1})} \label{nuc}
$\kappa_{\phi,\psi} = \frac{1}{- \int_{\S^1} \phi \, d\nu_{\psi}}
= \frac{1}{h_{\nu_{\phi}}(y)} = - \frac{1}{\frac{d}{dt}P(\phi + t \psi)|_{t=0}}.$
\end{theorem}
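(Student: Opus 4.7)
The plan is to transfer the symbolic result Theorem~\ref{nu} (which gives $\kappa_{\phi,\psi,\Sigma_2^+} = 1/(-\int_{\Sigma_2^+}\phi\,d\mu_\psi)$) to the circle using the machinery developed at the start of Section~\ref{sec10}. First I would record the compatibility between integrals: since $\nu_\phi=\mu_\phi\circ\pi^{-1}$ and $\nu_\psi=\mu_\psi\circ\pi^{-1}$, and since in the statement we identify $\phi$ on $\S^1$ with its pullback $\phi\circ\pi$ on $\Sigma_2^+$, we have
\[
\int_{\S^1}\phi\,d\nu_\psi = \int_{\Sigma_2^+}\phi\circ\pi\,d\mu_\psi,
\]
so the right-hand side of the theorem coincides with the critical value computed in Theorem~\ref{nu}.

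Next I would show that $\nu_\psi$-almost sure covering on the circle and $\mu_\psi$-almost sure covering in the shift are equivalent up to $\kappa$. For the easy direction, the left inclusion in~(\ref{inclusion}) applied to $\sigma^n x$ in place of $x$ gives $\pi(C_{\lfloor\kappa\log n\rfloor+1}(\sigma^n x))\subset J^\kappa_n(\pi(x))$, so $\pi(\mathcal{I}^{\kappa'}(x))\subset I^\kappa(\pi(x))$ whenever $\kappa'>\kappa$ (the extra $+1$ in the length is absorbed by any strict inequality in $\kappa$). Consequently, if $\kappa<\kappa_{\phi,\psi,\Sigma_2^+}$, one can pick $\kappa<\kappa'<\kappa_{\phi,\psi,\Sigma_2^+}$, apply Theorem~\ref{nu} to get $\mu_\psi(\mathcal{I}^{\kappa'}(x))=1$ for $\mu_\phi$-a.e.\ $x$, and then push forward by $\pi$ (noting $\pi$ is injective off a countable set of $\mu_\psi$-measure zero) to obtain $\nu_\psi(I^\kappa(\pi(x)))=1$ for $\nu_\phi$-a.e.\ $\pi(x)$. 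This yields $\kappa_{\phi,\psi,\S^1}\ge 1/(-\int\phi\,d\nu_\psi)$.

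For the reverse direction I would use the right inclusion in~(\ref{inclusion}), which gives $I^\kappa(\pi(x))\subset\pi(\mathcal{I}^{*\kappa}(x))$, where the starred set is defined with $C_n^*$ in place of $C_n$. Since Lemma~\ref{a+} shows $\alpha^*(x,y)=\alpha(x,y)$ for $\nu$-a.e.\ $y$ for any ergodic $\nu\ne\delta_{0^\infty},\delta_{1^\infty}$, applying this with $\nu=\mu_\psi$ yields $\mathcal{I}^{*\kappa}(x)\stackrel{\mu_\psi}{=}\mathcal{I}^\kappa(x)$. Thus $\nu_\psi(I^\kappa(\pi(x)))=1$ forces $\mu_\psi(\mathcal{I}^\kappa(x))=1$, so any $\kappa$ admissible on the circle is admissible on the shift, giving the matching upper bound. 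Combining the two bounds and invoking the identity $-\int\phi\,d\mu_\psi=-\tfrac{d}{dt}P(\psi+t\phi)|_{t=0}=h_{\nu_\phi}(y)$ (for $\nu_\psi$-a.e.\ $y$, by~(\ref{EntropyFormulus})) yields the three equalities.

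The only real subtlety is taking care of the boundary of $\pi$: the dyadic rationals have two preimages under $\pi$, and the symbols $0^\infty,1^\infty$ are atoms for no Gibbs measure but must be excluded in Lemma~\ref{a+}. Both are handled by noting these exceptional points form a countable, hence $\mu_\phi$- and $\mu_\psi$-null, set, so they cannot affect full-measure statements. The main obstacle, then, is not any hard estimate but simply verifying that the non-injectivity of $\pi$ and the passage from $C_n$ to $C_n^*$ do not change the critical $\kappa$; both are settled by the inclusions~(\ref{inclusion}) together with Lemma~\ref{a+}.
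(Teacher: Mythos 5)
Your proposal is correct and implements exactly the transfer procedure the paper invokes (without spelling it out) when it states that the results of Section~\ref{sec8} carry over to the circle. The two ingredients you use --- the inclusion~(\ref{inclusion}), shifted by $n$ and paired with the monotonicity $\mathcal{I}^{\kappa'}\subset\mathcal{I}^{\kappa}$ for $\kappa'>\kappa$ to absorb the $+1$ in cylinder length, and Lemma~\ref{a+} applied with $\nu=\mu_\psi$ to identify $\alpha^*$ with $\alpha$ almost everywhere --- are precisely what the paper's proof of Theorem~10.1 sets up, and Theorem~\ref{nu} is the correct symbolic input. The one technically loose point, your claim $\mathcal{I}^{*\kappa}(x)\stackrel{\mu_\psi}{=}\mathcal{I}^{\kappa}(x)$, is literally an equality only when $1/\kappa\ne-\int\phi\,d\mu_\psi$ (the boundary case $\alpha=1/\kappa$ is then a $\mu_\psi$-null set by Corollary~\ref{fub}), but since the critical exponent is a supremum this exceptional value plays no role. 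Note also that your proposal uses the correct pressure derivative $\frac{d}{dt}P(\psi+t\phi)|_{t=0}$, matching Theorem~\ref{nu}; the statement of the theorem in the text has an evident transposition typo writing $P(\phi+t\psi)$ instead.
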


\begin{lemma}\label{lemma3'}
For $\nu_\phi$ a.e.\ $s$ we have
$$
\sup \{E(t): \frac1t \le \kappa\} \ge \dim_H \Fn \ge  \sup \{E(t): \frac1t < \kappa\}.$$
For $\nu_\phi$ a.e.\ $s$ and $\kappa < 1/h_{\nu_\phi}$ we have
$$
\sup \{ E(t): \frac1t \ge \kappa\} \ge \dim_H \In \ge  \sup \{ E(t): \frac1t > \kappa\}.
$$
\end{lemma}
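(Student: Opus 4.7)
The plan is to reduce Lemma~\ref{lemma3'} to its symbolic counterpart Lemma~\ref{lemma3} via the transfer theorem established just above. The two ingredients I would invoke are: (i) the identities $\dim_H F^\kappa(\pi(x))=\dim_H \pi(\Fhn)$ and $\dim_H I^\kappa(\pi(x))=\dim_H \pi(\Ihn)$ for $\mu_\phi$-a.e.\ $x$, and (ii) the fact that the projection $\pi:\Sigma_2^+\to \S^1$ preserves Hausdorff dimension on Borel sets, so $\dim_H \pi(A)=\dim_H A$ for the relevant sets $A=\Fhn,\Ihn$. Combining these with Lemma~\ref{lemma3} applied to the symbolic sets $\Fhn$ and $\Ihn$ then yields the desired two-sided bounds involving $\sup\{E(t):1/t\le\kappa\}$ and $\sup\{E(t):1/t<\kappa\}$, respectively $\sup\{E(t):1/t\ge\kappa\}$ and $\sup\{E(t):1/t>\kappa\}$.

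More precisely, I would first recall from the background section that the natural projection $\pi$ sends the $n$-cylinder $C_n(y)$ to an interval of length $2^{-n}$, and that the $1/2$-metric on $\Sigma_2^+$ and the pull-back of the circle metric are bi-Lipschitz equivalent at the level of cylinder diameters; hence $\dim_H \pi(A)=\dim_H A$ for every Borel $A\subset\Sigma_2^+$ (see \cite{S1}). Since $\nu_\phi=\mu_\phi\circ\pi^{-1}$, the statement ``for $\nu_\phi$-a.e.\ $s$'' is equivalent to ``for $\mu_\phi$-a.e.\ $x$ with $s=\pi(x)$'', modulo the $\mu_\phi$-null set of dyadic preimages where $\pi$ fails to be one-to-one.

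Then, using the transfer theorem, for $\mu_\phi$-a.e.\ $x$ we have
\begin{equation*}
\dim_H F^\kappa(\pi(x))=\dim_H \pi(\Fhn)=\dim_H \Fhn,
\end{equation*}
and similarly for $I^\kappa(\pi(x))$ versus $\Ihn$. Lemma~\ref{lemma3} gives the bounds
\begin{equation*}
\sup\{E(t):1/t\le\kappa\}\ \ge\ \dim_H \Fhn\ \ge\ \sup\{E(t):1/t<\kappa\},
\end{equation*}
and, for $\kappa<1/h_{\mu_\phi}=1/h_{\nu_\phi}$ (the two entropies agree because $\pi$ is a measure-theoretic isomorphism between $(\Sigma_2^+,\mu_\phi,\sigma)$ and $(\S^1,\nu_\phi,T)$),
\begin{equation*}
\sup\{E(t):1/t\ge\kappa\}\ \ge\ \dim_H \Ihn\ \ge\ \sup\{E(t):1/t>\kappa\}.
\end{equation*}
Substituting the transferred dimensions yields the two displays in the statement.

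The only subtle point I anticipate is justifying that $\dim_H\pi(A)=\dim_H A$ on the relevant random sets; this is routine because $\pi$ is surjective, at most two-to-one, and bi-Hölder (indeed bi-Lipschitz on cylinder scales), so any cover of $\Fhn$ by cylinders pushes forward to a cover of $\pi(\Fhn)$ by intervals of the same diameter, and conversely an efficient interval cover of $\pi(\Fhn)$ pulls back to a cylinder cover of $\Fhn$ with at most doubled multiplicity. No other step requires new ideas beyond those already assembled in Sections~\ref{sec8} and~\ref{sec10}.
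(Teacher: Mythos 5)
Your proposal is correct and follows essentially the same route the paper takes: the paper proves Lemma~\ref{lemma3'} by the transfer theorem at the start of Section~\ref{sec10} (giving $\dim_H F^\kappa(\pi(x))=\dim_H\pi(\Fhn)$ and $\dim_H I^\kappa(\pi(x))=\dim_H\pi(\Ihn)$ a.e.), the dimension-preservation of $\pi$ on cylinder scales noted already in Section~\ref{back}, and the symbolic Lemma~\ref{lemma3}. Your handling of the a.e.\ transfer via $\nu_\phi=\mu_\phi\circ\pi^{-1}$ and of the two-to-one issue on dyadics is the right way to justify the "routine" step.
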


\begin{corollary}\label{theoremMFA'}
For $\nu_{\phi}$ a.e.\ $s$
$$
\sup_{-P'(q) \ge \frac{1}{\kappa}}\left[ P(q\phi) - P'(q\phi)q\right ]  \ge
dim_H \Fn
\ge  \sup_{-P'(q) > \frac{1}{\kappa}}\left[ P(q\phi) -
  P'(q\phi)q\right ].
$$
For $\nu_\phi$ a.e.\ $s$ and for $\kappa < 1/h_{\nu_\phi}$
$$
\sup_{-P'(q) \le \frac{1}{\kappa}}\left[ P(q\phi) - P'(q\phi)q\right ]  \ge
dim_H \In
\ge \sup_{-P'(q) < \frac{1}{\kappa}}\left[ P(q\phi) - P'(q\phi)q\right ].
$$
\end{corollary}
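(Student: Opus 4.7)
The plan is to observe that Corollary~\ref{theoremMFA'} is a direct Legendre-transform rewriting of Lemma~\ref{lemma3'}. By the Variational Principle~\ref{varI}, for each $q \in \mathbb{R}$ the value $t(q) := -P'(q\phi)$ lies in $[e^-,e^+]$ and satisfies
\begin{equation*}
E(t(q)) = P(q\phi) - q\,P'(q\phi),
\end{equation*}
and the map $q \mapsto t(q)$ is a real-analytic bijection from $\mathbb{R}$ onto $(e^-, e^+)$ (strictly decreasing, since $P$ is strictly convex in $q$ because $\phi$ is not cohomologous to a constant in the non-degenerate case; in the degenerate case the interval collapses to a point and both sides of the asserted inequalities trivially agree).

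First I would note that $E(t) = -\infty$ outside $[e^-,e^+]$, so each sup in Lemma~\ref{lemma3'} can be restricted to this interval without loss. For $\Fn$, the constraint $\frac{1}{t}\le \kappa$ is equivalent to $t \ge \frac{1}{\kappa}$, and via the substitution $t = t(q) = -P'(q\phi)$ this becomes $-P'(q\phi) \ge \frac{1}{\kappa}$. Substituting $E(t(q)) = P(q\phi) - qP'(q\phi)$ yields
\begin{equation*}
\sup\{E(t): \tfrac{1}{t}\le \kappa\} \;=\; \sup_{-P'(q\phi)\ge 1/\kappa}\bigl[P(q\phi) - qP'(q\phi)\bigr],
\end{equation*}
and the strict version is identical with $>$ in place of $\ge$. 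The statement for $\Fn$ then follows from Lemma~\ref{lemma3'}. The case where $1/\kappa > e^+$ (so the constraint set is empty) is consistent with Theorem~\ref{thm1.2} giving $\Fn = \emptyset$.

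For $\In$, the derivation is symmetric: the constraint $\frac{1}{t}\ge \kappa$ translates to $-P'(q\phi) \le \frac{1}{\kappa}$, and Lemma~\ref{lemma3'} (which requires $\kappa < 1/h_{\nu_\phi}$, matching the corollary's hypothesis) gives the corresponding inequalities. There is no real obstacle here; the step is purely one of reparametrization, and the only mild care needed is at the boundary values $e^-, e^+$, where $E$ may fail to be continuous (for typical potentials $E(e^\pm)=0$), but these boundary contributions are precisely what separates the $\le$ and $<$ (respectively $\ge$ and $>$) versions of the sup and hence produce the upper and lower bracketing bounds in the corollary rather than an equality.
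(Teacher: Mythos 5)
Your proposal is correct and follows essentially the same route as the paper: the paper derives Corollary~\ref{theoremMFA'} by transferring the symbolic Corollary~\ref{theoremMFA}, which itself is obtained from Lemma~\ref{lemma3} by exactly the reparametrization $t=t(q)=-P'(q\phi)$ via $E(t(q))=P(q\phi)-qP'(q\phi)$ from Theorem~\ref{varI}. You simply perform the transfer first (using the already-stated Lemma~\ref{lemma3'}) and the reparametrization second, which is the same argument with the two commuting steps in the opposite order.
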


\begin{corollary} {\rm (Theorems \ref{thm1.3} and \ref{thm1.4})}
For a typical potential $\phi$ and $\nu_\phi$ a.e.\ $s$ we have
\begin{eqnarray*}
&\dim_H \Fn = \dim_H(\S^1) = h_{\rm top}(\S^1) = 1  &\hbox{for } 1/\kappa \le - \int \phi \, dLeb.,\\
&\dim_H \In =  \dim_H(\S^1) = h_{\rm top}(\S^1) = 1  & \hbox{for }
1/\kappa \ge - \int \phi \, dLeb..
\end{eqnarray*}
Let $q_{\kappa}$ be the number such that $P'(q_{\kappa} \phi) = - \frac1{\kappa}.$ Then
\begin{eqnarray*}
&\dim_H \Fn = E\left (\frac{1}{\kappa} \right ) = P\left (q_{\kappa}
\phi \right ) + \frac{1}{\kappa} q_{\kappa}
\quad &\hbox{for } 1/\kappa > - \int \phi \, dLeb.,\\
&\dim_H \In = E\left (\frac{1}{\kappa} \right ) = P\left (q_{\kappa}
\phi \right ) + \frac{1}{\kappa} q_{\kappa} \quad &\hbox{for }
h_{\nu_\phi} \le 1/\kappa < - \int \phi \, dLeb.,\\
&\dim_H \In = \frac{1}{\kappa} \qquad \qquad \qquad\qquad\quad \quad
&\hbox{for } 1/\kappa < h_{\nu_\phi}.
\end{eqnarray*}
\end{corollary}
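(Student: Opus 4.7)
The plan is to combine the transfer theorem just established with the symbolic results from Section~\ref{sec8}. Since $\nu_\phi = \mu_\phi \circ \pi^{-1}$, it suffices to work with $s = \pi(x)$ for $\mu_\phi$-a.e.\ $x \in \Sigma_2^+$, and the transfer theorem yields
\[
\dim_H F^\kappa(\pi(x)) = \dim_H \pi(\Fhn), \qquad \dim_H I^\kappa(\pi(x)) = \dim_H \pi(\Ihn).
\]
Because the projection $\pi$ is bijective outside a countable set and $\diam \pi(C_n) = 2^{-n}$ (so $\pi$ is bi-Lipschitz on each cylinder up to a uniform constant, cf.\ the discussion of the metrics in Section~\ref{back}), $\pi$ preserves Hausdorff dimension of Borel sets. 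Hence $\dim_H \pi(\Fhn) = \dim_H \Fhn$ and $\dim_H \pi(\Ihn) = \dim_H \Ihn$, so the statements follow immediately from the symbolic corollary at the end of Section~\ref{sec8}.

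The one identification needed is that of the threshold. Since the Lebesgue measure on $\S^1$ is exactly $\mu_{\max} \circ \pi^{-1}$, we have
\[
-\int_{\S^1} \phi \, d\mathrm{Leb} = \int_{\Sigma_2^+} (-\phi) \, d\mu_{\max} = e_{\max}.
\]
Thus the threshold $1/\kappa = -\int \phi\, d\mathrm{Leb}$ in the circle formulation matches the threshold $1/\kappa = e_{\max}$ appearing in the symbolic results, and the threshold $1/\kappa = h_{\nu_\phi}$ matches $1/\kappa = h_{\mu_\phi}$ because entropies are preserved under the factor map.

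For a typical potential in the sense of Baire, the entropy spectrum $E(t)$ is continuous on $[e^-, e^+]$ with $E(e^-) = E(e^+) = 0$ (as recalled in Section~\ref{back}), so the upper and lower bounds in Lemma~\ref{lemma3} (and its circle version Lemma~\ref{lemma3'}) coincide, giving exact values. The value $q_\kappa$ with $P'(q_\kappa \phi) = -1/\kappa$ exists by strict concavity of $P$ on the relevant range, and the variational formula (\ref{VariationalPrinciple2}) rewrites $E(1/\kappa) = P(q_\kappa \phi) + q_\kappa/\kappa$, matching the displayed formulas. The easy part is the two regimes where the dimension is $1$: these are the cases $1/\kappa \le e_{\max}$ for $F^\kappa$ and $1/\kappa \ge e_{\max}$ for $I^\kappa$, already handled by Lemmas~\ref{lemma3'} since $E(e_{\max}) = 1$.

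The only potentially delicate point is to verify that the value $q_\kappa$ is well-defined on the full stated range and that, in the boundary case $1/\kappa = h_{\nu_\phi}$, the two formulas $\dim_H I^\kappa(s) = 1/\kappa$ and $\dim_H I^\kappa(s) = E(1/\kappa)$ agree. This is automatic since the spectrum $E(t)$ touches the diagonal $E(t) = t$ precisely at $t = h_{\nu_\phi}$ (where $E$ attains the value equal to its argument because $h_{\mu_\phi}$ equals the dimension of $\mu_\phi$ in base $2$), so both formulas give the same value $h_{\nu_\phi} = 1/\kappa$ at that boundary.
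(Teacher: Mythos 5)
Your proof takes essentially the same route as the paper: invoke the transfer theorem of Section~\ref{sec10} to reduce to the symbolic statements, note that $\pi$ preserves Hausdorff dimension, and identify the thresholds $-\int\phi\,d\mathrm{Leb}=e_{\max}$ and $h_{\nu_\phi}=h_{\mu_\phi}$; this is exactly the argument the authors compress into the sentence preceding the corollary. One small imprecision: $\pi$ is not bi-Lipschitz on cylinders (points such as $01^\infty$ and $10^\infty$ lie in distinct $1$-cylinders yet have the same image), so dimension preservation is not a literal Lipschitz consequence; what is actually used, as recalled in Section~\ref{back} citing~\cite{S1}, is the comparability of cylinder diameters which gives $\dim_H\pi A = h_{\rm top}(A)$ for Borel $A$. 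Also note that the third regime $\dim_H\mathcal{I}^\kappa(x)=1/\kappa$ comes from Lemma~\ref{lemma3} rather than from the final corollary of Section~\ref{sec8}; your citation should point there. With those two nits repaired the sketch is sound.
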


{\bf Remark:} 1)  If $\kappa >1$ then $\sum l_n < \infty$ and
we can not cover Lebesgue almost all points infinitely often
no matter which orbit we consider.
Thus it is likely that the dimension of $\In$ is less than 1.
In the degenerate case this is clear.
To see this in the nondegenerate case
note that since the graph of the entropy spectrum is below the diagonal
we have $1 = h_{\rm top} = E(e_{\max}) < e_{\max}$.
Therefore the maximum dimension (i.e.\ 1) is attained for $\kappa < 1$.

2)  For a non typical potential we have possibly discontinuities
of the function $E(t)$ at $e^\pm$.  At these points the upper and lower
estimates of Corollary~\ref{theoremMFA'} do not coincide.  This indicates that
the question about infinite versus finite covering can not be completely answered
in terms of the exponent $\kappa$.   At this point the answer might depend on
a constant $c$ where $l_n = \frac{c}{n^\nu}$.  This is in particular the case
for the i.i.d.\ case mentioned in the introduction.  The dynamical analog is Lebesgue
measure whose entropy spectrum is degenerate.  Therefore we can not get any
information about the sequence $\frac{c}{n}$ which resembles the i.i.d.\ case.

\begin{theorem} {\rm (Theorem \ref{thm1.2})}
For $\nu_\phi$ a.e.\ $s$ we have
$$
\Fn = \emptyset   \hbox{ for }  \kappa < \frac{1}{- \inf_{\mu \ \rm{ ergodic}} \int \phi \, d\mu}= \kappa^F_{\phi,\S^1}.
$$
\end{theorem}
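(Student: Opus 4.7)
My plan is to combine the transfer corollary from Section~\ref{sec10} with the dimension formula of Theorem~\ref{thm1.3}. First note that $-\inf_{\mu\text{ ergodic}}\int\phi\,d\mu = \sup_{\mu\text{ invariant}}\int(-\phi)\,d\mu = e^+$ by ergodic decomposition and the definition of $e^+$. So the theorem splits into two assertions: $F^\kappa(s)=\emptyset$ for $\nu_\phi$-a.e.\ $s$ whenever $\kappa<1/e^+$, and the sharpness of this threshold.

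For the first assertion I would invoke the corollary established at the end of Section~\ref{sec10}, namely $F^\kappa(\pi(x))=\emptyset$ whenever $1/\kappa>e^+$, for $\mu_\phi$-a.e.\ $x$. Since $\nu_\phi=\mu_\phi\circ\pi^{-1}$, this translates directly into $F^\kappa(s)=\emptyset$ for $\nu_\phi$-a.e.\ $s\in\S^1$. Consequently the whole interval $(0,1/e^+)$ lies inside the set whose supremum defines $\kappa^F_{\phi,\S^1}$, giving $\kappa^F_{\phi,\S^1}\ge 1/e^+$.

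For the reverse inequality I would show that $F^\kappa(s)\ne\emptyset$ for $\nu_\phi$-a.e.\ $s$ as soon as $\kappa>1/e^+$, which rules such $\kappa$ out of the supremum. Fixing $\kappa>1/e^+$, so $1/\kappa<e^+$, Theorem~\ref{thm1.3} gives $\dim_H F^\kappa(s)=1$ when $1/\kappa\le e_{\max}$, and $\dim_H F^\kappa(s)=E(1/\kappa)$ when $e_{\max}<1/\kappa<e^+$. In the second case $E(1/\kappa)>0$, because $E$ is nonnegative, continuous, and concave on $[e^-,e^+]$ with interior maximum $E(e_{\max})=1$, and these properties force $E$ to be strictly positive on the open interval $(e^-,e^+)\ni 1/\kappa$. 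In either case $F^\kappa(s)$ carries positive Hausdorff dimension and hence is nonempty.

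The only bookkeeping item requiring care is the degenerate situation in which $[e^-,e^+]$ collapses to a single point, so that $\mu_\phi$ is the measure of maximal entropy and $\nu_\phi$ is Lebesgue measure. In that regime $e^+=e_{\max}=h_{\rm top}=1$, so the assumption $\kappa>1/e^+$ puts us directly into the first case $1/\kappa<e_{\max}$ of Theorem~\ref{thm1.3}, yielding $\dim_H F^\kappa(s)=1$; no separate argument is needed. The genuinely delicate point, which the strict inequality $\kappa<1/e^+$ in the statement sidesteps, is the boundary $\kappa=1/e^+$ itself: in analogy with Kahane's random covering, and as alluded to in the remark following Section~\ref{bigh}, a strict inequality in the dimension formula may occur there, so the question whether $F^{1/e^+}(s)$ is empty is not resolved by this approach.
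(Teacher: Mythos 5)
Your proof is correct and follows essentially the same route as the paper: the emptiness direction ($\kappa<1/e^+$) is exactly the transfer of the symbolic lemma from Section~\ref{sec8}, which the paper records as the corollary you cite, and the sharpness direction comes from the positivity of $\dim_H\Fn$ given by the dimension theorem (the paper states this at the symbolic level right after Theorem~\ref{5.3} as ``$\Fhn\ne\emptyset$ if $1/\kappa<e_+$,'' while you invoke its circle version, Theorem~\ref{thm1.3}). Your handling of the degenerate case and your observation that $\kappa=1/e^+$ is genuinely unresolved both match remarks already in the paper.
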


These results are summarized in Figure 2.
\begin{figure}[t]
\psfrag*{a}{\footnotesize{$\dim_H \Fn$}}
\psfrag*{b}{\footnotesize{$\dim_H \In$}}
\psfrag*{c}{\footnotesize{$e^-$}}
\psfrag*{d}{\footnotesize{$e_{\max}$}}
\psfrag*{e}{\footnotesize{$e^+$}}
\psfrag*{f}{\footnotesize{$1$}}
\psfrag*{g}{\footnotesize{$1/\kappa$}}
\psfrag*{h}{\footnotesize{$e^-=e_{\max}=e^+$}}
\centerline{\psfig{file=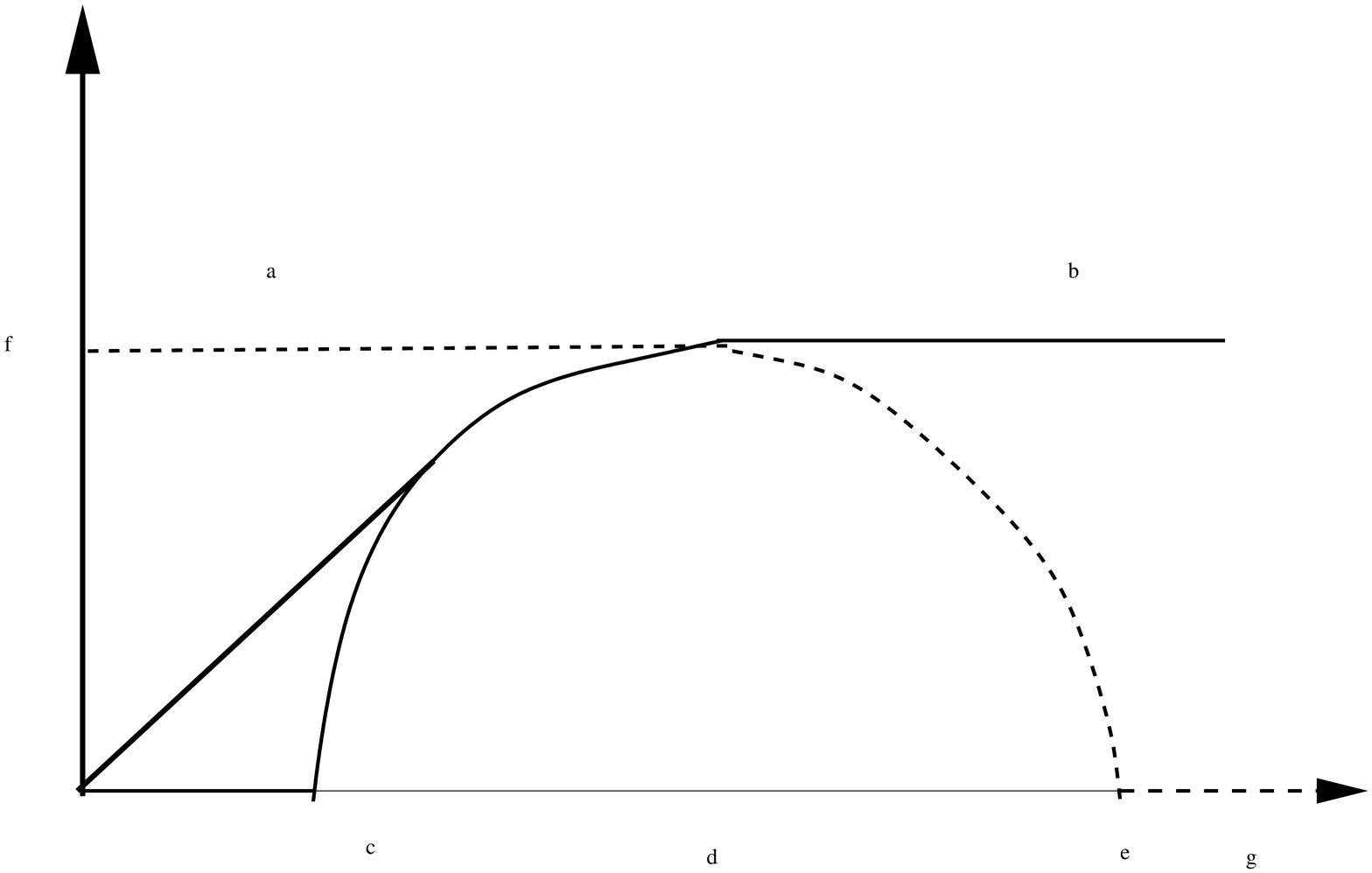,height=45mm}}
\centerline{\psfig{file=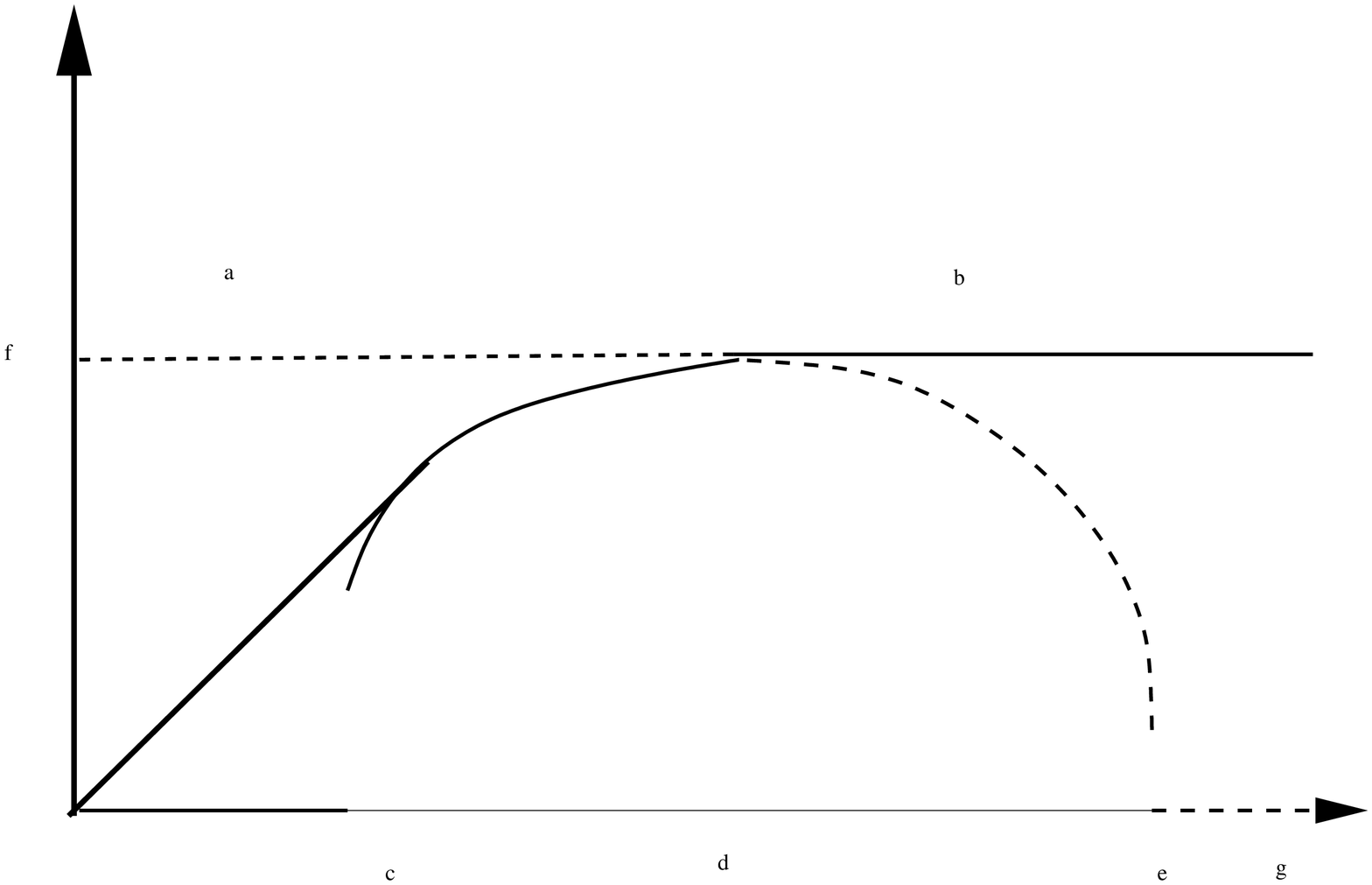,height=45mm} \quad
\psfig{file=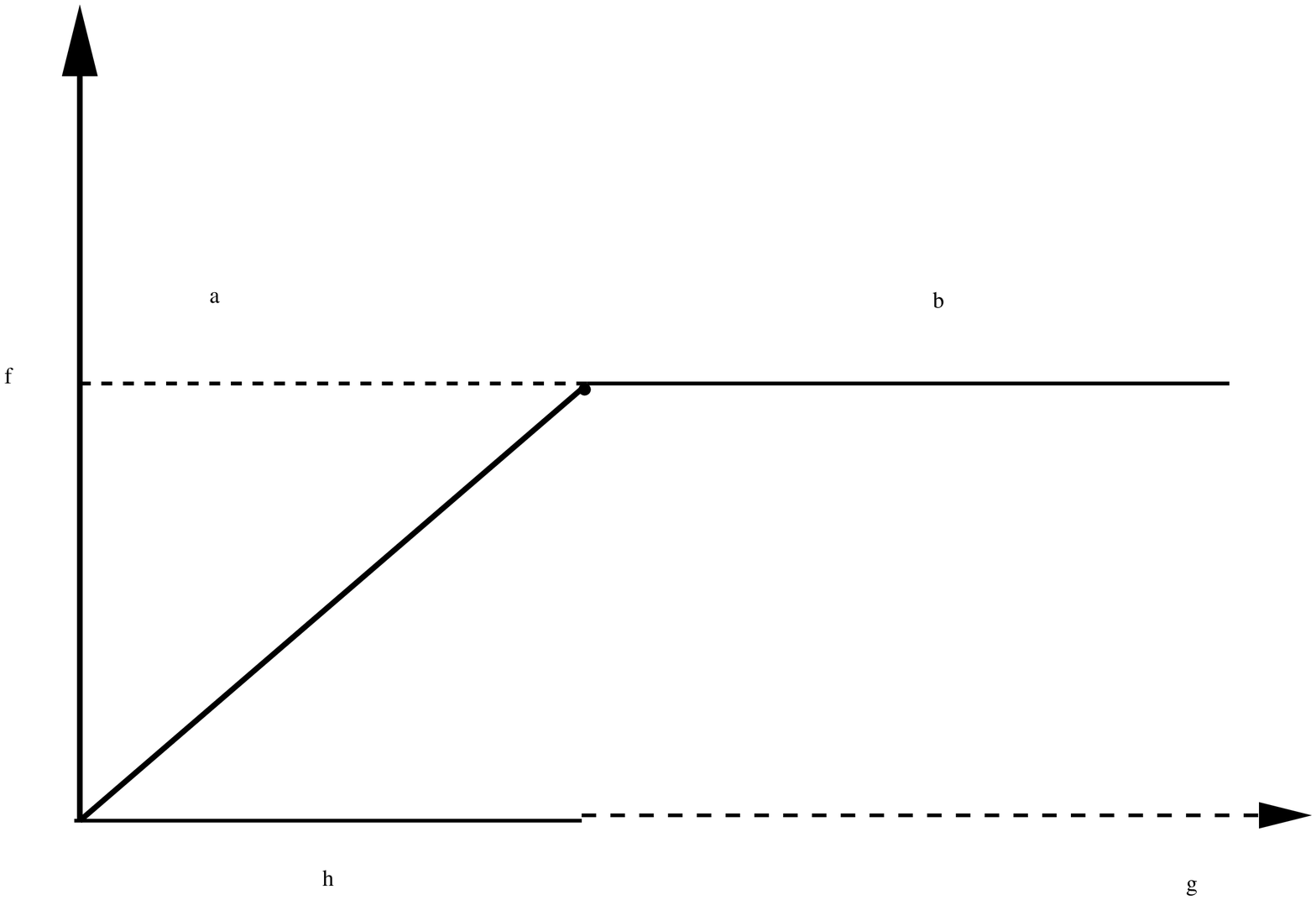,height=45mm}}

\caption{The dimension graphs in the typical, nontypical and
  degenerate cases.
The graph of $\dim_H \Fn$ is dotted and the graph of
$\dim_H \In$ is solid.}\label{cov2}
\end{figure}

{\bf Remark:}  The result of Corollary \ref{structure} can also be
transferred to the circle.  The interpretation of this result is as
follows.  The distribution of a typical orbit up to time $L$ is
completely determined by the entropy spectrum of the measure.

\medskip
{\bf Acknowledgment} The authors thank L.--M.\ Liao and Q.--L.\ Li for
their useful remarks.

\end{document}